\def\nicedashedcolourscheme{\shadedraw[top color=blue!22, bottom color=blue!22, draw=gray, dashed]}
\def\nicecolourscheme{\shadedraw[top color=blue!22, bottom color=blue!22, draw=white]}
\def\nicepalecolourscheme{\shadedraw[top color=blue!12, bottom color=blue!12, draw=white]}
\def\nicereallynocolourscheme{\shadedraw[top color=white!2, bottom color=white!25, draw=white]}
\definecolor{Myblue}{rgb}{0,0,0.6}
\newtheorem{theorem}{Theorem}[section]
\newtheorem{proposition}[theorem]{Proposition}
\newtheorem{lemma}[theorem]{Lemma}
\newtheorem{corollary}[theorem]{Corollary}
\theoremstyle{definition}
\newtheorem{definition}[theorem]{Definition}
\newtheorem{example}[theorem]{Example}
\newtheorem{remark}[theorem]{Remark}
\numberwithin{equation}{section}
\numberwithin{figure}{section}
\def\globalsigneval{n}
\def\eval{\operatorname{ev}}
\def\coev{\operatorname{coev}}
\def\res{\operatorname{Res}}
\def\can{\operatorname{can}}
\def\Hom{\operatorname{Hom}}
\def\uHom{\underline{\Hom}}
\def\End{\operatorname{End}}
\def\uEnd{\underline{\End}}
\DeclareMathOperator{\str}{str}
\DeclareMathOperator{\hmf}{hmf}
\DeclareMathOperator{\HMF}{HMF}
\DeclareMathOperator{\HF}{HF}
\DeclareMathOperator{\lAt}{lAt}
\DeclareMathOperator{\At}{At}
\begin{document}

\def\Atlarrow{\overset{\leftarrow}{\At}}
\def\Res{\res\!}
\newcommand{\cat}[1]{\mathcal{#1}}
\newcommand{\lto}{\longrightarrow}
\newcommand{\xlto}[1]{\stackrel{#1}\lto}
\newcommand{\mf}[1]{\mathfrak{#1}}
\newcommand{\md}[1]{\mathscr{#1}}
\newcommand{\intvar}{\bs{x}_{\textup{int}}}
\newcommand{\extvar}{\bs{x}_{\textup{ext}}}
\newcommand{\qderu}[2]{\mathbf{D}^{#1}(#2)}
\newcommand{\ud}{\mathrm{d}}
\def\l{\,|\,}
\def\by{f}
\def\totimes{\otimes}
\def\di{Q}
\newcommand{\cotimes}[1]{\,\widehat{\otimes}_{#1}\,}
\def\QQ{\mathds{Q}}
\def\krc{C}
\def\diffm{d}
\def\diffh{d_{\chi}}
\def\redh{\overline{H}}
\def\ZZ{\mathds{Z}}
\def\bs{\boldsymbol}
\def\Ztwo{\mathds{Z}_2}
\def\mdual{^{\vee}}
\def\KR{\operatorname{KR}}
\def\I{\!\operatorname{i}\!}
\def\E{\operatorname{e}\!}
\def\sln{\mathfrak{sl}(N)}
\def\nN{\mathds{N}}
\def\nZ{\mathds{Z}}
\def\nQ{\mathds{Q}}
\def\nR{\mathds{R}}
\def\nC{\mathds{C}}
\def\Bar{\mathds{B}}
\def\cBar{\widehat{\mathds{B}}}
\def\Se{S^{\operatorname{e}}}
\def\Re{R^{\operatorname{e}}}
\def\Ae{A^{\operatorname{e}}}
\def\Be{B^{\operatorname{e}}}
\def\Aop{A^{\operatorname{op}}}
\def\Rop{R^{\operatorname{op}}}
\def\lra{\longrightarrow}
\def\lmt{\longmapsto}
\def\LG{\mathcal{LG}_k}
\def\dual{\dagger}
\def\dlangle{\big\langle\!\big\langle}
\def\drangle{\big\rangle\!\big\rangle}
\def\bigdlangle{\Big\langle\!\!\Big\langle}
\def\bigdrangle{\Big\rangle\!\!\Big\rangle}
\def\reprod{\gamma}
\newcommand{\Ress}[1]{\res_{#1}\!}
\newcommand{\be}{\begin{equation}}
\newcommand{\ee}{\end{equation}}
\def\Xcirc{%
\begin{tikzpicture}[inner sep=0mm]
\node (X) at (0,0) {$X$};
\node (0) at (0,0) [circle,inner sep=0.99pt, thin,draw=black,fill= white] {};
\end{tikzpicture}%
}
\def\Xbul{%
\begin{tikzpicture}[inner sep=0mm]
\node (X) at (0,0) {$X$};
\node (0) at (0,0) [circle,inner sep=0.99pt, thin,draw=black,fill= black] {};
\end{tikzpicture}%
}

\renewcommand{\labelenumi}{(\roman{enumi})}

\allowdisplaybreaks

\usetikzlibrary{arrows,calc,decorations.pathreplacing,decorations.markings,shapes.geometric,shadows}
\tikzset{
    string/.style={draw=#1, postaction={decorate}, decoration={markings,mark=at position .51 with {\arrow[draw=#1]{>}}}},
    costring/.style={draw=#1, postaction={decorate}, decoration={markings,mark=at position .51 with {\arrow[draw=#1]{<}}}},
    ostring/.style={draw=#1, postaction={decorate}, decoration={markings,mark=at position .47 with {\arrow[draw=#1]{>}}}},
    ustring/.style={draw=#1, postaction={decorate}, decoration={markings,mark=at position .56 with {\arrow[draw=#1]{>}}}},
    oostring/.style={draw=#1, postaction={decorate}, decoration={markings,mark=at position .43 with {\arrow[draw=#1]{>}}}},
    uustring/.style={draw=#1, postaction={decorate}, decoration={markings,mark=at position .59 with {\arrow[draw=#1]{>}}}},
    directed/.style={string=blue!50!black}, 
    odirected/.style={ostring=blue!50!black}, 
    udirected/.style={ustring=blue!50!black}, 
    oodirected/.style={oostring=blue!50!black}, 
    uudirected/.style={uustring=blue!50!black},     
    redirected/.style={costring= blue!50!black},
}

\usetikzlibrary{fadings,decorations.pathreplacing}

\newcommand\pgfmathsinandcos[3]{%
  \pgfmathsetmacro#1{sin(#3)}%
  \pgfmathsetmacro#2{cos(#3)}%
}
\newcommand\LongitudePlane[3][current plane]{%
  \pgfmathsinandcos\sinEl\cosEl{#2} 
  \pgfmathsinandcos\sint\cost{#3} 
  \tikzset{#1/.estyle={cm={\cost,\sint*\sinEl,0,\cosEl,(0,0)}}}
}
\newcommand\LatitudePlane[3][current plane]{%
  \pgfmathsinandcos\sinEl\cosEl{#2} 
  \pgfmathsinandcos\sint\cost{#3} 
  \pgfmathsetmacro\yshift{\cosEl*\sint}
  \tikzset{#1/.estyle={cm={\cost,0,0,\cost*\sinEl,(0,\yshift)}}} %
}
\newcommand\DrawLongitudeCircle[2][1]{
  \LongitudePlane{\angEl}{#2}
  \tikzset{current plane/.prefix style={scale=#1}}
  \pgfmathsetmacro\angVis{atan(sin(#2)*cos(\angEl)/sin(\angEl))} %
  \draw[redirected,current plane,color=blue!50!black, very thick] (\angVis:1) arc (\angVis:\angVis+180:1);
  \draw[current plane,dotted,color=blue!50!gray, very thick] (\angVis-180:1) arc (\angVis-180:\angVis:1);
}
\newcommand\DrawLatitudeCircle[2][1]{
  \LatitudePlane{\angEl}{#2}
  \tikzset{current plane/.prefix style={scale=#1}}
  \pgfmathsetmacro\sinVis{sin(#2)/cos(#2)*sin(\angEl)/cos(\angEl)}
  \pgfmathsetmacro\angVis{asin(min(1,max(\sinVis,-1)))}
  \draw[directed,current plane, color=blue!50!black] (\angVis:1) arc (\angVis:-\angVis-180:1);
  \draw[current plane,dashed, color=blue!50!gray] (180-\angVis:1) arc (180-\angVis:\angVis:1);
}
\newcommand\DrawLatitudeCircleU[2][1]{
  \LatitudePlane{\angEl}{#2}
  \tikzset{current plane/.prefix style={scale=#1}}
  \pgfmathsetmacro\sinVis{sin(#2)/cos(#2)*sin(\angEl)/cos(\angEl)}
  \pgfmathsetmacro\angVis{asin(min(1,max(\sinVis,-1)))}
  \draw[redirected,current plane, color=blue!50!black] (\angVis:1) arc (\angVis:-\angVis-180:1);
  \draw[current plane,dashed, color=blue!50!gray] (180-\angVis:1) arc (180-\angVis:\angVis:1);
}

\title{Adjunctions and defects in Landau-Ginzburg models}

\dedication{To Ragnar-Olaf Buchweitz on the occasion of his sixtieth birthday}

\author{Nils Carqueville}
\email{nils.carqueville@physik.uni-muenchen.de}
\address{Arnold Sommerfeld Center for Theoretical Physics, LMU M\"unchen \& Excellence Cluster Universe}

\author{Daniel Murfet}
\email{daniel.murfet@math.ucla.edu}
\address{Department of Mathematics, UCLA}

\classification{18D05, 57R56}

\begin{abstract}
We study the bicategory of Landau-Ginzburg models, which has polynomials as objects and matrix factorisations as $1$-morphisms. Our main result is the existence of adjoints in this bicategory and formulas for the evaluation and coevaluation maps in terms of Atiyah classes and homological perturbation. The bicategorical perspective offers a unified approach to Landau-Ginzburg models: we show how to compute arbitrary correlators and recover the full structure of open/closed TFT, including the Kapustin-Li disc correlator and a simple proof of the Cardy condition, in terms of defect operators which in turn are directly computable from the adjunctions. 
\end{abstract}

\maketitle

\section{Introduction}\label{sec:Introduction}

Landau-Ginzburg models play an important role in many areas of mathematical physics and pure mathematics including singularity theory, representation theory, (homological) mirror symmetry, knot invariants, and conformal or topological field theory. The interplay between these areas is one of the aesthetic motivations for studying Landau-Ginzburg models. Another general motivation is their dual nature of affording insight into deep structure while being concrete enough to allow for hands-on computations. 

In this paper we will show how this dichotomy manifests itself in the context of two-dimensional topological field theory (TFT) with defects. We explain how Landau-Ginzburg models give rise to a bicategory with adjoints (also called duals) and we describe the structure maps in this bicategory, which include the units and counits of adjunction (also called evaluation and coevaluation maps) in terms of basic invariants called Atiyah classes \cite{atiyahconn}. On the one hand this gives a satisfying explanation for duality in the setting of Landau-Ginzburg models in terms of commutation relations for Atiyah classes, and on the other hand it provides an effective way of evaluating arbitrary string diagrams in the bicategory. Since string diagrams in a bicategory can be identified with correlators in TFTs with defects, this opens the door for many applications.

In order to set the stage, and in particular explain how string diagrams are related to correlators, we recall a few aspects of TFTs with defects in an informal fashion; for more detailed accounts see~\cite{k1004.2307} and~\cite[Section~2]{dkr1107.0495}. We imagine \textsl{bulk sector theories}~$T_I$ to ``live'' on a two-dimensional surface called the \textsl{worldsheet}. More precisely, the worldsheet may be partitioned into various domains to which the (not necessarily distinct) theories~$T_I$ are associated, and which are separated by one-dimensional oriented \textsl{defect lines} $D_\alpha$. A sketch of a typical worldsheet is shown in Figure~\ref{worldsheetwithdefects}. 

In addition to the labels $T_I$ for the two-dimensional domains and $D_\alpha$ for the one-dimensional defect lines, we also include labels~$\phi_i$ for zero-dimensional points. These labels are interpreted as describing \textsl{fields} inserted at the points on the worldsheet. Note that the fields can also be placed at junctions of multiple defect lines. 

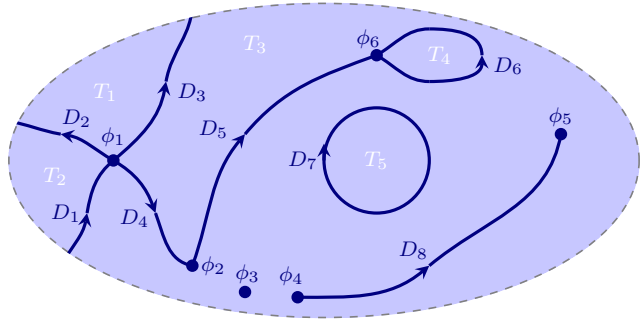
\begin{figure}[t]
$$
\begin{tikzpicture}[very thick,scale=0.7,color=blue!50!black, baseline,>=stealth]
\clip (0,0) ellipse (6cm and 3cm);
\nicedashedcolourscheme (0,0) ellipse (6cm and 3cm);

\draw (-4.15,1.3) [white] node {{\scriptsize $T_1$}};
\draw (-5.1,-0.3) [white] node {{\scriptsize $T_2$}};
\draw (-1.3,2.2) [white] node {{\scriptsize $T_3$}};
\draw (2.2,2.0) [white] node {{\scriptsize $T_4$}};
\draw (1,0) [white] node {{\scriptsize $T_5$}};

\draw (-4.9,-1) node {{\scriptsize $D_1$}};
\draw (-4.7,0.8) node {{\scriptsize $D_2$}};
\draw (-2.5,1.3) node {{\scriptsize $D_3$}};
\draw (-3.6,-1.1) node {{\scriptsize $D_4$}};
\draw (-2.1,0.6) node {{\scriptsize $D_5$}};
\draw (3.5,1.8) node {{\scriptsize $D_6$}};
\draw (-0.4,0) node {{\scriptsize $D_7$}};
\draw (1.7,-1.7) node {{\scriptsize $D_8$}};

\draw (-4,0.45) node {{\scriptsize $\phi_1$}};
\draw (-2.1,-2) node {{\scriptsize $\phi_2$}};
\draw (-1.45,-2.15) node {{\scriptsize $\phi_3$}};
\draw (-0.62,-2.2) node {{\scriptsize $\phi_4$}};
\draw (4.5,0.84) node {{\scriptsize $\phi_5$}};
\draw (0.85,2.32) node {{\scriptsize $\phi_6$}};

\draw[->, very thick, out=60, in=260] (-5,-2) to (-4.5,-1);
\draw[very thick, out=80, in=220] (-4.5,-1) to (-4,0);
\filldraw (-4,0) circle (2.5pt);
\draw[->,very thick, out=150, in=350] (-4,0) to (-5,0.5);
\draw[very thick, out=170, in=350] (-5,0.5) to (-6,0.75);

\draw[->,very thick, out=-30, in=110] (-4,0) to (-3.2,-1);
\draw[very thick, out=-70, in=170] (-3.2,-1) to (-2.5,-2);
\filldraw (-2.5,-2) circle (2.5pt);

\draw[->,very thick, out=70, in=230] (-2.5,-2) to (-1.5,0.5);
\draw[very thick, out=50, in=200] (-1.5,0.5) to (1,2);
\filldraw (1,2) circle (2.5pt);

\draw[very thick, out=-30, in=180] (1,2) to (2,1.5);
\draw[->,very thick, out=0, in=270] (2,1.5) to (3,2);
\draw[very thick, out=90, in=0] (3,2) to (2,2.5);
\draw[very thick, out=180, in=30] (2,2.5) to (1,2);

\draw[very thick] (1,0) circle (1);
\draw[->,very thick, out=90, in=270] (0,0) to (0,0);

\filldraw (-1.5,-2.5) circle (2.5pt);

\filldraw (-0.5,-2.6) circle (2.5pt);
\draw[->,very thick, out=0, in=220] (-0.5,-2.6) to (2,-2);
\draw[very thick, out=40, in=260] (2,-2) to (4.5,0.5);
\filldraw (4.5,0.5) circle (2.5pt);

\draw[->,very thick, out=40, in=260] (-4,0) to (-3,1.5);
\draw[very thick, out=80, in=270] (-3,1.5) to (-2.5,3);
\end{tikzpicture}
$$
\caption{Part of a worldsheet with defect lines and field insertions} 
\label{worldsheetwithdefects} 
\end{figure}

A TFT is a functor that assigns a number called the \textsl{correlator} to a labelled worldsheet like the one in Figure~\ref{worldsheetwithdefects}. Its topological nature implies that the value of any correlator does not depend on the precise position of the $D_\alpha$'s and $\phi_i$'s, only on their isotopy class. 

It is natural to organise the data in such a functor using a bicategory in which the objects are the theories $T_I$, 1-morphisms are labels for defect lines $D_\alpha$, and 2-morphisms are the fields $\phi_i$. The composition of 2-morphisms is the operator product of the fields, which is strictly associative because we only consider \textsl{topological} field theories. The composition of 1-morphisms comes about as follows: since the exact locus of the defect lines does not matter, two (or more) adjacent defect lines can be brought together arbitrarily close, and the limit of this \textsl{fusion} is well-defined and nonsingular. The unit of the fusion product is called the \textsl{invisible defect}. Thus any TFT with defects is expected to be associated with a bicategory \cite[Section~2.2]{dkr1107.0495}. A rich and interesting example is the bicategory of algebraic varieties with Fourier-Mukai kernels as 1-morphisms~\cite{ct1007.2679} which describes B-twisted sigma models, see~e.\,g.~\cite{MB2}.

The subject of this paper is the bicategory $\LG$ of Landau-Ginzburg models over a base ring~$k$. An object (or bulk theory) is a polynomial ring $R=k[x_1,\ldots,x_n]$ together with an element $W \in R$ (the \textsl{potential}) satisfying a finiteness condition which in the case $k = \nC$ holds for example when the critical points of $W$ are isolated. The 1- and 2-morphisms (or defects and fields) are described by the triangulated categories of matrix factorisations of potential differences $V-W$, with matrix factorisations of $V - W$ defining $1$-morphisms from $W$ to $V$. The fusion of 1-morphisms is the tensor product of matrix factorisations~\cite{yoshino98,br0707.0922}, and the invisible defect (or unit) between a potential $W \in k[x_1,\ldots,x_n]$ and itself is the stabilised diagonal $\Delta_W = \bigwedge (\bigoplus_{i=1}^n (R\otimes_k R)\cdot \theta_i)$. That $\LG$ is a bicategory was worked out in~\cite{McNameethesis, Calinetal, cr0909.4381}.

On general grounds it is expected that the bicategorical description of TFTs with defects involves additional structure. For example, defect lines are \textsl{oriented} so we expect that any defect line should be adjoint to the ``same'' defect line with reversed orientation. In the case of Landau-Ginzburg models this means that given a $1$-morphism $(X,D)$ from the object $(k[x_1,\ldots,x_n], W)$ to the object $(k[z_1,\ldots,z_m], V)$, that is, a $\nZ _2$-graded finite-rank free $k[x,z]$-module $X$ with odd operator $D$ satisfying $D^2 = (V - W) \cdot 1_X$, we expect there to be a matrix factorisation~$X^\dual$ of $W-V$ together with evaluation and coevaluation maps
\be\label{evcoevintro}
\widetilde\eval_X: X \otimes_{k[x]} X^\dual \lra \Delta _V
\, , \qquad
\widetilde\coev_X: \Delta_W \lra X^\dual \otimes_{k[z]} X
\ee
defining an adjunction between~$X$ and~$X^\dual$. Similarly, one expects a matrix factorisation~${}^\dual X$ and maps $\eval_X, \coev_X$ making ${}^\dual X$ a left adjoint of $X$. The special case where~$V = W$ depends on only one variable was worked out in~\cite{cr1006.5609}. 

In the present paper we prove that every $1$-morphism $X:W\lra V$ in $\LG$ as above has left and right adjoints given by ${}^\dual X = X^{\vee}[m]$ and $X^\dual = X^{\vee}[n]$ where $X^{\vee} = \Hom_{k[x,z]}(X, k[x,z])$ is the dual factorisation. The fact that left and right adjoints differ by a shift is encoded in $\LG$ being graded pivotal (a notion that we explain in Section~\ref{sec:wiggliesandsigns}). There is a natural pseudofunctor from $\LG$ to the bicategory of categories and functors which sends $W$ to the homotopy category of matrix factorisations of $W$ and a $1$-morphism $W \lto V$ to an integral functor between categories of matrix factorisations; the adjunctions just described on the level of $1$-morphisms translate to honest adjunctions.


The constructions can be made very concrete. For example, if $\{ e_i \}_i$ denotes a basis for the free $k[x,z]$-module~$X$ with dual basis $\{ e_i^* \}_i$ then the coevaluation map is given by
\be\label{eq:formula_intro_coev}
\widetilde\coev_X( \gamma ) = \sum_{i,j} (-1)^{(l+1)|e_j| + s} \big\{ \partial^{x,x'}_{[b_l]} D \ldots \partial^{x,x'}_{[b_1]} D \big\}_{ji} \cdot e_i^* \otimes e_j
\ee
where the sequence $b_1 < \cdots < b_l$ and integer $s$ are determined by $\gamma \wedge \theta_{b_1} \ldots \theta_{b_l} = (-1)^s \theta_1 \ldots \theta_n$, and the $\partial^{x,x'}_{[i]}$ are divided difference operators, see \eqref{diffquotop}. The evaluation maps have similar elementary presentations, which involve in addition supertraces and residues, see Section~\ref{sec:derivcoeval}.

We briefly mention two applications involving string diagrams. A bulk field in a Landau-Ginzburg model with potential $W\in k[x]$ is described by an endomorphism of the stabilised diagonal $\Delta_W$, so the field is an element in the Milnor algebra $M_W = k[x]/(\partial_{x_i} W)$. Given a matrix factorisation $(X,D)$ of $V-W$, i.\,e.~a defect between the theories~$W$ and $V\in k[z_1,\ldots,z_m]$, we obtain an operator $\mathcal D_r(X)$ between the spaces of bulk fields by sending $\psi\in M_W$ to an element in $M_V$ obtained by ``wrapping the defect line labelled by~$X$ around~$\psi$''. We can make rigorous sense of this \textsl{defect action on bulk fields} in terms of string diagrams in the bicategory $\LG$ as 
\begin{align}\label{defectactionIntro}
\mathcal D_r(X)(\psi) = 
\begin{tikzpicture}[very thick,scale=0.5,color=blue!50!black, baseline,>=stealth]
\nicepalecolourscheme (0,0) circle (3.5);
\fill (2.2,-2.2) circle (0pt) node[white] {{\small$V$}};
\nicecolourscheme (0,0) circle (2);
\fill (1.1,-1.1) circle (0pt) node[white] {{\small$W$}};
\draw (0,0) circle (2);
\fill (0:2) circle (0pt) node[right] {{\small$X$}};
\draw[<-, very thick] (0.100,2) -- (-0.101,2) node[above] {}; 
\draw[<-, very thick] (-0.100,-2) -- (0.101,-2) node[below] {}; 
\fill (135:0) circle (3.3pt) node[right] {{\small$\psi$}};
\fill (130:2) circle (3.3pt) node[left] {{\small$\rho_X$}};
\fill (230:2) circle (3.3pt) node[left] {{\small$\rho^{-1}_{X}$}};
\draw[dashed] (135:0) .. controls +(0,1) and +(0.5,-1) .. (130:2);
\fill (-0.1,1.15) circle (0pt) node {{\small $\Delta_W$}};
\fill (-0.1,-1.15) circle (0pt) node {{\small $\Delta_W$}};
\draw[dashed] (135:0) .. controls +(0,-1) and +(0.5,1) .. (230:2);
\draw[dashed] (270:2) -- (270:3.3)
node[near end,right] {{{\small$\Delta_V$}}};
\draw[dashed] (90:2) -- (90:3.3)
node[near end,right] {{{\small$\Delta_V$}}};
\end{tikzpicture} 
\equiv\;
\begin{tikzpicture}[very thick,scale=0.5,color=blue!50!black, baseline,>=stealth]
\nicepalecolourscheme (0,0) circle (3.5);
\fill (2.2,-2.2) circle (0pt) node[white] {{\small$V$}};
\nicecolourscheme (0,0) circle (2);
\fill (1.1,-1.1) circle (0pt) node[white] {{\small$W$}};
\fill (0:2) circle (0pt) node[right] {{\small$X$}};
\draw (0,0) circle (2);
\draw[<-, very thick] (0.100,2) -- (-0.101,2) node[above] {}; 
\draw[<-, very thick] (-0.100,-2) -- (0.101,-2) node[below] {}; 
\fill (135:0) circle (3.3pt) node[left] {{\small$\psi$}};
\end{tikzpicture} 
\; ,
\end{align}
where as usual (and explained in more detail in Section~\ref{subsec:bicatLG}) evaluation and coevaluation maps are denoted as caps and cups, respectively, $\rho_X$ is the right action of $\Delta_W$ on~$X$, and we always read diagrams like the above from bottom to top. Thus~\eqref{defectactionIntro} equals $\widetilde\eval_X \circ (1_{X}\otimes (\rho_{X} \circ (1_X \otimes \psi)\circ \rho_{X}^{-1})) \circ \coev_X$, from which in Section~\ref{sec:defectaction} we will prove the general formula 
\be\label{DrXpsi}
\mathcal D_r(X)(\psi)  = (-1)^{{m+1}\choose 2} \Res_{k[x,z]/k[z]} \left[ \frac{\psi \str\big( \partial_{x_1} D\ldots \partial_{x_n} D \, \partial_{z_1} D\ldots \partial_{z_m} D\big) \underline{\operatorname{d}\! x}}{\partial_{x_1} W \ldots \partial_{x_n} W} \right]
\ee
as well as various properties such as $\mathcal D_r(X\otimes Y) = \mathcal D_r(X) \circ \mathcal D_r(Y)$. 

One may also consider the situation in~\eqref{defectactionIntro} with an additional defect field $\Phi\in\End(X)$ inserted on the $X$-loop. We will see that this simply amounts to the insertion of~$\Phi$ as a factor inside the supertrace in~\eqref{DrXpsi}. As the two special cases $W=0$ and $V=0$ we thus obtain
$$
\begin{tikzpicture}[very thick,scale=0.4,color=blue!50!black, baseline,>=stealth]
\nicepalecolourscheme (0,0) circle (3.5);
\fill (-2.1,-2.1) circle (0pt) node[white] {{\small$V$}};
\shadedraw[top color=white, bottom color=white, draw=white] (0,0) circle (2);
\fill (180:1.8) circle (0pt) node[left] {{\small$X$}};
\fill (130:2) circle (3.3pt) node[left] {{\small$\Phi$}};
\draw (0,0) circle (2);
\draw[<-, very thick] (0.100,2) -- (-0.101,2) node[above] {}; 
\draw[<-, very thick] (-0.100,-2) -- (0.101,-2) node[below] {}; 
\end{tikzpicture} 
 = 
(-1)^{{m+1\choose 2}} \str\big( \Phi \Lambda_X^{(z)} \big)
\, , \qquad
\begin{tikzpicture}[very thick,scale=0.4,color=blue!50!black, baseline,>=stealth]
\nicecolourscheme (0,0) circle (2);
\fill (-1.0,-1.0) circle (0pt) node[white] {{\small$W$}};
\fill (180:1.8) circle (0pt) node[left] {{\small$X$}};
\fill (130:2) circle (3.3pt) node[left] {{\small$\Phi$}};
\draw (0,0) circle (2);
\draw[<-, very thick] (0.100,2) -- (-0.101,2) node[above] {}; 
\draw[<-, very thick] (-0.100,-2) -- (0.101,-2) node[below] {}; 
\fill (135:0) circle (3.3pt) node[left] {{\small$\psi$}};
\end{tikzpicture} 
= 
\Res_{k[x]/k} \!\!\left[ \frac{\psi \str\big( \Phi \Lambda_X^{(x)} \big) \underline{\operatorname{d}\! x}}{\partial_{x_1} W \ldots \partial_{x_n} W} \right] 
$$
where $\Lambda_X^{(x)} = \partial_{x_1} D\ldots \partial_{x_n} D$ and $\Lambda_X^{(z)} = \partial_{z_1} D\ldots \partial_{z_m} D$. 
In this way we respectively recover the \textsl{boundary-bulk map} (which reduces to the \textsl{Chern character} $(-1)^{{m+1\choose 2}} \str( \partial_{z_1} D\ldots \partial_{z_m} D)$ for $\Phi=1$) and the  \textsl{Kapustin-Li disc correlator}. 

Another application of our construction of adjunctions in $\LG$ is a new proof of the \textsl{Cardy condition} (see Section~\ref{sec:ocTFT} for its precise statement). This generalisation of the Hirzebruch-Riemann-Roch theorem is the most ``quantum'' among the axioms for open/closed TFTs (as it stems from a one-loop diagram) and may accordingly be viewed as a particularly deep structure. In the case of Landau-Ginzburg models it was proved only recently in~\cite{pv1002.2116} when $k$ is a field of characteristic zero. Our proof works for any ring~$k$ and simply follows from the fact that the 2-morphism in $\LG$ to be read off from the diagram 
$$
\begin{tikzpicture}[very thick,scale=0.7,color=blue!50!black, baseline,>=stealth]
\nicecolourscheme (0,0) circle (2);
\nicereallynocolourscheme (0,0) circle (1);
\fill (1.5,0) circle (0pt) node[white] {{\small$W$}};
\draw (0,0) circle (2);
\draw[->, very thick] (-0.100,2) -- (-0.101,2) node[above] {}; 
\draw[->, very thick] (0.100,-2) -- (0.101,-2) node[below] {}; 
\fill (45:2) circle (2.5pt) node[right] {{\small$\psi$}};
\draw (0,0) circle (1);
\draw[->, very thick] (0.100,1) -- (0.101,1) node[above] {}; 
\draw[->, very thick] (-0.100,-1) -- (-0.101,-1) node[below] {}; 
\fill (135:1) circle (2.5pt) node[left] {{\small$\varphi$}};
\fill (180:0.9) circle (0pt) node[left] {{\small$X$}};
\fill (0:2.7) circle (0pt) node[left] {{\small$Y$}};
\end{tikzpicture} 
$$
(which is to be identified with an annulus correlator) can be evaluated in two ways: either by first contracting the inner $X$-loop and then contracting the outer $Y$-loop, or by first fusing~$X$ with~$Y$ and then contracting the fused $(X^\vee \otimes Y)$-loop. Applying special cases of our evaluation and coevaluation maps then immediately produces the Cardy condition, see Theorem~\ref{thm:CardyCondition}. 

Let us conclude with future applications of our results. One of the most intriguing properties of Landau-Ginzburg models is that they are on one side of the \textsl{CFT/LG correspondence}. This roughly states that many aspects of a large class of conformal field theories (CFTs) can be described in terms of (non-conformal) Landau-Ginzburg models, and one may wonder which structures encountered in rational CFT can also be found in the theory of Landau-Ginzburg models.


One example is the \textsl{generalised orbifold} procedure of~\cite{ffrs0909.5013} which constructs all rational CFTs of fixed central charge and with identical left and right chiral algebras from any given single such CFT. Carried over to Landau-Ginzburg models this leads to the following picture: under the right circumstances a Landau-Ginzburg model with potential~$V$ can be obtained from a model with potential~$W$ by identifying an object~$A$ in the monoidal category $\LG(W,W)$ that can be equipped with the structure of a special symmetric Frobenius algebra (see e.\,g.~\cite[Section~3]{tft1}). Then the category of matrix factorisations of~$V$ is equivalent to the category of $A$-modules. The results of the present paper facilitate the construction of suitable algebras; the details appear in~\cite{genorb}. 



\medskip

The rest of the present paper is organised as follows. In Section~\ref{sec:Background} we collect necessary background material on bicategories with adjoints, matrix factorisations, noncommutative forms, residues, and homological perturbation theory. Section~\ref{section:atiyahclasses} introduces Atiyah classes, which together with homological perturbation allow us to invert and lift up to homotopy certain maps pertaining to the stabilised diagonal in Section~\ref{section:pertandhtpy}. Using these results we construct explicit evaluation and coevaluation maps in Section~\ref{sec:derivcoeval} and prove that they indeed endow the bicategory $\LG$ with left and right adjoints in Section~\ref{sec:Zorro}. In Section~\ref{sec:wiggliesandsigns} we discuss the details of the graphical calculus as well as pivotality, both important for applications of our main result, some of which we describe in the following three sections: defect action on bulk fields in Section~\ref{sec:defectaction}, open/closed TFT and in particular the Cardy condition in Section~\ref{sec:ocTFT}, and a bicategorical trace in Section~\ref{sec:shadows}. 

\begin{acknowledgements}
It is a pleasure to thank Ilka Brunner, Daniel Plencner, Peter Petersen and Ingo Runkel and the anonymous referee. We owe a special debt to Ragnar Buchweitz, who at an early stage of this project suggested the use of noncommutative forms and associative Atiyah classes; the reader can see for themselves the formative effect this had on the paper. Nils Carqueville thanks Paul Balmer, the UCLA math department and Espresso Profeta for hospitality and excellent working conditions. 
\end{acknowledgements}

\section{Background}\label{sec:Background}

Throughout rings are commutative and $k$ is any ring.

\subsection{Bicategories and adjunction}\label{subsec:bicat}

In this section we recall the theory of bicategories, with \cite{bor94} as our main reference. Standard examples of bicategories include the bicategory of small categories (with objects, 1- and 2-morphisms given by categories, functors and natural transformations, respectively) or rings (rings, bimodules, bimodule maps). In Section~\ref{subsec:bicatLG} we introduce the main example of interest, the bicategory of Landau-Ginzburg models. The basic references for bicategories are \cite{benabou, gray, kellystreet, lack}.

\begin{definition} A bicategory $\cat{B}$ consists of the following data:
\begin{itemize}
\item A class $|\cat{B}|$ of \textsl{objects}.
\item For each pair $A,B$ of objects a small category $\cat{B}(A,B)$ whose objects we call \textsl{1-morphisms} and whose arrows we call \textsl{2-morphisms}. Composition of $2$-morphisms is denoted $\delta \circ \gamma$.
\item For each triple $A,B,C$ of objects a functor
\[
c_{ABC}: \cat{B}(A,B) \times \cat{B}(B,C) \lto \cat{B}(A,C)\,.
\]
Given $1$-morphisms $f: A \lto B$ and $g: B \lto C$ we write $g \otimes f$ for their \textsl{composite} $c_{ABC}( f, g )$, and given $2$-morphisms $\gamma: f \lto f'$ and $\delta: g \lto g'$ we write $\delta \otimes \gamma$ for $c_{ABC}(\gamma, \delta)$.
\item For each object $A$ an identity $1$-morphism $\Delta_A: A \lto A$.
\item For each triple of composable $1$-morphisms $h, g, f$ a $2$-isomorphism
\[
\alpha_{fgh}: (h \otimes g) \otimes f \lto h \otimes (g \otimes f)
\]
natural with respect to $2$-morphisms in all three variables.
\item For each $1$-morphism $f: A \lto B$ a pair of $2$-isomorphisms
\begin{align*}
\lambda_f: \Delta_B \otimes f \lto f
\,,\qquad
\rho_f: f \otimes \Delta_A \lto f
\end{align*}
natural with respect to $2$-morphisms in the variable $f$.
\end{itemize}
This data is subject to two coherence axioms, one involving the associator $\alpha$ and another also involving the left and right unit actions $\lambda, \rho$, see \cite[(7.18),\,(7.19)]{bor94}.
\end{definition}

The identity $2$-endomorphism of a $1$-morphism $f: A \lto B$ is denoted $1_f$ and the identity $2$-endomorphism of $\Delta_A$ is denoted $1_A$. For the remainder of this section, $\cat{B}$ denotes a bicategory. A bicategory with one object is the same data as a monoidal category, and in general $\cat{B}(A,A)$ is a monoidal category with unit $\Delta_A$ for each object $A$. 

It is convenient to denote $2$-morphisms in a bicategory using \textsl{string diagram} notation. This was introduced in \cite{JSGoTCI,JSGoTCII} and the reader can also find very clear explanations in 
\cite{khovdia,ladia}.  
In order to fix our notation, recall that the diagram 
\be\label{eq:demonstratediagram}
\begin{tikzpicture}[very thick,scale=1.0,color=blue!50!black, baseline=.4cm]
\fill (0,0) circle (2.9pt) node (gamma) {};
\draw[line width=0pt] 
(0.25,0.2) node[line width=0pt] (gamma2) {{\small $\gamma$}}
(0,-1) node[line width=0pt] (A) {{\small $A$}}
(-1,0.75) node[line width=0pt] (B) {{\small $B$}}
(1,0.75) node[line width=0pt] (C) {{\small $C$}};
\draw
	(-1.5,-1.5) -- (0,0)
node[midway, left] {{{\small$f$}}};
\draw
	(1.5,-1.5) -- (0,0)
node[midway, right] {{{\small$h$}}};
\draw
	(0,0) -- (0,1.5)
node[midway, left] {{{\small$g$}}};
\end{tikzpicture} 
\ee 
represents a $2$-morphism $\gamma: f \otimes h \lto g$ in $\cat{B}$ with $f: A \lto B$, $g: C \lto B$ and $h: C \lto A$ all $1$-morphisms. We call $\gamma$ the \textsl{value} of the diagram \eqref{eq:demonstratediagram}. In the following we will often refrain from displaying labels for two-dimensional domains in such diagrams. 

All such diagrams are \textsl{progressively planar} in the sense of \cite{JSGoTCI}, i.\,e.~lines proceed strictly upwards. It is straightforward to check \cite[Theorem $1.2$]{JSGoTCI} that an arbitrary such diagram may be unambiguously assigned a value as a $2$-morphism in $\cat{B}$, by ``tensoring horizontally and composing vertically'' and this justifies rigorously the use of diagrams like the one above. Where appropriate, we allow ourselves to migrate the line labels so that they decorate the top and bottom horizontal boundaries, as e.\,g.~in the diagrams \eqref{Zorros} below.


Our references for adjunction in bicategories are \cite[Chapter $6$]{gray} and \cite{kellystreet,kelly}.

\begin{definition}\label{def:adjointbicats} An \textsl{adjunction} between $1$-morphisms $f: A \lto B$ and $g: B \lto A$ is a pair of $2$-morphisms
\be\label{evcoev}
\eval : g \otimes f \lra \Delta_A \, , \qquad \coev : \Delta_B \lra f \otimes g
\ee
such that the following two composites evaluate to identities:
\begin{align}
\xymatrix@C+1.5pc
{
f \ar[r]^-{\lambda_f^{-1}} & \Delta_B \otimes f \ar[r]^-{\coev \otimes 1_f} & ( f \otimes g ) \otimes f \ar[r]^-{\alpha_{fgf}} & f \otimes ( g \otimes f ) \ar[r]^-{1_f \otimes \eval} & f \otimes \Delta_A \ar[r]^-{\rho_f} & f
} ,\nonumber\\
\xymatrix@C+1.5pc
{
g \ar[r]^-{\rho_g^{-1}} & g \otimes \Delta_B \ar[r]^-{1_g \otimes \coev} & g \otimes ( f \otimes g ) \ar[r]^-{\alpha^{-1}_{gfg}} & (g \otimes f) \otimes g \ar[r]^-{\eval \otimes 1_g} & \Delta_A \otimes g \ar[r]^-{\lambda_g} & g
} .\label{uglyZorro1}
\end{align}
In this case we say that $g$ is \textsl{left adjoint} to $f$ and that $f$ is \textsl{right adjoint} to $g$, and we write $g \dashv f$. The $2$-morphisms $\eval$ and $\coev$ are referred to as the \textsl{evaluation} and \textsl{coevaluation} maps.
\end{definition}

\begin{definition}\label{def:bicatduals}
$\mathcal B$ \textsl{has left adjoints} (resp.~\textsl{has right adjoints}) if every 1-morphism in $\cat{B}$ admits a left adjoint (resp.~admits a right adjoint). If they exist these adjoints are unique up to canonical isomorphism, and the unique left and right adjoints of $f$ are denoted by ${}^\dual f$ and $f^\dual$, respectively.
\end{definition}

If a $1$-morphism $f: A \lto B$ has both a left and right adjoint then we write the evaluation and coevaluation maps for the adjunction ${}^\dual f \dashv f$ with $f$ as a subscript, that is
\be\label{evcoev2}
\eval_f : {}^\dual f \otimes f \lra \Delta_A \, , \qquad \coev_f : \Delta_B \lra f \otimes {}^\dual f\,.
\ee
For the adjunction $f \dashv f^\dual$ we write the evaluation and coevaluation maps as
\be\label{evcoev3}
\widetilde\eval_f : f \otimes f^\dual \lra \Delta_B \, , \qquad \widetilde\coev_f : \Delta_A \lra f^\dual \otimes f\,.
\ee
The choice, for each $1$-morphism $f: A \lto B$, of a right adjoint $f^\dual$ together with evaluation and coevaluation maps gives rise to a canonically defined functor 
\be\label{eq:rightadjointfunctor}
(-)^\dual: \cat{B}(A, B)^{\textrm{op}} \lto \cat{B}(B,A)
\ee
where for a $2$-morphism $\varphi: f_1 \lto f_2$ the $2$-morphism $\varphi^\dual$ is unique making the following diagrams commute: 
\be\label{eq:funcrightdual}
\xymatrix@C+1pc@R+1pc{
f_1 \otimes f_2^\dual \ar[r]^-{\varphi \otimes 1}\ar[d]_-{1 \otimes \varphi^\dual} & f_2 \otimes f_2^\dual \ar[d]^-{\widetilde\eval} \, , \\
f_1 \otimes f_1^\dual \ar[r]_{\widetilde\eval} & \Delta_B
} \qquad
\xymatrix@C+1pc@R+1pc{
\Delta_A \ar[d]_-{\widetilde\coev} \ar[r]^-{\widetilde\coev} & f_1^\dual \otimes f_1 \ar[d]^-{1 \otimes \varphi}\\
f_2^\dual \otimes f_2 \ar[r]_-{\varphi^\dual \otimes 1} & f_1^\dual \otimes f_2
}\,.
\ee
Similarly one defines a canonical functor 
\be\label{eq:leftadjointfunctor}
{}^\dual(-): \cat{B}(A, B)^{\textrm{op}} \lto \cat{B}(B,A)\,.
\ee
It is natural to use string diagrams when working with adjoints in a bicategory. In this language the evaluation and coevaluation maps~\eqref{evcoev2} are written as
\be\label{leftadjunctionmaps}
\eval_{f} = 
\begin{tikzpicture}[very thick,scale=1.0,color=blue!50!black, baseline=.6cm]
\draw[line width=0pt] 
(2.5,1.6) node[line width=0pt] (I) {{\small$\Delta_A$}}
(3,0) node[line width=0pt] (D) {{\small $f\vphantom{f^\dual}$}}
(2,0) node[line width=0pt] (s) {\small{${}^\dual f$}}; 
\draw[directed] (D) .. controls +(0,1) and +(0,1) .. (s);
\draw[dashed] (2.5,0.81) -- (I);
\end{tikzpicture}
\equiv
\begin{tikzpicture}[very thick,scale=1.0,color=blue!50!black, baseline=.6cm]
\draw[line width=0pt] 
(3,0) node[line width=0pt] (D) {{\small$f\vphantom{f^\dual}$}}
(2,0) node[line width=0pt] (s) {{\small${}^\dual f$}}; 
\draw[directed] (D) .. controls +(0,1) and +(0,1) .. (s);
\end{tikzpicture}
, 
\qquad
\coev_{f} = 
\begin{tikzpicture}[very thick,scale=1.0,color=blue!50!black, baseline=-.6cm,rotate=180]
\draw[line width=0pt] 
(2.5,1.6) node[line width=0pt] (I) {{\small$\Delta_B$}}
(3,0) node[line width=0pt] (D) {{\small$f\vphantom{{}^\dual f}$}}
(2,0) node[line width=0pt] (s) {{\small${}^\dual f$}}; 
\draw[redirected] (D) .. controls +(0,1) and +(0,1) .. (s);
\draw[dashed] (2.5,0.81) -- (I);
\end{tikzpicture}
\equiv
\begin{tikzpicture}[very thick,scale=1.0,color=blue!50!black, baseline=-.6cm,rotate=180]
\draw[line width=0pt] 
(3,0) node[line width=0pt] (D) {{\small$f\vphantom{A^\dual}$}}
(2,0) node[line width=0pt] (s) {{\small${}^\dual f$}}; 
\draw[redirected] (D) .. controls +(0,1) and +(0,1) .. (s);
\end{tikzpicture}
\, , 
\ee
where we started to abide by the rule typically not to display lines for identity 1-morphisms. 
We stress that, while suggestive, the arrows in these two diagrams have no meaning beyond that of the dot in \eqref{eq:demonstratediagram}. They are merely decorations intended to remind us that this diagram depicts an evaluation or coevaluation, with the direction of the arrow alerting us to the type. In particular, we do \textsl{not} mean that there is an oriented line labelled with $f$, rather, there are three unoriented lines adjacent to the vertex, which in the case of $\coev_f$ are labelled $f, {}^\dual f$ and $\Delta_B$.

In Section~\ref{sec:wiggliesandsigns} we will explain a richer kind of diagrammatics in which the lines are honestly oriented and a label $f$ on a downwards oriented line is understood in terms of the adjoints of $f$, but until then our diagrams have the simpler meaning described above.

With this notation the defining relations~\eqref{uglyZorro1} translate into the \textsl{Zorro moves}
\be\label{Zorros}
\begin{tikzpicture}[very thick,scale=1.0,color=blue!50!black, baseline=0cm]
\draw[line width=0] 
(-1,1.25) node[line width=0pt] (A) {{\small $f$}}
(1,-1.25) node[line width=0pt] (A2) {{\small $f$}}; 
\fill (0.5,1.1) circle (0pt) node {{\small $\eval_f$}};
\fill (-0.35,-1.2) circle (0pt) node {{\small $\coev_f$}};
\draw[directed] (0,0) .. controls +(0,-1) and +(0,-1) .. (-1,0);
\draw[directed] (1,0) .. controls +(0,1) and +(0,1) .. (0,0);
\draw (-1,0) -- (A); 
\draw (1,0) -- (A2); 
\end{tikzpicture}
=
\begin{tikzpicture}[very thick,scale=1.0,color=blue!50!black, baseline=0cm]
\draw[line width=0] 
(0,1.25) node[line width=0pt] (A) {{\small $f$}}
(0,-1.25) node[line width=0pt] (A2) {{\small $f$}}; 
\draw (A2) -- (A); 
\end{tikzpicture}
\, , \qquad
\begin{tikzpicture}[very thick,scale=1.0,color=blue!50!black, baseline=0cm]
\draw[line width=0] 
(1,1.25) node[line width=0pt] (A) {{\small ${}^\dual f$}}
(-1,-1.25) node[line width=0pt] (A2) {{\small ${}^\dual f$}}; 
\fill (-0.4,1.1) circle (0pt) node {{\small $\eval_f$}};
\fill (0.6,-1.2) circle (0pt) node {{\small $\coev_f$}};
\draw[directed] (0,0) .. controls +(0,1) and +(0,1) .. (-1,0);
\draw[directed] (1,0) .. controls +(0,-1) and +(0,-1) .. (0,0);
\draw (-1,0) -- (A2); 
\draw (1,0) -- (A); 
\end{tikzpicture}
=
\begin{tikzpicture}[very thick,scale=1.0,color=blue!50!black, baseline=0cm]
\draw[line width=0] 
(0,1.25) node[line width=0pt] (A) {{\small ${}^\dual f$}}
(0,-1.25) node[line width=0pt] (A2) {{\small ${}^\dual f$}}; 
\draw (A2) -- (A); 
\end{tikzpicture} \, .
\ee
From now on we shall not label the graphical representation of adjunction maps as we did in~\eqref{Zorros}. 
Similarly to~\eqref{leftadjunctionmaps} the evaluation and coevaluation maps \eqref{evcoev3} are written
$$
\widetilde\eval_{f} = 
\begin{tikzpicture}[very thick,scale=1.0,color=blue!50!black, baseline=.6cm]
\draw[line width=0pt] 
(2.5,1.6) node[line width=0pt] (I) {{\small$\Delta_B$}}
(3,0) node[line width=0pt] (D) {{\small $f^\dual\vphantom{f^\dual}$}}
(2,0) node[line width=0pt] (s) {\small{$f\vphantom{f^\dual}$}}; 
\draw[redirected] (D) .. controls +(0,1) and +(0,1) .. (s);
\draw[dashed] (2.5,0.81) -- (I);
\end{tikzpicture}
\equiv
\begin{tikzpicture}[very thick,scale=1.0,color=blue!50!black, baseline=.6cm]
\draw[line width=0pt] 
(3,0) node[line width=0pt] (D) {{\small$f^\dual$}}
(2,0) node[line width=0pt] (s) {{\small$f\vphantom{f^\dual}$}}; 
\draw[redirected] (D) .. controls +(0,1) and +(0,1) .. (s);
\end{tikzpicture}
, 
\qquad
\widetilde\coev_{f} = 
\begin{tikzpicture}[very thick,scale=1.0,color=blue!50!black, baseline=-.6cm,rotate=180]
\draw[line width=0pt] 
(2.5,1.6) node[line width=0pt] (I) {{\small$\Delta_A$}}
(3,0) node[line width=0pt] (D) {{\small$f^\dual$}}
(2,0) node[line width=0pt] (s) {{\small$f\vphantom{f^\dual}$}}; 
\draw[directed] (D) .. controls +(0,1) and +(0,1) .. (s);
\draw[dashed] (2.5,0.81) -- (I);
\end{tikzpicture}
\equiv
\begin{tikzpicture}[very thick,scale=1.0,color=blue!50!black, baseline=-.6cm,rotate=180]
\draw[line width=0pt] 
(3,0) node[line width=0pt] (D) {{\small$f^\dual$}}
(2,0) node[line width=0pt] (s) {{\small$f\vphantom{f^\dual}$}}; 
\draw[directed] (D) .. controls +(0,1) and +(0,1) .. (s);
\end{tikzpicture}
$$
satisfying the associated Zorro moves
\be\label{otherZorros}
\begin{tikzpicture}[very thick,scale=1.0,color=blue!50!black, baseline=0cm]
\draw[line width=0] 
(1,1.25) node[line width=0pt] (A) {{\small $f$}}
(-1,-1.25) node[line width=0pt] (A2) {{\small $f$}}; 
\draw[redirected] (0,0) .. controls +(0,1) and +(0,1) .. (-1,0);
\draw[redirected] (1,0) .. controls +(0,-1) and +(0,-1) .. (0,0);
\draw (-1,0) -- (A2); 
\draw (1,0) -- (A); 
\end{tikzpicture}
=
\begin{tikzpicture}[very thick,scale=1.0,color=blue!50!black, baseline=0cm]
\draw[line width=0] 
(0,1.25) node[line width=0pt] (A) {{\small $f$}}
(0,-1.25) node[line width=0pt] (A2) {{\small $f$}}; 
\draw (A2) -- (A); 
\end{tikzpicture}
\, , \qquad
\begin{tikzpicture}[very thick,scale=1.0,color=blue!50!black, baseline=0cm]
\draw[line width=0] 
(-1,1.25) node[line width=0pt] (A) {{\small $f^\dual$}}
(1,-1.25) node[line width=0pt] (A2) {{\small $f^\dual$}}; 
\draw[redirected] (0,0) .. controls +(0,-1) and +(0,-1) .. (-1,0);
\draw[redirected] (1,0) .. controls +(0,1) and +(0,1) .. (0,0);
\draw (-1,0) -- (A); 
\draw (1,0) -- (A2); 
\end{tikzpicture}
=
\begin{tikzpicture}[very thick,scale=1.0,color=blue!50!black, baseline=0cm]
\draw[line width=0] 
(0,1.25) node[line width=0pt] (A) {{\small $f^\dual$}}
(0,-1.25) node[line width=0pt] (A2) {{\small $f^\dual$}}; 
\draw (A2) -- (A); 
\end{tikzpicture} \, .
\ee
Note that in general there is no reason for the left and right adjoints to coincide, 
i.\,e.~in general there is no reason that $f^\dagger \cong {}^\dagger f$ for any given~$f$. 

\subsection{Bicategory of Landau-Ginzburg models}\label{subsec:bicatLG}

Next we define the bicategory $\LG$ of Landau-Ginzburg models over the base ring $k$, with relevant examples being $k=\nC$ and $k=\nC[t_1,\ldots,t_d]$. 

\begin{definition}\label{defn:potential} A polynomial $W \in k[x_1,\ldots,x_n]$ is a \textsl{potential} if (we write $f_i = \partial_{x_i} W$)
\begin{itemize}
\item[(i)] $f_1,\ldots,f_n$ is a quasi-regular sequence;
\item[(ii)] $k[x_1,\ldots,x_n]/(f_1,\ldots,f_n)$ is a finitely generated free $k$-module;
\item[(iii)] the Koszul complex of $f_1,\ldots,f_n$ is exact except in degree zero.
\end{itemize}
Any regular sequence is quasi-regular. If $k$ is noetherian then $f_1,\ldots,f_n$ is quasi-regular if and only if the image of the sequence is regular in $k[x_1,\ldots,x_n]_{\mf{m}}$ for every maximal ideal $\mf{m} \supseteq (f_1,\ldots,f_n)$. In particular (i) implies (iii) when $k$ is noetherian. For more on quasi-regular sequences see \cite[Chapitre $0$ \S 15.1]{EGA4} and \cite[Section\,10.68]{stacks_project}.
\end{definition}


\begin{example} Let $W \in \nC[x_1,\ldots,x_n]$ be given and for a point $P \in \nC^n$ with corresponding maximal ideal $\mf{m}_P$ denote the Milnor algebra and Milnor number of $W$ at $P$ by 
\[
M_{W,P} = \nC[ x_1,\ldots,x_n ]_{\mf{m}_P} / ( f_1,\ldots,f_n )\, , \qquad \mu(W, P) = \dim_{\mathbb{C}} M_{W,P}\,.
\]
This will be zero unless $P$ is a critical point of the function $W$, and $\mu(W,P) < \infty$ if and only if $P$ is an isolated critical point of $W$ \cite[Section 2.1]{greuel}. Suppose all the critical points of $W$ are isolated, that is, $\mu(W,P) < \infty$ for all points $P$. Then with $R = \nC[x_1,\ldots,x_n]/(f_1,\ldots,f_n)$ it follows that $R_{\mf{m}_P}$ is a finite-dimensional $\nC$-algebra for every point $P$, and thus that $R$ itself is finite-dimensional. Since the images of $f_1,\ldots,f_n$ in $\nC[x_1,\ldots,x_n]_{\mf{m}_P}$ are regular whenever $\mf{m}_P$ contains $(f_1,\ldots,f_n)$ it follows that this sequence is quasi-regular in $\nC[x_1,\ldots,x_n]$, and hence that $W$ is a potential.
\end{example}

Objects of $\LG$ are pairs $(x, W)$ where $x = (x_1,\ldots,x_n)$ is an ordered sequence of variables and $W \in R = k[x_1,\ldots,x_n]$ is a potential. Typically we will write $k[x]$ for $k[x_1,\ldots,x_n]$. We will usually leave the chosen variable ordering implicit, and refer to objects of $\LG$ as pairs $(R, W)$ or even just a potential $W$ if this will not cause confusion.

The category $\LG(W, V)$ is defined in terms of matrix factorisations, which we now recall. Let $R = k[x_1,\ldots,x_n]$. A \textsl{linear factorisation} of $W\in R$ is a $\nZ_2$-graded $R$-module $X=X^0\oplus X^1$ together with an odd $R$-linear endomorphism~$d_X$ such that $d_X^2=W\cdot 1_X$. If~$X$ is a free $R$-module then the pair $(X,d_X)$ is called a \textsl{matrix factorisation}
\cite{EisenbudMF}, 
and we often refer to it by~$X$ without explicitly mentioning the \textsl{differential} $d_X$; given a basis for~$X$ we sometimes identify the latter with the associated matrix
(we follow the convention that matrices act to the right): 
$$
d_{X} = \begin{pmatrix} 0 & d_X^1 \\ d_X^0 & 0\end{pmatrix} .
$$
Given two linear factorisations $X,Y$ of $W$, $\Hom_R(X,Y)$ is a $\nZ _2$-graded complex with differential
\[
\varphi \lmt d_Y \circ \varphi - (-1)^{|\varphi|} \varphi \circ d_X\,.
\]
A \textsl{morphism} of linear factorisations $(X,d_X)$ and $(Y,d_Y)$ is an even $R$-linear map $\varphi: X \longrightarrow Y$ such that $d_Y \varphi = \varphi d_X$. Two morphisms $\varphi, \psi: X\lra Y$ are \textsl{homotopic} if there exists an odd $R$-linear map $\lambda:X\lra Y$ such that $d_Y\lambda + \lambda d_X = \psi-\varphi$. Equality up to homotopy is an equivalence relation.

Given a linear factorisation $X$ of $W$ the \textsl{dual factorisation} $X^\vee = \Hom_R(X, R)$ is a linear factorisation of $-W$ with $d_{X^\vee}( \nu ) = -(-1)^{|\nu|} \nu \circ d_X$. In terms of matrices we have 
\be\label{eq:differentials_adjoints}
d_{X^\vee} = \begin{pmatrix} 0 & (d_X^0)^\vee \\ -(d_X^1)^\vee & 0\end{pmatrix} .
\ee

The \textsl{homotopy category of linear factorisations} $\HF(R,W)$ is the category of linear factorisations of $W\in R$ modulo homotopy. We denote by $\HMF(R,W)$ its full subcategory of matrix factorisations, and we write $\hmf(R,W)$ for the full subcategory of \textsl{finite-rank matrix factorisations}, i.\,e.~the matrix factorisations $X$ whose underlying $R$-module is free of finite rank. These three categories have standard triangulated structures whose shift functor we denote $[1]$
see e.\,g.~\cite{yoshino}. 

Since we work with polynomials rather than power series, $\hmf(R,W)$ is not necessarily idempotent complete.\footnote{For an example of this phenomenon we refer to~\cite[Example A$.5$]{keller}.} However $\HMF(R,W)$ has arbitrary coproducts and is therefore idempotent complete \cite{bokstedt,NeemanBook}, and $\hmf(R,W)^\omega$ denotes the idempotent closure of $\hmf(R,W)$ in this larger triangulated category. More concretely, this idempotent closure is the full subcategory of $\HMF(R,W)$ whose objects are those matrix factorisations~$Y$ which are direct summands of finite-rank matrix factorisations (in the homotopy category). This is an idempotent complete triangulated category.

For objects $(R = k[x], W)$ and $(S = k[z], V)$ of $\LG$ we define\footnote{Taking the idempotent completion is necessary because the composition of $1$-morphisms in $\LG$ results in matrix factorisations which, while not finite-rank, are \textsl{summands} in the homotopy category of something finite-rank. There are two natural ways to resolve this: work throughout with power series rings and completed tensor products, or work with idempotent completions. The latter seems less technical.}
\[
\LG( (R, W), (S,V) ) = \hmf( R \otimes_k S, V - W )^\omega = \hmf( k[x,z], V - W)^\omega\,.
\]
That is, a $1$-morphism between potentials $W \lto V$ is a matrix factorisation of $V - W$. We compose such $1$-morphisms using tensor products. Given potentials $W_i \in R_i$ consider matrix factorisations
\begin{align*}
X &\in \LG( W_1, W_2 ) = \hmf(R_1\otimes_k R_2, W_2-W_1)^\omega \, , \\
Y &\in \LG( W_2, W_3 ) = \hmf(R_2\otimes_k R_3, W_3-W_2)^\omega \,.
\end{align*}
The \textsl{tensor product} $Y\otimes_{R_2} X \in \HMF(R_1\otimes_k R_3, W_3-W_1)$ is the $\nZ_2$-graded module 
\begin{gather*}
Y\otimes_{R_2} X = \Big( (Y^0\otimes_{R_2} X^0) \oplus (Y^1\otimes_{R_2} X^1) \Big) \oplus \Big( (Y^0\otimes_{R_2} X^1) \oplus (Y^1\otimes_{R_2} X^0) \Big) \, , \\
d_{Y\otimes X} = d_Y \otimes 1 + 1 \otimes d_X
\end{gather*}
where $1 \otimes d_X$ has the usual Koszul signs when applied to elements. This is a free module of infinite rank over $R_1 \otimes_k R_3$, however by e.\,g.~the argument of~\cite[Section~12]{dm1102.2957} the tensor product $Y \otimes_{R_2} X$ is a direct summand in the homotopy category of something finite-rank, i.\,e.~we may define
\[
Y \circ X := Y \otimes_{R_2} X \in \hmf( R_1 \otimes_k R_3, W_3 - W_1 )^\omega = \LG( W_1, W_3 )\,.
\]
Letting the tensor product act in the obvious way on morphisms this defines a functor
\be\label{eq:backgroundcomptensor}
c_{W_1 W_2 W_3}: \LG( W_1, W_2 ) \times \LG( W_2, W_3 ) \lto \LG( W_1, W_3 ) \, , 
\qquad ( X, Y ) \lmt Y \otimes_{R_2} X\,.
\ee
Given a triple of composable $1$-morphisms $X, Y, Z$ there is a natural isomorphism $\alpha_{XYZ}: (X\otimes Y)\otimes Z \lra X\otimes (Y \otimes Z)$ given by the usual formula $(a \otimes b) \otimes c \lmt a \otimes (b \otimes c)$. In the rest of the paper we drop the ``$\circ$'' notation for composition and use only tensor products.

Finally, we define the units $\Delta_W: W \lto W$ and the left and right unit actions $\lambda, \rho$ for the bicategory $\LG$. Given a ring $R = k[x_1,\ldots,x_n]$ we write $\Re = R\otimes_k R = k[x,x']$ where $x_i = x_i \otimes 1$ and $x_i' = 1 \otimes x_i$. Given $W\in R$ there is always the \textsl{unit matrix factorisation} $\Delta_W \in \hmf(\Re, \widetilde W)$ where $\widetilde W = W\otimes 1 - 1\otimes W$. 
Using formal symbols $\theta_i$ we define the $\Re$-module
$$
\Delta_W = \bigwedge \Big( \bigoplus_{i=1}^n \Re \theta_i \Big)
$$
with the $\Ztwo$-grading given by $\theta$-degree (where deg$\,\theta_i=1$). 
Typically we will omit the wedge product and write  e.\,g.~$\theta_i\wedge \theta_j$ simply as $\theta_i \theta_j$. To describe the differential $d_{\Delta_W}$ we further need the variable-changing maps
${}^{t_i}(-)$ which in any polynomial replace the variable~$x_i$ by the variable~$x'_i$, 
$$
{}^{t_i}(-): k[x,x'] \lra k[x,x'] \, , \qquad f \lmt f\big|_{x_i\lmt x'_i} \, , 
$$
in terms of which we can define difference quotient operators
\be\label{diffquotop}
\partial^{x,x'}_{[i]}: k[x,x'] \lra k[x,x'] \, , \qquad f \lmt \frac{{}^{t_1\ldots t_{i-1}}f - {}^{t_1\ldots t_i}f}{x_i-x'_i} \, . 
\ee
Sometimes we write $\partial_{[i]}$ for $\partial^{x,x'}_{[i]}$. These operators satisfy a kind of Leibniz rule 
which can be checked directly by applying the above definitions. 
\begin{lemma}\label{lem:LeibnizForDQO}
For $f,g \in k[x,x']$ we have $\partial^{x,x'}_{[i]}(fg) = (\partial^{x,x'}_{[i]}f) ({}^{t_1\ldots t_{i}}g) + ({}^{t_1\ldots t_{i-1}}f) (\partial^{x,x'}_{[i]}g)$. 
\end{lemma}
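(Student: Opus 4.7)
The plan is to reduce the claim to the classical algebraic identity $ab - cd = (a-c)d + a(b-d)$. The only nontrivial point is to keep track of the variable-substitution bookkeeping.

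First I would observe that each operator ${}^{t_j}(-)\colon k[x,x']\to k[x,x']$ is a $k$-algebra endomorphism (it is induced by the substitution of generators $x_j\mapsto x'_j$), so the same is true of the composite ${}^{t_1\ldots t_{i-1}}(-)$ and of ${}^{t_1\ldots t_i}(-)$. In particular these operators are multiplicative on any pair $f,g\in k[x,x']$. To simplify notation I would set
\[
F = {}^{t_1\ldots t_{i-1}}f, \qquad G = {}^{t_1\ldots t_{i-1}}g,
\]
and note that ${}^{t_1\ldots t_i}f = {}^{t_i}F$ and ${}^{t_1\ldots t_i}g = {}^{t_i}G$, because the substitutions ${}^{t_1},\ldots,{}^{t_{i-1}}$ act on distinct generators from ${}^{t_i}$ and hence commute with it.

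Next, by multiplicativity we have ${}^{t_1\ldots t_{i-1}}(fg) = FG$ and ${}^{t_1\ldots t_i}(fg) = {}^{t_i}F\cdot {}^{t_i}G$, so by definition
\[
\partial^{x,x'}_{[i]}(fg) \;=\; \frac{FG - {}^{t_i}F\cdot{}^{t_i}G}{x_i-x'_i}.
\]
The elementary identity
\[
FG - {}^{t_i}F\cdot{}^{t_i}G \;=\; (F - {}^{t_i}F)\,{}^{t_i}G \;+\; F\,(G - {}^{t_i}G)
\]
is immediate by expanding the right-hand side. Each of $F-{}^{t_i}F$ and $G-{}^{t_i}G$ is divisible by $x_i-x'_i$ in $k[x,x']$, since both terms become equal upon setting $x_i=x'_i$; thus dividing the displayed identity by $x_i-x'_i$ is legitimate and yields
\[
\partial^{x,x'}_{[i]}(fg) \;=\; (\partial^{x,x'}_{[i]}f)\,{}^{t_i}G \;+\; F\,(\partial^{x,x'}_{[i]}g),
\]
which, after rewriting $F$ and ${}^{t_i}G$ as ${}^{t_1\ldots t_{i-1}}f$ and ${}^{t_1\ldots t_i}g$, is precisely the claimed formula.

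There is no real obstacle; the only point requiring care is noticing that the substitutions with indices $<i$ commute with the one with index $i$, so that the Leibniz identity reduces cleanly to the single-variable case and does not get tangled with the preceding substitutions. I would present the argument in the compact form above, spending perhaps one line to confirm multiplicativity of ${}^{t_j}$ and one line to justify the division by $x_i-x'_i$.
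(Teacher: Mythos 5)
Your proof is correct, and indeed the paper states Lemma~\ref{lem:LeibnizForDQO} without giving a proof, so there is nothing to compare against; your argument (reduce to the polarisation identity $FG - \tilde F\tilde G = (F-\tilde F)\tilde G + F(G-\tilde G)$ after pushing the multiplicative substitution ${}^{t_1\ldots t_{i-1}}$ through the product, then divide by $x_i - x'_i$) is the standard and natural one.
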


Viewing~$W$ as an element in $k[x] \subset k[x,x']$, 
the differential on $\Delta_W$ is then given by\footnote{We observe that the matrix factorisation $(\Delta_W, d_{\Delta_W})$ depends on the chosen ordering of the ring variables via the operators $\partial^{x,x'}_{[i]}$ in the differential $\delta_{+}$. Another ordering will yield a different, but isomorphic, matrix factorisation; we address this point carefully in Appendix~\ref{app:variableordering}.}
\be\label{DeltaW}
d_{\Delta_W} = \delta_+ + \delta_- \, , \qquad \delta_+ = \sum_{i=1}^n \partial^{x,x'}_{[i]}W\cdot \theta_i \wedge (-)\, , \qquad \delta_- =  \sum_{i=1}^n (x_i-x'_i) \cdot \theta_i^* 
\ee
where~$\theta_i^*$ is defined by linear extension of $\theta_i^*(\theta_j)=\delta_{ij}$, and it acts on an element $\theta_{i_1} \ldots \theta_{i_l}$ of the exterior algebra by the Leibniz rule with Koszul signs. 
We call $\Delta_W$ the unit matrix factorisation as it is the unit with respect to the tensor product of matrix factorisations. It is also referred to as the \textsl{stabilised diagonal} \cite{d0904.4713} or \textsl{Koszul model} of the diagonal. The diagonal here refers to $R$ as an $\Re$-module, which is a linear factorisation of $\widetilde W$ with differential zero since $\widetilde W$ acts as zero on $R$. The morphism of linear factorisations of $\widetilde W$ defined for $r,r' \in R$ by
\be\label{DeltaWstabmap}
\pi: \Delta_W \lra R\,, \qquad \pi( (r \otimes r') \theta_{i_1} \cdots \theta_{i_k} ) = \delta_{k,0} rr'\,,
\ee
is the projection $\Delta_W \lra \Re$ to $\theta$-degree~$0$ followed by multiplication $\Re \lra R$. This morphism is \textsl{universal} \cite{d0904.4713} in the homotopy category of linear factorisations among all morphisms from finite-rank matrix factorisations to $R$ (for a more precise statement see Section \ref{section:liftingproblem}).

To define the unitor maps of $\LG$, take a $1$-morphism $X \in \LG(W_1, W_2) = \hmf(R_1 \otimes_k R_2,W_2-W_1)^\omega$ be given. There are natural maps
\begin{align}
\lambda_X &= \pi \otimes 1_X: \Delta_{W_2} \otimes_{R_2} X \lra X \,, \nonumber\\
\rho_X &= 1_X \otimes \pi : X \otimes_{R_1} \Delta_{W_1} \lra X \label{lambdarho}
\end{align}
which are morphisms in $\hmf(R_1 \otimes_k R_2,W_2-W_1)^\omega$. To summarise:

\begin{proposition} There is a bicategory $\LG$ consisting of the following data: 
\begin{itemize}
\item Objects are pairs $(R,W)$ with $W\in R=k[x]$ a potential. 
\item 1- and 2-morphisms are the objects and morphisms of the categories
\[
\LG((R, W), (S,V)) = \hmf(R \otimes_k S,V-W)^\omega\,.
\]
\item The unit 1-morphisms are $\Delta_W \in \hmf(\Re,\widetilde W)$ of \eqref{DeltaW}.
\item The composition functor is the tensor product functor $c$ of \eqref{eq:backgroundcomptensor}.
\item There are natural 2-isomorphisms $\alpha, \lambda, \rho$ as above. 
\end{itemize}
\end{proposition}
\begin{proof}
This is straightforward; see \cite{kr0401268, McNameethesis, Calinetal, cr0909.4381}. The associator~$\alpha$ is clearly an isomorphism and satisfies the coherence axiom. The unitors $\lambda, \rho$ are both homotopy equivalences by \cite{d0904.4713, kr0401268}, and we will reprove this as part of the developments of Section \ref{section:liftingproblem}. The remaining check is the coherence axiom for the unitors which asserts, for every composable pair $X: W_1 \lto W_2$ and $Y: W_2 \lto W_3$ as above, commutativity of the diagram
\[
\xymatrix{
(Y \otimes \Delta_{W_2}) \otimes X \ar[rr]^\alpha\ar[dr]_{\rho_Y \otimes 1_X} & & Y \otimes ( \Delta_{W_2} \otimes X ) \ar[dl]^{1_Y \otimes \lambda_X}\\
& Y \otimes X
}\,.
\]
But $\rho_Y \otimes 1_X = (1_Y \otimes \pi) \otimes 1_X$ and $1_Y \otimes \lambda_X = 1_Y \otimes (\pi \otimes 1_X)$ so this is clear.
\end{proof}

\begin{remark}\label{remark:integral_functors} It is often helpful to think of a potential $W \in k[x]$ as ``standing in'' for the triangulated category $\hmf(k[x], W)^\omega$, in which case a $1$-morphism $X: W \lto V$ stands for the functor
\[
\Phi_X: \hmf(k[x], W)^\omega \lto \hmf(k[z], V)^\omega\,,\qquad
\Phi_X( Y ) = X \otimes_{k[x]} Y\,.
\]
Working in the bicategory $\LG$ therefore amounts to doing algebra with integral functors. To make this precise, consider the object $\mathbb{I} = (k,0)$ of $\LG$. Then $\LG(\mathbb{I}, W) = \hmf(k[x], W)^\omega$ and $\Phi_X$ is the functor defined by composition with $X$: 
\[
\LG( \mathbb{I}, W ) \lto \LG( \mathbb{I}, V ) \, , \qquad Y \lmt X \otimes Y \, . 
\]
Moreover an adjunction in $\LG$ gives rise in this way to a pair of adjoint functors between categories of matrix factorisations. Another way to say this is that $\LG( \mathbb{I}, - )$ defines a pseudofunctor from $\LG$ to the bicategory of categories, functors and natural transformations, and adjoint pairs are preserved by a pseudofunctor; see \cite[Section\,$4$]{benabou} and \cite{lack}.

Less tersely, given an adjunction $X \dashv X^\dual$ and $1$-morphisms $Y: \mathbb{I} \lto W$ and $Z: \mathbb{I} \lto V$ one deduces from the equation satisfied by the evaluation and coevaluation maps a natural isomorphism
\[
\Hom_{\LG(\mathbb{I}, V)}( X \otimes Y, Z ) \cong \Hom_{\LG(\mathbb{I}, W)}( Y, X^\dual \otimes Z )
\]
which makes the integral functor $\Phi_X$ left adjoint to $\Phi_{X^\dual}$. 
\end{remark}

\subsection{Bar complex}\label{subsec:Bar}

The unit $1$-morphisms in $\LG$ are defined using a Koszul model for the diagonal, but in the study of adjoints we will need another model constructed from the bar complex. For this reason we recall the necessary background on noncommutative forms and the bar complex from 
\cite{cuntzquillen,Loday} 
and construct a map $\Psi$ which relates the Koszul and bar models.

For the first part of this section, algebras are associative unital $k$-algebras that are not necessarily commutative. \textsl{Noncommutative forms} over a $k$-algebra~$R$ can be defined via a universal property which we recall below, and we use two concrete realisations of this universal object. First, we define
\begin{align*}
\Omega^n R = R\otimes \bar R^{\otimes n}
\,,\qquad
\widetilde{\Omega}^n R = \bar R^{\otimes n} \otimes R
\end{align*}
where $\bar R = R/k$ and in this section by $\otimes$ we mean $\otimes_k$. We denote the projection of $a_0\otimes a_1 \otimes \ldots \otimes a_n \in R^{\otimes (n+1)}$ to $\Omega^n R$ by $(a_0,a_1,\ldots,a_n)$ and the projection of $a_n \otimes \ldots \otimes a_0$ to $\widetilde{\Omega}^n R$ by $[a_n,\ldots,a_0]$. The direct sums
$$
\Omega R = \bigoplus_{n\geq 0} \Omega^n R\,, \qquad \widetilde{\Omega} R = \bigoplus_{n\geq 0} \widetilde{\Omega}^n R
$$
are differential graded (dg) algebras $(\Omega R, d, \cdot)$, $(\widetilde{\Omega} R, s, *)$ with multiplication given by
\begin{align*}
(a_0,\ldots,a_m) \cdot (a_{m+1},\ldots, a_{m+n}) &= \sum_{i=0}^m (-1)^{m-i}(a_0,\ldots,a_{i-1},a_i a_{i+1},a_{i+2},\ldots, a_{m+n})\,,\\
[a_{m+n}, \ldots, a_{m+1}] * [a_m,\ldots,a_0] &= \sum_{i=0}^m (-1)^{m-i}[a_{m+n}, \ldots, a_{i+2}, a_{i+1} a_i, a_{i-1}\ldots, a_0]
\end{align*}
and differentials
$$
d: (a_0,\ldots,a_n) \lmt (1,a_0,\ldots,a_n)\,, \qquad s: [a_n,\ldots,a_0] \lmt (-1)^n [a_n, \ldots, a_0, 1]\,.
$$
The proof that $\Omega R$ is a dg-algebra is given in \cite{cuntzquillen} and the proof for $\widetilde{\Omega} R$ follows in the same way. We may therefore write $(a_0,a_1,\ldots,a_n)$ as $a_0da_1\ldots da_n$ and $[a_n,\ldots,a_1,a_0]$ as $sa_n * \ldots * sa_1 * a_0$.

There is a canonical algebra morphism $R \cong \Omega^0 R \lto \Omega R$ and the dg-algebra $\Omega R$ is the free dg-algebra over $k$ generated by $R$ in the following sense: given a dg $k$-algebra $\Gamma$ and a $k$-algebra morphism $u: R \lto \Gamma^0$ there is a unique morphism of dg-algebras $\Omega R \lto \Gamma$ extending $u$ \cite[Proposition $1.1$]{cuntzquillen}. If $S$ is another $k$-algebra and $R$ is an $S$-algebra, by which we mean that there is a morphism of $k$-algebras $S \lto R$ (which does not necessarily have image in the centre of $R$) then there is a notion of relative forms $\Omega_S R$ which has a similar universal property: given a dg $k$-algebra $\Gamma$ and a morphism of $k$-algebras $v: R \lto \Gamma^0$ with the property that $d( v S ) = 0$, there is a unique morphism of dg-algebras $\Omega_S R \lto \Gamma$ extending $v$ \cite[Proposition $2.1$]{cuntzquillen}.

An easy consequence of the universal property is the following:

\begin{lemma}\label{lemma:coeffomeage} If $S$ is a $k$-algebra then the map $R \otimes S \lto \Omega R \otimes S$ extends to an isomorphism of dg $S$-algebras $\Omega_S( R \otimes S) \lto \Omega R \otimes S$. Similarly $\Omega_R( R \otimes S) \cong R \otimes \Omega S$ as dg $R$-algebras.
\end{lemma}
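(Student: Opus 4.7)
The strategy is to verify that $\Omega R \otimes_k S$, equipped with the evident differential and multiplication, satisfies the universal property characterising $\Omega_S(R \otimes_k S)$ as a free dg $S$-algebra generated by $R \otimes_k S$. The second isomorphism will then follow by the symmetric argument after interchanging the roles of $R$ and $S$.

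First I would observe that $\Omega R \otimes_k S$ is naturally a dg $S$-algebra: the product is $(\omega \otimes s)(\omega' \otimes s') = \omega\omega' \otimes ss'$ (no Koszul signs intervene because $S$ sits in degree zero), the differential is $d \otimes 1_S$, and the $S$-algebra structure is given by $s \mapsto 1\otimes s$. The canonical inclusion $R \otimes_k S \hookrightarrow (\Omega R \otimes_k S)^0$ is then tautologically an $S$-algebra morphism.

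Next, given any dg $S$-algebra $\Gamma$ together with an $S$-algebra morphism $u: R \otimes_k S \lto \Gamma^0$, restriction along $r \mapsto r \otimes 1$ yields a $k$-algebra morphism $R \lto \Gamma^0$. By the universal property of $\Omega R$ as a free dg $k$-algebra, this extends uniquely to a morphism of dg $k$-algebras $\varphi: \Omega R \lto \Gamma$. I would then define
\[
\widehat\varphi: \Omega R \otimes_k S \lto \Gamma, \qquad \omega \otimes s \lmt \varphi(\omega) \cdot u(1 \otimes s),
\]
and check that $\widehat\varphi$ is a morphism of dg $S$-algebras extending $u$. Compatibility with the differential uses that $u(1 \otimes s)$ is a cocycle in $\Gamma$ since it sits in degree zero in the image of an $S$-algebra map, while multiplicativity reduces to the commutation identity $u(1\otimes s) \cdot \varphi(\omega) = \varphi(\omega \cdot s)$, which one verifies by induction on the form-degree of $\omega$ starting from the case $\omega \in R$, where it expresses exactly the $S$-bilinearity of $u$. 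Uniqueness of $\widehat\varphi$ is immediate from the fact that $\Omega R \otimes_k S$ is generated as a dg $S$-algebra by $R \otimes_k S$.

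The main obstacle is the bookkeeping in the noncommutative setting: when $R$ or $S$ is noncommutative, $u(1 \otimes s)$ need not lie in the centre of $\Gamma$, and one must keep track of the precise $S$-bilinearity convention built into the definition of relative forms as recalled in \cite{cuntzquillen}. Once these conventions are pinned down, both constructions and the verification of the universal property become formal, and the second isomorphism $\Omega_R(R \otimes_k S) \cong R \otimes_k \Omega S$ follows by running the identical argument with $R$ and $S$ swapped.
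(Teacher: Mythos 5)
Your proposal is correct and is exactly the argument the paper has in mind: the lemma is presented as one of the ``easy consequences of the universal properties,'' and verifying that $\Omega R \otimes_k S$ satisfies the universal property of the free dg $S$-algebra generated by $R \otimes_k S$ in degree zero is the intended route. Two small points of reassurance on the details you flagged: the cocycle condition $d\,u(1\otimes s)=0$ is built into the notion of a dg $S$-algebra (the structure map $S\to\Gamma^0$ must land in $\ker d$ for $d$ to be $S$-linear), and the commutation of $u(1\otimes S)$ with $\varphi(\Omega R)$ is automatic even without commutativity hypotheses on $R$ or $S$, since $u(1\otimes s)$ commutes with $\varphi(R)=u(R\otimes 1)$ by multiplicativity of $u$ in $R\otimes_k S$, and then with $\varphi(dR)=d\varphi(R)$ by differentiating the relation $u(1\otimes s)\varphi(r)=\varphi(r)u(1\otimes s)$ and using $d\,u(1\otimes s)=0$.
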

\begin{proof}
This is \cite[Proposition $2.8$]{cuntzquillen}.
\end{proof}

In the case where all rings are commutative, $\Omega_S R$ may be defined by simply replacing $k$ by $S$ in the definition of $\Omega R$. This special case is enough for our applications.

If $Q$ is a dg-algebra then $Q^{\textrm{op}}$ denotes the \textsl{opposite dg-algebra} with $x \cdot y = (-1)^{|x||y|} yx$. The canonical map $R^{\textrm{op}} \lto (\Omega R)^{\textrm{op}}$ extends to an isomorphism of dg-algebras $\Omega(R^{\textrm{op}}) \lto (\Omega R)^{\textrm{op}}$.

\begin{lemma} The map $R \lto \widetilde{\Omega} R$ extends to an isomorphism of dg-algebras $\Omega R \lto \widetilde{\Omega} R$.
\end{lemma}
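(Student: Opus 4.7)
The plan is to invoke the universal property of $\Omega R$ to build the comparison map, then prove the analogous universal property for $\widetilde\Omega R$ in order to produce its inverse.

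Applying the universal property of $\Omega R$ to the identity $k$-algebra morphism $R \to (\widetilde\Omega R)^0 = R$ produces a unique dg $k$-algebra morphism $\Psi \colon \Omega R \to \widetilde\Omega R$ extending it. By multiplicativity and the condition $\Psi \circ d = s \circ \Psi$, the image $\Psi(a_0\, da_1 \cdots da_n) = a_0 * s(a_1) * \cdots * s(a_n)$ is completely determined.

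Next I would establish the symmetric universal property of $\widetilde\Omega R$: for any dg $k$-algebra $\Gamma$ and any $k$-algebra morphism $u \colon R \to \Gamma^0$, there exists a unique morphism of dg $k$-algebras $\widetilde\Omega R \to \Gamma$ extending $u$. Uniqueness is immediate from the identity $[a_n, \ldots, a_1, a_0] = s a_n * \cdots * s a_1 * a_0$, which shows that $\widetilde\Omega R$ is generated as a dg $k$-algebra by the copy of $R$ sitting in degree zero. For existence, the candidate map sending $[a_n, \ldots, a_0]$ to $s_\Gamma(u(a_n)) \cdots s_\Gamma(u(a_1)) \cdot u(a_0)$ descends from $R^{\otimes (n+1)}$ to $\bar R^{\otimes n} \otimes R$ because $s_\Gamma(u(1)) = 0$, and its compatibility with the $*$-product and the differential $s$ is verified directly from the definitions using the graded Leibniz rule in $\Gamma$, in parallel with the argument for $\Omega R$ given in \cite{cuntzquillen}.

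Applying this universal property with $\Gamma = \Omega R$ and $u \colon R \hookrightarrow \Omega^0 R$ produces a dg-algebra morphism $\Phi \colon \widetilde\Omega R \to \Omega R$ extending the identity on $R$. The composites $\Phi \Psi$ and $\Psi \Phi$ are dg-algebra endomorphisms of $\Omega R$ and $\widetilde\Omega R$ respectively, each restricting to the identity on $R$, so by the uniqueness clauses they equal the identity. Hence $\Psi$ is an isomorphism. The main obstacle is the second step: although the symmetry with $\Omega R$ makes the universal property of $\widetilde\Omega R$ intuitively clear, the $*$-product is genuinely distinct from the product on $\Omega R$, so one must carry out an explicit inductive verification on the degree that the proposed extension respects multiplication.
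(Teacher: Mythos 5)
Your argument is correct, but it takes a different route from the paper's. You establish the universal property of $\widetilde{\Omega}R$ from scratch (uniqueness via generation by $R$ under $*$ and $s$; existence via the explicit formula $[a_n,\ldots,a_0]\mapsto d_\Gamma u(a_n)\cdots d_\Gamma u(a_1)\cdot u(a_0)$) and then play the two universal properties off against each other. The paper instead reduces to the known case by observing that the map $[a_n,\ldots,a_0]\mapsto(-1)^{\binom{n}{2}}(a_0,\ldots,a_n)$ is a dg-isomorphism $\widetilde{\Omega}R\to\Omega(R^{\textrm{op}})^{\textrm{op}}$, and composes with the previously-noted isomorphism $\Omega(R^{\textrm{op}})^{\textrm{op}}\cong\Omega R$ (itself an instance of the universal property of $\Omega$). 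Both arguments are honest: in each case there is one explicit multiplicativity check to do — for you, that the candidate extension out of $\widetilde{\Omega}R$ respects the $*$-product (essentially rerunning Cuntz--Quillen mirror-image), and for the paper, that the signed reversal intertwines $*$ with the opposite product. The paper's route is shorter because the opposite-algebra trick packages all the sign bookkeeping into a single binomial coefficient and reuses the already-established universal property rather than proving a second one; yours has the compensating advantage of recording the universal property of $\widetilde{\Omega}R$ explicitly, which is a clean statement that makes the isomorphism's uniqueness transparent. One cosmetic remark: you write $s_\Gamma$ for the differential on $\Gamma$; since a dg-algebra has only one differential, this is fine, but it is worth saying explicitly that $s_\Gamma=d_\Gamma$ so the reader does not look for an extra structure on $\Gamma$.
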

\begin{proof}
The map $\widetilde{\Omega} R \lto \Omega(R^{\textrm{op}})^{\textrm{op}}$ defined by $[a_n, \ldots, a_0] \lmt (-1)^{\binom{n}{2}} (a_0, a_1, \ldots, a_n)$ is an isomorphism of dg-algebras. But $\Omega(R^{\textrm{op}})^{\textrm{op}} \cong \Omega R$ and the composite isomorphism $\widetilde{\Omega} R \lto \Omega R$ is inverse to the induced map $\Omega R \lto \widetilde{\Omega} R$.
\end{proof}

This means that we are free to use either $\Omega R$ or $\widetilde{\Omega} R$ as a concrete realisation of the dg-algebra of noncommutative forms. If we write $R_i = R$ for $i \in \{1,2\}$ and $\Re = R_1 \otimes R_2$, then the dg-algebras
\be\label{eq:twoomegas}
\Omega R_1 \otimes R_2 \cong \Omega_{R_2}( R_1 \otimes R_2 ) 
\, , \qquad 
R_1 \otimes \widetilde{\Omega} R_2 \cong \widetilde{\Omega}_{R_1}( R_1 \otimes R_2 )
\ee
have the same underlying graded $k$-module which we denote by $\Bar$, i.\,e.
\be\label{BarComplex}
\Bar = \bigoplus_{n\geq 0} \Bar_n \, , \qquad \Bar_n = \Omega^n R_1 \otimes R_2 = R_1 \otimes \widetilde{\Omega}^n R_2 = R \otimes \bar{R}^{\otimes n} \otimes R\, .
\ee
This graded $k$-module $\Bar$ is made into a dg-algebra in two different ways by \eqref{eq:twoomegas}, so for example the projection to $\Bar_n$ of the tensor $a_0 \otimes a_1 \otimes \ldots \otimes a_n \otimes a_{n+1}$ can be presented as
\[
a_0 da_1 \ldots da_n \otimes a_{n+1} = a_0 \otimes sa_1 * \ldots * sa_n * a_{n+1}\,.
\]
The differentials in these two dg-algebra structures on $\Bar$ are written $d = d \otimes 1_R$ and $s = 1_R \otimes s$. 

Left and right multiplication makes $\Omega_{R_2}(\Re)$ into an $\Re$-$\Re$-bimodule. Using the identification with $\Bar$ the left action may be written
\be\label{eq:leftreactionb}
(r \otimes r') \cdot a_0 \otimes a_1 \otimes \ldots \otimes a_n \otimes a_{n+1} = r a_0 \otimes a_1 \otimes \ldots \otimes a_n \otimes r' a_{n+1}
\ee
Similarly, $\widetilde{\Omega}_{R_1}(\Re)$ is an $\Re$-$\Re$-bimodule by left and right multiplication and the right action is
\be\label{eq:rightreactionb}
a_0 \otimes a_1 \otimes \ldots \otimes a_n \otimes a_{n+1} * (r \otimes r') = a_0 r \otimes a_1 \otimes \ldots \otimes a_n \otimes a_{n+1} r'\,.
\ee
It is important to keep in mind that the two $\Re$-$\Re$-bimodule structures defined in this way on the underlying graded $k$-module $\Bar$ do not agree, e.\,g.~the right action of $\Re$ on $\Omega_{R_2}(\Re)$ is not~\eqref{eq:rightreactionb}.


Next we describe a third differential $b'$ on $\Bar$. The \textsl{(normalised) bar complex} is the resolution
\be
\xymatrix{
\cdots \ar[r]^-{b'} & R \otimes \bar R^{\otimes 2} \otimes R \ar[r]^-{b'} & R \otimes \bar R \otimes R \ar[r]^-{b'} & R\otimes R \ar[r]^-{b'} & R \ar[r] & 0
}\label{eq:normalised_bar_cpx_line}
\ee
of~$R$, where the degree-lowering differential~$b'$ is the $k$-linear map
$$
a_0 \otimes \cdots \otimes a_{n+1} \lmt \sum_{i=0}^{n-1} (-1)^i a_0 \otimes \cdots \otimes a_i a_{i+1} \otimes \cdots  \otimes a_{n+1} + (-1)^n a_0 \otimes \cdots \otimes a_{n-1} \otimes a_n a_{n+1} \, . 
$$
From this it is straightforward to check that we have the identities
\be\label{b'd+db'} 
b'd+db'=1_{\Bar} \, , \qquad b' s + s b' = -1_{\Bar}\,.
\ee
Hence $d$ (resp.~$s$) is a right $R$-linear (resp.~left $R$-linear) contracting homotopy for $b'$ on $\Bar$. These identities in degree zero involve $d,s: R \lto R \otimes R$ defined by $d(a) = 1 \otimes a$ and $s(a) = a \otimes 1$.

Henceforth we assume that $R$ is commutative. Recall that $(m,n)$-shuffles are permutations in
$$
\operatorname{Sh}(m,n) = \big\{ \sigma\in S_{m+n} \,|\, \sigma(1)<\sigma(2)<\ldots<\sigma(m), \, \sigma(m+1)<\sigma(m+2)<\ldots<\sigma(m+n) \big\} \, . 
$$
The \textsl{shuffle product}~$\times$ on~$\Bar$ is defined by
\begin{align*}
& (a_0da_1\ldots da_m \otimes a_{m+1}) \times (b_0db_1\ldots db_n \otimes b_{n+1}) \\
& \qquad 
= \sum_{\sigma \in \operatorname{Sh}(m,n)} (-1)^{|\sigma|} a_0 b_0 \, \sigma_\bullet (da_1\ldots da_m db_1 \ldots db_n) \otimes a_{m+1} b_{n+1}
\end{align*}
where 
$\sigma_\bullet(dc_1\ldots dc_j) = dc_{\sigma^{-1}(1)}\ldots dc_{\sigma^{-1}(j)}$. 
Equipped with this multiplication $(\Bar,b',\times)$ is a graded-commutative dg-algebra \cite[Proposition 4.2.2]{Loday}. Here for compactness we use the notation of forms in $\Omega_{R_2}(\Re)$, but one could just as easily write out the formulas in terms of $\widetilde{\Omega}_{R_1}(\Re)$.

Given $W \in R$ we define $\widetilde W = W\otimes 1 - 1\otimes W \in \Re$. Consider the $k$-linear operator
$$
d_{\Bar} = b' + d\widetilde W \times (-) = b' - s\widetilde W \times (-)\, . 
$$

\begin{lemma}\label{lemma:barisafactorisation_pre}
$(d_{\Bar})^2 = \widetilde{W} \times (-)$.
\end{lemma}
\begin{proof}
We have $b'^2=0$ and $d\widetilde W \times d\widetilde W = 0$, so
\begin{align*}
d_{\Bar}^2 (\omega) & = b'(d\widetilde W \times \omega) + d\widetilde W \times b'(\omega) \\
& = b'(d\widetilde W) \times \omega - d\widetilde W \times b'(\omega) + d\widetilde W \times b'(\omega) \\
& = \widetilde W \times \omega\,.
\end{align*}
\end{proof}

The $k$-module $\Bar=\Bar^0 \oplus \Bar^1$ is $\nZ_2$-graded with $\Bar^i = \bigoplus_{n\in 2\nN+i}\Bar_n$ and if we identify $\Omega_{R_2}(\Re)$ with $\Bar$ then \eqref{eq:leftreactionb} is the induced left $\Re$-action and $\widetilde W \times(-) = \widetilde W\cdot(-)$.

\begin{lemma}\label{lemma:barisafactorisation}
$(\Omega_{R_2}(\Re),d_{\Bar})$ is a left $\Re$-linear factorisation of $\widetilde W$. 
\end{lemma}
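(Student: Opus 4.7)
The lemma asserts four things about $(\Omega_{R_2}(\Re), d_{\Bar})$: that it is $\Ztwo$-graded, that $d_{\Bar}$ is odd and left $\Re$-linear, and that $d_{\Bar}^2$ coincides with left multiplication by $\widetilde W$. The grading $\Bar = \Bar^0 \oplus \Bar^1$ has just been recorded, and the identification $\widetilde W \times (-) = \widetilde W \cdot (-)$ for the left action \eqref{eq:leftreactionb} was stated in the same paragraph, so combining that identification with the previous lemma immediately gives $d_{\Bar}^2 = \widetilde W \cdot 1_{\Bar}$. The actual work is therefore reduced to verifying oddness and left $\Re$-linearity of $d_{\Bar}$.

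Oddness is immediate: $b'$ sends $\Bar_n$ to $\Bar_{n-1}$, while $d\widetilde W \times (-)$ sends $\Bar_n$ to $\Bar_{n+1}$ since $d\widetilde W \in \Bar_1$; both operations flip $\Ztwo$-parity. For $\Re$-linearity I treat the two summands of $d_{\Bar}$ separately. Left multiplication by the fixed element $d\widetilde W \in \Omega_{R_2}(\Re)$ is left $\Re$-linear because $\Omega_{R_2}(\Re)$ is a dg-algebra in which $\Re$ sits as the degree-zero sub-algebra, and the induced left $\Re$-action on $\Omega_{R_2}(\Re)$ is precisely \eqref{eq:leftreactionb}. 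For $b'$, inspect the defining formula term by term: the action of $r \otimes r' \in \Re$ multiplies $a_0$ by $r$ and $a_{n+1}$ by $r'$, and each summand of
\[
b'\big((a_0,\ldots,a_n) \otimes a_{n+1}\big) = \sum_{i=0}^{n-1}(-1)^i(a_0,\ldots,a_ia_{i+1},\ldots,a_n) \otimes a_{n+1} + (-1)^n (a_0,\ldots,a_{n-1}) \otimes a_n a_{n+1}
\]
preserves these outermost slots. The only term where $r'$ appears "displaced" is the last one, but commutativity of $R$ gives $r' a_n a_{n+1} = a_n r' a_{n+1}$, so the action is compatible with $b'$.

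No genuine obstacle is anticipated: all the real computation was in the preceding lemma establishing $d_{\Bar}^2 = \widetilde W \times (-)$. The one point deserving care is to remember that we are working with the specific left-action \eqref{eq:leftreactionb} attached to the presentation $\Omega_{R_2}(\Re) \cong \Bar$, rather than the right-action \eqref{eq:rightreactionb} attached to $\widetilde{\Omega}_{R_1}(\Re)$; without this distinction the identification $\widetilde W \times (-) = \widetilde W \cdot (-)$ and the linearity of $b'$ in its outermost slots would look asymmetric.
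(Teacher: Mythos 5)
Your proof is correct and matches what the paper leaves implicit: the squared-differential identity comes from the preceding lemma, the $\nZ_2$-grading and the identification $\widetilde W \times (-) = \widetilde W \cdot (-)$ are recorded in the paragraph just above the statement, and what genuinely remains is the oddness and left $\Re$-linearity of $d_\Bar$, which you verify. One small imprecision: in justifying the $\Re$-linearity of $d\widetilde W \times (-)$ you invoke the dg-algebra structure of $\Omega_{R_2}(\Re)$, but the operator in $d_\Bar$ uses the \emph{shuffle} product $\times$ and not the noncommutative-forms product $\cdot$, and these two products on $\Bar$ do not coincide outside degree zero. The conclusion is nonetheless correct; the clean argument is either to cite the graded-commutative dg-algebra $(\Bar, b', \times)$, in which $\Bar_0 = \Re$ is a central degree-zero subalgebra whose $\times$-action agrees with \eqref{eq:leftreactionb}, or simply to read off from the explicit shuffle formula (or Lemma~\ref{lemma:shortobs}) that $d\widetilde W \times (-)$ leaves the outer tensor slots $a_0$, $a_{n+1}$ of $\omega$ untouched, so $\Re$-linearity is manifest. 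Your term-by-term verification for $b'$ is correct, including the observation that commutativity of $R$ is needed to move $r'$ across $a_n$ in the last summand.
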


If we identify $\widetilde{\Omega}_{R_1}(\Re)$ with $\Bar$ then \eqref{eq:rightreactionb} is the right $\Re$-action and $\widetilde W \times (-) = (-) * \widetilde W$.

\begin{lemma}\label{lemma:barisafactorisation2}
$(\widetilde{\Omega}_{R_1}(\Re),d_{\Bar})$ is a right $\Re$-linear factorisation of $\widetilde W$. 
\end{lemma}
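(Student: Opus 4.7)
The plan is to deduce this as the right-handed counterpart of Lemma~\ref{lemma:barisafactorisation}. The key observation is that the identity $(d_\Bar)^2 = \widetilde W \times (-)$ has already been established abstractly on the graded $k$-module $\Bar$, so the factorisation equation is the same regardless of which identification of $\Bar$ one uses; only the module structure being preserved by $d_\Bar$ differs between the two lemmas.

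First I would identify $\Bar \cong \widetilde{\Omega}_{R_1}(\Re)$, equipping $\Bar$ with the right $\Re$-action \eqref{eq:rightreactionb}, and invoke (as the text records immediately above the statement) that under this identification $\widetilde W \times \omega = \omega * \widetilde W$. The underlying reason is that $\widetilde W$ lies in shuffle-degree zero, so graded-commutativity of the shuffle product yields $\widetilde W \times \omega = \omega \times \widetilde W$, and right shuffle multiplication by a degree-zero element $r \otimes r' \in \Bar_0$ agrees with the right action \eqref{eq:rightreactionb}. This recasts the factorisation equation as $(d_\Bar)^2(\omega) = \omega * \widetilde W$, as required.

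Next I would verify that $d_\Bar = b' - s\widetilde W \times (-)$ is right $\Re$-linear with respect to \eqref{eq:rightreactionb}. For $b'$ this is immediate from the explicit formula, since $b'$ only combines consecutive interior entries $a_i a_{i+1}$ and leaves the extreme factors $a_0$ and $a_{n+1}$ untouched, while right multiplication by $r \otimes r'$ scales precisely these extreme factors. For the shuffle term, associativity of the shuffle product together with the just-noted compatibility of right $\Re$-multiplication and right shuffle multiplication by degree-zero elements gives
\[
(s\widetilde W \times \omega) * (r \otimes r') \;=\; (s\widetilde W \times \omega) \times (r \otimes r') \;=\; s\widetilde W \times \big(\omega * (r \otimes r')\big),
\]
which is the required right $\Re$-linearity.

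The only delicate point — and the main obstacle in a fully written-out proof — is bookkeeping the sign conventions across the two identifications $\Omega_{R_2}(\Re) \cong \Bar \cong \widetilde{\Omega}_{R_1}(\Re)$ so that shuffle multiplication by $\widetilde W$ really translates into right multiplication by $\widetilde W$ without extraneous signs. This rests on the vanishing of the Koszul sign $(-1)^{|\widetilde W||\omega|}$ in graded-commutativity (since $|\widetilde W|=0$) and on the dg-algebra isomorphism $\Omega R \cong \widetilde\Omega R$ recorded earlier. Once that is settled, the argument is a transparent mirror of the previous lemma.
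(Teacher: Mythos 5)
Your proof fills in, correctly and in the expected way, details that the paper leaves implicit: the lemma is stated with no separate proof, as a direct consequence of the earlier identity $(d_\Bar)^2 = \widetilde W \times (-)$ together with the remark recorded just above the lemma that, under the identification with $\widetilde{\Omega}_{R_1}(\Re)$, one has $\widetilde W \times (-) = (-) * \widetilde W$. You correctly isolate the two assertions that need verifying, namely the square identity and the right $\Re$-linearity of $d_\Bar$, and your treatment of the shuffle term via graded-commutativity of $\times$ (together with associativity and the observation that $\omega \times (r \otimes r') = \omega * (r \otimes r')$ for $r \otimes r' \in \Bar_0$) is exactly the right argument.

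One small slip: the claim that $b'$ ``leaves the extreme factors $a_0$ and $a_{n+1}$ untouched'' is not literally true. Looking at the formula, the $i=0$ summand of $b'$ combines $a_0$ with $a_1$, and the final summand combines $a_n$ with $a_{n+1}$. Right $\Re$-linearity of $b'$ with respect to \eqref{eq:rightreactionb} still holds: in the $i=0$ term one compares $a_0 r a_1$ with $a_0 a_1 r$, which agree because $R$ is commutative (a standing assumption in this part of the section); the final term compares $a_n (a_{n+1} r')$ with $(a_n a_{n+1}) r'$, which is just associativity; and the strictly interior terms commute with the action trivially. So the conclusion is right, but the stated reason is off — worth repairing, since commutativity of $R$ is genuinely being used there.
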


We refer to these two linear factorisations as the \textsl{bar models} for the diagonal. In Section \ref{section:pertandhtpy} we will define left and right actions $\widetilde{\Omega}_{R_1}(\Re) \otimes X \lto X$ and $X \otimes \Omega_{R_2}(\Re) \lto X$ as in \eqref{lambdarho} and prove that they are homotopy equivalences (after a suitable completion). For the moment we take the fact that $\Bar$ is another model for the unit action on matrix factorisations as motivation to discuss its relation to the Koszul matrix factorisation~$\Delta_W$. 

Before we do this on the level of linear factorisations we consider the case of $\nZ$-graded complexes. We return to the $k$-algebra $R=k[x_1,\ldots,x_n]$ and write $\Delta = \bigwedge( \bigoplus_{i=1}^n \Re \theta_i)$ so that $(\Delta, \delta_-)$ is the ordinary Koszul complex, see~\eqref{DeltaW}. 

There are two $k$-linear maps between~$\Bar$ and~$\Delta$ which will be important: 
\begin{align}
\Phi & : \Delta \lra \Bar \, , \qquad (r \otimes r')\theta_{i_1}\ldots \theta_{i_p} \lmt \sum_{\sigma\in S_p} (-1)^{|\sigma|} r dx_{i_{\sigma(1)}} \ldots dx_{i_{\sigma(p)}} \otimes r' \, , \label{intro_phi}\\
\Psi & : \Bar \lra \Delta \, , \qquad rdf_1\ldots df_p \otimes r' \lmt \sum_{1\leq i_1<\ldots<i_p\leq n} (r \otimes r')\Big( \prod_{k=1}^p \partial^{x,x'}_{[i_k]} f_k \Big) \, \theta_{i_1} \ldots \theta_{i_p} \, \label{intro_psi}.
\end{align}
The map $\Psi$ was studied in~\cite{sw0911.0917}, we only rephrase the presentation in~\cite[Definition $4.1$]{sw0911.0917} terms of the difference quotient operators suitable for our setting. One easily verifies that $\Psi\Phi = 1_\Delta$. If we give $\Bar$ the left $\Re$-module structure of \eqref{eq:leftreactionb} then $\Phi$ and $\Psi$ are left $\Re$-linear, and if we give $\Bar$ the right $\Re$-module structure of \eqref{eq:rightreactionb} then $\Phi, \Psi$ are right $\Re$-linear.

\begin{lemma}\label{PhiPsiDG}
Both~$\Phi$ and~$\Psi$ are maps of dg-algebras between $(\Delta, \delta_-, \wedge)$ and $(\Bar, b', \times)$. 
\end{lemma}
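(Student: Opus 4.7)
The plan is to check, for each of $\Phi$ and $\Psi$, compatibility with (a) the $R^e$-bimodule structure, (b) the products $\wedge$ and $\times$, and (c) the differentials $\delta_-$ and $b'$. Bilinearity (a) is immediate from \eqref{intro_phi}--\eqref{intro_psi}.

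For $\Phi$, the algebra $\Delta = \bigwedge\bigl(\bigoplus_{i}R^e \theta_i\bigr)$ is the free graded-commutative $R^e$-algebra on the odd generators $\theta_i$, and $(\Bar, \times)$ is graded-commutative, so there is a unique morphism of graded $R^e$-algebras $\widetilde\Phi\colon \Delta \lra \Bar$ extending $\theta_i \lmt dx_i$. A short induction on $p$, in which the $(p-1,1)$-shuffles are parametrised by the insertion position of the last factor, shows that iterated shuffle products of degree-$1$ elements equal signed symmetrisations,
$$
dx_{i_1} \times \cdots \times dx_{i_p} \;=\; \sum_{\sigma \in S_p}(-1)^{|\sigma|}\, dx_{i_{\sigma(1)}} \cdots dx_{i_{\sigma(p)}},
$$
so $\widetilde\Phi = \Phi$, establishing (b). For (c) one observes that both $b' \circ \Phi$ and $\Phi \circ \delta_-$ are graded $R^e$-linear derivations of degree $-1$ from $\Delta$ to $\Bar$ (viewed as a $\Delta$-bimodule via $\Phi$): $\delta_-$ is a derivation of $\wedge$ as an $R^e$-linear sum of contractions $\theta_i^*$, $b'$ is a derivation of $\times$ by construction, and $\Phi$ is an algebra map by the previous step. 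Both derivations vanish on $\Delta^0 = R^e$ and agree on the generators, since $b'(dx_i) = x_i \otimes 1 - 1 \otimes x_i = \Phi(\delta_-(\theta_i))$, so they coincide on all of $\Delta$.

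For $\Psi$ no analogous freeness is available and I would check (b), (c) by direct computation. $R^e$-bilinearity reduces (b) to the case $r=r'=s=s'=1$. Expanding the shuffle product $(df_1 \cdots df_p) \times (dg_1 \cdots dg_q)$ and applying $\Psi$ yields, for each strictly increasing tuple $K = (k_1 < \cdots < k_{p+q})$, a signed sum of products of $\partial^{x,x'}_{[k_a]}(h_b)$ with $(h_b) = (f_1,\ldots,f_p,g_1,\ldots,g_q)$; the same data arises from expanding $\Psi(df_1 \cdots df_p) \wedge \Psi(dg_1 \cdots dg_q)$, and matching the two expansions amounts to a Laplace-type minor identity. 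For (c), I would apply $b'$ to $r\, df_1 \cdots df_p \otimes r'$, use the Leibniz rule of Lemma~\ref{lem:LeibnizForDQO} to split each $\partial^{x,x'}_{[k]}(f_j f_{j+1})$ appearing after $\Psi$ into two pieces, and verify that they reassemble into $\delta_- \Psi(r\, df_1 \cdots df_p \otimes r')$ after tracking the signs coming from the shuffle and from the Koszul rule for $\theta_i^*$.

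The main obstacle is (b) for $\Psi$: handling the combinatorics of $(p,q)$-shuffles together with the variable-changing maps ${}^{t_j}(-)$ hidden in the definition of $\partial^{x,x'}_{[k]}$ is where the bookkeeping is most delicate, whereas the remaining verifications reduce either to the universal property of $\Delta$ as a free graded-commutative algebra or to Lemma~\ref{lem:LeibnizForDQO}.
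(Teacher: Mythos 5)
Your approach for $\Psi$ matches the paper's: direct computation, with differential compatibility established via the Leibniz rule of Lemma~\ref{lem:LeibnizForDQO} and product compatibility via a reindexing argument. One mild correction on emphasis: you flag (b) for $\Psi$ as the ``main obstacle'', but in the paper's proof the product compatibility is the \emph{short} step — one expands $\Psi(df_1\ldots df_p\otimes 1)\wedge\Psi(df_{p+1}\ldots df_{p+q}\otimes 1)$ as a double sum over pairs of increasing tuples, then uses anticommutativity of the $\theta_i$ to re-sort each term into a single increasing $(p+q)$-tuple at the cost of a sign $(-1)^{|\sigma|}$ for a $(p,q)$-shuffle $\sigma$, and one immediately recognises the result as $\Psi$ applied to the shuffle product. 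No determinantal or Laplace-type identity is involved; it is pure reindexing. By contrast, the differential compatibility $\delta_-\Psi=\Psi b'$ is the lengthy chain of manipulations where the ${}^{t_1\ldots t_j}$ variable-changing maps and Lemma~\ref{lem:LeibnizForDQO} have to be tracked carefully through telescoping sums.

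For $\Phi$, your argument is genuinely different from (and more self-contained than) the paper's: the paper simply cites a reference, whereas you derive multiplicativity from the universal property of $\Delta$ as the free graded-commutative $\Re$-algebra on the odd generators $\theta_i$, together with the fact that iterated shuffle products of degree-one elements equal signed symmetrisations. You then get compatibility with the differentials by comparing two degree $-1$ graded derivations $\Delta\to\Bar$ (along the algebra map $\Phi$): $\delta_-$ is a derivation of $\wedge$ since it is an $\Re$-linear combination of contractions $\theta_i^*$, $b'$ is a derivation of $\times$, both vanish on $\Delta^0=\Re$, and both send $\theta_i$ to $x_i\otimes 1-1\otimes x_i$, hence they agree. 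This is a clean argument and worth keeping. The only thing missing from your write-up overall is that the $\Psi$ verifications are sketched rather than carried out, but the outline is correct.
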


\begin{proof}
We refer to~\cite{sw0911.0917} for the case of~$\Phi$; since our expression for~$\Psi$ is not manifestly the same as in loc.~cit.~we spell out the proof. Let us first show that~$\Psi$ is compatible with the differentials, writing $\partial_{[i]} = \partial^{x,x'}_{[i]}$. On the one hand we compute $(\delta_- \Psi) (df_1\ldots df_p \otimes 1)$ to be 
\begin{align}
& \delta_ -\Big( \sum_{i_1<\ldots <i_p} (\partial_{[i_1]} f_1) \ldots (\partial_{[i_p]} f_p) \, \theta_{i_1} \ldots \theta_{i_p} \Big) \nonumber \\
& =  \sum_{i_1<\ldots <i_p} (\partial_{[i_1]} f_1) \ldots (\partial_{[i_p]} f_p)  \sum_{k=1}^p (-1)^{k+1} (x_{i_k} - x'_{i_k})\theta_{i_1} \ldots \widehat{\theta_{i_k}} \ldots \theta_{i_p}  \nonumber \\
& = \sum_{k=1}^p (-1)^{k+1}\sum_{i_1<\ldots <i_p} (\partial_{[i_1]} f_1) \ldots ({}^{t_1\ldots t_{i_{k-1}}} f_k - {}^{t_1\ldots t_{i_{k}}} f_k) \ldots (\partial_{[i_p]} f_p) \, \theta_{i_1} \ldots \widehat{\theta_{i_k}} \ldots \theta_{i_p}  \nonumber \\
& = \sum_{2\leq i_2<\ldots< i_p} (f_1 - {}^{t_1\ldots t_{i_{2}-1}} f_1) (\partial_{[i_2]} f_2) \ldots (\partial_{[i_p]} f_p) \, \theta_{i_2} \ldots \theta_{i_p} \nonumber \\
& \qquad + \sum_{k=2}^{p-1} (-1)^{k+1}\sum_{i_1 < \ldots <i_p} (\partial_{[i_1]} f_1) \ldots ({}^{t_1\ldots t_{i_{k-1}}} f_k - {}^{t_1 \ldots t_{i_{k+1}-1}} f_k) \ldots (\partial_{[i_p]} f_p) \, \theta_{i_1} \ldots \widehat{\theta_{i_k}} \ldots \theta_{i_p}  \nonumber \\
& \qquad + (-1)^{p+1} \sum_{i_1<\ldots< i_{p-1}\leq n-1} (\partial_{[i_1]} f_1) \ldots (\partial_{[i_{p-1}]} f_{p-1}) ({}^{t_1\ldots t_{i_{p-1}}} f_p - {}^{t_1\ldots t_n} f_p)  \, \theta_{i_1} \ldots \theta_{i_{p-1}} \nonumber \\
& = \sum_{2\leq t_2<\ldots <i_p} f_1 (\partial_{[i_2]} f_2) \ldots (\partial_{[i_p]} f_p) \, \theta_{i_2} \ldots \theta_{i_p} \nonumber \\
& \qquad + (-1)^{p} \sum_{i_1<\ldots< i_{p-1}\leq n-1} (\partial_{[i_1]} f_1) \ldots (\partial_{[i_{p-1}]} f_{p-1}) \, {}^{t_1\ldots t_n} f_p  \, \theta_{i_1} \ldots \theta_{i_{p-1}}  \label{delPsi} 
\end{align}
while on the other hand $(\Psi b') (df_1\ldots df_p \otimes 1)$ equals
\begin{align*}
& \Psi \Big( f_1 df_2 \ldots df_p \otimes 1 + \sum_{k=1}^{p-1} (-1)^k df_1 \ldots d(f_k f_{k+1}) \ldots df_p \otimes 1 + (-1)^p df_1 \ldots df_{p-1} \otimes f_p \Big) \\
& = \sum_{i_1<\ldots< i_{p-1}} f_1 (\partial_{[i_1]} f_2) \ldots (\partial_{[i_{p-1}]} f_p) \, \theta_{i_1} \ldots \theta_{i_{p-1}} \\
& \qquad + \sum_{k=1}^{p-1} (-1)^k \sum_{i_1<\ldots< i_{p-1}} (\partial_{[i_1]} f_1) \ldots (\partial_{[i_k]} (f_k f_{k+1}) )\ldots (\partial_{[i_{p-1}]} f_p) \, \theta_{i_1} \ldots \theta_{i_{p-1}} \\
& \qquad + (-1)^p \sum_{i_1<\ldots< i_{p-1}} (\partial_{[i_1]} f_1) \ldots (\partial_{[i_{p-1}]} f_{p-1}) \, {}^{t_1\ldots t_n} f_p \, \theta_{i_1} \ldots \theta_{i_{p-1}} \\
& = \sum_{2\leq i_1<\ldots< i_{p-1}} f_1 (\partial_{[i_1]} f_2) \ldots (\partial_{[i_{p-1}]} f_p) \, \theta_{i_1} \ldots \theta_{i_{p-1}} \\ 
& \qquad + \sum_{2\leq i_2 \ldots< i_{p-1}} f_1 (\partial_{[1]} f_2) (\partial_{[i_2]} f_3) \ldots (\partial_{[i_{p-1}]} f_p) \, \theta_{i_1} \ldots \theta_{i_{p-1}} \\ 
& \qquad + \sum_{k=1}^{p-1} (-1)^k \sum_{i_1<\ldots< i_{p-1}} (\partial_{[i_1]} f_1) \ldots \big\{ ({}^{t_1\ldots t_{i_k -1}}f_k)(\partial_{[i_k]} f_{k+1}) \\
& \qquad\quad + (\partial_{[i_k]} f_k) ({}^{t_1\ldots t_{i_k}} f_{k+1}) \big\} \ldots (\partial_{[i_{p-1}]} f_p) \, \theta_{i_1} \ldots \theta_{i_{p-1}} \\
& \qquad + (-1)^p \sum_{i_1<\ldots< i_{p-1}\leq n-1} (\partial_{[i_1]} f_1) \ldots (\partial_{[i_{p-1}]} f_{p-1}) \, {}^{t_1\ldots t_n} f_p \, \theta_{i_1} \ldots \theta_{i_{p-1}} \\
& \qquad\quad + (-1)^p \sum_{i_1<\ldots< i_{p-2}\leq n-1} (\partial_{[i_1]} f_1) \ldots (\partial_{[i_{p-1}]} f_{p-1}) \, {}^{t_1\ldots t_n} f_p \, \theta_{i_1} \ldots \theta_{i_{p-1}} \\
& = \sum_{2\leq i_1<\ldots< i_{p-1}} f_1 (\partial_{[i_1]} f_2) \ldots (\partial_{[i_{p-1}]} f_p) \, \theta_{i_1} \ldots \theta_{i_{p-1}} \\ 
& \qquad + (-1)^p \sum_{i_1<\ldots< i_{p-1}\leq n-1} (\partial_{[i_1]} f_1) \ldots (\partial_{[i_{p-1}]} f_{p-1}) \, {}^{t_1\ldots t_n} f_p \, \theta_{i_1} \ldots \theta_{i_{p-1}} 
\end{align*}
which agrees with~\eqref{delPsi}. 

To establish compatibility with the products we compute 
\begin{align*}
& \Psi( df_1\ldots df_p \otimes 1) \wedge \Psi( df_{p+1}\ldots df_{p+q} \otimes 1) \\ 
=\, & \Big\{ \sum_{i_1<\ldots< i_p} \Big( \prod_{k=1}^p \partial_{[i_k]} f_k \Big) \theta_{i_1} \ldots \theta_{i_p}\Big\} 
\wedge 
\Big\{ \sum_{i_{p+1}<\ldots< i_{p+q}} \Big( \prod_{k=p+1}^{p+q} \partial_{[i_k]} f_k \Big) \theta_{i_{p+1}} \ldots \theta_{i_{p+q}} \Big\} \\
= \, & \sum_{i_1<\ldots< i_p} \sum_{i_{p+1}<\ldots< i_{p+q}} \Big( \prod_{k=1}^{p+q} \partial_{[i_k]} f_k \Big) \theta_{i_{1}} \ldots \theta_{i_{p+q}} \\
= \, & \sum_{i_1<\ldots< i_{p+q}} \sum_{\sigma\in\operatorname{Sh}(p,q)} (-1)^{|\sigma|} \Big( \prod_{k=1}^{p+q} \partial_{[i_{\sigma(k)}]} f_k \Big) \theta_{i_{1}} \ldots \theta_{i_{p+q}} \\
= \, & \Psi \big((df_1\ldots df_p \otimes 1) \times( df_{p+1}\ldots df_{p+q} \otimes 1) \big)
\end{align*}
where in the third step the anti-commutativity of the~$\theta_i$ allowed us to sum over the longer sequences $i_1<\ldots< i_{p+q}$ by introducing an additional sum over shuffles. 
\end{proof}

We will also need the following property of $\Psi$, where we identify $\Bar = \Omega R_1 \otimes R_2$.

\begin{lemma}\label{lem:PsiToTheRight}
Let $\omega\in\Omega^n R_1$ and $f \in R$ be given. Then $\Psi(\omega \cdot (f \otimes 1))=\Psi(\omega) \cdot (1 \otimes f)$. 
\end{lemma}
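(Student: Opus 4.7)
The plan is to verify the identity by a direct combinatorial computation that parallels, but is more intricate than, the verification of compatibility with differentials in Lemma~\ref{PhiPsiDG}. Writing $\omega = r_0 \, dr_1 \cdots dr_n \otimes 1$ in the identification $\Bar = \Omega R_1 \otimes R_2$, I first expand $\omega \cdot (f \otimes 1)$ using the Loday multiplication formula for $\Omega R_1 \otimes R_2$. This produces a signed sum consisting of the $n$ terms $(-1)^{n-i}(r_0, \ldots, r_i r_{i+1}, \ldots, r_n, f) \otimes 1$ for $i \in \{0, \ldots, n-1\}$, together with the final term $(r_0, \ldots, r_{n-1}, r_n f) \otimes 1$. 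Applying $\Psi$ to each summand using its definition yields a sum indexed by $(i, i_1 < \cdots < i_n)$ of products of difference quotient operators $\partial_{[j]}^{x,x'}$.

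The next step is to apply the Leibniz-type rule of Lemma~\ref{lem:LeibnizForDQO} to rewrite each merged factor $\partial_{[i_k]}^{x,x'}(r_j r_{j+1})$ and $\partial_{[i_k]}^{x,x'}(r_n f)$ as a sum of products of twisted factors ${}^{t_1 \cdots t_{i_k - 1}}(-)$ and ${}^{t_1 \cdots t_{i_k}}(-)$. After reindexing, the many terms so produced rearrange into a telescoping sum in which consecutive groups of terms cancel, the key identity being
\[
f - {}^{t_1 \cdots t_n} f = \sum_{j=1}^{n} (x_j - x'_j)\, \partial_{[j]}^{x,x'} f
\]
which, combined with $(x_j - x'_j)\theta_j^* = \delta_-$, effectively transfers the factor $f$ from the $R_1$-slot to the $R_2$-slot of $\Re$. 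Once all cancellations are performed, the only surviving contribution is $\sum_{i_1 < \cdots < i_n} (r_0 \otimes f) \prod_k \partial_{[i_k]}^{x,x'} r_k \cdot \theta_{i_1} \cdots \theta_{i_n}$, which is exactly $\Psi(\omega) \cdot (1 \otimes f)$ by the definition of the right $\Re$-action on $\Delta$.

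The hard part will be the combinatorial bookkeeping: keeping track of the signs $(-1)^{n-i}$, the shift in index ranges after a merge at position $i$, and matching up the twisted substitutions ${}^{t_1 \cdots t_j}$ so that the telescoping actually collapses the double sum to a single sum. An inductive approach on $n$, isolating the last variable $r_n$ to reduce to a lower-rank instance, may streamline the bookkeeping, but the essential content remains the systematic application of Lemma~\ref{lem:LeibnizForDQO}.
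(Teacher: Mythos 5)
Your overall strategy coincides with the paper's: expand $\omega \cdot (f\otimes 1)$ using the dg-algebra multiplication on $\Omega R_1\otimes R_2$, apply $\Psi$, expand each merged factor via Lemma~\ref{lem:LeibnizForDQO}, and observe cancellation. But the ``key identity'' you single out is not what makes the argument work, and its role in your sketch points to a mechanism that is not actually in play.

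Two specific concerns. First, because $\omega\in\Omega^n R_1$ with $n$ equal to the number of ring variables, the sum $\sum_{i_1<\cdots<i_n}$ in the definition of $\Psi$ has a \emph{single} index sequence $i_k=k$; there is no double sum indexed by $(i,\,i_1<\cdots<i_n)$. This collapse is exactly what makes the Leibniz expansions telescope so cleanly, and it is special to top form-degree; for $\omega\in\Omega^p R_1$ with $p<n$ the identity $\Psi(\omega f)=\Psi(\omega)(1\otimes f)$ is in general \emph{false}, so the inductive ``peel off the last variable'' approach cannot reduce to lower-rank instances of the same statement. Second, the telescoping is driven entirely by the fact that the right twist in the Leibniz expansion at position $i$,
\[
\partial_{[1]}a_1\cdots\partial_{[i-1]}a_{i-1}\cdot{}^{t_1\cdots t_{i-1}}a_i\cdot\partial_{[i]}a_{i+1}\cdots\partial_{[n-1]}a_n\,\partial_{[n]}f\,,
\]
coincides, up to the opposite sign, with the left twist from position $i-1$, so consecutive summands cancel in pairs; the boundary terms cancel against the separate $i=0$ and $i=n$ contributions of the Loday product. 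The sole survivor is $\partial_{[1]}a_1\cdots\partial_{[n]}a_n\cdot{}^{t_1\cdots t_n}f$, and the fact that ${}^{t_1\cdots t_n}(f\otimes 1)=1\otimes f$ is what transfers $f$ to the $R_2$ slot. The identity $f-{}^{t_1\cdots t_n}f=\sum_j(x_j-x'_j)\partial_{[j]}f$ is true but is not used, and the Koszul operator $\delta_-$ plays no role whatsoever: the whole computation takes place in the coefficient ring $\Re$ multiplying the fixed monomial $\theta_1\cdots\theta_n$, with no interaction with the differential on $\Delta_W$. Invoking $\delta_-$ suggests an up-to-homotopy argument, but the lemma is an on-the-nose equality; you should remove that part of the sketch before attempting the bookkeeping, or it will lead you to look for cancellations that are not there.
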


\begin{proof}
We may assume that $\omega$ is of the form $da_1\ldots da_n$ for some $a_i\in R$. Then
\begin{align*}
\Psi(\omega f) & = \Psi \Big( \sum_{i=1}^n (-1)^{n-i} da_1\ldots da_{i-1} d(a_i a_{i+1}) da_{i+2}\ldots da_n df + (-1)^n a_1 da_2\ldots da_n df \Big) \\
& = \sum_{i=1}^n (-1)^{n-i} \partial_{[1]} a_1\ldots \partial_{[i-1]} a_{i-1} \partial_{[i]} (a_i a_{i+1}) \partial_{[i+1]} a_{i+2} \ldots \partial_{[n-1]} a_n \partial_{[n]} f \\
& \qquad + (-1)^n a_1 \partial_{[1]} a_2 \ldots \partial_{[n-1]} a_n \partial_{[n]} f \\
& = \sum_{i=1}^n (-1)^{n-i} \partial_{[1]} a_1\ldots \partial_{[i-1]} a_{i-1} \Big(\partial_{[i]} a_i {}	^{t_1\ldots t_i}a_{i+1} + {}^{t_1\ldots t_{i-1}} a_i \partial_{[i]} a_{i+1} \Big) \\
& \qquad \cdot  \partial_{[i+1]} a_{i+2} \ldots \partial_{[n-1]} a_n \partial_{[n]} f + (-1)^n a_1 \partial_{[1]} a_2 \ldots \partial_{[n-1]} a_n \partial_{[n]} f \\
& = \partial_{[1]} a_1 \ldots \partial_{[n]} a_n (1 \otimes f) \\
& = \Psi(\omega) (1 \otimes f) \, ,
\end{align*}
where we used Lemma~\ref{lem:LeibnizForDQO} in the third step. 
\end{proof}


The map~$\Psi$ is a morphism of linear factorisations, where we denote by $(\Bar, d_{\Bar})$ either of the linear factorisations in Lemma \ref{lemma:barisafactorisation} or Lemma \ref{lemma:barisafactorisation2}.

\begin{lemma}\label{PsiHF}
$\Psi: (\Bar, d_\Bar) \lra (\Delta_W, d_{\Delta_W})$ is a morphism in $\HF(\Re,\widetilde W)$. 
\end{lemma}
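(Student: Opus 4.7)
The strategy is to leverage Lemma~\ref{PhiPsiDG}, which already tells us that $\Psi$ is a morphism of dg-algebras between $(\Bar,b',\times)$ and $(\Delta,\delta_-,\wedge)$; nothing more than multiplicativity and one small direct computation is needed beyond that. Compatibility with the $\Re$-action is part of the discussion immediately after the definitions \eqref{intro_phi}--\eqref{intro_psi} of $\Phi$ and $\Psi$, and $\Psi$ preserves the $\Ztwo$-grading because it sends $\Bar_n$ into $\theta$-degree~$n$, so the only thing to check is that $\Psi\circ d_{\Bar} = d_{\Delta_W}\circ \Psi$.

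First I would work with the presentation $\Bar = \Omega R_1\otimes R_2$, so that $d_{\Bar} = b' + d\widetilde{W}\times(-)$ and, by Lemma~\ref{PhiPsiDG}, $\Psi\circ b' = \delta_-\circ\Psi$ holds automatically. Next I would compute
\[
d\widetilde W \;=\; d(W\otimes 1 - 1\otimes W) \;=\; dW\otimes 1
\]
(the term $d(1)\otimes W$ vanishes since $1=0$ in $\bar R$), and then apply $\Psi$ to get $\Psi(d\widetilde W) = \sum_{i=1}^n \partial^{x,x'}_{[i]}W\cdot\theta_i$ directly from the definition \eqref{intro_psi} of $\Psi$. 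Observe that this element is precisely the one whose left wedge-product action on $\Delta_W$ realises $\delta_+$.

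Finally, using that $\Psi$ is multiplicative with respect to $(\times,\wedge)$ and that $d\widetilde W\times(-)$ is just left multiplication by $d\widetilde W$ in $\Bar$, I would conclude
\[
\Psi\bigl(d\widetilde W \times \omega\bigr) \;=\; \Psi(d\widetilde W)\wedge \Psi(\omega) \;=\; \delta_+\bigl(\Psi(\omega)\bigr)
\]
for every $\omega\in\Bar$, which combined with $\Psi\circ b' = \delta_-\circ\Psi$ yields $\Psi\circ d_{\Bar} = (\delta_-+\delta_+)\circ\Psi = d_{\Delta_W}\circ \Psi$ as desired. The same argument handles the right $\Re$-linear factorisation $(\widetilde\Omega_{R_1}(\Re),d_{\Bar})$ of Lemma~\ref{lemma:barisafactorisation2}, since the underlying $k$-linear differential on $\Bar$ is the same. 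The only potentially subtle point is the sign-free identification $\Psi(d\widetilde W) = \sum_i \partial^{x,x'}_{[i]}W\cdot\theta_i$, but no serious obstacle arises: essentially all the combinatorial work has been absorbed into the verification of Lemma~\ref{PhiPsiDG}.
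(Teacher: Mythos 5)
Your proof is correct and follows exactly the paper's approach: both reduce to checking $\delta_+\Psi = \Psi(d\widetilde W\times(-))$ after invoking Lemma~\ref{PhiPsiDG} for the $b'$/$\delta_-$ compatibility, then use the dg-algebra multiplicativity of $\Psi$ together with the direct identification $\Psi(dW\otimes 1) = \sum_i \partial^{x,x'}_{[i]}W\cdot\theta_i$. The only difference is that you spell out $d\widetilde W = dW\otimes 1$ explicitly, which the paper's one-line calculation leaves implicit.
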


\begin{proof}
We need to show $d_{\Delta_W} \Psi = \Psi d_\Bar$. 
We know from~\eqref{DeltaW} that $d_{\Delta_W} = \delta_+ + \delta_-$, and $d_\Bar = b' + d\widetilde W \times (-)$ by Lemma~\ref{PhiPsiDG}. 
What remains to be checked is $\delta_+ \Psi = \Psi (d\widetilde W\times (-))$. This can be done: 
\be\label{eq:deltaplusandpsi}
\delta_+ \Psi = \Big( \sum_{i=1}^n \partial_{[i]} W \cdot \theta_i^* \Big) \wedge \Psi(-) = \Psi(dW\otimes 1) \wedge \Psi(-) = \Psi (d\widetilde W\times (-)) 
\ee
where the last equality is due to Lemma~\ref{PhiPsiDG}. 
\end{proof}

Although $(\Bar, d_{\Bar})$ depends only on the pair $(R,W)$, the matrix factorisation $(\Delta_W, d_{\Delta_W})$ depends in addition on the ordering of the ring variables. The compatibility of $\delta_{+}$ and $\Psi$ in \eqref{eq:deltaplusandpsi} depends on the fact that $\Delta_W$ and $\Psi$ are defined using the \textsl{same} ordering.

Finally, using the ordering of the ring variables to order the $\theta$'s, we obtain an $\Re$-linear map 
\be\label{eq:vareps}
\varepsilon: \Delta \lra \Re[n] 
\, , \qquad 
\theta_1\ldots \theta_n \lmt 1
\ee
that is non-zero only on elements in top $\theta$-degree. 


\subsection{Residues}\label{section:residuebackground}

Residues feature prominently throughout the paper, so we briefly recall their definition and basic property; general references for residues are \cite{Lipman84, Lipman} and \cite[Appendix A]{Conrad00}. Given a regular sequence $(f_1,\ldots,f_n)$ in $k[x,y]$ where $x = (x_1,\ldots,x_n)$, the residue is a $k[y]$-linear map that sends a polynomial $g\in k[x,y]$ to an element
\begin{equation}\label{eq:defin_residue_1}
\Res_{k[x,y]/k[y]} \left[ \frac{g \, \underline{\operatorname{d}\! x}}{f_1, \ldots, f_n} \right] 
\in k[y]
\end{equation}
where $\underline{\operatorname{d}\!x}=\operatorname{d}\!x_1\ldots \operatorname{d}\!x_n$. Defining residues in a way which is independent of a choice of coordinates is a delicate business, and in the case of rings the most elegant approach we are aware of is by Lipman \cite{Lipman} who uses the canonical pairing between Hochschild homology and cohomology. This amounts to defining the residue as the \textsl{trace} of a certain carefully constructed $k[y]$-linear operator on $k[x,y]/(f_1,\ldots,f_n)$, as explained in terms of connections in \cite[Section $8$]{dm1102.2957}.

While it is not straightforward to define residues abstractly, they are easy to compute in practice. The residue is zero if $g$ belongs to the ideal $I$ generated by the $f_1,\ldots,f_n$, and the expression inside the brackets in \eqref{eq:defin_residue_1} behaves like a fraction in the sense that
\begin{equation}\label{eq:defin_residue_2}
\Res_{k[x,y]/k[y]} \left[ \frac{g \, f_i \, \underline{\operatorname{d}\! x}}{f_1, \ldots, f_i^2, \ldots f_n} \right] = \Res_{k[x,y]/k[y]} \left[ \frac{g \, \underline{\operatorname{d}\! x}}{f_1, \ldots, f_i, \ldots, f_n} \right]\,.
\end{equation}
More precisely, the expression inside the brackets is a \textsl{generalised fraction}, which is terminology for elements of the local cohomology module $H^n_{I}( \Omega^n_{k[x,y]/k[y]} )$. Nontrivial residues are computed using the so-called \textsl{transformation rule}:
$$
\Res_{k[x,y]/k[y]} \left[ \frac{g \, \underline{\operatorname{d}\! x}}{f_1, \ldots, f_n} \right] 
= 
\Res_{k[x,y]/k[y]} \left[ \frac{\det(C) g \, \underline{\operatorname{d}\! x}}{f'_1, \ldots, f'_n} \right] 
, \qquad
f'_i = \sum_{j=1}^n C_{ij} f_j 
\, , \qquad
C_{ij} \in k[x,y] \, .
$$
Such an expression for the $f_i$ will always exist with denominators $f_i' = x_i^{a_i}$ for sufficiently large exponents $a_i$. This transforms an arbitrary residue into one whose denominator consists of powers of the variables, and this, together with \eqref{eq:defin_residue_2} and the defining property $\Res_{k[x,y]/k[y]} \left[ \underline{\operatorname{d}\! x}/(x_1^{a_1} ,\ldots, x_n^{a_n}) \right] = \delta_{a_1,1}\ldots \delta_{a_n,1}$ is enough to compute any residue. Note that the order of the elements of the regular sequence in the denominator plays a role; changing that order produces a permutation sign.

In the next result we use the divided difference operators of \eqref{diffquotop}.

\begin{proposition}\label{prop:determinantresidue} 
The element $\delta = \det \big( (\partial^{x,x'}_{[i]} f_j)_{i,j} \big)$ in $k[x,x']$ has the property that for $g \in k[x]$
\be\label{eq:integrateagainstd}
\Res_{k[x,x']/k[x']} \left[ \frac{g \, \delta \,  \underline{\operatorname{d}\! x}}{f_1, \ldots, f_n} \right] = g(x')
\ee
as an element of the algebra $k[x']/(f_1(x'),\ldots,f_n(x'))$.
\end{proposition}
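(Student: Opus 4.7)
The plan is to reduce \eqref{eq:integrateagainstd} to the defining property of the residue by applying the transformation rule, the key input being a telescoping identity satisfied by the difference quotient operators $\partial^{x,x'}_{[i]}$.

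First I would observe that by definition
\[
(x_i - x'_i) \partial^{x,x'}_{[i]} f = {}^{t_1\ldots t_{i-1}} f - {}^{t_1\ldots t_i} f,
\]
so summing over $i$ telescopes to give $\sum_i (x_i - x'_i) \partial^{x,x'}_{[i]} f_j = f_j(x) - f_j(x')$. Setting $u_i = x_i - x'_i$, $h_j = f_j(x) - f_j(x')$, and $M_{ji} = \partial^{x,x'}_{[i]} f_j$, this reads $h = Mu$ as column vectors, and since $(M_{ji}) = (N_{ij})^T$ for the matrix $N$ appearing in $\delta$, we get $\det M = \delta$.

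Next, the sequence $(u_1,\ldots,u_n)$ is clearly regular in $k[x,x']$ and the defining property of the residue gives $\Res_{k[x,x']/k[x']}[g\,\underline{\operatorname{d}x}/(u_1,\ldots,u_n)] = g(x')$. Applying the transformation rule with $C=M$ then yields
\[
g(x') \;=\; \Res_{k[x,x']/k[x']}\!\left[\frac{g\,\delta\,\underline{\operatorname{d}x}}{h_1,\ldots,h_n}\right]
\]
as an identity in $k[x']$ itself (before any reduction). The only remaining task is to replace the denominators $(h_i)$ by the $(f_i)$ at the cost of working modulo $I = (f_1(x'),\ldots,f_n(x'))$. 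Since $f_i - h_i = f_i(x') \in I$, the two sequences coincide modulo $I\cdot k[x,x']$, and both define regular sequences in $(k[x']/I)[x]$; by the compatibility of Grothendieck residues with the quotient base change $k[x'] \twoheadrightarrow k[x']/I$, the two residues agree modulo $I$, which is exactly the assertion of the proposition.

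The main obstacle is justifying this last compatibility cleanly. In one direction one can invoke it as a standard property of residues from \cite{Lipman84, Lipman}; alternatively, it can be established by hand through a telescoping identity of the form
\[
\frac{1}{f_1\cdots f_n} - \frac{1}{h_1\cdots h_n} = \sum_{i=1}^n \frac{-f_i(x')}{h_1\cdots h_{i-1}\,h_i f_i \,f_{i+1}\cdots f_n},
\]
combined with a formal expansion $1/(h_i + f_i(x')) = \sum_{k \geq 0} (-f_i(x'))^k / h_i^{k+1}$ in a suitable $I$-adic completion, observing that every term on the right-hand side carries an explicit factor from $I$. Everything else in the argument is essentially bookkeeping about signs and the identification $\det M = \delta$.
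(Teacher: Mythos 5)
Your argument is correct and follows the standard line of reasoning behind \cite[Proposition 4.1.2]{pv1002.2116}, which the paper simply cites rather than proving. The core ingredients — the telescoping identity
\[
f_j(x) - f_j(x') \;=\; \sum_{i=1}^n \big(\partial^{x,x'}_{[i]}f_j\big)(x_i - x'_i)\,,
\]
the identification $\det M = \delta$ (since $M$ is the transpose of the matrix defining $\delta$), and the direct residue computation $\Res_{k[x,x']/k[x']}\big[\tfrac{g\,\underline{\ud x}}{x_1-x'_1,\ldots,x_n-x'_n}\big] = g(x')$ via the substitution $\tilde x_i = x_i - x'_i$ — are all right, and the plan of reducing to this normalised case by the transformation rule is exactly the natural one.

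Two points deserve more care. First, your intermediate identity $\Res\big[\tfrac{g\delta\,\underline{\ud x}}{h_1,\ldots,h_n}\big] = g(x')$ ``before any reduction'' implicitly assumes that the sequence $(h_1,\ldots,h_n)$, $h_i = f_i(x)-f_i(x')$, is itself regular (or at least Koszul-regular) in $k[x,x']$ with finite quotient over $k[x']$; this is not obviously guaranteed by the hypotheses on the $f_i$. You can avoid the issue entirely by working over $\bar A = k[x']/(f_1(x'),\ldots,f_n(x'))$ from the outset: there $\bar h_i = \bar f_i$, so the transformation rule carries you directly from $(x_i - x'_i)$ to $(\bar f_i)$ with Jacobian $\bar\delta$, without ever introducing $(h)$ as a denominator sequence over $k[x']$. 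Second, your displayed telescoping identity for $\tfrac{1}{f_1\cdots f_n} - \tfrac{1}{h_1\cdots h_n}$ is algebraically valid as rational functions but its summands have $n+1$ denominator factors, which do not correspond to well-formed residue symbols of length $n$; the $I$-adic expansion of $1/f_i = 1/(h_i + f_i(x'))$ is the version that actually lifts to residue-theoretic statements. Both remarks boil down to the same needed input, compatibility of the residue with the quotient base change $k[x'] \to k[x']/I$ when the denominator sequence remains good; this is indeed a standard property, but as it is the only nontrivial step, it is worth stating explicitly rather than distributing across two alternative sketches.
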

\begin{proof}
The result is proven in \cite[Proposition $4.1.2$]{pv1002.2116} for $k$ a field, but the same proof works in the present generality. 
\end{proof}

We will also use the transitivity property of residues:
 
\begin{proposition}\label{prop:transitivity} Given regular sequences $(f_1,\ldots,f_n)$ in $k[x,y,z]$ and $(g_1,\ldots,g_m)$ in $k[y,z]$, and a polynomial $h \in k[x,y,z]$, the iterated residue
\[
\Res_{k[y,z]/k[z]} \begin{bmatrix} \Res_{k[x,y,z]/k[y,z]} \begin{bmatrix} h \, \underline{\operatorname{d}\! x} \\ f_1,\ldots, f_n \end{bmatrix} \cdot \underline{\operatorname{d}\! y} \\ g_1, \ldots, g_m \end{bmatrix} = \Res_{k[x,y,z]/k[z]} \begin{bmatrix} h \, \underline{\operatorname{d}\! x} \wedge \underline{\operatorname{d}\! y} \\ f_1,\ldots,f_n,g_1,\ldots,g_m \end{bmatrix}\,.
\]
\end{proposition}
\begin{proof}
See \cite{lipman92} and \cite[Appendix A, p.244]{Conrad00}.
\end{proof}

\subsection{Perturbation}\label{section:perturbation_lemma}

A crucial role will be played by the homological perturbation lemma, which we will use to promote homotopy equivalences of complexes (arising from the bar and Koszul resolutions of the diagonal) to homotopy equivalences of associated matrix factorisations. More importantly, the perturbation lemma will provide explicit homotopy inverses in terms of Atiyah classes.

Let $R$ be a ring and $W \in R$. An $R$-linear \textsl{deformation retract datum} is a diagram
\begin{equation}\label{eq:perturbeddiagr1}
\xymatrix@C+2pc{
(X,d_X) \ar@<-1ex>[r]_-{\sigma} & (Y,d_Y) \ar@<-1ex>[l]_-{\pi}
}%
\!\!\!\xymatrix{%
{}\ar@(ur,dr)[]^{h}
}%
\end{equation}
in which $(X,d_X)$ and $(Y,d_Y)$ are linear factorisations of $W$, $\pi, \sigma$ are morphisms of linear factorisations and $h: Y \lto Y$ is a degree one $R$-linear map such that
$$
\pi \sigma = 1 \, , \qquad
\sigma \pi = 1 + d_Yh + hd_Y \, .
$$
A degree one morphism $\delta: Y \lto Y$ is a \textsl{small perturbation} of the deformation retract datum if $1_Y - \delta h$ is an isomorphism of $R$-modules. In this case we define
\[
\tau = (1- \delta h)^{-1} \delta
\]
and consider the new ``perturbed'' diagram
\begin{equation}\label{eq:perturbeddiagr}
\xymatrix@C+2pc{
(X,d_{X,\infty}) \ar@<-1ex>[r]_-{\sigma_\infty} & (Y,d_Y+\delta) \ar@<-1ex>[l]_-{\pi_\infty}
}%
\!\!\!\xymatrix{%
{}\ar@(ur,dr)[]^{h_\infty}
}%
\end{equation}
where
\begin{align*}
\sigma_\infty &= \sigma + h\tau\sigma \,, & h_\infty &= h + h \tau h\,,\\
\pi_\infty &= \pi + \pi \tau h \, , & d_{X,\infty} &= d_X + \pi \tau \sigma\,.
\end{align*}

\begin{proposition}\label{prop:pertlemma} Suppose that $h \sigma = 0, \pi h = 0$ and $h^2 = 0$. If $\delta$ is a small perturbation of (\ref{eq:perturbeddiagr1}) such that $(d_Y + \delta)^2 = W' \cdot 1_X$ for some $W' \in R$ then (\ref{eq:perturbeddiagr}) is a deformation retract datum of linear factorisations of $W'$ over $R$.
\end{proposition}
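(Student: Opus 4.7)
The plan is to verify the four structural identities defining a deformation retract datum of linear factorisations of $W'$: that $\pi_\infty$ and $\sigma_\infty$ intertwine $d_{X,\infty}$ with $D := d_Y + \delta$, that $\pi_\infty \sigma_\infty = 1_X$, the homotopy relation $\sigma_\infty \pi_\infty - 1_Y = D h_\infty + h_\infty D$, and finally $(d_{X,\infty})^2 = W' \cdot 1_X$. This is the standard homological perturbation argument, the only twist being the curved condition $(d_Y + \delta)^2 = W' \cdot 1_Y$ rather than $= 0$. Because the structural maps $\pi_\infty, \sigma_\infty, h_\infty, d_{X,\infty}$ are defined formally through $\tau$ without reference to curvature, the bulk of the argument will go through unchanged; the term $W' - W$ enters only through $[d_Y, \delta] = (W' - W)\cdot 1_Y - \delta^2$ and will be absorbed by the side conditions $h^2 = 0$, $h\sigma = 0$, $\pi h = 0$.

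As a preliminary step I would record, via the Neumann expansion, the identities $\tau = \delta + \delta h \tau$ and $\tau = \delta + \tau h \delta$ arising from $(1 - \delta h)\tau = \delta$ and the equivalent $\tau(1 - h\delta) = \delta$, together with their reformulation $(1+\tau h)\delta = \tau = \delta(1+h\tau)$ and the reduction $(1+\tau h)(1+h\tau) = 1 + \tau h + h\tau$ (using $h^2 = 0$). Given these, the identity $\pi_\infty\sigma_\infty = 1_X$ follows at once by expansion from $\pi\sigma = 1$, $\pi h = 0$, $h\sigma = 0$ and $h^2 = 0$.

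The main calculation is the intertwining. Expanding $D\sigma_\infty - \sigma_\infty d_{X,\infty}$ using $\sigma\pi - 1 = d_Y h + h d_Y$ and $d_Y \sigma = \sigma d_X$, and simplifying via the preliminary identities, the difference should collapse to $-h\big[ d_Y \tau + \tau d_Y + \tau\sigma\pi\tau \big]\sigma$. The bracket is the one place the curvature enters: combining $[d_Y,\delta] = (W'-W) - \delta^2$ with the Neumann identities to evaluate $(1+\tau h)[d_Y, \delta](1+h\tau)$, and using the key simplification $(1+\tau h)\delta^2(1+h\tau) = \tau^2$, I expect to obtain
\[
d_Y \tau + \tau d_Y + \tau\sigma\pi\tau = (W' - W)(1 + \tau h + h\tau).
\]
Then left-multiplication by $h$ and right-multiplication by $\sigma$ annihilates every term via $h\sigma = 0$ and $h^2 = 0$. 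The mirror argument with $\pi h = 0$ replacing $h\sigma = 0$ gives $\pi_\infty D = d_{X,\infty}\pi_\infty$. By the same token, $\sigma_\infty\pi_\infty - 1_Y - Dh_\infty - h_\infty D$ reduces to $h[d_Y\tau + \tau d_Y + \tau\sigma\pi\tau]h$, which vanishes because $h^2 = 0$. Finally, $(d_{X,\infty})^2 = W' \cdot 1_X$ is formal once the previous identities are established: $\sigma_\infty d_{X,\infty}^2 = D^2 \sigma_\infty = W' \sigma_\infty$, and applying $\pi_\infty$ gives $d_{X,\infty}^2 = W' \pi_\infty\sigma_\infty = W' \cdot 1_X$.

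The hard part will be the bracket identity above. It requires careful bookkeeping with the Neumann series for $\tau$ and is the one place where the curvature $W' - W$ plays a genuine role. Once that identity is in hand, everything else follows either by a completely analogous calculation or formally from intertwining and retraction, so the whole proposition reduces to this single algebraic computation.
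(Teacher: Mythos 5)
Your proposal is correct. The paper itself offers no argument: it simply cites Crainic's note on the perturbation lemma and declares the rest ``straightforward.'' What you have done is carry out the verification in full, and in doing so you have isolated the one place where the curved setting ($(d_Y+\delta)^2 = W'\cdot 1$ rather than $=0$) genuinely departs from Crainic's treatment. Your Neumann identities $\tau=(1+\tau h)\delta=\delta(1+h\tau)$ and $(1+\tau h)(1+h\tau)=1+\tau h+h\tau$ are correct (the second using $h^2=0$), and the central \emph{bracket identity}
\[
d_Y\tau + \tau d_Y + \tau\sigma\pi\tau = (W'-W)(1+\tau h+h\tau)
\]
does hold: writing $\tau\sigma\pi\tau = \tau^2+\tau d_Y h\tau+\tau h d_Y\tau$ and regrouping gives $(1+\tau h)[d_Y,\delta](1+h\tau)+\tau^2$, and since $[d_Y,\delta]=(W'-W)-\delta^2$ with $(1+\tau h)\delta^2(1+h\tau)=\tau^2$ the $\tau^2$ cancels and the curvature term survives. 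Sandwiching by $h(\cdot)\sigma$, $\pi(\cdot)h$, and $h(\cdot)h$ and using the side conditions then kills all residual terms exactly as you say, and the final identity $d_{X,\infty}^2 = W'\cdot 1_X$ follows formally from $\pi_\infty\sigma_\infty=1$. The gain of your route over the paper's is that Crainic's statement is for genuine complexes, so a reader who is not already comfortable promoting it to the curved setting is left with a gap; your bracket identity makes the passage explicit and shows precisely how the term $W'-W$ is absorbed by $h\sigma=0$, $\pi h=0$, $h^2=0$. The only cosmetic point worth flagging is that ``vanishes because $h^2=0$'' in the homotopy step is shorthand: you first need to substitute the bracket identity to see that the expression is $(W'-W)h(1+\tau h+h\tau)h$, after which $h^2=0$ does the rest.
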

\begin{proof}
This follows from the standard results in~\cite{c0403266}. In more detail: the identities (2.3) and (2.4) of \cite[Lemma $2.5$]{c0403266} still hold, but the identity (2.5) does not (note that $\tau, \pi, \sigma, d_Y$ here are $A, p, i, b$ there). The proof in loc. cit. of (2.5) instead shows that
\begin{align*}
\tau \sigma \pi \tau + \tau d_Y + d_Y \tau &= (1 - \delta h)^{-1}\big[ \delta \delta + \delta d_Y + d_Y \delta \big]( 1 - h \delta )^{-1}\\
&= (1 + \tau h) \big[ (d_Y+\delta)^2 - d_Y^2 \big](1 + h \tau)\\
&= (W' - W) (1 + \tau h)(1 + h \tau)\\
&= (W' - W)( 1 + h \tau + \tau h)
\end{align*}
where in the last line we use $h^2 = 0$. Using this identity together with $h \sigma = 0$ and $\pi h = 0$ we get
\begin{equation}\label{eq:pert_form_1}
\pi(\tau d_Y + d_Y \tau) \sigma = (W' - W)\pi(1 + h \tau + \tau h) \sigma - \pi(\tau \sigma \pi \tau ) \sigma = W' - W - \pi(\tau \sigma \pi \tau)\sigma\,,
\end{equation}
\begin{equation}\label{eq:pert_form_2}
h(\tau d_Y + d_Y \tau)\sigma = (W' - W)h(1+h\tau + \tau h)\sigma - h(\tau \sigma \pi \tau)\sigma = - h(\tau \sigma \pi \tau) \sigma\,.
\end{equation}
Similarly,
\begin{equation}\label{eq:pert_form_3}
\pi(\tau d_Y + d_Y \tau)h = -\pi(\tau \sigma \pi \tau)h, \qquad h(\tau d_Y+ d_Y \tau)h = -h(\tau \sigma \pi \tau)h\,.
\end{equation}
Using these identities \eqref{eq:pert_form_1},\eqref{eq:pert_form_2},\eqref{eq:pert_form_3} the proof of \cite[Theorem 2.3]{c0403266} goes through.
\end{proof}

In the cases of interest to us the sum $\sum_{m \ge 0} (\delta h)^m$ converges, so that $\tau = \sum_{m \ge 0} (\delta h)^m \delta$ and
\[
\sigma_\infty = \sigma + \sum_{m \ge 0} h(\delta h)^m \delta \sigma = \sum_{m \ge 0} (h \delta)^m \sigma\,.
\]

\subsection{Canonical morphisms}\label{section:canonicalmaps}

In this section~$R$ is a ring and~$X$ and~$Y$ denote linear factorisations of~$W$ and~$V$ over $R$, respectively. The $R$-module $\Hom_R(X,Y)$ with its natural $\mathbb{Z}_2$-grading has a differential $\alpha \mapsto d_Y \circ \alpha - (-1)^{|\alpha|} \alpha \circ d_X$ which makes it into a linear factorisation of $V - W$, and there is a natural morphism of linear factorisations of $V - W$, 
\be
\xi: X^{\vee} \otimes_R Y \lto \Hom_R(X,Y)\, , \qquad
\xi( \nu \otimes y )(x) = (-1)^{|\nu||y|} \nu(x) \cdot y \, ,
\ee
which is an isomorphism if~$X$ is a finitely generated projective $R$-module.

There is a natural isomorphism of linear factorisations of $W + V$, 
\be
\textup{swap}: X \otimes_R Y \lto Y \otimes_R X\,, \qquad
x \otimes y \lmt (-1)^{|x||y|} y \otimes x\,.
\ee
Precomposing~$\xi$ with this swap isomorphism we have a canonical morphism
\be\label{eq:iso_tensorhom2}
Y \otimes_R X^{\vee} \lto \Hom_R(X,Y)\,, \qquad
y \otimes \nu \lmt \big\{ x \lmt \nu(x) \cdot y \big\}\,.
\ee
Since it is unlikely to cause confusion, we also denote this map by $\xi$.

There are natural isomorphisms of linear factorisations of $W + V$, 
\begin{align}
X[i] \otimes_R Y &\lto (X \otimes_R Y)[i]\,, \qquad
x \otimes y \lmt x \otimes y\,,\\
X \otimes_R Y[i] &\lto (X \otimes _R Y)[i]\,,\qquad 
x \otimes y \lmt (-1)^{i|x|} x \otimes y\,.
\end{align}

\section{Atiyah classes}\label{section:atiyahclasses}

As mentioned in the Introduction, the structure of the bicategory $\LG$ can be understood in terms of Atiyah classes. In this section we develop the theory of these operators, beginning with noncommutative form-valued connections. The reader might prefer to begin reading Section \ref{section:pertandhtpy} to see how Atiyah classes naturally arise via perturbation, and then return here.

Let $A$ be a unital associative $k$-algebra which is not necessarily commutative, and $\Omega A = \Omega_k A$ the dg-algebra of noncommutative forms defined in Section \ref{subsec:Bar}. Following \cite[Section $8$]{cuntzquillen} a $k$-linear \textsl{connection} on a $\nZ _2$-graded right $A$-module $X$ is a $k$-linear map (we write $\otimes$ for $\otimes_A$)
\[
\nabla: X \lto X \otimes \Omega^1 A\,,
\]
which sends $X^i$ into $X^i \otimes \Omega^1 A$ and satisfies the graded Leibniz rule
\[
\nabla( x a ) = \nabla( x ) a + (-1)^{|x|} x \otimes da
\]
for $x \in X$, $a \in A$. The differential on $\Omega A$ gives a map $A \lto \Omega^1 A$ which is a connection, and so if~$X$ is a free $A$-module (or even projective) there exists a connection $\nabla$ on $X$. A connection extends uniquely to a degree one $k$-linear operator on $X \otimes \Omega A$, still denoted $\nabla$, with the property that for homogeneous $\xi \in X \otimes \Omega A$ and $\omega \in \Omega A$
\[
\nabla( \xi \omega ) = \nabla( \xi ) \omega + (-1)^{|\xi|} \xi d \omega\,.
\]
Using the Leibniz rule it is easy to check that for any odd $A$-linear operator $D$ on $X$ the operator $[\nabla, D] = \nabla D + D \nabla$ on $X \otimes \Omega A$ is right $A$-linear. Here we use the notation of graded commutators: for homogeneous operators $F, G$, 
\[
[F,G] = FG - (-1)^{|F||G|} GF \,.
\]

\begin{definition} The operator $\At = [\nabla, D]$ on $X \otimes \Omega A$ is called the (associative) \textsl{Atiyah class} of the pair $(X,D)$ with respect to the ring morphism $k \lto A$ and connection $\nabla$.
\end{definition}

The terminology of characteristic classes is appropriate when $D^2 = W \cdot 1_X$ for some $W \in k$, since then $[\nabla, D]$ is a closed map, i.\,e.~it gives a class in the cohomology of $\Hom_A(X, X \otimes \Omega^1 A)$ with respect to the differential defined by taking the commutator with $D$. By a standard argument this cohomology class is independent of the choice of connection on $X$. 

In general the Atiyah class depends on a choice of connection, so we should perhaps refer to it as an Atiyah \textsl{operator}, or as a component of the curvature of a superconnection as Remark \ref{remark:curvature_superconnection} below explains. However for us the purpose of the operator $\At$ is to give closed formulas for chain maps that are already known to be canonically defined up to homotopy, so we will ignore this issue.

\begin{remark}
We call these \textsl{associative} Atiyah classes to distinguish them from the standard Atiyah classes defined using commutative differential forms, see \cite{atiyahconn,illusie,buchweitz03,markarian,buchweitz08}. This kind of Atiyah class does not appear here, so we will usually drop the qualifier ``associative''.
\end{remark}

Let $X$ be a free $\nZ _2$-graded right $A$-module with homogeneous basis $\{ e_i \}_i$. Then
\[
d: X \lto X \otimes \Omega^1 A \, , \qquad d( e_i a ) = (-1)^{|e_i|} e_i \otimes da
\]
is a $k$-linear connection. It extends to a $k$-linear operator
\be
d: X \otimes \Omega A \lto X \otimes \Omega A \,, \qquad d( e_i \otimes \omega ) = (-1)^{|e_i|} e_i \otimes d \omega\,, \label{eq:extendd}\\
\ee
and powers of the Atiyah class $\At = [d, D]$ are given by:

\begin{lemma} For $l \ge 1$ and $\omega \in \Omega A$ we have
\be
\At^l( e_i \otimes \omega ) = (-1)^{l|e_i| + \binom{l+1}{2}} \sum_{j_1,\ldots,j_l} e_{j_l} \otimes d( D_{j_l j_{l-1}} ) \ldots d(D_{j_2 j_1})d( D_{j_1 i} ) \omega \label{eq:powerofatiyah}
\ee
where $D_{ij}$ are the entries of the matrix representing~$D$ in the basis $\{ e_i \}_i$.
\end{lemma}
\begin{proof}
The proof is by induction on $l$. We have
\begin{align}
\At^{l+1}( e_i \otimes \omega ) &= \At \At^l( e_i \otimes \omega )\\
&= (-1)^{l|e_i| + \binom{l+1}{2}} \sum_{j_1,\ldots,j_l} \At\left( e_{j_l} \otimes d( D_{j_l j_{l-1}} ) \ldots d(D_{j_2 j_1})d( D_{j_1 i} ) \omega \right)\,.\label{eq:mad_signs_everywhere}
\end{align}
For a fixed tuple of indices $j_1,\ldots,j_l$ let us write $\kappa$ for $d( D_{j_l j_{l-1}} ) \ldots d(D_{j_2 j_1})d( D_{j_1 i} )$. Then
\begin{align*}
Dd( e_{j_l} \otimes \kappa \omega ) &= (-1)^{|e_{j_l}|} D( e_{j_l} ) \otimes d( \kappa \omega ) =  \sum_{j_{l+1}} (-1)^{|e_{j_l}|} e_{j_{l+1}} \otimes D_{j_{l+1}j_l} d( \kappa \omega )\,,\\
d D\left( e_{j_l} \otimes \kappa \omega \right) &= \sum_{j_{l+1}} d\left( e_{j_{l+1}} D_{j_{l+1}j_l}  \otimes \kappa\omega \right) = \sum_{j_{l+1}} (-1)^{|e_{j_{l}}| + 1} e_{j_{l+1}} \otimes d( D_{j_{l+1}j_l} \kappa\omega )\,.
\end{align*}
Adding these using the Leibniz rule we find that
\[
\At( e_{j_l} \otimes \kappa \omega ) = \sum_{j_{l+1}} (-1)^{|e_{j_l}| + 1} e_{j_{l+1}} \otimes d( D_{j_{l+1}j_l} ) \kappa\omega\,.
\]
Since $|e_{j_l}| = |e_i| + l$ the overall sign on the right hand side of \eqref{eq:mad_signs_everywhere} is
\[
l|e_i| + \binom{l+1}{2} + |e_i| + l + 1 = (l+1)|e_i| + \binom{l+2}{2}\,,
\]
which completes the inductive step.
\end{proof}

\begin{remark}\label{remark:leftatiyah} A connection on a $\nZ _2$-graded left $A$-module $X$ is a $k$-linear map $\nabla: X \lto \Omega^1 A \otimes X$ satisfying the Leibniz rule, and the \textsl{left Atiyah class} of a pair $(X,D)$ is defined to be the commutator $\lAt = [\nabla, D]$. Everything we say has a natural analogue for left Atiyah classes.
\end{remark}

\begin{example}\label{example:leftatiyahexplicit} Let $X$ be a free $\nZ _2$-graded left $A$-module with homogeneous basis $\{ e_i \}_i$. Then
\[
d: X \lto \Omega^1 A \otimes X \, , \qquad d( a e_i ) = da \otimes e_i
\]
is a $k$-linear connection. Powers of the left Atiyah class $\lAt = [d, D]$ are operators on $\Omega A \otimes X$,
\be\label{eq:leftatiyahexp}
\lAt^l( \omega \otimes e_i ) = \sum_{j_1,\ldots,j_l} \omega d( D_{j_1 i} ) d(D_{j_2 j_1}) \ldots d(D_{j_l j_{l-1}}) \otimes e_{j_l}\,.
\ee
\end{example}

Let $X$ and $X'$ be $\nZ _2$-graded right $A$-modules equipped with odd $A$-linear operators which for convenience we denote by $D$ in both cases, and suppose $X$ and $X'$ admit $k$-linear connections, both of which we denote $\nabla$.

\begin{lemma}\label{lemma:atiyahnat} Given an $A$-linear homogeneous map $\varphi: X \lto X'$ consider the diagram
\[
\xymatrix@C+1pc@R+1pc{
X \ar[r]^-{\varphi}\ar[d]_-{[\nabla, D]}\ar@{.>}[dr]^-{g} & X' \ar[d]^-{[\nabla, D]}\\
X \otimes \Omega^1 A \ar[r]_{\varphi \otimes 1} & X' \otimes \Omega^1 A
}
\]
where $g = [\varphi, \nabla]$. This is a right $A$-linear map, and
\[
[\varphi, [\nabla, D]] = (-1)^{|\varphi|} [ D, [\varphi, \nabla]] - [ \nabla, [D, \varphi]]\,.
\]
\end{lemma}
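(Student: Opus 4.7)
The plan is to recognise the identity as the graded Jacobi identity in disguise, and to verify the right $A$-linearity by the same Leibniz-rule mechanism that was used in the paragraph immediately before the statement to show that $[\nabla,D]$ is right $A$-linear.

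First I would check right $A$-linearity of $g=[\varphi,\nabla]$. The key point is that $\varphi$ is $A$-linear while $\nabla$ is only $k$-linear, but the Leibniz term it produces cancels against itself in the commutator. Concretely, for $x\in X$ and $a\in A$, expand $g(xa)=\varphi(\nabla(xa))-(-1)^{|\varphi|}\nabla(\varphi(x)a)$ using $\nabla(xa)=\nabla(x)a+(-1)^{|x|}x\otimes da$, together with $\nabla(\varphi(x)a)=\nabla(\varphi(x))a+(-1)^{|\varphi|+|x|}\varphi(x)\otimes da$ and the $A$-linearity $\varphi(ya)=\varphi(y)a$ (extended trivially to $X\otimes \Omega A$). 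The two $\varphi(x)\otimes da$ contributions carry signs $(-1)^{|x|}$ and $-(-1)^{|\varphi|+|\varphi|+|x|}=-(-1)^{|x|}$ respectively and therefore cancel, leaving exactly $g(x)a$.

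For the identity, I would invoke the graded Jacobi identity for the graded commutator of $k$-linear endomorphisms of $X\otimes \Omega A$ (with $\varphi$, $D$, and $\nabla$ all extended to such endomorphisms in the evident way, using $\varphi\otimes 1$ on the target factor):
\[
[F,[G,H]] = [[F,G],H] + (-1)^{|F||G|}[G,[F,H]].
\]
Applying this with $F=\varphi$, $G=\nabla$ and $H=D$, noting $|\nabla|=|D|=1$, yields
\[
[\varphi,[\nabla,D]] = [[\varphi,\nabla],D] + (-1)^{|\varphi|}[\nabla,[\varphi,D]].
\]
Then I would rewrite each term using graded antisymmetry: since $[\varphi,\nabla]$ has degree $|\varphi|+1$, one has $[[\varphi,\nabla],D]=-(-1)^{|\varphi|+1}[D,[\varphi,\nabla]]=(-1)^{|\varphi|}[D,[\varphi,\nabla]]$, and similarly $[\varphi,D]=-(-1)^{|\varphi|}[D,\varphi]$, so $(-1)^{|\varphi|}[\nabla,[\varphi,D]]=-[\nabla,[D,\varphi]]$. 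Substituting gives the stated formula.

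There is essentially no hard step here; the only thing to be careful about is sign bookkeeping and the fact that $\varphi$, $\nabla$, $D$ are not operators on a single $A$-module but rather a mixture of maps between $X$, $X'$ and their extensions to $\Omega A$-valued forms. However all the commutators are defined purely as graded commutators of $k$-linear maps between the appropriate $k$-modules, so the Jacobi identity applies formally, and the right $A$-linearity of every object appearing in the formula is guaranteed by the first part of the lemma together with the analogous statement for $[\nabla,D]$ proved just before the statement.
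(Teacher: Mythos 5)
Your proof is correct and takes essentially the same approach as the paper, which simply states that the identity "follows from the graded Jacobi identity for commutators." You have filled in the details the paper omits — the explicit Leibniz-rule cancellation showing $[\varphi,\nabla]$ is right $A$-linear and the sign bookkeeping in the graded Jacobi/antisymmetry manipulations — and these check out.
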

\begin{proof}
Follows from the graded Jacobi identity for commutators.
\end{proof}

In particular if $D^2 = W$ on both $X$ and $X'$ for some $W \in k$, so $[\nabla, D]$ is a closed map, then for any morphism $\varphi$ we see that there is a right $A$-linear homotopy $\varphi \circ [ \nabla, D] \simeq [\nabla, D] \circ \varphi$.

\begin{remark}\label{remark:curvature_superconnection} According to Quillen \cite{superquillen} a \textsl{superconnection} $\Theta$ on a $\nZ _2$-graded right $A$-module $X$ is a $k$-linear degree one operator~$\Theta$ on $X \otimes \Omega A$ satisfying the graded Leibniz rule
\[
\Theta( \xi \omega ) = \Theta( \xi ) \omega + (-1)^{|\xi|} \xi d \omega
\]
for all homogeneous $\xi \in X \otimes \Omega A$ and $\omega \in \Omega A$. Here we use grading $|x \otimes \omega| = |x| + |\omega|$ mod $2$. If $\nabla$ is a connection on $X$ then we extend $\nabla$ to an operator on $X \otimes \Omega A$ and it is easily checked that for any odd $A$-linear operator $D$ on $X$ the operator $\Theta = \nabla + D$ is a superconnection. The \textsl{curvature} of this superconnection is $\Theta^2 = \nabla^2 + D^2 + [\nabla, D]$. In this paper our connections $\nabla$ are flat, i.\,e.~$\nabla^2 = 0$, and so if $(X,D)$ is a complex the curvature~$\Theta^2$ is precisely the Atiyah class $[\nabla, D]$.
\end{remark}

We now explain the special cases that will arise in later sections. In the rest of the present section, all rings are commutative. In all the examples there will be a ring $A$ and a pair $(X,D)$ consisting of a free $\nZ _2$-graded $A$-module $X$ with homogeneous basis $\{ e_i \}_i$ and an odd $A$-linear operator $D$ on~$X$. Let~$R$ and~$S$ denote $k$-algebras.

\begin{example}\label{remark:linearatotherR} With $A = S \otimes_k R$ the $S$-linear map defined by
\[
d: X \otimes_R \Omega R \lto X \otimes_R \Omega R \, , \qquad d( e_i a ) = (-1)^{|e_i|} e_i \otimes d a
\]
for $a \in R$ is the extension of a $k$-linear connection $X \lto X \otimes_R \Omega R$ on $X$ as an $R$-module, and so the Atiyah class of $X$ relative to $k \lto R$ is the operator
\[
\At_R(X) := [d, D]: X \otimes_R \Omega R \lto X \otimes_R \Omega R\,.
\]
The subscript indicates the fact that the connection involved differentiates in the ``$R$-directions''. The explicit formula is given by \eqref{eq:powerofatiyah}. Under the isomorphism of Lemma \ref{lemma:coeffomeage}, 
\[
X \otimes_A \Omega_S A \cong X \otimes_A (S \otimes_k \Omega R) \cong X \otimes_R \Omega R \, , 
\]
the operator $\At_R(X)$ corresponds to the Atiyah class of $X$ relative to the ring morphism $S \lto A$. If we write $\Re = R_1 \otimes_k R_2$ with $R_i = R$ and $A = S \otimes_k R_1$ then the operator $\At_{R_1}(X) \otimes 1_{R_2}$ on
\[
(X \otimes_{R_1} \Omega R_1) \otimes_k R_2 \cong X \otimes_{R_1} \Omega_{R_2}(\Re) \cong (X \otimes_{R_1} \Re) \otimes_{\Re} \Omega_{R_2}(\Re)
\]
is the Atiyah class of the extension of scalars $X \otimes_{R_1} \Re$ relative to $R_2 \lto \Re$.
\end{example}

In the next example we use the alternative realisation of noncommutative forms given in Section~\ref{subsec:Bar}, as this will be convenient later.

\begin{example}\label{remark:linearatotherS} Again with $A = S \otimes_k R$ the $R$-linear map defined by
\[
s: \widetilde{\Omega} S \otimes_S X \lto \widetilde{\Omega} S \otimes_S X \, , \qquad d( a e_i ) = sa \otimes e_i
\]
for $a \in S$ is the extension of a $k$-linear connection $X \lto \widetilde{\Omega} S \otimes_S X$ and so the left Atiyah class of~$X$ relative to $k \lto S$ is the operator
\[
\lAt_S(X) := [s,D]: \widetilde{\Omega} S \otimes_S X \lto \widetilde{\Omega} S \otimes_S X\,.
\]
The explicit formula is \eqref{eq:leftatiyahexp} with $s$ in place of $d$. Under the isomorphism $\widetilde{\Omega}_R A \otimes_A X \cong \widetilde{\Omega} S \otimes_S X$ this operator corresponds to the left Atiyah class of $X$ relative to the ring morphism $S \lto A$.
\end{example}




\begin{example}\label{remark:linearatother} Set $\Re = R_1 \otimes_k R_2$ with $R_i = R$ and $A = S \otimes_k \Re$. The Atiyah class of $X$ relative to $R_2 \lto \Re$ is the operator
\[
\At_{R_1}(X) := [d, D]: X \otimes_{\Re} \Omega_{R_2}(\Re) \lto X \otimes_{\Re} \Omega_{R_2}(\Re)
\]
where $d$ is the $(S \otimes_k R_2)$-linear operator $d( e_i \otimes \omega ) = (-1)^{|e_i|} e_i \otimes d\omega$. The operator $\At_{R_1}(X)$ can also be identified with the Atiyah class of $X$ relative to the ring morphism $S \otimes_k R_2 \lto A$. Similarly, if we define the $(S \otimes_k R_1)$-linear operator
\[
\lAt_{R_2}(X) := [s,D]: \widetilde{\Omega}_{R_1}(\Re) \otimes_{\Re} X \lto \widetilde{\Omega}_{R_1}(\Re) \otimes_{\Re} X 
\]
where $s( \omega \otimes e_i ) = s\omega \otimes e_i$, then the canonical isomorphism $\widetilde{\Omega}_{S \otimes_k R_1} A \otimes_{S \otimes_k \Re} X \cong \widetilde{\Omega}_{R_1}(\Re) \otimes_{\Re} X$ identifies $\lAt_{R_2}(X)$ with the left Atiyah class of $X$ relative to $S \otimes_k R_1 \lto A$.
\end{example}

\begin{example}\label{example:linearother3} With $A = \Re$ the left Atiyah class of $X$ relative to $R_2 \lto \Re$ is the operator
\[
\lAt_{R_1}(X) := \lAt: \Omega_{R_2}(\Re) \otimes_{\Re} X \lto \Omega_{R_2}(\Re) \otimes_{\Re} X
\]
given explicitly by Example \ref{example:leftatiyahexplicit}.
\end{example}

We will need ``cosmetic'' variants of these Atiyah classes where we switch the order of components in tensors. This is natural in situations where multiple Atiyah classes are combined; see Lemma \ref{lemma:atshufat} below. We take $A = \Re$ with all other notation as above, and make use of the $k$-algebra morphism $\reprod: A \otimes_k A  \lto A \otimes_k A$ defined by $\reprod ( a \otimes a' ) = a a' \otimes 1$. Given an $A$-bimodule $M$ the bimodule $\gamma_*M$ is the restriction of scalars along the ring map $\gamma$, that is, $\gamma_* M$ has the same underlying $k$-module as $M$ and the $A$-bimodule action $\star$ given by $a \star m \star a' = aa' m$. 

\begin{definition}\label{eq:leftarrowAt} We define $\Atlarrow_{R_1}(X)$ to be the operator making the diagram
\[
\xymatrix@C+2pc@R+1pc{
X \otimes \Omega_{R_1} A \ar[d]_{\At_R(X)} \ar[r]^-{\tau}_{\cong} & \reprod_* \Omega_{R_1} A \otimes X \ar[d]^{\Atlarrow_{R_1}(X)}\\
X \otimes \Omega_{R_1} A \ar[r]_-{\tau}^{\cong} & \reprod_* \Omega_{R_1} A \otimes X
}
\]
commute, where $\tau(x \otimes \omega) = (-1)^{|x||\omega|} \omega \otimes x$ is the graded twist map. Explicitly,
\be
\Atlarrow_{R_1}(X)^l( \omega \otimes e_i ) = (-1)^{l|\omega| + \binom{l}{2}} \sum_{j_1,\ldots,j_l} d(D_{j_l j_{l-1}}) \ldots d(D_{j_2 j_1}) d(D_{j_1 i}) \omega \otimes e_{j_l}\,. \label{eq:atlarrow_formula}
\ee
The arrow indicates that the operator $\Atlarrow$ puts forms on the left, whereas $\At$ puts them on the right. By pre- and post-composing with $\tau, \tau^{-1}$ we similarly define operators $\Atlarrow_{R_1}, \overset{\rightarrow}{\lAt}_{R_2}, \overset{\rightarrow}{\lAt}_{R_2}$.
\end{definition}


An important property of the ordinary Atiyah class defined using commutative forms is that the class of $X \otimes Y$ can be expressed in terms of the classes of $X$ and $Y$. The analogue for associative Atiyah classes involves the shuffle product. Hence we restrict to the special case of $A = \Re$ and let $(X,d_X)$ and $(Y,d_Y)$ be $\nZ _2$-graded free $\Re $-modules equipped with odd $\Re $-linear operators and respective homogeneous bases $\{ e_i \}_{i}$ and $\{ \by_j \}_{j}$. Then $X \otimes Y$ is given the differential $d_{X \otimes Y} = d_X \otimes 1 + 1 \otimes d_Y$ and the basis $\{ e_i \otimes \by_j \}_{i,j}$.

There are three natural operators
\begin{align*}
\At_{R_2}(X) &\in \End_k( X \otimes \Omega_{R_2}(\Re) ) \, ,\\
\Atlarrow_{R_2}(Y) &\in \End_k( \reprod_*\Omega_{R_2}(\Re) \otimes Y ) \, ,\\
\At_{R_2}(X \otimes Y) &\in \End_k( X \otimes Y \otimes \Omega_{R_2}(\Re) )\,.
\end{align*}
Via the identification of $\Omega_{R_2}(\Re)$ with $\Bar$, the shuffle product defines an operation
\begin{equation}\label{eq:shuffle_opat}
\xymatrix@C+1pc{
X \otimes \reprod_*\Omega_{R_2}(\Re) \otimes \reprod_*\Omega_{R_2}(\Re) \otimes Y \ar[r]^-{1 \otimes \times \otimes 1} & X \otimes \reprod_*\Omega_{R_2}(\Re) \otimes Y \ar[r]^-{\cong} & X \otimes Y \otimes \Omega_{R_2}(\Re)
}
\end{equation}
where the last map is the graded twist on the second two components. 

\begin{lemma}\label{lemma:atshufat} 
For $l \ge 0$ we have
\begin{align}
\At_{R_2}(X \otimes Y)^l( e_i \otimes \by_j ) &= \sum_{p+q = l} \At_{R_2}(X)^p(e_i) \times \Atlarrow_{R_2}(Y)^q(\by_j)\label{eq:atshufat}
\end{align}
where on the right-hand side we use the operation \eqref{eq:shuffle_opat}. 
\end{lemma}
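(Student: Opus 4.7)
The plan is to expand both sides using the explicit matrix formulas \eqref{eq:powerofatiyah} and \eqref{eq:atlarrow_formula} for powers of the Atiyah operators, together with the decomposition of the matrix entries of $d_{X\otimes Y}$, and to match them combinatorially via the definition of the shuffle product. The conceptual point is that, in the basis $\{e_i\otimes \by_j\}$, each matrix entry of $d_{X\otimes Y}$ is the sum of a ``$d_X$-term'' and a ``$d_Y$-term'' (with a Koszul sign); iterating $l$ times, $\At_{R_2}(X\otimes Y)^l$ becomes a sum indexed by strings of intermediate indices together with a choice, at each position, of $X$-factor versus $Y$-factor, and the Kronecker deltas collapse this to a sum over $(p,q)$-shuffles with $p+q = l$.

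First I would record the matrix decomposition
\begin{equation*}
(d_{X\otimes Y})_{(i',j'),(i,j)} \;=\; (d_X)_{i',i}\,\delta_{j',j} \,+\, (-1)^{|e_i|}\,\delta_{i',i}\,(d_Y)_{j',j}.
\end{equation*}
Substituting this into \eqref{eq:powerofatiyah} applied to $(X\otimes Y, d_{X\otimes Y})$ with its standard basis, and distributing $d$ over the sum of the two terms at each of the $l$ positions, each surviving summand corresponds to the data of a subset $S\subseteq\{1,\ldots,l\}$ of size $p$ (the positions where a $d_X$-factor acts) together with intermediate $X$-indices $i = k_0,k_1,\ldots,k_p$ and $Y$-indices $j = m_0,m_1,\ldots,m_q$. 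Such data are equivalent to the choice of a $(p,q)$-shuffle $\sigma\in\operatorname{Sh}(p,q)$ interleaving the $d_X$- and $d_Y$-factors.

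Next, for fixed $\sigma$, I would reorder the resulting product so that all $d_X$-factors sit to the left of all $d_Y$-factors in their natural order; since each $d((d_X)_{\cdots})$ and $d((d_Y)_{\cdots})$ is a 1-form in $\Omega^1\Re$, this reordering produces precisely the sign $(-1)^{|\sigma|}$. Comparing with the definition of $\sigma_\bullet$ in the shuffle product on $\Bar$ from Section \ref{subsec:Bar}, the result of summing over all $\sigma\in\operatorname{Sh}(p,q)$ and all intermediate indices (for fixed $p,q$) is exactly
\begin{equation*}
\At_{R_2}(X)^p(e_i) \,\times\, \Atlarrow_{R_2}(Y)^q(\by_j)
\end{equation*}
interpreted via the operation \eqref{eq:shuffle_opat}. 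Summing over $p+q=l$ then yields the claimed identity.

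The main obstacle will be sign bookkeeping. The overall sign $(-1)^{l(|e_i|+|\by_j|)+\binom{l+1}{2}}$ from applying \eqref{eq:powerofatiyah} to $X\otimes Y$ must match the combined contribution of: the $(-1)^{|e_i|}$ picked up at each of the $q$ positions where a $d_Y$-factor is chosen; the prefactors $(-1)^{p|e_i|+\binom{p+1}{2}}$ and $(-1)^{\binom{q}{2}}$ coming from \eqref{eq:powerofatiyah} and \eqref{eq:atlarrow_formula} respectively (the latter evaluated at $1\otimes \by_j$, for which the $|\omega|$ in \eqref{eq:atlarrow_formula} vanishes); the $(-1)^{|\sigma|}$ from reordering $d_X$-factors past $d_Y$-factors; and the twist sign in the definition of \eqref{eq:shuffle_opat}. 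The key arithmetic identity is $\binom{p+1}{2}+\binom{q}{2}+pq=\binom{l+1}{2}$, with the $pq$ produced by the graded twist on $\Omega^p\otimes\Omega^q$; together with the parity count in $|e_i|$ and $|\by_j|$ coming from the $q$ Koszul signs and the twist, this should give exactly the required global sign. Once this sign calculation is carried out carefully for one shuffle, summing over $\sigma$ and then over $p+q=l$ completes the proof.
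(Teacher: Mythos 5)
Your overall strategy coincides with the paper's: expand each side via the explicit matrix formulas and match terms indexed by $(p,q)$-shuffles and intermediate indices. (The paper iterates the one-step formula for $\At_{R_2}(X\otimes Y)$ while you substitute the matrix decomposition of $d_{X\otimes Y}$ into the $l$-fold formula \eqref{eq:powerofatiyah} directly, but these organize the same computation.) So the approach is correct and essentially identical.

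The sign analysis as written, however, does not close. The identity you call key, $\binom{p+1}{2}+\binom{q}{2}+pq=\binom{l+1}{2}$, is false; already at $p=0$, $q=1$ the left side is $0$ and the right side is $1$. The valid identity is $\binom{p+1}{2}+\binom{q+1}{2}+pq=\binom{l+1}{2}$, so your tally is off by $(-1)^q$. You also describe the Koszul sign at each $Y$-step as $(-1)^{|e_i|}$, but the sign contributed by the decomposition $(d_{X\otimes Y})_{(i',j'),(i,j)} = (d_X)_{i'i}\,\delta_{j'j} + (-1)^{|e_i|}\delta_{i'i}\,(d_Y)_{j'j}$ depends on the \emph{current} $X$-index, which shifts parity with every preceding $X$-step; these contributions therefore interleave with $|\sigma|$ rather than contributing a flat $q\,|e_i|$. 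Similarly, the twist in \eqref{eq:shuffle_opat} moves a degree-$(p{+}q)$ form past $\by_{j_q}$, giving $(-1)^{(p+q)|\by_{j_q}|}$, not a $pq$ factor. The missing $(-1)^q$ must be recovered from exactly these parity-shift and twist corrections, and that cancellation is the actual content of the sign check; it is fair that the paper also defers this, but your stated accounting is not yet correct and should not be quoted as the verification.
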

\begin{proof}
We have
\[
\At_{R_2}(X \otimes Y)(e_i \otimes \by_j) = \sum_{i_1} (-1)^{|e_{i_1}| + |\by_j|} e_{i_1} \otimes \by_j \otimes d( d_{X,i_1 i}) + \sum_{j_1} (-1)^{|\by_{j_1}|} e_i \otimes \by_{j_1} \otimes d( d_{Y, j_1 j} )\,.
\]
Iterating, we see that $\At_{R_2}(X \otimes Y)^l(e_i \otimes \by_j)$ is a sum over all indices of terms
\be\label{eq:proofofatiyahshuff}
e_{i_{l-p}} \otimes \by_{j_p} \otimes \sigma_\bullet( \underbrace{d(d_{X,i_{l-p}i_{l-p-1}}), \ldots, d(d_{X,i_1i})}_{l-p}, \underbrace{d(d_{Y,j_pj_{p-1}}), \ldots, d(d_{Y,j_1j})}_p )
\ee
where $\sigma$ is an $(l-p,p)$ shuffle and the sign that is attached to such a term is $(-1)^q$ where
\[
q = |e_{i_{l-p}}| + \ldots + |e_{i_1}| + |\by_{j_p}| + \ldots + |\by_{j_1}| + (l-p)|\by_{j_p}| + |\sigma|\,.
\]
But considering \eqref{eq:powerofatiyah} it is straightforward to check that the right-hand side of~\eqref{eq:atshufat} is a sum over the same collection of terms, and the signs match, so that \eqref{eq:atshufat} holds. 
\end{proof}

The analogous statement for $\At_{R_1}$ is proved in the same way.

\section{Perturbation and inverting unit actions}\label{section:pertandhtpy}

The fundamental technical results in this paper are constructions, using the perturbation lemma, of explicit homotopy inverses to morphisms involving the stabilised diagonal. Generically the results are geometric series in the Atiyah classes of the previous section. To give a specific example, recall that we have specified for any $1$-morphism $X \in \LG(W,V)$ a pair of natural isomorphisms
\begin{equation}\label{eq:pertandhtpy1}
\lambda: \Delta_V \otimes X \lto X \, , \qquad \rho: X \otimes \Delta_W \lto X
\end{equation}
called the \textsl{unit actions}, see \eqref{lambdarho}. Representing chain maps for the inverses of these morphisms are necessary for computing with diagrams in $\LG$, and in particular are needed for proving the Zorro moves in Section \ref{sec:Zorro}, but finding such representatives is nontrivial. 

Instead of inverting $\lambda$ and $\rho$ directly, which is difficult, we proceed by identifying $\rho$ as the shadow of a similar canonical map $\pi$ on the bar model for the diagonal via a commutative diagram
\begin{equation}\label{eq:pertdia1}
\xymatrix{
X \,\widehat{\otimes}\, \Bar \ar[dr]_-{\pi}\ar[rr]^{1 \otimes \Psi} & & X \otimes \Delta_W \ar[dl]^-{\rho}\\
& X
}
\end{equation}
where $\Psi$ is the canonical map given in (\ref{intro_psi}). Roughly speaking the inverse of $\pi$ is the geometric series in powers of the Atiyah class of $X$ and by postcomposing with $\Psi$ we obtain the desired inverse to $\rho$. A similar argument works for inverting $\lambda$. A completion of $X \otimes \Bar$ is used in order to guarantee that the geometric series converges. We return to this example in Section \ref{section:rhoandlambdainverse} below.

Another important example is the following: given an object $(k[x],W) \in \LG$ and a $1$-morphism $X \in \LG(W,W)$ consider the problem of lifting a $k[x]$-bilinear morphism of linear factorisations $X \lto k[x]$ to a morphism $X \lto \Delta_W$ along the stabilisation map $\pi_{\Delta}: \Delta_W \lto k[x]$, 
\[
\xymatrix{
X \ar[dr] \ar@{.>}[rr] & & \Delta_W \ar[dl]^-{\pi_{\Delta}} \\
& k[x]\,.
}
\]
The solution to this lifting problem is given in Section \ref{section:liftingproblem}, and will be used to give an explicit formula for the evaluation maps in Section \ref{sec:derivcoeval}.

In order to address these examples and others at the same time, we work in the following general setting: $k$ is a ring, $R, S$ are $k$-algebras and $\Re  = R_1 \otimes_k R_2$ where $R_i = R$ for $i \in \{1,2\}$. We assume that a matrix factorisation $X \in \HMF(S \otimes_k \Re, U)$ is given with homogeneous basis $\{ e_i \}_{i}$ as an $(S \otimes_k \Re)$-module. Let~$D$ denote the differential on~$X$ and write $\otimes$ for the tensor product $\otimes_{\Re}$.

We set $\Bar = \Omega_{R_2}(\Re)$ and consider the module
\[
X \,\widehat{\otimes}\, \Bar := \prod_{l \ge 0} X \otimes \Bar_l
\]
with the $\nZ _2$-grading
\[
\big( X \,\widehat{\otimes}\, \Bar \big)^i = \left( \prod_{l \in \nZ } X^i \otimes \Bar_{2l} \right) \oplus \left( \prod_{l \in \nZ } X^{i+1} \otimes \Bar_{2l+1} \right) . 
\]
It is sometimes helpful to view $X \,\widehat{\otimes}\, \Bar$ as the inverse limit of the system
\begin{equation}\label{eq:inverse_system}
\cdots \lto X \otimes \Bar/\Bar_{\ge 2} \lto X \otimes \Bar/\Bar_{\ge 1}
\end{equation}
where $\Bar_{\ge l} = \bigoplus_{i \ge l} \Bar_i \subseteq \Bar$ and the maps are the obvious quotients $\Bar/\Bar_{\ge l+1} \lto \Bar/\Bar_{\ge l}$. Note that by presenting $\Bar$ as $\Omega_{R_2}(\Re)$ we are also fixing a left and right $\Re$-action on $\Bar$, as in \eqref{eq:leftreactionb}. The left action is consumed by the tensor product with $X$, so that overall $X \, \widehat{\otimes}\, \Bar$ is an $S$-$\Re$-bimodule using the right action of $\Re$ on $\Bar$.

Let $W \in R$ be arbitrary and set $\widetilde{W} = W \otimes 1 - 1 \otimes W \in \Re$. Our first observation is that $X \,\widehat{\otimes}\, \Bar$ can be equipped as a linear factorisation of $U + \widetilde{W}$. One checks that there are well-defined $S$-$\Re$-bilinear operators on $X \,\widehat{\otimes}\, \Bar$ given by
\begin{align}
D(x_0, x_1, \ldots) &= (D(x_0), D(x_1), \ldots) \, , \nonumber\\
b'(x_0, x_1, \ldots) &= (b'(x_1),b'(x_2),\ldots) \, , \nonumber\\
d(x_0,x_1,\ldots) &= (0, d(x_0), d(x_1),\ldots) \, , \nonumber\\
s(x_0,x_1,\ldots) &= (0, s(x_0), s(x_1), \ldots) \, , \nonumber\\
d \widetilde{W} \times (x_0,x_1,\ldots) &= (0, d \widetilde{W} \times x_0, d \widetilde{W} \times x_1, \ldots) \label{eq:listofcompletions}
\end{align}
where $d$ and $s$ are extended to $X \otimes \Bar$ as in \eqref{eq:extendd} using the chosen basis. 

\begin{lemma} $( X \, \widehat{\otimes}\, \Bar, D + b' + d\widetilde{W} \times (-) )$ is an $S \otimes_k \Re$-linear factorisation of $U + \widetilde{W}$.
\end{lemma}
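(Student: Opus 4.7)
The claim has three parts: well-definedness of the listed operators on the completion, oddness of $\diffm = D + b' + d\widetilde{W}\times(-)$, and the factorisation identity $\diffm^2 = (U + \widetilde{W})\cdot 1$, together with left $\Re$-linearity.

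For well-definedness on $X\,\widehat{\otimes}\,\Bar = \prod_{l \ge 0} X \otimes \Bar_l$, the only concern is with $d, s$ and $d\widetilde{W}\times(-)$, which raise bar-degree by one: their sum over all components of a general element need not be finite in $X \otimes \Bar$, but the shift-by-one formulae in \eqref{eq:listofcompletions} show that the images lie in the completion. The operators $D$ and $b'$ preserve or decrease the degree and so extend without issue. Oddness of $\diffm$ is immediate since each summand has odd $\nZ_2$-degree.

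The main step is computing $\diffm^2$. Expanding the square of the sum of three odd operators gives
\[
\diffm^2 = D^2 + (b')^2 + (d\widetilde{W}\times-)^2 + \{D, b'\} + \{D, d\widetilde{W}\times-\} + \{b', d\widetilde{W}\times-\},
\]
where $\{-,-\}$ denotes the graded anticommutator. The diagonal terms are handled by already-established identities: $D^2 = U\cdot 1_X$ since $X$ is a matrix factorisation of $U$; $(b')^2 = 0$ by definition of the bar differential; $(d\widetilde W\times-)^2 = 0$ since $d\widetilde W\times d\widetilde W = 0$. The mixed term $\{b', d\widetilde{W}\times-\} = \widetilde{W}\cdot 1$ is precisely the content of the proof of Lemma~\ref{lemma:barisafactorisation} (where it is used to compute $d_\Bar^2$), and it carries over from $\Bar$ to $X\,\widehat{\otimes}\,\Bar$ because these two operators act only on the bar-factor. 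The remaining cross terms $\{D, b'\}$ and $\{D, d\widetilde{W}\times-\}$ both vanish: $D$ is $(S\otimes_k\Re)$-linear, while $b'$ and $d\widetilde{W}\times(-)$ come from operations on the bar-factor alone, so after accounting for Koszul signs (both $D$ and the bar operators are odd) they anticommute with $D$. Summing yields $\diffm^2 = (U + \widetilde{W})\cdot 1$.

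Left $\Re$-linearity follows directly: the left $\Re$-action on $X\,\widehat{\otimes}\,\Bar$ is the one induced from \eqref{eq:leftreactionb} (equivalently, from the left action of $\Re$ on $X$, using commutativity of $\Re$), and each summand of $\diffm$ preserves it---$D$ because it is $\Re$-linear and acts only on the $X$-factor, and $b'$ and $d\widetilde{W}\times-$ because they are left $\Re$-linear on $\Bar$. The only delicate point in the whole argument is the Koszul sign tracking in the cross-term computations $\{D,b'\} = 0$ and $\{D,d\widetilde{W}\times-\}=0$, which is a direct but careful verification under the chosen tensor-product sign convention.
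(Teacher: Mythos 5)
Your proof is correct and takes essentially the same approach as the paper, which simply notes that the identity can be checked on the inverse system and follows from the earlier lemma $d_\Bar^2 = \widetilde W \times (-)$; you unpack that lemma into the three bar-side terms $(b')^2$, $(d\widetilde W\times-)^2$, $\{b',d\widetilde W\times-\}$ and add the explicit observation that the cross terms $\{D,b'\}$ and $\{D,d\widetilde W\times-\}$ vanish by Koszul signs, which the paper leaves implicit.
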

\begin{proof}
The operator $D + b' + d\widetilde{W} \times (-)$ on $X \, \widehat{\otimes}\, \Bar$ arises from a map of the inverse system \eqref{eq:inverse_system} to itself, and the fact that this map squares to $U + \widetilde{W}$ can therefore be checked on the truncations $X \otimes \Bar/\Bar_{\ge i}$ where it follows from Lemma \ref{lemma:barisafactorisation_pre}.
\end{proof}

There is a morphism of linear factorisations of $U + \widetilde W$ over $S \otimes_k \Re$ (note that $\widetilde W = 0$ on $X \otimes R$)
\begin{equation}\label{eq:pibar}
\pi: \xymatrix{
X \, \widehat{\otimes}\, \Bar = \prod_{l \ge 0} X \otimes \Bar_l \ar@{->>}[r] & X \otimes \Bar_0 \ar[r] & X \otimes R
}
\end{equation}
where the first map is the projection and the second is the product $\Re \lto R$. Next we show using the perturbation lemma that this map is a homotopy equivalence, and we give an explicit homotopy inverse in terms of Atiyah classes. 

\begin{remark}\label{remark:specialcaseinvert} The reader should keep in mind the special case where $S = k[z], R = k[x]$ and $X$ is the extension of scalars from $S \otimes_k R$ to $S \otimes_k \Re$ via the ring map $s \otimes r \mapsto s \otimes r \otimes 1$ of a matrix factorisation $X'$ of $V - W$ over $k[x,z]$, with $V \in k[z], W \in k[x]$. Then $\pi$ is a morphism $X' \,\widehat{\otimes}_R\, \Bar \lto X'$ of linear factorisations of $V - W$.
\end{remark}

For the next lemma, note that the two factors of $\Re = R_1 \otimes_k R_2$ act in the same way on $X \otimes R$, but they act differently on $\Bar$, and hence on $X \, \widehat{\otimes} \, \Bar$. 

\begin{lemma}\label{lemma:firstdefo} There is an $R_2$-linear deformation retract of $\nZ _2$-graded complexes
\begin{equation}\label{eq:firstdefo1}
\xymatrix@C+2pc{
(X \otimes R, 0) \ar@<-1ex>[r]_-{\sigma_2} & (X \, \widehat{\otimes} \, \Bar, b') \ar@<-1ex>[l]_-{\pi}
}%
\!\!\!\xymatrix{%
{}\ar@(ur,dr)[]^{-d}
}%
\end{equation}
where $\sigma_2(e_i \otimes a ) = e_i \otimes (1 \otimes a)$.
\end{lemma}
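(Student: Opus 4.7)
The plan is to verify directly the four conditions defining an $R_2$-linear deformation retract of $\nZ_2$-graded complexes for the diagram in \eqref{eq:firstdefo1}: the maps $\pi$ and $\sigma_2$ are $R_2$-linear chain maps, $\pi\sigma_2 = 1_{X \otimes R}$, and $\sigma_2\pi - 1 = b'(-d) + (-d)b'$. The first identity is immediate from the definitions: $\sigma_2$ places an element of $X \otimes R$ in $X \otimes \Bar_0$ with a $1$ inserted on the left, and $\pi$ projects back to $\Bar_0$ and multiplies, so $\pi\sigma_2(e_i \otimes a) = \pi(e_i \otimes 1 \otimes a) = e_i \otimes a$. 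The chain-map compatibility will be trivial for $\sigma_2$ because its image lies in $\Bar_0$ on which $b'$ (restricted to endomorphisms of $\Bar$) vanishes, and will reduce for $\pi$ to a one-line calculation: on $\Bar_1$ one has $b'((a_0,a_1)\otimes a_2) = a_0a_1 \otimes a_2 - a_0 \otimes a_1a_2$, whose image under the projection-then-multiplication is $a_0a_1a_2 - a_0a_1a_2 = 0$, and on $\Bar_{\geq 2}$ the image of $b'$ lies in $\Bar_{\geq 1}$ which is killed by $\pi$.

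The heart of the proof is verifying the homotopy identity $\sigma_2\pi - 1 = -(b'd + db')$ on $X \,\widehat{\otimes}\, \Bar$. Because the operators $b'$ and $d$ act component-wise in the decomposition $X \,\widehat{\otimes}\, \Bar = \prod_{n\geq 0} X \otimes \Bar_n$, the $n$-th component of $(b'd + db')(x_0,x_1,\ldots)$ is $(b'd + db')(x_n)$ for $n \geq 1$ and $b'd(x_0)$ for $n = 0$ (the $db'$ term in degree zero vanishes because $b'$ sends $\Bar_0$ to $0$ inside $\Bar$). On components $n \geq 1$ the identity \eqref{b'd+db'} gives $(b'd + db')(x_n) = x_n$ directly. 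The degree-zero component is the only place requiring any care, and I would handle it by a short direct calculation of $b'd$ on an element $e_i \otimes a_0 \otimes a_1 \in X \otimes \Bar_0$: applying $d$ inserts a $1$, and the subsequent $b'$ collapses this to $e_i \otimes a_0 \otimes a_1 - e_i \otimes 1 \otimes a_0 a_1$, the second term being exactly $\sigma_2\pi(e_i \otimes a_0 \otimes a_1)$. Assembling these cases yields $b'd + db' = 1 - \sigma_2\pi$, as needed.

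The main obstacle — really the only subtlety — is this reinterpretation of the contracting-homotopy identity \eqref{b'd+db'} once the augmentation $\Bar \twoheadrightarrow R$ is dropped: the multiplication map $\Bar_0 \to R$ that ordinarily extends $b'$ (making $b'd + db' = 1$ hold strictly on the augmented complex) is absorbed into the new term $\sigma_2\pi$ on our unaugmented $\Bar$. Once this is observed, the rest is routine. The $R_2$-linearity of $\pi$, $\sigma_2$, and $d$ is manifest from the formulas: $\sigma_2$ touches only the right-hand $R_2$-factor, $d$ inserts a $1$ on the left-most $R_1$-side and commutes with the $R_2$-action on $\Bar = \Omega_{R_2}(\Re)$ by the very definition of relative forms, and $\pi$ is the composition of the projection to $\Bar_0 = R_1 \otimes R_2$ with multiplication, both of which are $R_2$-linear.
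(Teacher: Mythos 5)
Your proposal is correct and follows the same route as the paper's one-line proof, which simply cites \eqref{b'd+db'}; you explicitly unpack the one genuine subtlety, namely that $b'd+db'=1$ holds only on the augmented bar complex, and that in degree zero the deviation $1\otimes a_0a_1$ produced by $b'd$ (once the augmentation is dropped) is precisely $\sigma_2\pi$. The chain-map checks, the componentwise decomposition of $b'd+db'$, and the $R_2$-linearity claims are all accurate.
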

\begin{proof}
To prove that this is a deformation retract in the sense of Section \ref{section:perturbation_lemma} we have to show that $\sigma_2 \pi = 1 - b' d - db'$. For this $X$ is irrelevant, and we are dealing with the operator $b'$ on the normalised bar complex \eqref{eq:normalised_bar_cpx_line} and its splitting $d$, with $\sigma_2 = d: R \lto R \otimes R = \Bar_0$. The identity follows from $b' d + d b' = 1$ given in (\ref{b'd+db'}), as long as one is careful to note that the $b'$ in $( X \, \widehat{\otimes} \, \Bar, b' )$ vanishes on $\Bar_0$ whereas the $b'$ in (\ref{b'd+db'}) is, in degree zero, what in the context of \eqref{eq:firstdefo1} we are calling $\pi$.
\end{proof}

\begin{lemma}\label{lemma:smallpertde} The perturbation $\delta = D + d\widetilde{W} \times (-)$ is small on $X\,\widehat{\otimes} \, \Bar$.
\end{lemma}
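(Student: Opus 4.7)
The plan is to show that $1 - \delta h = 1 + \delta d$ is an isomorphism of $R$-modules on $X \,\widehat{\otimes}\, \Bar$ by exhibiting its inverse as a geometric series that converges in the completion. The filtration by bar-degree is the key tool: let $F^l = \Bar_{\geq l}$, so that $X \,\widehat{\otimes}\, \Bar$ is (by construction) the inverse limit of the $X \otimes \Bar/F^l$.

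The first step is to check how the operators move through the filtration. Since $h = -d$ sends $\Bar_l$ into $\Bar_{l+1}$, it strictly raises bar-degree by $1$. Of the two summands of $\delta$, the operator $D$ preserves bar-degree (it acts on the $X$-factor only), and $d\widetilde W \times (-)$ raises it by $1$ (because $d\widetilde W \in \Bar_1$ and the shuffle product is additive in degree). Hence $\delta$ preserves the filtration, and composing gives
\begin{equation*}
\delta h \bigl( X \otimes F^l \bigr) \subseteq X \otimes F^{l+1}, \qquad (\delta h)^m\bigl( X \otimes F^l \bigr) \subseteq X \otimes F^{l+m}
\end{equation*}
for all $m \geq 0$.

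Next, for any $\xi \in X \,\widehat{\otimes}\, \Bar$ the formal sum $\eta := \sum_{m \geq 0} (\delta h)^m(\xi)$ makes sense in the inverse limit: its image modulo $F^{l+1}$ is the finite sum $\sum_{m=0}^{l} (\delta h)^m(\xi) \bmod F^{l+1}$, and these truncations are obviously compatible under the projections $\Bar/F^{l+1} \twoheadrightarrow \Bar/F^l$. A formal manipulation then gives $(1 - \delta h)(\eta) = \xi$, so $1 - \delta h$ is surjective. For injectivity, if $(1-\delta h)(\eta) = 0$ then $\eta = (\delta h)^m(\eta) \in X \otimes F^m$ for all $m$; since $\bigcap_m X \otimes F^m = 0$, we conclude $\eta = 0$. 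As $\delta, h$ and their products are $R$-linear (here $R$ acts through either factor of $\Re$; the point is that the filtration is $R$-stable), this produces the required $R$-module inverse.

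There is essentially no obstacle: the statement is a direct consequence of the fact that $h$ strictly raises bar-degree while $\delta$ preserves it, combined with the completeness of $X \,\widehat{\otimes}\, \Bar$ with respect to the bar-filtration. The only thing that needs minor care is the observation that $d\widetilde W \times (-)$, as well as $D$, is filtration-preserving, so no compatibility with the $\nZ_2$-grading or the differentials is required at this stage.
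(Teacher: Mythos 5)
Your proof is correct and follows the same approach as the paper's: the paper's proof is a one-line assertion that the geometric series $\sum_{m\geq 0} (\delta h)^m$ converges on the completion $\prod_{l\geq 0} X \otimes \Bar_l$ and hence inverts $1 - \delta h$, while you have supplied the underlying filtration argument (that $h$ strictly raises bar-degree while both summands of $\delta$ preserve it) that makes that convergence clear. The only difference is cosmetic: you treat surjectivity and injectivity separately rather than observing directly that the convergent series $\tau = \sum_m (\delta h)^m$ is a two-sided inverse, but both packagings are valid.
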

\begin{proof}
Let $h$ be $-d$. Because we are working with $\prod_{l \ge 0} X \otimes \Bar_l$ it is clear that the sum $\sum_{l \ge 0} (\delta h)^l$ converges as an operator on $X \, \widehat{\otimes}\, \Bar$ and this gives the desired inverse to $1 - \delta h$.
\end{proof}

Next we describe the homotopy inverse of $\pi$ using Atiyah classes. Note that $X$ is a free $(S \otimes_k \Re)$-module and we are using the notation of Example \ref{remark:linearatother}, i.\,e.
\[
\At_{R_1}(X) = [d, D]: X \otimes \Bar \lto X \otimes \Bar\,.
\]
We interpret this as an operator on the product $X \,\widehat{\otimes}\, \Bar$ as in \eqref{eq:listofcompletions}.

\begin{proposition}\label{prop:finalpertdefo} The morphism $\pi$ of \eqref{eq:pibar} is an $R_2$-linear homotopy equivalence with inverse
\[
\sigma_\infty = \sum_{l \ge 0} (-1)^l \At_{R_1}(X)^l \sigma_2\,.
\]
More precisely, there is an $R_2$-linear deformation retract
\be\label{eq:finalpertdefo001}
\xymatrix@C+2pc{
(X \otimes R, D \otimes 1) \ar@<-1ex>[r]_-{\sigma_\infty} & (X \, \widehat{\otimes} \, \Bar, D + b' + d\widetilde{W} \times (-)) \ar@<-1ex>[l]_-{\pi}
}.
\ee
\end{proposition}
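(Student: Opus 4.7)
The proof is a direct application of the perturbation lemma (Proposition~\ref{prop:pertlemma}) to the deformation retract datum of Lemma~\ref{lemma:firstdefo}, with the perturbation $\delta = D + d\widetilde W \times (-)$, which is small by Lemma~\ref{lemma:smallpertde} and whose sum with $b'$ squares correctly by the lemma immediately preceding the statement. I first verify the three side conditions with $h = -d$: $h\sigma_2 = 0$ because applying $d$ to $(1) \otimes a$ prepends a fresh leading $1 \in R_1$, thereby pushing the former $1$ into $\bar R_1 = R_1/k$ where it vanishes; $\pi h = 0$ because $h$ strictly raises bar-degree while $\pi$ is supported on $\Bar_0$; and $h^2 = d^2 = 0$. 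Proposition~\ref{prop:pertlemma} then produces the perturbed deformation retract with the standard formulas for $\sigma_\infty$, $\pi_\infty$, $h_\infty$ and $d_{X,\infty}$.

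Two easy degree observations then simplify the output. Since $\delta h$ strictly raises bar-degree while $\pi$ kills all of $\Bar_{\ge 1}$, every term $\pi (\delta h)^m$ with $m \ge 1$ vanishes. This collapses $\pi_\infty = \pi + \pi\tau h$ to $\pi_\infty = \pi$, and reduces $d_{X,\infty} = \pi\tau\sigma_2$ to $\pi\delta\sigma_2$; the $d\widetilde W \times (-)$-summand of $\delta$ lands in $\Bar_1$ and is also killed by $\pi$, so only $\pi D \sigma_2$ survives, and this is immediately identified with the restriction of the differential to the diagonal, denoted $D \otimes 1$ on $X \otimes R$.

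The main content is to identify $\sigma_\infty = \sum_{m \ge 0}(h\delta)^m \sigma_2$ with $\sum_{l \ge 0}(-1)^l \At_{R_1}(X)^l \sigma_2$. For this I would restrict attention to the $k$-subspace of $X \otimes_{\Re} \Bar$ spanned by elementary tensors $x \otimes \omega$ with $\omega$ a \emph{normalised} bar element, namely $\omega$ of the form $(1_{R_1}, a_1, \ldots, a_n) \otimes r$ belonging to $k \otimes \bar R_1^{\otimes n} \otimes R_2 \subseteq \Omega^n R_1 \otimes R_2 = \Bar_n$; denote this subspace of normalised forms by $\Bar'$. Three observations are required: (i) $\sigma_2$ lands in $X \otimes \Bar'$; (ii) $d$ vanishes identically on $\Bar'$, since prepending a $1 \in R_1$ forces the previous leading $1$ into $\bar R_1$ where it dies; (iii) both $\At_{R_1}(X)$ and the shuffle operator $d\widetilde W \times (-)$ preserve $X \otimes \Bar'$, the former because on $\Bar'$ one has $\At_{R_1}(X) = [d,D] = dD$ and $d$ automatically lands in $\Bar'$, the latter because $d\widetilde W = dW \otimes 1$ has leading $R_1$-entry $1$ and shuffling two forms with leading $1$ retains a leading $1$. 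Writing $\At_{R_1}(X) = dD + Dd$ and using $h = -d$ we get
\[
h\delta \;=\; -dD - d(d\widetilde W \times (-)) \;=\; -\At_{R_1}(X) + Dd - d(d\widetilde W \times (-));
\]
on $X \otimes \Bar'$ both correction terms vanish by (ii) and (iii), since each factors through an application of $d$ to an element of $\Bar'$. Hence $h\delta = -\At_{R_1}(X)$ on $X \otimes \Bar'$, and induction on $m$ using (i) and (iii) yields $(h\delta)^m \sigma_2 = (-\At_{R_1}(X))^m \sigma_2$. The principal obstacle is the stability of $\Bar'$ under $d\widetilde W \times (-)$ in (iii), which is the one step requiring an explicit shuffle-product computation.
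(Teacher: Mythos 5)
Your argument is correct and takes essentially the same approach as the paper's proof. Both hinge on the same key observation---that $d$ annihilates normalised bar elements (forms with leading $R_1$-coefficient $1$) and that the iterated application of $h\delta$ to the image of $\sigma_2$ stays in this class---with the paper packaging it as a commutator rewrite $\sigma_\infty = \sum_l (-1)^l [d,\delta]^l \sigma_2$ together with Lemma~\ref{lemma:shortobs} to dispose of the $[d, d\widetilde{W}\times(-)]$ corrections, while you track the normalised subspace $\Bar'$ directly and show $h\delta = -\At_{R_1}(X)$ there; the underlying computation, including the shuffle-product check, is the same.
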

\begin{proof}
It follows from the perturbation lemma (Proposition \ref{prop:pertlemma}) with Lemmas~\ref{lemma:firstdefo} and~\ref{lemma:smallpertde} that
\[
\xymatrix@C+2pc{
(X \otimes R, b_\infty) \ar@<-1ex>[r]_-{\sigma_\infty} & (X \, \widehat{\otimes} \, \Bar, D + b' + d\widetilde{W} \times (-)) \ar@<-1ex>[l]_-{\pi_\infty}
}
\]
is a deformation retract datum, where $\tau = \sum_{l \ge 0} (-1)^l (\delta d)^l \delta$
\begin{align*}
\sigma_\infty = \sum_{l \ge 0} (-1)^l (d \delta)^l \sigma_2 \, , \qquad
\pi_\infty = \pi + \pi \tau h\, , \qquad
b_\infty = \pi \tau \sigma_2\,.
\end{align*}
Clearly $\pi \delta = \pi D$ and $\pi$ vanishes on $\delta d$, so $\pi \tau = \pi D$. It follows that $b_\infty = D \otimes 1$ and $\pi_\infty = \pi - \pi D d = \pi$. So there is a deformation retract datum \eqref{eq:finalpertdefo001}, where we may use $d^2 = 0$ and $d \sigma_2 = 0$ to write
\[
\sigma_\infty = \sum_{l \ge 0} (-1)^l \big[ d, \delta \big]^l \sigma_2 = \sum_{l \ge 0} (-1)^l [d, D + d\widetilde{W} \times (-)]^l \sigma_2\,.
\]
Expanding this yields $\sum_{l \ge 0} (-1)^l [d, D]^l \sigma_2$ plus terms that look like
\begin{equation}\label{eq:finalpertdefo2}
\cdots [d, d\widetilde{W} \times (-)] [d,D]^i \sigma_2
\end{equation}
for some $i \ge 0$. Applying $[d,D]^i \sigma_2$ to an element of $X \otimes R$ produces a tensor whose form component is of the type $da_0 \ldots da_n \otimes a_{n+1}$. By Lemma \ref{lemma:shortobs} below $[d, d\widetilde{W} \times (-)]$ vanishes on such a tensor, so all terms of the form (\ref{eq:finalpertdefo2}) vanish and $\sigma_\infty$ is as given in the statement of the proposition.
\end{proof}

\begin{lemma}\label{lemma:shortobs} On $\Bar$ we have $[d, d\widetilde{W} \times (-)] = d \widetilde{W} \cdot d(-)$ and $[s, d\widetilde{W} \times (-)] = s(-) \cdot d\widetilde{W}$.
\end{lemma}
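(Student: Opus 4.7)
The plan is to verify both identities by direct computation on a general basis element $\omega = a_0 \otimes a_1 \otimes \cdots \otimes a_n \otimes a_{n+1} \in \Bar_n$, unwinding the explicit formulas for the shuffle product $\times$, the operators $d,s$, and the products $\cdot$ (in $\Omega R_1 \otimes R_2$) and $*$ (in $R_1 \otimes \widetilde\Omega R_2$). Concretely, I would first expand
\[
d\widetilde W \times \omega \;=\; \sum_{j=0}^n (-1)^j\, (a_0,a_1,\ldots,a_j,W,a_{j+1},\ldots,a_n) \otimes a_{n+1}
\]
in $\Bar_{n+1}$, which is just the $n+1$ possible positions in which the single ``form'' $dW$ can be shuffled among $da_1,\ldots,da_n$ with the usual $(-1)^j$ shuffle signs.

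For the first identity, the conceptual point is that $d$ is a graded derivation of the dot product on $\Omega R_1\otimes R_2$, and $d(d\widetilde W)=0$, $d(d\beta_i)=0$ for every $i$. Writing each summand above as a dot product
$(\beta_0 d\beta_1\cdots d\beta_j)\cdot d\widetilde W \cdot (d\beta_{j+1}\cdots d\beta_n)\otimes\beta_{n+1}$, the Leibniz rule then forces $d$ to act only on the leading factor, producing a term with $d\widetilde W$ at position $j+2$ in the new sequence. Separately, $d\widetilde W\times d\omega$ is a sum of $n+2$ shuffle-terms indexed by the position $k=0,1,\ldots,n+1$ of $d\widetilde W$ in the length-$(n+1)$ form $d\omega$. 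Keeping track of the anticommutator signs (both $d$ and $d\widetilde W\times(-)$ are odd), the ``interior'' terms $k=1,\ldots,n+1$ pair up with the summands of $d(d\widetilde W\times\omega)$ and cancel in pairs; the only survivor is the boundary term $k=0$, which is precisely $d\widetilde W\cdot d\omega$.

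The second identity is proved by an entirely parallel computation in the $\widetilde\Omega_{R_1}(\Re)$ picture. Now $s$ appends a $1$ on the right with a sign $(-1)^n$ on $\Bar_n$ and is a graded derivation for the $*$-product, with $s^2=0$ and $s(sa_i)=0$. The same term-by-term matching applies, but the surviving boundary term is now on the right-hand end and gives $s(-)\cdot d\widetilde W$ with $\cdot$ understood as the $*$-product.

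The main obstacle is purely combinatorial: carefully reindexing the sums so that the interior cancellation becomes manifest, and tracking the various sign conventions consistently — in particular the $(-1)^n$ in the definition of $s$, the Koszul signs in the shuffle formula, and the fact that the bracket $[-,-]$ is an anticommutator since both operators have odd degree. Once these bookkeeping issues are organised the identities reduce to elementary cancellations.
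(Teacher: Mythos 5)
Your proof is correct and follows the same strategy as the paper: expand $d\widetilde W\times\omega$ as a sum over the $n+1$ insertion positions for $dW$, expand $d\widetilde W\times d\omega$ over the $n+2$ positions, observe that the interior terms cancel pairwise against $d(d\widetilde W\times\omega)$, and identify the sole surviving boundary term with $d\widetilde W\cdot d\omega$ (the key being that in $\Omega R = R\otimes\bar R^{\otimes\bullet}$ with $\bar R = R/k$ the ``other'' component of $(1,W)\cdot(1,a_0,\ldots,a_n)$ vanishes since it has a $1$ in a $\bar R$-slot). The paper simply applies $d$ directly rather than invoking the Leibniz rule for $\cdot$, but this is cosmetic; the cancellation pattern and the treatment of the $s$-identity are identical.
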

\begin{proof}
With $\alpha = d( W \otimes 1 - 1 \otimes W ) = dW \otimes 1$ and $\omega = a_0 da_1 \ldots da_n \otimes a_{n+1}$ we have
\begin{align*}
\alpha \times \omega &= a_0 dW da_1 \ldots da_n \otimes a_{n+1} - a_0 da_1 dW da_2 \ldots da_n \otimes a_{n+1}\\
& \qquad+ \ldots + (-1)^n a_0 da_1 \ldots da_n dW \otimes a_{n+1}\,.
\end{align*}
On the other hand
\begin{align*}
\alpha \times d\omega &= dW da_0 da_1 \ldots da_n \otimes a_{n+1} - da_0 dW da_1 \ldots da_n \otimes a_{n+1}\\
&\qquad+ \ldots + (-1)^{n+1} da_0 da_1 \ldots da_n dW \otimes a_{n+1}\,,
\end{align*}
from which it is clear that $d( \alpha \times \omega ) + \alpha \times d\omega = \alpha \cdot d\omega$. The proof for $s$ is similar.
\end{proof}

We will also need to consider the left action of $\Bar$ on $X$. Since the proofs are completely parallel, we only state the results. We now take $\Bar = \widetilde{\Omega}_{R_1}(\Re)$ which is the same underlying graded $k$-module as $\Omega_{R_2}(\Re)$ but with a different left and right action of $\Re$, and define
\[
\Bar \,\widehat{\otimes}\, X := \prod_{l \ge 0} (\Bar_l \otimes X)\,.
\]
By Lemma \ref{lemma:barisafactorisation2} this is a linear factorisation of $U + \widetilde W$ over $\Re \otimes_k S$ when equipped with the operator $D + b' + d\widetilde{W} \times (-)$. The morphism of linear factorisations $\pi$ is defined as the projection followed by multiplication
\begin{equation}\label{eq:pibar2}
\pi: \xymatrix{
\Bar \, \widehat{\otimes}\, X = \prod_{l \ge 0} \Bar_l \otimes X \ar@{->>}[r] & \Bar_0 \otimes X \ar[r] & R \otimes X \, .
}
\end{equation}
There is an $R_1$-linear deformation retract of $\nZ _2$-graded complexes
\be
\xymatrix@C+2pc{
(R \otimes X, 0) \ar@<-1ex>[r]_-{\sigma_1} & (\Bar \, \widehat{\otimes} \, X, b') \ar@<-1ex>[l]_-{\pi}
}%
\!\!\!\xymatrix{%
{}\ar@(ur,dr)[]^{s}
}%
\ee
where $\sigma_1( a \otimes e_i ) = (a \otimes 1) \otimes e_i$. This follows from the identity $b' s + s b' = 1$ in (\ref{b'd+db'}). To this we apply the perturbation lemma with perturbation $\delta = D + d\widetilde{W} \times (-)$ to obtain an $R_1$-linear deformation retract
\[
\xymatrix@C+2pc{
(R \otimes X, 1 \otimes D) \ar@<-1ex>[r]_-{\sigma_\infty} & (\Bar \, \widehat{\otimes} \, X, D + b' + d\widetilde{W} \times (-)) \ar@<-1ex>[l]_-{\pi} \, .
}
\]
The upshot is (again using the notation of Example \ref{remark:linearatother}):

\begin{proposition}\label{prop:finalpertdefo2} The morphism $\pi$ of \eqref{eq:pibar2} is an $R_1$-linear homotopy equivalence with inverse
\[
\sigma_\infty = \sum_{l \ge 0} \lAt_{R_2}(X)^l \sigma_1\,.
\]
\end{proposition}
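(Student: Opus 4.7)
The plan is to mirror the proof of Proposition \ref{prop:finalpertdefo} verbatim, with the only changes being the swap of $d \leftrightarrow s$, $\sigma_2 \leftrightarrow \sigma_1$, and of left/right $\Re$-actions. First I would assemble the unperturbed $R_1$-linear deformation retract
\[
\xymatrix@C+2pc{
(R \otimes X, 0) \ar@<-1ex>[r]_-{\sigma_1} & (\Bar \, \widehat{\otimes} \, X, b') \ar@<-1ex>[l]_-{\pi}
}
\!\!\!\xymatrix{
{}\ar@(ur,dr)[]^{s}
}
\]
which is exactly the left-handed analog of Lemma \ref{lemma:firstdefo}: the required identity $b's + sb' = -1 + \sigma_1\pi$ is the second half of \eqref{b'd+db'}, and the side conditions $\pi s = 0$, $s\sigma_1 = 0$, $s^2 = 0$ are immediate from the formula for $s$ on the normalised bar complex together with the fact that $\sigma_1$ lands in $\Bar_0 \otimes X$.

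Next, I would verify that the perturbation $\delta = D + d\widetilde{W} \times (-)$ is small: because $\Bar \, \widehat{\otimes}\, X = \prod_{l \ge 0} \Bar_l \otimes X$ and the component $\delta s$ preserves or raises bar-degree without lowering it, the sum $\sum_{m \ge 0} (\delta s)^m$ converges componentwise, providing the inverse to $1 - \delta s$. This is the exact analog of Lemma \ref{lemma:smallpertde}. Applying Proposition \ref{prop:pertlemma} then produces a perturbed deformation retract
\[
\xymatrix@C+2pc{
(R \otimes X, b_\infty) \ar@<-1ex>[r]_-{\sigma_\infty} & (\Bar \, \widehat{\otimes} \, X, D + b' + d\widetilde{W} \times (-)) \ar@<-1ex>[l]_-{\pi_\infty}
}
\]
with $\tau = \sum_{m \ge 0}(\delta s)^m \delta$. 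Since $\pi$ kills the image of $s$ and also annihilates everything in strictly positive bar-degree, one gets $\pi \tau = \pi D$, so that $b_\infty = \pi \tau \sigma_1 = 1 \otimes D$ and $\pi_\infty = \pi + \pi \tau s = \pi$.

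The main computation is then the evaluation of
\[
\sigma_\infty = \sigma_1 + s \tau \sigma_1 = \sum_{l \ge 0}(s \delta)^l \sigma_1 = \sum_{l \ge 0}[s, D + d\widetilde{W}\times(-)]^l \sigma_1,
\]
where the last equality uses $s^2 = 0$ and $s\sigma_1 = 0$. Expanding the commutator gives the desired $\sum_l \lAt_{R_2}(X)^l \sigma_1$ plus correction terms containing at least one factor of $[s, d\widetilde{W}\times(-)]$. The main (and really only nontrivial) obstacle is showing that these correction terms vanish; this is exactly the content of the analog of Lemma \ref{lemma:shortobs}, which gives $[s, d\widetilde{W}\times(-)](-) = s(-) \cdot d\widetilde{W}$. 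Iterates of $[s,D]\sigma_1$ produce tensors whose $\Bar$-components already end with the extra $1$ appended by $s$ on the right, so applying $s$ once more returns $0$ in the normalised bar complex. Hence every correction term is killed, and the formula for $\sigma_\infty$ simplifies to $\sum_{l \ge 0}\lAt_{R_2}(X)^l \sigma_1$ as claimed.
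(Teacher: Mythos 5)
Your proof is correct and follows exactly the route the paper intends: the paper states Proposition \ref{prop:finalpertdefo2} with the remark that ``the proofs are completely parallel,'' and you have supplied the parallel argument faithfully, including the correct sign bookkeeping (the homotopy is $h=s$, not $-s$, which is why no $(-1)^l$ appears in $\sigma_\infty$, in contrast to Proposition \ref{prop:finalpertdefo} where $h=-d$). One small remark: the identity $[s, d\widetilde{W}\times(-)](\omega) = s(\omega)\cdot d\widetilde{W}$ you need for killing the cross terms is not merely an ``analog'' of Lemma \ref{lemma:shortobs} --- it is literally the second formula stated in that lemma, so no new work is required there.
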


Now we turn to the Koszul stabilisation of the diagonal. Assume that $R = k[x_1,\ldots,x_n]$ and let $(\Delta, d_{\Delta})$ be the finite-rank matrix factorisation of $\widetilde{W}$ over $\Re = R_1 \otimes_k R_2$ given in Section~\ref{subsec:bicatLG} where $d_{\Delta} = \delta_{+} + \delta_{-}$. The stabilisation morphism $\pi_{\Delta}: \Delta \lto R$ of \eqref{DeltaWstabmap} is compatible with the morphism $\pi$ of \eqref{eq:pibar} in the sense that there is a commutative diagram
\[
\xymatrix{
X \,\widehat{\otimes}\, \Bar \ar[dr]_-{\pi}\ar[rr]^{1 \otimes \Psi} & & X \otimes \Delta \ar[dl]^-{1 \otimes \pi_\Delta}\\
& X \otimes R
}
\]
where the horizontal map is the morphism of linear factorisations over $S \otimes_k \Re$
\[
1 \otimes \Psi: \xymatrix{
X \,\widehat{\otimes}\, \Bar = \prod_{l \ge 0} X \otimes \Bar_l \ar@{->>}[r] & \bigoplus_{0 \le l \le n} X \otimes \Bar_l \ar[r]^-{1 \otimes \Psi} & X \otimes \Delta
}
\]
in which the first map is the projection. Recall that $X \, \widehat{\otimes}\, \Bar$ is made into an $S$-$\Re$-bimodule using right multiplication by $\Re$ on $\Bar$, and $X \otimes \Delta = X \otimes_{\Re} \Delta$ comes with a natural $\Re$-action. In particular the maps $1 \otimes \pi_{\Delta}$ and its cousin $\pi_{\Delta} \otimes 1: \Delta \otimes X \lto R \otimes X$ are $\Re$-linear, and it makes sense to say that as $R_1$ or $R_2$-linear maps they are homotopy equivalences.

\begin{lemma}\label{lemma:koszulstab1} $1 \otimes \pi_\Delta$ is an $R_2$-linear (resp.~ $\pi_\Delta \otimes 1$ is an $R_1$-linear) homotopy equivalence.
\end{lemma}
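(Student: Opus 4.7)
The proof will exploit the commutative triangle displayed immediately above the lemma together with Proposition \ref{prop:finalpertdefo}, which already identifies $\pi$ as an $R_2$-linear homotopy equivalence with explicit inverse $\sigma_\infty$. My candidate inverse to $1 \otimes \pi_\Delta$ is simply
\[
\widetilde{\sigma}_\infty := (1 \otimes \Psi) \circ \sigma_\infty : X \otimes R \lto X \otimes \Delta,
\]
which is a morphism of linear factorisations of $U$ because both factors are, and which is $R_2$-linear because $\sigma_\infty$ is by Proposition \ref{prop:finalpertdefo} and $\Psi$ is left $\Re$-linear (hence in particular $R_2$-linear), as observed after the definition \eqref{intro_psi}.

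The first identity to check is $(1 \otimes \pi_\Delta) \circ \widetilde{\sigma}_\infty = 1_{X \otimes R}$, and this holds on the nose: commutativity of the displayed triangle gives $(1 \otimes \pi_\Delta)(1 \otimes \Psi) = \pi$, so $(1 \otimes \pi_\Delta) \widetilde{\sigma}_\infty = \pi \sigma_\infty = 1_{X \otimes R}$ by Proposition \ref{prop:finalpertdefo}. The second identity to check, $\widetilde{\sigma}_\infty \circ (1 \otimes \pi_\Delta) \simeq 1_{X \otimes \Delta}$, is where the key trick enters. The map $\Phi$ from \eqref{intro_phi} satisfies $\Psi \Phi = 1_\Delta$, so $(1 \otimes \Psi)(1 \otimes \Phi) = 1_{X \otimes \Delta}$ strictly. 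Inserting this factorisation on the right and using associativity,
\[
\widetilde{\sigma}_\infty (1 \otimes \pi_\Delta) = (1 \otimes \Psi) \sigma_\infty (1 \otimes \pi_\Delta)(1 \otimes \Psi)(1 \otimes \Phi) = (1 \otimes \Psi) \sigma_\infty \pi (1 \otimes \Phi) \simeq (1 \otimes \Psi)(1 \otimes \Phi) = 1_{X \otimes \Delta},
\]
where the homotopy in the penultimate step comes from $\sigma_\infty \pi \simeq 1_{X \widehat\otimes \Bar}$ furnished by Proposition \ref{prop:finalpertdefo}. Explicitly, a contracting homotopy is given by $(1 \otimes \Psi) \circ h_\infty \circ (1 \otimes \Phi)$ with $h_\infty$ as in Proposition \ref{prop:finalpertdefo}, and this is visibly $R_2$-linear.

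For the $R_1$-linear statement, I replace $X \,\widehat{\otimes}\, \Bar$ (with $\Bar = \Omega_{R_2}(\Re)$) by $\Bar \,\widehat{\otimes}\, X$ (with $\Bar = \widetilde{\Omega}_{R_1}(\Re)$), invoke Proposition \ref{prop:finalpertdefo2} in place of Proposition \ref{prop:finalpertdefo}, and appeal to the right $\Re$-linearity of $\Psi$ instead of its left $\Re$-linearity. The construction of the candidate inverse and the two verifications are completely parallel.

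The argument is really an instance of the two-out-of-three property in the homotopy category, using the fact that $\Psi$ admits $\Phi$ as a strict section to promote the one-sided inverse arising from the triangle into a genuine homotopy inverse. All of the hard perturbation-theoretic work has been carried out in Proposition \ref{prop:finalpertdefo}, so the remaining obstacle is the essentially formal verification that the linearity properties are inherited by $\widetilde{\sigma}_\infty$, which is transparent from the constructions of $\sigma_\infty$ and $\Psi$.
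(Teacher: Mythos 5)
Your construction is natural and the first half is correct: $\widetilde{\sigma}_\infty = (1\otimes\Psi)\circ\sigma_\infty$ is $R_2$-linear and a chain map, and $(1\otimes\pi_\Delta)\widetilde{\sigma}_\infty = \pi\sigma_\infty = 1_{X\otimes R}$ holds strictly. The gap is in the other direction. To pass the homotopy $\sigma_\infty\pi \simeq 1_{X\,\widehat\otimes\,\Bar}$ through $(1\otimes\Psi)(-)(1\otimes\Phi)$ and obtain $(1\otimes\Psi)\sigma_\infty\pi(1\otimes\Phi)\simeq(1\otimes\Psi)(1\otimes\Phi)$, you need \emph{both} $1\otimes\Psi$ and $1\otimes\Phi$ to be chain maps: the general principle $f\simeq g \Rightarrow afb\simeq agb$ (with homotopy $ahb$) only works when $a$ and $b$ intertwine the differentials. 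The paper proves $\Psi$ is a morphism of linear factorisations (Lemma~\ref{PsiHF}), but it only proves that $\Phi$ is a dg-algebra map between the \emph{unperturbed} complexes $(\Delta,\delta_-,\wedge)$ and $(\Bar,b',\times)$ (Lemma~\ref{PhiPsiDG}); $\Phi$ does \emph{not} commute with the full differentials $d_{\Delta_W}$ and $d_{\Bar}$. Since $\Phi$ is a morphism of dg-algebras we have $\Phi\delta_+ = \big(\Phi\Psi(d\widetilde W)\times(-)\big)\Phi$, and $\Phi\Psi\neq 1_{\Bar}$ (only $\Psi\Phi = 1_\Delta$), so in general $\Phi\Psi(d\widetilde W)\neq d\widetilde W$ and $\Phi\delta_+\neq (d\widetilde W\times(-))\Phi$. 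Concretely, with $n=1$ and $W=x^2$ one computes $\Phi(\delta_+ 1)=\Phi\big((x+x')\theta_1\big)=x\,dx\otimes 1 + dx\otimes x$, whereas $d\widetilde W\times\Phi(1)=dW\otimes 1$, and these differ. As a result your candidate homotopy $(1\otimes\Psi)h_\infty(1\otimes\Phi)$ does not satisfy the required anticommutator identity: one picks up an extra term $(1\otimes\Psi)h_\infty\big((1\otimes\Phi)d_\Delta - d_{\Bar}(1\otimes\Phi)\big)$ which has no reason to vanish.

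By contrast, the paper avoids the bar model entirely for this lemma. It observes that the Koszul complex of the quasi-regular sequence $x_1-x_1',\ldots,x_n-x_n'$ is split exact, giving an $R_2$-linear (resp.\ $R_1$-linear) deformation retract of $(X\otimes\Delta,\,1\otimes\delta_-)$ onto $(X\otimes R,0)$ directly, and then applies the perturbation lemma to the small perturbation $\delta = D\otimes 1 + 1\otimes\delta_+$. This is a genuinely different route, working intrinsically on $X\otimes\Delta$ rather than transferring from $X\,\widehat\otimes\,\Bar$. Your idea of transferring from the bar model is essentially what Corollary~\ref{corollary:koszulstab2} does \emph{afterwards} to obtain the explicit formula $\pi_2^{-1}=\sum_l(-1)^l\Psi\At^l\sigma_2$ for the inverse — but the logical order matters: the paper first establishes the homotopy equivalence by the intrinsic Koszul argument, and only then reads off the formula by postcomposing $\sigma_\infty$ with $\Psi$. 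Your proof inverts this order and thereby runs into the non-chain-map nature of $\Phi$.
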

\begin{proof}
With $\Delta_j = \bigwedge^j (\bigoplus_{i=1}^n \Re \theta_i)$ there is a split exact sequence
\[
\xymatrix{
0 \ar[r] & \Delta_n \ar[r]^-{\delta_{-}} & \cdots \ar[r]^-{\delta_{-}} & \Delta_0 \ar[r] & R \ar[r] & 0
}
\]
and the splittings provide the $R_2$-linear $\sigma: R \lto \Delta$ and homotopy $h$ making
\[
\xymatrix@C+2pc{
(X \otimes R, 0) \ar@<-1ex>[r]_-{\sigma} & (X \otimes \Delta, 1 \otimes \delta_{-}) \ar@<-1ex>[l]_-{1 \otimes \pi_\Delta}
}%
\!\!\!\xymatrix{%
{}\ar@(ur,dr)[]^{-h}
}%
\]
into an $R_2$-linear deformation retract datum. Moreover $\delta = D \otimes 1 + 1 \otimes \delta_{+}$ is a small perturbation and the perturbation lemma shows that $1 \otimes \pi_\Delta$ is a homotopy equivalence. A similar argument applies to $\pi_{\Delta} \otimes 1$.
\end{proof}

\begin{corollary}\label{corollary:koszulstab2} An $R_2$-linear homotopy inverse to $1 \otimes \pi_\Delta$ is given by
\begin{equation}\label{eq:koszulstab2_1}
(1 \otimes \pi_\Delta)^{-1} := \sum_{l \ge 0} (-1)^l \Psi \At_{R_1}(X)^l \sigma_2\,.
\end{equation}
\end{corollary}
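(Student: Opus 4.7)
The plan is to exploit the commutative triangle displayed just before the corollary, which factors $\pi$ as $\pi = (1 \otimes \pi_\Delta) \circ (1 \otimes \Psi)$, and to transport the explicit homotopy inverse of $\pi$ from Proposition~\ref{prop:finalpertdefo} across this factorisation.

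First I would note that Proposition~\ref{prop:finalpertdefo} produces a deformation retract datum, so in particular the strict identity $\pi \circ \sigma_\infty = 1_{X \otimes R}$ holds, where $\sigma_\infty = \sum_{l \ge 0}(-1)^l \At_{R_1}(X)^l \sigma_2$. Composing through the factorisation,
\[
(1 \otimes \pi_\Delta) \circ \big((1 \otimes \Psi) \circ \sigma_\infty\big) = \pi \circ \sigma_\infty = 1_{X \otimes R},
\]
so $\pi^{-1}_2 := (1 \otimes \Psi) \circ \sigma_\infty$ is a strict right inverse to $1 \otimes \pi_\Delta$. Since $\Psi$ kills forms of degree exceeding $n$, the a priori infinite sum $\sum_{l \ge 0}(-1)^l \Psi \At_{R_1}(X)^l \sigma_2$ truncates to $l \le n$ and is hence well-defined without any completion; $R_2$-linearity of $\pi^{-1}_2$ is inherited from that of $\Psi$ (which is $\Re$-linear for both actions) and of $\sigma_\infty$.

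To upgrade this one-sided inverse to a two-sided homotopy inverse, I would invoke Lemma~\ref{lemma:koszulstab1}, which supplies some $R_2$-linear homotopy inverse $\sigma'$ to $1 \otimes \pi_\Delta$. A standard two-out-of-three argument
\[
\pi^{-1}_2 \simeq \sigma' \circ (1 \otimes \pi_\Delta) \circ \pi^{-1}_2 = \sigma' \circ 1_{X \otimes R} = \sigma'
\]
then shows $\pi^{-1}_2 \simeq \sigma'$, and in particular $\pi^{-1}_2$ is itself a homotopy inverse.

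There is no real technical obstacle here: Proposition~\ref{prop:finalpertdefo} absorbs all the perturbation-lemma work, Lemma~\ref{lemma:koszulstab1} independently certifies that $1 \otimes \pi_\Delta$ is a homotopy equivalence, and the commutative triangle mediates between the two. The only small point to verify is convergence of the sum, which is immediate from the vanishing of $\Psi$ above form degree $n$.
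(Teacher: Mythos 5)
Your proof is correct and takes essentially the same route as the paper: the paper's terse proof ("postcomposing $\sigma_\infty$ with $\Psi$") implicitly relies on the same commutative triangle factoring $\pi$ through $1\otimes\Psi$ and $1\otimes\pi_\Delta$, together with Lemma~\ref{lemma:koszulstab1} to upgrade the strict one-sided inverse to a two-sided homotopy inverse. You have merely made explicit the strict identity $\pi\circ\sigma_\infty = 1$ coming from the deformation retract, the two-out-of-three step, and the degree truncation ensuring the sum is finite — all of which is what the paper's argument tacitly uses.
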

\begin{proof}
By Proposition \ref{prop:finalpertdefo}, $\pi: X \,\widehat{\otimes}\, \Bar \lto X \otimes R$ is an $R_2$-linear homotopy equivalence with inverse $\sum_{l \ge 0} (-1)^l \At_{R_1}(X)^l \sigma_2$. Postcomposing with $\Psi$ gives the desired inverse for $1 \otimes \pi_{\Delta}$.
\end{proof}

From commutativity of the diagram
\[
\xymatrix{
\Bar \,\widehat{\otimes}\, X \ar[dr]_-{\pi}\ar[rr]^{\Psi \otimes 1} & & \Delta \otimes X \ar[dl]^-{\pi_\Delta \otimes 1}\\
& R \otimes X
}
\]
with the morphism $\pi$ from \eqref{eq:pibar2}, we deduce:

\begin{corollary}\label{corollary:koszulstab2r1} An $R_1$-linear homotopy inverse to $\pi_\Delta \otimes 1$ is given by
\begin{equation}\label{eq:koszulstab2_2}
(\pi_\Delta \otimes 1)^{-1} := \sum_{l \ge 0} \Psi \lAt_{R_2}(X)^l \sigma_1\,.
\end{equation}
\end{corollary}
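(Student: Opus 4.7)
The plan is to mirror the proof of Corollary~\ref{corollary:koszulstab2} on the opposite side, swapping the roles of $\At_{R_1}(X)$ and $\lAt_{R_2}(X)$, and using Proposition~\ref{prop:finalpertdefo2} in place of Proposition~\ref{prop:finalpertdefo}. The argument decomposes cleanly into three steps: identify the bar-side homotopy inverse explicitly, transport it along $\Psi\otimes 1$ via the commutative triangle appearing just above the statement, and then upgrade the resulting one-sided inverse to a two-sided one.

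First I would invoke Proposition~\ref{prop:finalpertdefo2} to conclude that the map $\pi:\Bar\,\widehat{\otimes}\,X\lto R\otimes X$ of \eqref{eq:pibar2} is an $R_1$-linear homotopy equivalence with explicit homotopy inverse
\[
\sigma_\infty = \sum_{l\ge 0} \lAt_{R_2}(X)^l\,\sigma_1 \, .
\]
Next I would observe that the displayed triangle factors $\pi$ as $(\pi_\Delta\otimes 1)\circ(\Psi\otimes 1)$; commutativity here is immediate from the fact that $\Psi$ restricted to $\Bar_0=\Re$ is the identity (the empty product of $\theta_i$'s), so that after projection the two composites to $R\otimes X$ both become multiplication. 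Post-composing $\sigma_\infty$ with $\Psi\otimes 1$ then yields
\[
(\Psi\otimes 1)\circ\sigma_\infty = \sum_{l\ge 0}\Psi\,\lAt_{R_2}(X)^l\,\sigma_1 \, ,
\]
which satisfies $(\pi_\Delta\otimes 1)\circ(\Psi\otimes 1)\circ\sigma_\infty = \pi\circ\sigma_\infty\simeq 1$ and is therefore a right $R_1$-linear homotopy inverse to $\pi_\Delta\otimes 1$.

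To upgrade this to a two-sided inverse I would appeal to Lemma~\ref{lemma:koszulstab1} (in its $R_1$-linear version), which already shows that $\pi_\Delta\otimes 1$ is itself a (two-sided) $R_1$-linear homotopy equivalence; a standard formal argument then gives that any one-sided homotopy inverse to a homotopy equivalence agrees with the two-sided inverse up to homotopy, so $(\Psi\otimes 1)\sigma_\infty$ is the desired $R_1$-linear homotopy inverse. There is no genuine obstacle beyond what was already dealt with in Corollary~\ref{corollary:koszulstab2}: all the technical content is absorbed into Proposition~\ref{prop:finalpertdefo2} and Lemma~\ref{PhiPsiDG}, and the only mild subtlety is checking the commutativity of the triangle in the presence of the completion $\Bar\,\widehat{\otimes}\,X$, which is harmless because both $\pi$ and $(\pi_\Delta\otimes 1)(\Psi\otimes 1)$ factor through the finite truncation in total $\bar R$-degree at most $n$.
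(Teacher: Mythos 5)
Your proof is correct and follows exactly the route the paper intends: the paper states Corollary~\ref{corollary:koszulstab2r1} as an immediate consequence of the commuting triangle together with Proposition~\ref{prop:finalpertdefo2}, mirroring the proof of Corollary~\ref{corollary:koszulstab2}, which is precisely what you have spelled out (including the harmless step of upgrading the one-sided inverse to a two-sided one using Lemma~\ref{lemma:koszulstab1}).
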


\subsection{Inverses of unit actions}\label{section:rhoandlambdainverse}

Let us return to the problem of inverting the unit actions $\lambda$ and $\rho$ in the setting of Remark~\ref{remark:specialcaseinvert}, so that $R = k[x], S = k[z]$, $X$ is a matrix factorisation over $S \otimes_k R$ of $V - W$ with basis $\{ e_i \}_i$.

We begin with the right action $\rho$, so we take $X' = X \otimes_R \Re$ as the relevant matrix factorisation in the above. In this case $1 \otimes \pi_{\Delta}$ is the right unit action $\rho: X \otimes_R \Delta_W \lto X$, which is therefore an $(S \otimes _k R)$-linear homotopy equivalence. The homotopy inverse of $\rho$ is given by \eqref{eq:koszulstab2_1}, which is
\be\label{eq:formula_rhoinverse}
\rho^{-1} = \sum_{l \ge 0} (-1)^l \Psi \At_R(X)^l \sigma_2
\ee
using the notation of Example \ref{remark:linearatotherR}. The summands are composites of $(S \otimes_k R)$-linear maps
\[
\xymatrix@C+3.5pc{
X \ar[r]^-{\sigma_2} & X \otimes_R \Re \ar[r]^-{\At_R(X)^l \otimes_k 1_{R_2}} & X \otimes_R \Omega_{R_2}(\Re) \ar[r]^-{1 \otimes \Psi} & X \otimes_R \Delta_W
}
\]
where $\sigma_2(e_ia) = e_i \otimes (1 \otimes a)$ for $a \in R$. 

Often \eqref{eq:formula_rhoinverse} is the best presentation of $\rho^{-1}$, but note that using the formulas \eqref{intro_psi} and \eqref{eq:powerofatiyah} we can also be very concrete
(now writing $d_X$ instead of~$D$ in~\eqref{eq:powerofatiyah}): 
\be\label{eq:formularhoinv}
\rho^{-1}(e_i) = \sum_{l \ge 0} \sum_{i_1 < \cdots < i_l} \sum_j (-1)^{\binom{l}{2} + l|e_i|} e_j \otimes \left\{ \partial^{x,x'}_{[i_1]} d_X \ldots \partial^{x,x'}_{[i_l]} d_X \right\}_{ji} \theta_{i_1} \ldots \theta_{i_l}\,.
\ee

To invert the left action $\lambda$ we write $\Se = S_1 \otimes_k S_2$ with $S_i = S$, and take $X' = \Se \otimes_S X$ in the above (i.\,e. we switch the roles of $R$ and $S$) so that $\pi_{\Delta} \otimes 1$ is the left unit action $\lambda: \Delta_V \otimes_S X \lto X$. The $(S \otimes_k R)$-linear homotopy inverse of $\lambda$ is given by \eqref{eq:koszulstab2_2}, which is
\be\label{eq:formulalambdainv}
\lambda^{-1} = \sum_{l \ge 0} \Psi \lAt_S(X)^l \sigma_1
\ee
using the notation of Example \ref{remark:linearatotherS}. The summands are composites of $(S \otimes_k R)$-linear maps
\[
\xymatrix@C+3.5pc{
X \ar[r]^-{\sigma_1} & \Se \otimes_S X \ar[r]^-{1_{S_1} \otimes_k \lAt_S(X)^l} & \widetilde{\Omega}_{S_1}(\Se) \otimes_S X \ar[r]^-{\Psi} & \Delta_V \otimes_S X
}
\]
where $\sigma_1(ae_i) = (a \otimes 1) \otimes e_i$ for $a \in S$. Explicitly
\be
\lambda^{-1}(e_i) = \sum_{l \ge 0} \sum_{i_1 < \cdots < i_l} \sum_j \theta_{i_1} \ldots \theta_{i_l} \left\{ \partial^{z,z'}_{[i_l]} d_X \ldots \partial^{z,z'}_{[i_1]}d_X \right\}_{ji} \otimes e_j\,.
\ee
Notice that we have shown that $\lambda, \rho$ are homotopy equivalences, and provided explicit inverses, for an arbitrary matrix factorisation $X$ (i.\,e.~not necessarily finite-rank).

\subsection{The lifting problem}\label{section:liftingproblem}

The universal property of $\pi_{\Delta}: \Delta_W \lto R$ is that for any matrix factorisation $Y \in \hmf(\Re , \widetilde{W})$ and morphism of linear factorisations $\varphi: Y \lto R$ there is a unique (up to homotopy) morphism $\varphi_{\textup{lift}}$ making the following diagram commute up to homotopy: 
\be\label{eq:liftingproblem2}
\xymatrix{
Y \ar[dr]_-{\varphi} \ar@{.>}[rr]^-{\varphi_{\textup{lift}}} & & \Delta_W \ar[dl]^-{\pi_\Delta} \\
& R
} .
\ee
In this section we give an explicit formula for $\varphi_{\textup{lift}}$.

Having chosen a homogeneous basis $\{ e_i \}_{i}$ for $Y$, the formula is written in terms of the maps
\[
\xymatrix@C+2pc{
Y \ar[r]^-{\lAt_{R_1}(Y)^l} & \Omega_{R_2}(\Re) \otimes_{\Re } Y \ar[r]^-{1 \otimes \varphi'} & \Omega_{R_2}(\Re) \otimes_{\Re } \Re  \cong \Omega_{R_2}(\Re) \ar[r]^-{\Psi} & \Delta_W
} 
\]
using the notation of Example \ref{example:linearother3}, and the map $\varphi': Y \lto \Re$ which is the unique $\Re$-linear map defined on basis elements by $\varphi'(e_i) = 1 \otimes \varphi(e_i)$.

\begin{proposition}\label{prop:liftingresult} 
In the above notation
\begin{equation}\label{eq:liftingresult}
\varphi_{\textup{lift}} = \sum_{l \ge 0} \Psi (1 \otimes \varphi') \lAt_{R_1}(Y)^l
\end{equation}
is a morphism of matrix factorisations making \eqref{eq:liftingproblem2} commute.
\end{proposition}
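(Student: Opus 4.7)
The plan is to factor the proposed formula as $\varphi_{\textup{lift}} = \Psi \circ (1 \otimes \varphi') \circ \sigma_\infty$, where
\[
\sigma_\infty(y) := \sum_{l \ge 0} \lAt_{R_1}(Y)^l(1 \otimes y) \in \Bar \,\widehat{\otimes}_\Re\, Y
\]
for $\Bar = \Omega_{R_2}(\Re)$, and to verify the two conditions of the lifting problem separately.

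For the identity $\pi_\Delta \circ \varphi_{\textup{lift}} = \varphi$, observe that $\Psi$ sends $\Bar_l$ into the $\theta$-degree $l$ summand of $\Delta_W$, which $\pi_\Delta$ annihilates when $l \ge 1$. Hence only the $l = 0$ summand survives, and it evaluates to $\pi_\Delta(1 \otimes \varphi(y)) = \varphi(y)$, so the triangle \eqref{eq:liftingproblem2} commutes on the nose.

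For the chain-map property $d_{\Delta_W} \varphi_{\textup{lift}} = \varphi_{\textup{lift}} d_Y$, it suffices to verify that each of the three factors is a chain map with respect to appropriate differentials. $\Psi$ is a chain map by Lemma~\ref{PsiHF}. The map $1 \otimes \varphi'$ from $(\Bar \,\widehat{\otimes}_\Re\, Y,\, d_Y + b' + d\widetilde W \times (-))$ to $(\Bar, d_\Bar)$ is a chain map because $b'$ and $d\widetilde W \times (-)$ act on the $\Bar$-component alone, while the $d_Y$-contribution dies against $\varphi' d_Y = 0$ (a consequence of $\varphi$ being a morphism of linear factorisations into the zero-differential $R$). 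Finally, $\sigma_\infty$ is a chain map from $(Y, d_Y)$ into the same perturbed linear factorisation by the mirror of Proposition~\ref{prop:finalpertdefo}: start from the deformation retract datum
\[
\xymatrix@C+2pc{(Y, 0) \ar@<-1ex>[r]_-{\sigma} & (\Bar \,\widehat{\otimes}_\Re\, Y, b') \ar@<-1ex>[l]_-{\pi}}\!\!\!\xymatrix{{}\ar@(ur,dr)[]^{-d}}
\]
(the left-handed analogue of Lemma~\ref{lemma:firstdefo}, with $\sigma(y) = 1 \otimes y$ and $d$ the left connection on $\Bar \otimes Y$) and perturb by $\delta = d_Y + d\widetilde W \times (-)$. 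Exactly as in Proposition~\ref{prop:finalpertdefo}, Lemma~\ref{lemma:shortobs} combined with $d^2 = 0$ on $\Bar$ forces the terms in the expansion $\sum_l (h \delta)^l \sigma$ that involve $d\widetilde W$ to vanish, leaving $\sigma_\infty = \sum_l \lAt_{R_1}(Y)^l \sigma$ as claimed.

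The main obstacle is executing this mirror perturbation carefully: one must confirm that the left-handed analogues of Lemma~\ref{lemma:firstdefo} and Proposition~\ref{prop:finalpertdefo} produce the left Atiyah class $\lAt_{R_1}(Y)$ of Example~\ref{example:linearother3} rather than the right Atiyah class $\At_{R_1}$ appearing previously. The computation is formally parallel, with left and right $\Re$-actions interchanged, and convergence in $\prod_l \Bar_l \otimes_\Re Y$ is automatic from the $\nZ_{\ge 0}$-grading on $\Bar$.
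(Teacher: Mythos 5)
Your factorisation of $\varphi_{\textup{lift}}$ as $\Psi \circ (1 \otimes \varphi') \circ \sigma_\infty$ and the verification that $\pi_\Delta \circ \varphi_{\textup{lift}} = \varphi$ (only the $l=0$ term survives) are both fine, but your verification that the composite is a chain map has genuine gaps. This is also a genuinely different route from the paper: the paper applies Corollary~\ref{corollary:koszulstab2} to $Y^{\vee}$, passes through the chain of quasi-isomorphisms $\Hom_{\Re}(Y,\Delta_W) \cong Y^{\vee}\otimes\Delta_W \to Y^{\vee}\otimes R \cong \Hom_{\Re}(Y,R)$, applies the already-constructed homotopy inverse $\pi_2^{-1}$ (so the result is automatically a morphism), and only then computes that the expression in $\At_{R_1}(Y^{\vee})$ coincides with \eqref{eq:liftingresult} via \eqref{eq:leftatiyahexp}.

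The problem is that $1\otimes\varphi'$ is \emph{not} a chain map with respect to the differentials you specify. First, $\varphi' d_Y \neq 0$: what is true is $\varphi d_Y = 0$ in $R$, but $\varphi'(e_i)=1\otimes\varphi(e_i)$ lands in $\Re$ and $\varphi'(d_Y(e_i)) = \sum_j (d_Y)_{ji}\,(1\otimes\varphi(e_j))$ only lies in the kernel of the multiplication $\Re\to R$, it does not vanish. Second, the assertion that $b'$ and $d\widetilde W\times(-)$ ``act on the $\Bar$-component alone'' does not give what you need. To identify the target of $1\otimes\varphi'$ with $(\Bar, d_\Bar)$ you use $\Bar\otimes_{\Re}\Re\cong\Bar$ via the dg-algebra right $\Re$-action on $\Omega_{R_2}(\Re)$ (the one the paper emphasises is \emph{not} \eqref{eq:rightreactionb}), and $b'$ is not right $\Re$-linear for that action: a direct check with $\omega = da_1\otimes a_2$ and $r=s\otimes t$ shows $b'(\omega\cdot r) \neq b'(\omega)\cdot r$. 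Finally, the ``mirror'' deformation retract you invoke is not the one the paper proves: Proposition~\ref{prop:finalpertdefo2} uses $\widetilde\Omega_{R_1}(\Re)$ with contracting homotopy $s$, not $\Omega_{R_2}(\Re)$ with $d$. Your variant would require $\Bar\,\widehat{\otimes}_{\Re}\,Y$ (with the dg-algebra right action on $\Omega_{R_2}(\Re)$) to be a linear factorisation of $\widetilde W$ under $d_Y+b'+d\widetilde W\times(-)$, and computing $(d_Y)^2 + [b',d\widetilde W\times]$ gives $\widetilde W$ acting twice --- once through each tensor factor --- rather than once, so this would need to be addressed before the perturbation lemma applies.
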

\begin{proof}
We apply Corollary \ref{corollary:koszulstab2} to see that the composite
\[
\xymatrix@C+2pc{
\Hom_{\Re }(Y, \Delta_W) \ar[r]^-{\cong}_-{\xi^{-1}} & Y^{\vee} \otimes_{\Re } \Delta_W \ar[r]_-{1 \otimes \pi_{\Delta}} & Y^{\vee} \otimes_{\Re } R \ar[r]^-{\cong}_-{\xi} & \Hom_{\Re }(Y, R)
}
\]
is a homotopy equivalence, with $\xi$ denoting the canonical isomorphisms (see Section \ref{section:canonicalmaps}). Evaluating the homotopy inverse $\xi \circ (1 \otimes \pi_\Delta)^{-1} \circ \xi^{-1}$ on the cohomology class of $\varphi$ yields the map $\varphi_{\textup{lift}}$ defined by
\[
\varphi_{\textup{lift}} = \sum_i \sum_{l \ge 0} (-1)^l \xi \Psi \At_{R_1}(Y^\vee)^l \sigma_2( e_i^* \otimes \varphi(e_i) )\,.
\]
A computation yields
\[
\varphi_{\textup{lift}}(e_j) = \sum_{l \ge 0}\sum_{j_1,\ldots,j_l} (-1)^{l|e_j| + l} \Psi\left( d( d_{Y,j_1j}) \ldots d( d_{Y,j_l j_{l-1}} ) \cdot (1 \otimes \varphi(e_{j_l}) ) \right)
\]
which by \eqref{eq:leftatiyahexp} agrees with the right-hand side of \eqref{eq:liftingresult}.
\end{proof}

\section{Evaluation and coevaluation}\label{sec:derivcoeval}

We will show that every $1$-morphism in $\LG$ has a left and right adjoint. Specifically, if a $1$-morphism $W \lto V$ in $\LG$ is given by a finite-rank matrix factorisation $X$ of $V - W$ over $k[x,z]$, where $W\in R = k[x_1,\ldots,x_n]$ and $V\in S = k[z_1,\ldots,z_m]$, then we prove that the $1$-morphisms
\be\label{eq:derivcoeval11}
X^\dual = R[n] \otimes_R X^\vee \, , \qquad {}^\dual X = X^\vee \otimes_S S[m]
\ee
are respectively the right and left adjoints of $X$ in $\LG$. The dual 
$X^\vee = \Hom_{S\otimes_k R}(X, S\otimes_k R)$ 
is the matrix factorisation of $W - V$ described in \eqref{eq:differentials_adjoints}. To prove that these $1$-morphisms are adjoint to $X$ we define in this section two pairs of evaluation and coevaluation maps
\begin{align}
\coev_X &: \Delta_V \lto X \otimes_R {}^\dual X
\, , \qquad
\eval_X: {}^\dual X \otimes_S X \lto \Delta_W \, , \label{eq:derivcoeval1}\\
\widetilde\coev_X&: \Delta_W \lto X^\dual \otimes_S X
\, , \qquad
\widetilde\eval_X: X \otimes_R X^\dual \lto \Delta_V \label{eq:derivcoeval2}
\end{align}
and then in Section~\ref{sec:Zorro} we will prove that the Zorro moves~\eqref{Zorros} and~\eqref{otherZorros} hold for these maps. We begin by giving the explicit formulas for the evaluation and coevaluation maps in terms of Atiyah classes and residues, in Definition \ref{defa:first_coevblah} and Definition \ref{defa:first_evblah} below. It is not immediately obvious that these complicated formulas even define chain maps, but in Section \ref{subsec:derivcoeval} and Section \ref{subsec:eval} we show how to construct these chain maps from simple inputs using homological perturbation. On a first reading, we suggest that the reader proceed directly from the definition of the evaluation and coevaluation maps to the proof in Section~\ref{sec:Zorro} that these morphisms define adjunctions. For earlier work in this direction see~\cite{brs0909.0696, cr1006.5609}. 

Throughout we write $R = k[x]$, $S = k[z]$ and $\Re  = R_1 \otimes_k R_2$ with $R_i = R$ and $\Se  = S_1 \otimes_k S_2$ with $S_i = S$. Then $\eval_X, \widetilde\coev_X$ are $\Re$-linear morphisms of linear factorisations of $\widetilde{W} = W \otimes 1 - 1 \otimes W$, while $\coev_X, \widetilde\eval_X$ are $\Se$-linear morphisms of linear factorisations of $\widetilde{V} = V \otimes 1 - 1 \otimes V$. 

\begin{remark} The matrix factorisations $X^\dual, {}^\dual X$ are canonically isomorphic to $X^\vee[n]$ and $X^\vee[m]$ respectively, but there are good reasons to prefer the presentation of \eqref{eq:derivcoeval11}, for example the graphical calculus of Section \ref{sec:wiggliesandsigns}. While $R[n] \otimes_R X^\vee$ is isomorphic to $X^\vee[n]$ with no intervention of signs, the isomorphism $X^\vee[m] \cong X^\vee \otimes_S S[m]$, $\nu \lmt (-1)^{m|\nu|} \nu$, does involve signs.
\end{remark}

The chain level representations for the evaluation and coevaluation maps depend on a choice of homogeneous basis $\{ e_i \}_{i}$ for $X$ with dual basis $\{ e_i^* \}_{i}$, but the homotopy equivalence classes are independent of this choice. We also give completely elementary expressions for these maps involving only divided difference operators, see Remarks \ref{remark:explicitev} and \ref{remark:coev_divideddiff}. 

To define the coevaluation maps we need to introduce various notation, beginning with:
\[
\iota_X = \sum_{j} (-1)^{|e_j|} e_j^* \otimes e_j \in X^\dual \otimes_S X, \qquad \iota'_X = \sum_j e_j \otimes e_j^* \in X \otimes_R {}^\dual X\,.
\]
In the exterior algebra which is the underlying graded module of $\Delta_W$ we write $\wedge$ for the multiplication, and $\varepsilon: \Delta_W \lto \Re$ is the map from \eqref{eq:vareps}. Since $X^\dual \otimes_S X$ is an $\Re$-module, we can talk about the Atiyah class with respect to the ring map $R_2 \lto \Re$ (Example \ref{remark:linearatother}) which is defined by taking the commutator with the operator $d$ on $\Bar = \Omega_{R_2}(\Re)$ which ``differentiates'' in the $R_1$-directions:
\[
\At_{R_1}(X^\dual \otimes_S X) = [d, d_{X^\dual \otimes X}]: (X^\dual \otimes_S X) \otimes_{\Re} \Bar \lto (X^\dual \otimes_S X) \otimes_{\Re} \Bar\,.
\]
Note that $[d, d_{X^\dual \otimes X}] = [d, d_{X^\dual} \otimes 1_X]$ as $d$ is $R_2$-linear. The twisted version $\Atlarrow_{R_1}(X^\dual \otimes_S X)$ of this Atiyah class is defined by formally moving the noncommutative forms to the left (see Definition \ref{eq:leftarrowAt}) and given $\gamma \in \Delta_W$ and $l \ge 0$ we may define a map, writing $X^\dual X$ for $X^\dual \otimes_S X$,
\[
\xymatrix{
X^\dual X \ar[rr]^-{\Atlarrow_{R_1}(X^\dual X)^l} & & \gamma_* \Bar \otimes_{\Re} ( X^\dual X ) \ar[r]^-{\Psi} & \Delta_W \otimes_{\Re} (X^\dual X) \ar[r]^-{\gamma \wedge (-)} & \Delta_W \otimes_{\Re} (X^\dual X) \ar[r]^-{\varepsilon} & X^\dual X
}.
\]
These maps are used to define $\widetilde\coev_X$, and similar constructions go into $\coev_X$:

\begin{definition}\label{defa:first_coevblah} The coevaluation maps are defined for $\gamma \in \Delta_W$ and $\gamma' \in \Delta_V$ by
\begin{align}
\widetilde\coev_X(\gamma) &= \sum_{l \ge 0} \varepsilon \left( \gamma \wedge  (-1)^{l+nl} \Psi \Atlarrow_{R_1}(X^\dual \otimes_S X)^l( \iota_X ) \right) , \label{eq:coev10-2}\\
\coev_X( \gamma' ) &= \sum_{l \ge 0} \varepsilon \left( \gamma' \wedge (-1)^{m+l+ml} \Psi \Atlarrow_{S_1}(X \otimes_R {}^\dual X)^l( \iota'_X ) \right) . \label{eq:coev102}
\end{align}
\end{definition}

Next we come to the evaluation maps. These maps take values in the exterior algebra $\Delta$, which is $\nZ$-graded by form-degree, and we remark that to prove the adjointness of ${}^\dual X$ and $X^\dual$ to $X$ in $\LG$ it will actually be enough to describe the components of $\widetilde \eval_X$ and $\eval_X$ that land in form-degree zero (see Lemmas \ref{lemma:morphismevalzero} and \ref{lemma:morphismevalzeroother}). However, the full formulas are necessary in general string diagrams.

To write down chain maps it is necessary to make various choices. As the construction in Section~\ref{subsec:eval} below will show (see particularly Remark~\ref{remark:indeptlambda}) the result is independent of all of these choices up to homotopy. For $1 \le i \le n$ we choose a null-homotopy $\lambda_i$ on $X$ for the action of $\partial_{x_i} W$, for example $\lambda_i = - \partial_{x_i}(d_X)$ is such a null-homotopy by the Leibniz rule, and for $1 \le j \le m$ we choose a null-homotopy $\mu_j$ on $X$ for the action of $\partial_{z_j} V$, for example $\mu_j = \partial_{z_j}(d_X)$, and define
\[
\Lambda^{(x)} = \lambda_1 \ldots \lambda_n\,, \qquad
\Lambda^{(z)} = \mu_1 \ldots \mu_m\,.
\]

\begin{definition}\label{defa:first_evblah} The chain level representatives for the evaluation maps are
\begin{align}
\widetilde\eval_X( \eta \otimes \nu ) & = \sum_{l \ge 0} (-1)^{n+n|\eta|} \Res_{R/k} \left[ \frac{ \Psi \big< \lAt_{S_1}(X \otimes_R X^\vee)^l( \Lambda^{(x)} \eta \otimes \nu ) \big> \underline{\operatorname{d}\!x}}{\partial_{x_1}W, \ldots, \partial_{x_n} W} \right]\label{eq:constructevalmap00} , \\
\eval_X( \nu \otimes \eta ) & = \sum_{l \ge 0} (-1)^m \Res_{S/k} \left[ \frac{ \Psi \big< \lAt_{R_1}(X^{\vee} \otimes_S X)^l( \nu \Lambda^{(z)} \otimes \eta )\big> \underline{\operatorname{d}\!z}}{\partial_{z_1}V, \ldots, \partial_{z_m} V} \right]\label{eq:constructevalmap01}
\end{align}
where $\underline{\operatorname{d}\!x} = \ud x_1 \ldots \ud x_n$ and $\underline{\ud z} = \ud z_1 \ldots \ud z_m$,
and in~\eqref{eq:constructevalmap00} the map $\langle - \rangle$ is the $(\Se \otimes_k R)$-linear map
\begin{gather*}
\langle - \rangle: X \otimes_R X^\vee \lto \Se \otimes_k R\,,\qquad 
\langle e_i \otimes e_j^* \rangle = (-1)^{|e_i||e_j|} \delta_{ij} \, , 
\end{gather*}
while in \eqref{eq:constructevalmap01} it denotes the $(\Re \otimes_k S)$-linear map
$$
\langle - \rangle: X^{\vee} \otimes_S X \lto \Re \otimes_k S 
\, , \qquad 
\langle e_i^* \otimes e_j \rangle = \delta_{ij}\,.
$$
\end{definition}

Writing $X X^\vee$ for $X \otimes_R X^\vee$ the numerator of \eqref{eq:constructevalmap00} involves the maps
\[
\xymatrix@C+1pc{
X X^\vee \ar[r]^-{\lAt_{S_1}^l} & \Omega_{S_2}(\Se) \otimes_{\Se} X X^\vee \ar[r]^-{1 \otimes \langle - \rangle} & \Omega_{S_2}(\Se) \otimes_k R \ar[r]^-{\Psi \otimes 1} & \Delta_V \otimes_k R \ar[r]^-{\Res} & \Delta_V
}.
\]
We have written the formulas with Atiyah classes of e.\,g.~$X \otimes_R X^\vee$ rather than $X \otimes_R {}^\dual X$ since this simplifies the consideration of signs.

\begin{remark} There is a question of global signs. If we multiply $\eval$ and $\coev$ by a nonzero scalars~$a$ and~$a^{-1}$, respectively, then the resulting $2$-morphisms still define an adjunction between $X^\dual$ and $X$. The interaction between the two adjunctions fixing our normalisation\footnote{which differs from \cite{survey} by a sign $(-1)^m$ on both $\eval$ and $\coev$} is the pivotality identity of Section \ref{sec:wiggliesandsigns}.
\end{remark}

\subsection{Coevaluation}\label{subsec:derivcoeval}

In this section we present the coevaluation morphism $\widetilde\coev_X$ and the derivation of its explicit chain level representative. Afterwards we consider the other coevaluation $\coev_X$, but since the derivation is almost identical we will not provide all details. 
Let~$Y$ also be a $1$-morphism $W \lto V$ in $\LG$. 

\begin{proposition}\label{prop:isogivescoev} There is a canonical homotopy equivalence of $\nZ _2$-graded $k$-complexes
\begin{equation}
\Hom_{\Re }( \Delta_W, X^\dual \otimes_{S} Y ) \lto \Hom_{R \otimes_k S}(X,Y)\,. \label{eq:isogivescoev}
\end{equation}
\end{proposition}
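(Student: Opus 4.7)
The plan is to identify an explicit forward chain map, invert it up to homotopy using the perturbation-theoretic lifting from Section~\ref{section:pertandhtpy}, and conclude a homotopy equivalence with no new technical input beyond what has already been set up.

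\textbf{Forward map.} Define $\Phi : \Hom_\Re(\Delta_W, X^\dual \otimes_S X) \lra \Hom_{R \otimes_k S}(X, X)$ by sending $\varphi$ to the image of $\varphi(\theta_1 \ldots \theta_n)$ under the quotient $X^\dual \otimes_S X \lra (X^\dual \otimes_S X) \otimes_\Re R$, followed by the identification $(X^\dual \otimes_S X) \otimes_\Re R \cong \Hom_{R \otimes_k S}(X, X)$ obtained by unpacking $X^\dual = R[n] \otimes_R X^\vee$ and applying the canonical isomorphism $X^\vee \otimes_{R \otimes_k S} X \cong \Hom_{R \otimes_k S}(X, X)$ of Section~\ref{section:canonicalmaps} (valid because $X$ is finite-rank free). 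Chain map: since $\delta_+(\theta_1 \ldots \theta_n) = 0$ and every summand of $\delta_-(\theta_1 \ldots \theta_n)$ carries a factor $x_i - x'_i$ lying in the diagonal ideal $I = \ker(\Re \to R)$, the contribution of $\varphi \circ d_{\Delta_W}$ to $[d,\varphi](\theta_1\ldots\theta_n)$ vanishes modulo $I$, so $\Phi$ intertwines differentials. The $\nZ_2$-grading works out because the shift $[n]$ built into $X^\dual$ is cancelled by the parity shift of $n$ picked up in evaluating at a form of top degree~$n$.

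\textbf{Inverse and homotopy equivalence.} The content of Corollary~\ref{corollary:koszulstab2} and Proposition~\ref{prop:liftingresult} is that applying $\Hom_\Re(-, M)$ to the augmentation $\pi_\Delta : \Delta_W \to R$ produces a homotopy equivalence, together with an explicit homotopy inverse built from a geometric series in the left Atiyah class $\lAt_{R_1}$ and the bar-to-Koszul map $\Psi$. The argument there is formal in the target module: what is used is only that $M$ is a matrix factorisation of $\widetilde{W}$ over $\Re$. Applying it with $M = X^\dual \otimes_S X$ produces an explicit inverse $\Phi^{-1}$ of the same shape as the coevaluation formula~\eqref{eq:coev10-2}. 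Indeed, by construction $\Phi^{-1}(1_X) = \widetilde\coev_X$, where $1_X$ is represented at the chain level by $\iota_X = \sum_j (-1)^{|e_j|} e_j^* \otimes e_j \in X^\dual \otimes_S X$ under the canonical iso $X^\vee \otimes_{R\otimes_k S} X \cong \End(X)$; this will be the main use of the proposition in the subsequent construction of $\widetilde\coev_X$.

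\textbf{Main obstacle.} Mathematically, the perturbation lemma and Koszul stabilisation already deliver the homotopy equivalence; the only real work is administrative sign and grading bookkeeping. One must verify (i) the $\nZ_2$-shift cancellation between the $[n]$ in $X^\dual = R[n] \otimes_R X^\vee$ and the degree-$n$ form $\theta_1 \ldots \theta_n$, (ii) that the canonical isomorphism $\xi$ of Section~\ref{section:canonicalmaps} is compatible with differentials, so that the identification $(X^\dual \otimes_S X) \otimes_\Re R \cong \Hom_{R \otimes_k S}(X, X)$ is an isomorphism of $\nZ_2$-graded $k$-complexes, and (iii) that the deformation retract produced by Proposition~\ref{prop:finalpertdefo} remains a valid deformation retract with $M = X^\dual \otimes_S X$ in place of the scalar target. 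Each of these is a careful but routine extension of what has been proved, so the proposition follows.
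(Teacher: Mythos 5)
Your forward map $\Phi$ is correct, and indeed it coincides (after unwinding definitions) with the composite in the paper's proof: if you trace through the zigzag \eqref{eq:isogivescoev2} you will find that it picks out exactly $\varphi(\theta_1 \ldots \theta_n)$ modulo the diagonal ideal, and your chain-map check and $\nZ_2$-grading bookkeeping are both right. So the first half of your proposal is essentially a more direct description of the paper's map.

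The inversion step, however, has a genuine gap. You claim that ``applying $\Hom_\Re(-, M)$ to the augmentation $\pi_\Delta: \Delta_W \to R$ produces a homotopy equivalence,'' citing Corollary~\ref{corollary:koszulstab2} and Proposition~\ref{prop:liftingresult}. But $\Hom_{\Re}(\pi_\Delta, M)$ is a map $\Hom_{\Re}(R, M) \lto \Hom_{\Re}(\Delta_W, M)$, and $\Hom_{\Re}(R, M)$ (the annihilator of the diagonal ideal inside $M$, a \emph{submodule} of $M$) is simply not the same object as $M \otimes_{\Re} R \cong \Hom_{R\otimes_k S}(X,X)$ (a \emph{quotient} of $M$); for instance already $\Hom_{k[x]}(k, k[x]) = 0$ while $k[x] \otimes_{k[x]} k = k$. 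So this cannot supply an identification of $\Hom_{\Re}(\Delta_W, M)$ with $\Hom_{R\otimes_k S}(X,X)$. Moreover Proposition~\ref{prop:liftingresult} concerns $\Hom_{\Re}(Y, -)$ applied to $\pi_\Delta$, which is what is used later for the \emph{evaluation} maps, not the present coevaluation statement. Neither of the cited results is about $\Hom_{\Re}(-, M)$.

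What is actually needed — and what your proposal omits — is the self-duality of the Koszul factorisation. Since $\Delta_W$ is finite-rank free over $\Re$, one first converts Hom to tensor via $\xi$: $\Hom_{\Re}(\Delta_W, M) \cong M \otimes_{\Re} \Delta_W^{\vee}$. The key step is then the isomorphism $\zeta: \Delta_W' \lto \Delta_W^{\vee}[n]$, $\omega \mapsto \varepsilon(\omega \wedge -)$, where $\Delta_W'$ has the same underlying module as $\Delta_W$ but differential $-\delta_{+} + \delta_{-}$ (a factorisation of $-\widetilde W$). After $1\otimes\zeta^{-1}$ the problem becomes one of showing $1 \otimes \pi: M \otimes_{\Re} \Delta_W' \lto M \otimes_{\Re} R$ is a homotopy equivalence, which \emph{is} the content of Lemma~\ref{lemma:koszulstab1}, with explicit inverse as in Corollary~\ref{corollary:koszulstab2}. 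Without the bridge $\zeta$, there is no way to get from the Hom complex to a tensor situation where the perturbation machinery applies, and this is the crux of the paper's argument rather than routine bookkeeping.
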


\begin{definition}\label{def:coeval} We define $\widetilde\coev_X$ to be the morphism in the category $\HMF(\Re, \widetilde{W})$ whose cohomology class maps to $1_X$ under the quasi-isomorphism \eqref{eq:isogivescoev}
in the case $X=Y$. 
\end{definition}

\begin{proof}[Proof of Proposition \ref{prop:isogivescoev}]
Let $\Delta_W'$ denote the matrix factorisation of $- \widetilde{W}$ with the same underlying graded free module as $\Delta_W$ but the modified differential $d_{\Delta'} = - \delta_{+} + \delta_{-}$. This approximates the diagonal as a matrix factorisation of $- \widetilde{W}$ in the same way that $\Delta_W$ approximates it as a factorisation of $\widetilde{W}$. Also let~$K$ denote the $\nZ _2$-graded complex with the same underlying graded module as $\Delta_W$, but the differential~$\delta_{-}$, so~$K$ is the usual Koszul complex of $x_1 - x'_1, \ldots, x_n - x'_n$.

The product in the exterior algebra induces a morphism of complexes $\Delta_W' \otimes_{\Re} \Delta_W \lto K$. Here $\Delta_{W}' \otimes_{\Re} \Delta_W$ has the differential $d_{\Delta'} \otimes 1 + 1 \otimes d_{\Delta}$, which squares to $- \widetilde{W} + \widetilde{W} = 0$, so that this is a $\mathbb{Z}_2$-graded complex. Composing this with $\varepsilon: K \lto \Re[n]$ from~\eqref{eq:vareps} and taking the adjoint in the monoidal category of $\mathbb{Z}_2$-graded $\Re$-modules, we obtain the isomorphism of matrix factorisations $\zeta: \Delta_W' \lto \Delta_W^\vee [n]$ defined by $\omega \lmt \varepsilon( \omega \wedge - )$. Using the map $\xi$ of Section~\ref{section:canonicalmaps} we have a diagram
\begin{equation}
\xymatrix@C+3pc{
\Hom_{\Re}( \Delta_W, X^\vee[n] \otimes_{S} Y) & \Hom_{R \otimes_k S}(X,Y)\\
\left( X^\vee[n] \otimes_S Y \right) \otimes_{\Re} \Delta_W^{\vee} \ar[u]^-{\cong}_-{\xi} & \left( X^\vee \otimes_{S} Y \right) \otimes_{\Re} R \ar[u]_-{\kappa}^-{\cong}\\
\left( X^\vee \otimes_{S} Y \right) \otimes_{\Re} \Delta_W^\vee[n]  \ar[u]^-{\cong} & \left( X^\vee \otimes_S Y \right) \otimes_{\Re} \Delta_W' \ar[l]^-{\cong}_-{1 \otimes \zeta}\ar[u]_-{1 \otimes \pi}\\
}\label{eq:isogivescoev2}
\end{equation}
where for $f \in X^\vee$, $g \in Y$ and $h \in X$, $\kappa( f \otimes g \otimes 1 )(h) = (-1)^{|f||g|} f(h) \cdot g$. 
To complete the proof we need only to show that $1 \otimes \pi$ is a homotopy equivalence, but this is Lemma \ref{lemma:koszulstab1}.
\end{proof}


\begin{proposition}\label{prop:coev1} A representative for $\widetilde\coev_X$ is the chain map \eqref{eq:coev10-2}.
\end{proposition}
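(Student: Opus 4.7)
The strategy is to unwind Definition~\ref{def:coeval} by tracing the identity $1_X \in \Hom_{R \otimes_k S}(X,X)$ backwards through the explicit zig-zag of quasi-isomorphisms \eqref{eq:isogivescoev2}, step by step.

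First I would verify that the preimage of $1_X$ under the isomorphism $\kappa$ is $\iota_X \otimes 1$: since $|e_j^*|=|e_j|$, one computes $\kappa(\iota_X \otimes 1)(e_i) = \sum_j (-1)^{|e_j|+|e_j||e_j^*|} e_j^*(e_i)\, e_j = e_i$. Then comes the main step: lifting $\iota_X \otimes 1$ through the homotopy equivalence $1 \otimes \pi : (X^\vee \otimes_S X) \otimes_{\Re} \Delta_W' \lto (X^\vee \otimes_S X) \otimes_{\Re} R$. For this I would adapt the perturbation-lemma argument of Proposition~\ref{prop:finalpertdefo} and Corollary~\ref{corollary:koszulstab2} to $\Delta_W'$ (which stabilises the diagonal as a matrix factorisation of $-\widetilde W$). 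Starting from the Koszul deformation retract of $\nZ_2$-graded complexes used in the proof of Lemma~\ref{lemma:koszulstab1}, and adding the small perturbation $d_{X^\vee \otimes X} \otimes 1 - 1 \otimes \delta_{+}$ (with the sign in front of $\delta_+$ flipped relative to Corollary~\ref{corollary:koszulstab2} because we are using $\Delta_W'$ rather than $\Delta_W$), the perturbation lemma yields an explicit $R_2$-linear section whose value on $\iota_X \otimes 1$ is a geometric series in the Atiyah class $\At_{R_1}(X^\vee \otimes_S X)$ postcomposed with $\Psi$.

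Next I would apply the remaining two arrows in \eqref{eq:isogivescoev2}, which are the isomorphisms $1 \otimes \zeta$ and $\xi$: the former replaces a form $\omega \in \Delta_W'$ by the functional $\varepsilon(\omega \wedge -)$, and the latter turns a tensor $\nu \otimes \eta$ into the morphism $\gamma \lmt (-1)^{|\nu||\eta|} \nu(\gamma)\,\eta$. Composing all four arrows, the forms produced by $\Psi$ become elements of $\Delta_W$ that are first wedged on the left with $\gamma$ and then contracted by $\varepsilon$. To match \eqref{eq:coev10-2} precisely one then rewrites the right-handed $\At_{R_1}$ as the left-handed $\Atlarrow_{R_1}$ of Definition~\ref{eq:leftarrowAt}, moving the forms to the side where they meet $\gamma$.

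The hard part is the bookkeeping of signs. Contributions arise from (a) the $(-1)^l$ of the perturbation formula combined with the flipped sign of $\delta_+$ in $\Delta_W'$, (b) the twist $(-1)^{l|\omega|+\binom{l}{2}}$ introduced when passing from $\At$ to $\Atlarrow$, (c) the $n$-fold suspension in the isomorphism $\zeta:\Delta_W' \lto \Delta_W^\vee[n]$, and (d) the Koszul sign in $\xi$. Collecting these yields the overall sign $(-1)^{l+nl}$ appearing in \eqref{eq:coev10-2}; once this calculation is in place the claimed formula for $\widetilde\coev_X(\gamma)$ follows.
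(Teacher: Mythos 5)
Your proposal follows the same route as the paper's proof: trace $1_X$ backwards through the zig-zag \eqref{eq:isogivescoev2}, noting $\kappa^{-1}(1_X) = \iota_X$, invert $1\otimes\pi$ via the Atiyah-class formula from Corollary~\ref{corollary:koszulstab2}, and then apply $\zeta$ and $\xi$ while converting $\At_{R_1}$ to $\Atlarrow_{R_1}$ to collect the signs. One small clarification regarding your step (a): the flipped sign on $\delta_+$ that comes from using $\Delta_W'$ rather than $\Delta_W$ does not alter the homotopy inverse $\pi_2^{-1} = \sum_{l\ge 0}(-1)^l \Psi\, \At_{R_1}^l\, \sigma_2$, because this expression is independent of the chosen potential (in the proof of Proposition~\ref{prop:finalpertdefo} the terms involving $d\widetilde W$ cancel, and neither $\At_{R_1}=[d,D]$ nor $\Psi$ nor $\sigma_2$ depends on $W$); so Corollary~\ref{corollary:koszulstab2} applies verbatim with $W$ replaced by $-W$, which is exactly what the paper does.
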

\begin{proof}
We lift the identity $1_X$ through the quasi-isomorphisms in \eqref{eq:isogivescoev2}
in the case $X=Y$, 
noting that $\iota_X = \kappa^{-1}(1_X)$. To find the inverse image in cohomology of $\iota_X$ under $1 \otimes \pi$, we use the homotopy inverse $(1 \otimes \pi_\Delta)^{-1}$ of Corollary \ref{corollary:koszulstab2} which gives $(1 \otimes \pi_\Delta)^{-1} \kappa^{-1}(1_X) = \sum_{l \ge 0} (-1)^l \Psi \At_{R_1}(X^{\vee} \otimes_S X)^l( \iota_X )$. After applying $\xi$ and $\zeta$ we arrive at the desired formula.
\end{proof}

To derive the explicit formula in terms of divided difference operators \eqref{eq:formula_intro_coev} for $\widetilde \coev_X$, we simply substitute the formula \eqref{eq:atlarrow_formula} for Atiyah classes and the definition \eqref{intro_psi} of $\Psi$. In this way we also see that there is an alternative presentation of the coevaluation map using $\lAt_{R_2}$, as
\be\label{eq:alt_formula_coev}
\widetilde\coev_X(\gamma) = \sum_{l \ge 0} (-1)^{nl} \varepsilon \left( \gamma \wedge \Psi \lAt_{R_2}(X^{\dual} \otimes_S X)^l( \iota_X ) \right) .
\ee

To construct the other coevaluation one begins with the canonical homotopy equivalence
\be\label{eq:othercoev11}
\Hom_{\Se}(\Delta_V, X \otimes_R {}^\dual X) \lto \Hom_{R \otimes_k S}(X, X)
\ee
defined as in the proof of Proposition \ref{prop:isogivescoev}. The coevaluation $\coev_X$ is defined to be the morphism whose cohomology class is the preimage of the identity under the quasi-isomorphism \eqref{eq:othercoev11} times a global sign factor of $(-1)^m$. Once again using the homotopy inverses computed in Corollary \ref{corollary:koszulstab2} and \ref{corollary:koszulstab2r1} one finds two different presentations of the same chain map:
\begin{proposition} 
A representative for $\coev_X$ is the chain map \eqref{eq:coev102}, and for $\gamma' \in \Delta_V$ also
\be
\coev_X(\gamma') = \sum_{l \ge 0} (-1)^{m+ml} \varepsilon \left( \gamma' \wedge \Psi \lAt_{S_2}(X \otimes_R {}^\dual X)^l( \iota'_X ) \right) .
\ee
\end{proposition}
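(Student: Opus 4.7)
The plan is to mirror the derivation of $\widetilde\coev_X$ in Propositions \ref{prop:isogivescoev} and \ref{prop:coev1}, with $(V, S, m)$ in place of $(W, R, n)$ and with the tensor factors arranged so that $X$ lies to the left of its left adjoint ${}^\dual X = X^\vee \otimes_S S[m]$. Writing $\Delta_V'$ for the matrix factorisation of $-\widetilde V$ with differential $-\delta_+ + \delta_-$ and defining $\zeta: \Delta_V' \lto \Delta_V^\vee[m]$ by $\omega \lmt \varepsilon(\omega \wedge -)$, the analogue of \eqref{eq:isogivescoev2} is the zig-zag
\begin{align*}
\Hom_{\Se}(\Delta_V, X \otimes_R {}^\dual X) &\xleftarrow[\xi]{\cong} (X \otimes_R X^\vee) \otimes_{\Se} \Delta_V^\vee[m] \xleftarrow[1 \otimes \zeta]{\cong} (X \otimes_R X^\vee) \otimes_{\Se} \Delta_V' \\
&\xrightarrow{1 \otimes \pi} (X \otimes_R X^\vee) \otimes_{\Se} S \xrightarrow[\kappa]{\cong} \Hom_{R \otimes_k S}(X, X)
\end{align*}
in which $1 \otimes \pi$ is a homotopy equivalence by Lemma \ref{lemma:koszulstab1}, $\kappa(f \otimes g \otimes 1)(g') = (-1)^{|f||g|} f(g') g$, and $\iota'_X = \kappa^{-1}(1_X)$. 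By the definition of $\coev_X$ recalled immediately after \eqref{eq:othercoev11}, $\coev_X$ is represented by $(-1)^m$ times the preimage of $1_X$ along this zig-zag.

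For the first formula, I would compute the preimage of $\iota'_X$ under $1 \otimes \pi$ using the homotopy inverse $\pi^{-1}_2$ of Corollary \ref{corollary:koszulstab2} applied with $(S, \Se)$ in place of $(R, \Re)$, obtaining $\sum_{l \ge 0}(-1)^l \Psi \At_{S_1}(X \otimes_R X^\vee)^l(\iota'_X)$ in $(X \otimes_R X^\vee) \otimes_{\Se} \Delta_V$. Composing with $1 \otimes \zeta^{-1}$, $\xi$, the shift identification $X^\vee[m] \cong {}^\dual X$ and the global factor $(-1)^m$, while simultaneously applying the twist $\tau$ of Definition \ref{eq:leftarrowAt} to replace $\At_{S_1}$ by $\Atlarrow_{S_1}$, produces the chain map \eqref{eq:coev102}. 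The prefactor $(-1)^{m+l+ml}$ assembles as $(-1)^m$ (the normalisation) times $(-1)^l$ (from the perturbation series) times $(-1)^{ml}$ (from commuting the degree-$l$ form past the shift $[m]$). For the second formula I would argue identically but using Corollary \ref{corollary:koszulstab2r1}, in which the homotopy inverse of $\pi$ is $\sum_{l \ge 0} \Psi \lAt_{S_2}(X \otimes_R X^\vee)^l \sigma_1$ without alternating signs; both expressions represent the same homotopy class in $\Hom_{\Se}(\Delta_V, X \otimes_R {}^\dual X)$ since $\Bar$ underlies both realisations of noncommutative forms and $\Psi$ agrees on them. The absent factor $(-1)^l$ in the $\lAt_{S_2}$ formulation reflects the relative sign between the identities $b'd + db' = 1_\Bar$ and $b's + sb' = -1_\Bar$ from \eqref{b'd+db'}.

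The principal obstacle is sign bookkeeping. The shift $[m]$, the twist $\tau$ converting $\At_{S_1}$ to $\Atlarrow_{S_1}$, the reversal of tensor factors relative to $\widetilde\coev_X$, and the differing definitions $\iota_X = \sum_j (-1)^{|e_j|} e_j^* \otimes e_j$ versus $\iota'_X = \sum_j e_j \otimes e_j^*$ each contribute signs that must combine to give the stated prefactors $(-1)^{m+l+ml}$ and $(-1)^{m+ml}$. I would pin these down by first verifying the cases $l = 0$ and $l = 1$ directly from \eqref{eq:atlarrow_formula} and the analogous expansion for $\lAt_{S_2}$, after which the sign for general $l$ follows from the iterative structure of the Atiyah class together with the shuffle identity of Lemma \ref{lemma:atshufat}.
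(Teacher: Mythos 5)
Your proposal takes essentially the same approach as the paper: the paper defines $\coev_X$ via the canonical homotopy equivalence \eqref{eq:othercoev11} (the mirror of \eqref{eq:isogivescoev2} with $V,S,m$ replacing $W,R,n$) and says "using the homotopy inverses computed in Corollary \ref{corollary:koszulstab2} and \ref{corollary:koszulstab2r1} one finds two different presentations," which is exactly the derivation you spell out, and your attribution of the $(-1)^l$ discrepancy to $b'd+db'=1$ versus $b's+sb'=-1$ matches the structure behind Propositions \ref{prop:finalpertdefo} and \ref{prop:finalpertdefo2}. The paper provides no more detail than this, so your fleshing-out, including the sign decomposition and the verification strategy for small $l$, is the intended argument.
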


\begin{remark}\label{remark:coev_divideddiff} Given $\gamma' \in \Delta_V$ then there is a sequence $b_1 < \cdots < b_l$ and integer $s$ with $\gamma' \wedge \theta_{b_1} \ldots \theta_{b_l} = (-1)^s \theta_1 \ldots \theta_m$, and again using the explicit formulas for Atiyah classes and $\Psi$ one computes that
\be\label{eq:coev_divideddiff}
\coev_X(\gamma') = \sum_{i,j} (-1)^{\binom{l+1}{2}+s+m+ml} \big\{ \partial^{z,z'}_{[b_1]} d_X \ldots \partial^{z,z'}_{[b_l]} d_X \big\}_{ij} e_{i} \otimes e_j^* \, .
\ee
\end{remark}

\subsection{Evaluation}\label{subsec:eval}

In this section we construct $\widetilde\eval_X$ by defining a simpler map $\widetilde\eval_0$ and then lifting this via perturbation. The other evaluation $\eval_X$ is defined in a similar way at the end of the section.

The partial derivatives $\partial_{x_i} W$ act null-homotopically on $X$ and we let $\lambda_i \in \Hom_{R \otimes_k S}(X,X)$ denote a degree-one map with $[d_X, \lambda_i] = \partial_{x_i} W \cdot 1_X$. The construction is independent of this choice up to homotopy, although this is not obvious; see Remark \ref{remark:indeptlambda}. We also set $\Lambda^{(x)} = \lambda_1 \ldots \lambda_n$.

\begin{lemma}\label{lemma:morphismevalzero} There is a morphism $\widetilde\eval_0: X \otimes_R X^\dual \lto S$ of linear factorisations of $\widetilde{V}$ over $\Se $
\begin{align}
\widetilde\eval_0( \eta \otimes \nu ) = (-1)^{n+n|\eta|} \Res_{R/k} \left[ \frac{ \str( \Lambda^{(x)} \circ \eta \circ \nu ) \, \underline{\operatorname{d}\!x}}{\partial_{x_1}W, \ldots, \partial_{x_n} W} \right] . 
\end{align}
\end{lemma}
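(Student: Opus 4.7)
The plan is to verify three properties of the proposed formula: (i) that it descends from the external product $X \times X^\dual$ to the tensor product $X \otimes_R X^\dual$, (ii) that it is $\Se$-linear, and (iii) that it commutes with differentials. Since the target $S$ is viewed as an $\Se$-module via multiplication $\Se \to S$, the potential $\widetilde V = V\otimes 1 - 1\otimes V$ maps to zero on it, so $S$ carries the trivial differential, and (iii) reduces to the vanishing $\widetilde\eval_0 \circ d_{X\otimes_R X^\dual} = 0$.

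Properties (i) and (ii) will follow quickly from centrality. For (i), given $r\in R$ the endomorphism $\xi(\eta r \otimes \nu)$ coincides with $\xi(\eta \otimes r\nu)$ inside $\End_{R\otimes_k S}(X)$ because $r$ is central in $R\otimes_k S$, so the supertrace (and hence $\widetilde\eval_0$) is balanced over $R$. For (ii), the central elements $z_i \in S$ can be pulled out of the composition $\Lambda^{(x)}\circ\eta\circ\nu$ and then out of the supertrace; combined with the $S$-linearity of $\Res_{R/k}$ (which here means $\Res_{R\otimes_k S/S}$), this exactly matches the diagonal $\Se$-action on $S$ under the multiplication $\Se \to S$.

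The main step is (iii). The plan is to combine three ingredients: first, from Section \ref{section:canonicalmaps}, the map $\xi: X \otimes_R X^\vee \to \End_{R\otimes_k S}(X)$ is a morphism of linear factorisations, hence a chain map, so writing $\phi = \xi(\eta\otimes\nu)$ one has $\xi\bigl(d(\eta\otimes\nu)\bigr) = [d_X,\phi]$ (the shift by $n$ in $X^\dual = R[n]\otimes_R X^\vee$ is absorbed by the global sign $(-1)^{n+n|\eta|}$). Second, the supertrace annihilates supercommutators, $\str([d_X, -]) = 0$. Third, the graded Leibniz rule applied to the product of odd operators $\Lambda^{(x)} = \lambda_1\cdots\lambda_n$ together with $[d_X,\lambda_i] = \partial_{x_i}W\cdot 1_X$ gives $[d_X,\Lambda^{(x)}] = \sum_{i=1}^n (-1)^{i-1}\partial_{x_i}W\cdot\lambda_1\cdots\widehat{\lambda_i}\cdots\lambda_n$. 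Expanding $0 = \str([d_X,\Lambda^{(x)}\phi]) = \str([d_X,\Lambda^{(x)}]\phi) + (-1)^n\str(\Lambda^{(x)}[d_X,\phi])$ therefore shows that $\str(\Lambda^{(x)}[d_X,\phi])$ lies in the ideal $(\partial_{x_1}W,\ldots,\partial_{x_n}W)\subset R\otimes_k S$. Since the residue $\Res_{R/k}[-\,\underline{\operatorname{d}\! x}/(\partial_{x_1}W,\ldots,\partial_{x_n}W)]$ factors through the Jacobi-algebra quotient and so annihilates this ideal, the chain-map property follows.

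The hard part will not be the conceptual argument, which is essentially a one-line application of ``trace of supercommutator is zero'' combined with the defining property of the null-homotopies $\lambda_i$. The real obstacle will be sign bookkeeping: the shift $R[n]$ in the definition of $X^\dual$, the parities of $\eta$, $\nu$ and $\phi$, the sign $(-1)^n$ coming from the parity of $\Lambda^{(x)}$, and the sign $-(-1)^{|\nu|}$ in $d_{X^\vee}$ all have to conspire so that $\widetilde\eval_0(d_X\eta\otimes\nu) + (-1)^{|\eta|}\widetilde\eval_0(\eta\otimes d_{X^\dual}\nu)$ consolidates into a single supertrace of the form $\str(\Lambda^{(x)}[d_X,\phi])$ — and this is precisely the reason for the otherwise mysterious normalising factor $(-1)^{n+n|\eta|}$ in the statement.
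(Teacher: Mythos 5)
Your proof is correct and hinges on the same essential computation as the paper's: the observation that $[d_X,\Lambda^{(x)}]$ lands in the ideal $(\partial_{x_1}W,\ldots,\partial_{x_n}W)$, combined with the facts that the supertrace annihilates supercommutators and the residue annihilates that ideal. But the route is genuinely different in presentation. The paper factors $\widetilde\eval_0$ as a composite of three manifestly closed maps, first passing to the quotient $\bar R = R/(\partial_{x_1}W,\ldots,\partial_{x_n}W)$, so that $\Lambda^{(x)}$ becomes an honest (not just up-to-homotopy) closed map $\bar X[n]\to\bar X$, and only then applying $\str$ and $\Res$; the point of this modular version is that the same factorisation is reused verbatim in the proof of Proposition~\ref{prop:constructevalmap}. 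You instead check the chain-map equation head-on: identify $\xi(d(\eta\otimes\nu))$ with $[d_X,\phi]$, then consolidate using $0=\str([d_X,\Lambda^{(x)}\phi])$, then compute $[d_X,\Lambda^{(x)}]$ by Leibniz. This is exactly what the paper's opening sentence dismisses as ``straightforward to see from the formula,'' and you make that remark concrete. One thing worth saying explicitly: your closing paragraph defers the sign bookkeeping entirely. That is acceptable (the paper does not do it either), but the remark about the shift $R[n]$ being ``absorbed'' by $(-1)^{n+n|\eta|}$ elides that the isomorphism $X\otimes_R X^{\vee}[n]\cong (X\otimes_R X^{\vee})[n]$ already contributes $(-1)^{n|\eta|}$ (this is the second isomorphism listed in Section~\ref{section:canonicalmaps}), while the extra $(-1)^n$ comes from $|\Lambda^{(x)}|=n$ in the commutator expansion; spelling this out would turn the last paragraph from a promissory note into a proof.
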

\begin{proof}
While it is straightforward to see that $\widetilde\eval_0$ is a closed map from the formula, it will be useful later to express it as a composite of simple maps. To begin with there is a canonical map
\be\label{eq:defneval0_00}
X \otimes_R X^{\vee}[n] \cong (X \otimes_R X^{\vee})[n] \lto \left( X \otimes_R X^{\vee} \right)[n] \otimes_R \bar{R} \cong (\bar{X} \otimes_R \bar{X}^{\vee})[n]
\ee
where we write $\bar{R} = R/(\partial_{x_1} W,\ldots,\partial_{x_n} W)$ and $\bar{X} = X \otimes_R \bar{R}$. Then we compose with
\be\label{eq:defneval0_0}
\xymatrix@C+2.5pc{
(\bar{X} \otimes_R \bar{X}^{\vee})[n] \ar[r]^-{\Lambda^{(x)} \otimes 1} & \bar{X} \otimes_R \bar{X}^{\vee}
}
\ee
which is closed because $\Lambda^{(x)}$ is a closed map $X[n] \lto X$ modulo the $\partial_{x_i} W$. Finally compose with
\be\label{eq:defneval0_1}
\xymatrix@C+1.5pc{
\bar{X} \otimes_R \bar{X}^{\vee} \ar[r]^-{\can} & S \otimes_{\Se} ( \bar{X} \otimes_R \bar{X}^{\vee} ) \cong \Hom_{R \otimes_k S}(\bar{X}, \bar{X}) \ar[r]^-{\str} & \bar{R} \otimes_k S \ar[r]^-{\Res} & S
}
\ee
where the last map marked is the $S$-linear residue symbol (see Section~\ref{section:residuebackground}). We note that the first map in~\eqref{eq:defneval0_1} and the second in \eqref{eq:defneval0_1} are as defined in Section \ref{section:canonicalmaps}. Finally $\widetilde\eval_0$ is $(-1)^{\globalsigneval}$ times the composite of \eqref{eq:defneval0_00}, \eqref{eq:defneval0_0} and \eqref{eq:defneval0_1}.
\end{proof}

There is a stabilisation morphism $\pi_\Delta: \Delta_V \lto S$, and while $X \otimes_R X^\dual$ is not free of finite rank over $\Se $, it is a direct summand of a finite-rank matrix factorisation in $\HMF(\Se , \widetilde{V})$. The unique lifting statement of Proposition \ref{prop:liftingresult} can be extended to summands so there is up to homotopy a unique morphism making the following diagram commute:
\be\label{eq:defineeval}
\xymatrix{
X \otimes_R X^\dual \ar[dr]_-{\widetilde\eval_0} \ar@{.>}[rr]^-{\widetilde\eval_X} & & \Delta_V \ar[dl]^-{\pi_\Delta} \\
& S
} .
\ee

\begin{definition} $\widetilde\eval_X$ is the unique morphism in $\HMF(\Se , \widetilde{V})$ making \eqref{eq:defineeval} commute.
\end{definition}

\begin{proposition}\label{prop:constructevalmap} A representative for $\widetilde\eval_X$ is the chain map \eqref{eq:constructevalmap00}.
\end{proposition}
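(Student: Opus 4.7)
The plan is to apply Proposition~\ref{prop:liftingresult}, with $(R,\Re,W)$ replaced by $(S,\Se,V)$, to the lifting diagram~\eqref{eq:defineeval} with $Y = X \otimes_R X^\dual$ and $\varphi = \widetilde\eval_0$. The lifting result extends to summands of finite-rank matrix factorisations in $\HMF(\Se,\widetilde V)$, as already noted above~\eqref{eq:defineeval}, and so produces the canonical representative
\[
\widetilde\eval_X \;=\; \sum_{l \ge 0} \Psi \circ (1 \otimes \widetilde\eval_0') \circ \lAt_{S_1}(X \otimes_R X^\dual)^l,
\]
with $\widetilde\eval_0'(\alpha) = 1 \otimes \widetilde\eval_0(\alpha) \in \Se$. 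By construction this morphism is closed and makes~\eqref{eq:defineeval} commute, so by the uniqueness clause of the lifting result it represents $\widetilde\eval_X$ in $\HMF(\Se,\widetilde V)$.

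To rewrite this as~\eqref{eq:constructevalmap00} I would substitute the factorisation of $\widetilde\eval_0$ extracted from the proof of Lemma~\ref{lemma:morphismevalzero}, namely $(-1)^n$ times the composite of the shift $X \otimes_R X^\dual \cong (X \otimes_R X^\vee)[n]$, the endomorphism $\Lambda^{(x)} \otimes 1$, the map $\kappa$, the supertrace, and the residue. A short basis calculation identifies $\str \circ \kappa$ on $X \otimes_R X^\vee$ with the pairing $\langle - \rangle$ appearing in~\eqref{eq:constructevalmap00}. It then remains to pull $\Lambda^{(x)} \otimes 1$ past the operator $\lAt_{S_1}$. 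The operator $\Lambda^{(x)}$ is built from the $\Se$-linear maps $\lambda_i = -\partial_{x_i}(d_X)$, so it commutes strictly with the form-valued differentiation in $\lAt_{S_1}$ (which touches only $S_1$-variables), while its graded commutator with the module differential $D$ lies in the ideal $(\partial_{x_1}W, \ldots, \partial_{x_n}W)$ by the defining property $[d_X, \lambda_i] = \partial_{x_i}W \cdot 1_X$. Iterating via the graded Jacobi identity, the full commutator $[\Lambda^{(x)} \otimes 1, \lAt_{S_1}^l]$ again lies in this ideal, and is therefore killed by the denominator in the residue; consequently, inside the integrand of~\eqref{eq:constructevalmap00} we may exchange the order of application and obtain $\lAt_{S_1}^l(\Lambda^{(x)}\eta \otimes \nu)$ in place of $(\Lambda^{(x)} \otimes 1)\lAt_{S_1}^l(\eta \otimes \nu)$. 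The Koszul factor $(-1)^{n|\eta|}$ appears when the shift of degree $n$ is commuted past the homogeneous element $\eta$.

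The main obstacle I expect is the sign bookkeeping: between the two shifts $[n]$ (one inside $X^\dual = R[n] \otimes_R X^\vee$ and one inside the intermediate step~\eqref{eq:defneval0_00}), the twist sign built into $\lAt_{S_1}$ via Definition~\ref{eq:leftarrowAt}, and the convention $\langle e_i \otimes e_j^* \rangle = (-1)^{|e_i||e_j|}\delta_{ij}$ used in~\eqref{eq:constructevalmap00}, it is not automatic that the global sign $(-1)^{n+n|\eta|}$ appears on the nose. Once this sign accounting is completed and the commutation of $\Lambda^{(x)} \otimes 1$ with $\lAt_{S_1}$ modulo $(\partial_{x_i}W)$ is verified, the identification with~\eqref{eq:constructevalmap00} follows by collecting terms.
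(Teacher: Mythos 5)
Your plan has a genuine gap in its first step, which then propagates into the second.

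The formula of Proposition~\ref{prop:liftingresult} is not available for $Y = X \otimes_R X^\dual$. That proposition requires $Y \in \hmf(\Se, \widetilde V)$, i.\,e.\ $Y$ free of \emph{finite rank over $\Se$}, because its proof uses the isomorphism $\xi: Y^\vee \otimes_\Se (-) \to \Hom_\Se(Y,-)$ and the explicit Atiyah-class expression is written in a chosen finite $\Se$-basis. When $R \neq k$ the module $X \otimes_R X^\dual$ has infinite rank over $\Se = k[z,z']$ (the $x$-variables are free), so neither hypothesis holds. What the paper means by ``the unique lifting statement extends to summands'' is only the existence and uniqueness of the lift up to homotopy; it does \emph{not} hand you the closed formula $\sum_{l} \Psi (1\otimes\widetilde\eval_0')\lAt_{S_1}(X \otimes_R X^\dual)^l$. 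The paper closes this gap by first invoking the idempotent pushforward of~\cite[Thm.~7.4]{dm1102.2957} to present $(X\otimes_R X^\vee)[n]$ as a summand of the genuinely finite-rank $\Se$-module $\bar X \otimes_R \bar X^\vee$ (with $\bar R = R/(\partial W)$ finite over $k$), applies Proposition~\ref{prop:liftingresult} to that object via the diagram~\eqref{eq:proofconstructevalmap}, and then precomposes with $\Lambda^{(x)} \otimes 1$ and the reduction $\psi$. A side effect of this construction is that $\Lambda^{(x)}\otimes 1$ appears on the right of the Atiyah class from the outset, so no commutation is needed.

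Your second step is correspondingly shaky. The claim that $\Lambda^{(x)}$ ``commutes strictly with the form-valued differentiation in $\lAt_{S_1}$'' is false in general: the matrix entries $\Lambda^{(x)}_{ki}$ (e.\,g.\ for $\lambda_i = -\partial_{x_i}d_X$) live in $k[x,z]$, not $k[x]$, so the connection $s$ underlying $\lAt_{S_1}$ — which differentiates in the $z$-direction and is only $k[x,z']$-linear — does not commute with $\Lambda^{(x)}\otimes 1$. In the Jacobi expansion $[\Lambda^{(x)},[s,D]] = [[\Lambda^{(x)},s],D] \pm [s,[\Lambda^{(x)},D]]$, only the second summand lands in $(\partial_{x_1}W,\ldots,\partial_{x_n}W)$; the first does not, and is not obviously killed by the residue. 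So even granting your starting formula, the passage to~\eqref{eq:constructevalmap00} would need a different argument (and would also have to cope with arbitrary choices of null-homotopies $\lambda_i$, not just $-\partial_{x_i}d_X$, to match Remark~\ref{remark:indeptlambda}). The paper avoids all of this by never producing the $\Lambda^{(x)}$-on-the-left variant in the first place.
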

\begin{proof}
If $X \otimes_R X^\dual$ were finite-rank we could apply Proposition \ref{prop:liftingresult} directly to lift $\widetilde\eval_X$. As this is not the case, we first appeal to the idempotent pushforward construction of \cite{dm1102.2957}. Since $W$ is a potential, $\bar{R} = R/(\partial_{x_1} W,\ldots, \partial_{x_n} W)$ is a finite-rank free $k$-module, and Theorem $7.4$ of loc.\,cit.~shows that there is a diagram
\be\label{eq:idempotentpushdia}
\xymatrix@C+2pc{
(X \otimes_R X^\vee)[n] \ar@<-1ex>[r]_-{\vartheta} & \bar{X} \otimes_{R} \bar{X}^\vee := (X \otimes_R X^\vee) \otimes_R \bar{R} \ar@<-1ex>[l]_-{\upsilon}
}
\ee
in $\HMF(\Se , \widetilde{V})$ with $\upsilon \circ \vartheta = 1$ and $\vartheta = \Lambda^{(x)} \otimes 1$. Consider the commutative diagram
\be\label{eq:proofconstructevalmap}
\xymatrix@C+2pc{
\Hom_{\Se }( (X \otimes_R X^\vee)[n], \Delta ) \ar[d]_{\pi^\bullet_\Delta} & \Hom_{\Se }(\bar{X} \otimes_{R} \bar{X}^\vee, \Delta ) \ar[l]_-{\vartheta_\bullet} \ar[d]^{\pi^\bullet_\Delta}\\
\Hom_{\Se }( (X \otimes_R X^\vee)[n], S ) & \Hom_{\Se }( \bar{X} \otimes_{R} \bar{X}^\vee, S) \ar[l]_-{\vartheta_\bullet}
}
\ee
where a ``$\bullet$'' as a superscript indicates postcomposition and as a subscript it denotes precomposition. By Proposition \ref{prop:liftingresult} the right-hand vertical map is a homotopy equivalence and therefore so is the left-hand vertical map. This justifies why in \eqref{eq:defineeval} there is a unique morphism $\widetilde\eval_X$ making the diagram commute: it is the image in cohomology of $\widetilde\eval_0$ under the homotopy inverse of $\pi_{\Delta}^\bullet$.

Let $\widetilde\eval'_0$ denote $(-1)^{\globalsigneval}$ times the morphism in \eqref{eq:defneval0_1}, and let $\widetilde\eval'$ denote the morphism lifting $\widetilde\eval'_0$ which is produced by Proposition \ref{prop:liftingresult}, so that up to homotopy $\pi_{\Delta} \circ \widetilde\eval' = \widetilde\eval'_0$. To run the lifting construction we use the $\Se$-basis $\{ g_\alpha e_i \otimes e_j ^* \}_{\alpha, i, j}$ for $\bar{X} \otimes_{R} \bar{X}^\vee$, where $g_\alpha$ gives a $k$-basis of $\bar{R}$. Let $\psi$ be the map \eqref{eq:defneval0_00}. We define $\widetilde\eval_X = \vartheta_\bullet( \widetilde\eval' ) \circ \psi = \widetilde\eval' \circ (\Lambda^{(x)} \otimes 1) \circ \psi$. By construction this morphism makes \eqref{eq:defineeval} commute, and it just remains to compute it explicitly.

The statement of Proposition \ref{prop:liftingresult} gives us
\[
\widetilde\eval_X = \sum_{l \ge 0} \Psi \circ (1 \otimes \widetilde\eval''_0) \circ \lAt_{S_1}(\bar{X} \otimes_R \bar{X}^\vee)^l \circ ( \Lambda^{(x)} \otimes 1 ) \circ \psi
\]
where $\widetilde\eval''_0( g_\alpha e_i \otimes e_j^* ) = 1 \otimes \widetilde\eval'_0( g_\alpha e_i \otimes e_j^* )$. We compute using \eqref{eq:leftatiyahexp} that for $g \in R$
\begin{align*}
\widetilde\eval_X( g e_i \otimes e_j^* ) &= \sum_{l \ge 0} \sum_k (-1)^{n|e_i|} \Psi (1 \otimes \widetilde\eval''_0)\left( g \lAt_{S_1}(\bar{X} \otimes_R \bar{X}^\vee)^l( \Lambda^{(x)}_{ki} e_k \otimes e_j ^*) \right)\\
&= \sum_{l \ge 0} \sum_{k, k_1, \ldots, k_l} (-1)^{n|e_i|} \Psi (1 \otimes \widetilde\eval''_0)\left( g  \Lambda^{(x)}_{ki} \cdot d( d_{X,k_1k} ) \ldots d( d_{X,k_l k_{l-1}} ) \otimes e_{k_l} \otimes e_j^* \right) .
\end{align*}
Applying $\widetilde\eval''_0$ involves the supertrace $\str( e_{k_l} \circ e_j^* ) = (-1)^{|e_j|} \delta_{jk_l}$, so we are left with
\[
\sum_{l \ge 0} \sum_{k,k_1,\ldots,k_{l-1}} (-1)^{\globalsigneval+n|e_i|+|e_j|} \Res_{R/k} \left[ \frac{ g \Lambda^{(x)}_{ki} \Psi( d(d_{X,k_1k}) \ldots d(d_{X,jk_{l-1}}) )  \,  \underline{\operatorname{d}\!x}}{\partial_{x_1}W, \ldots, \partial_{x_n} W} \right]
\]
which agrees with \eqref{eq:constructevalmap00} for $\eta = g e_i$ and $\nu = e_j^*$, completing the proof.
\end{proof}

Next we give a brief derivation of $\eval_X$. As before we define a map $\eval_0$ and lift it via perturbation. Each $\partial_{z_i} V$ acts null-homotopically on $X$ and we let $\mu_i \in \Hom_{R \otimes_k S}(X,X)$ denote a degree-one map with $[d_X, \mu_i] = \partial_{z_i} V \cdot 1_X$. For example $\mu_i = \partial_{z_i} d_X$ would do. We set $\Lambda^{(z)} = \mu_1 \ldots \mu_m$.



\begin{lemma}\label{lemma:morphismevalzeroother} There is a morphism $\eval_0: {}^\dual X \otimes_S X \lto R$ of linear factorisations of $\widetilde W$ over $\Re$
\begin{align}
\eval_0( \nu \otimes \eta ) &= (-1)^{m+|\nu|}\Res_{S/k} \left[ \frac{ \str( \Lambda^{(z)} \circ \eta \circ \nu) \, \underline{\operatorname{d}\!z}}{\partial_{z_1}V, \ldots, \partial_{z_m} V} \right].
\end{align}
\end{lemma}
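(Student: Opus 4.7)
The proof parallels that of Lemma \ref{lemma:morphismevalzero} with the roles of $(R,W,x,n,\Lambda^{(x)})$ and $(S,V,z,m,\Lambda^{(z)})$ interchanged, and with the tensor order reversed so that the dual appears on the left. The plan is to write $\eval_0$ as a composite of manifestly closed $\Re$-linear maps, so that closedness is automatic and only the explicit identification with the residue formula needs to be checked.

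First I would use the canonical identification
\[
{}^\dual X \otimes_S X \;=\; X^{\vee} \otimes_S S[m] \otimes_S X \;\lto\; (X^{\vee} \otimes_S X)[m]
\]
given by pulling the shift $[m]$ to the outside, which incurs a Koszul sign $(-1)^{m|\nu|}$ on an element $\nu \otimes \eta$ (absorbed by the conventions relating $X^\vee[m]$ and $X^\vee \otimes_S S[m]$). Next, extension of scalars along $S \to \bar S := S/(\partial_{z_1}V,\ldots,\partial_{z_m}V)$ gives a closed map $(X^{\vee} \otimes_S X)[m] \lto (\bar X^{\vee} \otimes_{\bar S} \bar X)[m]$; here $\bar S$ is a finite-rank free $k$-module because $V$ is a potential. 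Since each $\partial_{z_j} V$ is null-homotopic via $\mu_j$, the composite $\Lambda^{(z)} = \mu_1 \cdots \mu_m$ descends to a closed degree-zero map $\bar X[m] \to \bar X$, so $1 \otimes \Lambda^{(z)}: (\bar X^{\vee} \otimes_{\bar S} \bar X)[m] \lto \bar X^{\vee} \otimes_{\bar S} \bar X$ is closed.

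Then I would compose with the morphism analogous to \eqref{eq:defneval0_1}, namely
\[
\xymatrix@C+1.5pc{
\bar{X}^\vee \otimes_{\bar S} \bar{X} \ar[r]^-{\can} & R \otimes_{\Re} ( \bar{X}^\vee \otimes_{\bar S} \bar{X} ) \cong \Hom_{R \otimes_k S}(\bar{X}, \bar{X}) \ar[r]^-{\str} & \bar S \otimes_k R \ar[r]^-{\Res} & R
}
\]
using the canonical map $\xi$ of Section \ref{section:canonicalmaps} (which, as a swap of positions $X^\vee \otimes X \to X \otimes X^\vee$ before identifying with $\Hom$, introduces a sign $(-1)^{|\nu||\eta|}$), followed by the supertrace and the $R$-linear residue symbol along the $z$-variables. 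Multiplying by the global sign $(-1)^{m}$ matching the normalisation of $\coev_X$, one gets the desired morphism $\eval_0: {}^\dual X \otimes_S X \lto R$ of linear factorisations of $\widetilde W$ over $\Re$, and evaluating on $\nu \otimes \eta$ gives the formula with $\str( \eta \circ \nu \circ \Lambda^{(z)})$ up to cyclic rotation.

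The principal obstacle is not conceptual but the bookkeeping of Koszul signs: one must verify that the shift $(-1)^{m|\nu|}$, the swap $(-1)^{|\nu||\eta|}$, and the cyclic rotation inside the supertrace $\str(\eta \circ \nu \circ \Lambda^{(z)}) = (-1)^{|\nu|(|\eta|+m)}\str(\Lambda^{(z)} \circ \eta \circ \nu)$ combine, together with the global normalisation $(-1)^m$, to yield exactly the prefactor $(-1)^{m+|\nu|}$. Once the composite is identified with the stated expression, its closedness as a morphism of linear factorisations of $\widetilde W$ follows from closedness of each factor, completing the proof.
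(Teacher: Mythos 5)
Your proposal follows exactly the route the paper intends: its own proof of this lemma is the one-liner ``Either directly, or as in the proof of Lemma~\ref{lemma:morphismevalzero},'' and you have correctly dualised that argument, writing $\eval_0$ as a composite of closed $\Re$-linear maps (shift bookkeeping through $X^\vee\otimes_S S[m]\otimes_S X$, reduction modulo $(\partial_{z_j}V)$, contraction with $\Lambda^{(z)}$, the canonical map into $\Hom$, supertrace, residue) so that closedness is automatic. One small correction to the sign audit at the end: the factor $(-1)^{|\nu|(|\eta|+m)}$ comes from the Koszul sign built into $\xi$ applied to $\nu\otimes\Lambda^{(z)}\eta$ (hence the shifted degree $|\eta|+m$), and once $\xi$ produces the rank-one operator $\Lambda^{(z)}\circ\eta\circ\nu$ there is no further cyclic rotation to perform; also, the two shift signs $(-1)^{m|\nu|}$ (one from collapsing $X^\vee\otimes S[m]$, one from moving the outer $[m]$ back onto the $\bar X$ factor before applying $\Lambda^{(z)}$) cancel. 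The correct tally is $(-1)^{m+|\nu|(|\eta|+m)}$, which on the support $|\nu|+m+|\eta|\equiv 0 \pmod 2$ equals the stated $(-1)^{m+|\nu|}$.
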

\begin{proof}
Either directly, or as in the proof of Lemma \ref{lemma:morphismevalzero}.
\end{proof}

We define $\eval_X$ to be the unique morphism in $\HMF(\Re, \widetilde{W})$ making the diagram
\be
\xymatrix{
{}^\dual X \otimes_S X \ar[dr]_-{\eval_0} \ar@{.>}[rr]^-{\eval_X} & & \Delta_W \ar[dl]^-{\pi_\Delta} \\
& R
}
\ee
commute. The existence of such a unique morphism, and the explicit formula for it, are established as before. First we have to write the infinite-rank matrix factorisation $X^{\vee}[m] \otimes_S X$ as a direct summand of a finite-rank factorisation as in \eqref{eq:idempotentpushdia}. Then Proposition \ref{prop:liftingresult} applies to this finite-rank factorisation in which we have embedded to produce the desired lifting.

\begin{proposition}\label{prop:constructevalmapnontilde} A representative for $\eval_X$ is the chain map \eqref{eq:constructevalmap01}.
\end{proposition}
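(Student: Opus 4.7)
The plan is to mirror the proof of Proposition~\ref{prop:constructevalmap} with the roles of the two polynomial rings interchanged. The first step is to observe that ${}^\dual X \otimes_S X = X^\vee \otimes_S S[m] \otimes_S X$ is not of finite rank over $\Re$, so in order to apply the lifting result Proposition~\ref{prop:liftingresult} we must first realise it as a direct summand of a finite-rank matrix factorisation in $\HMF(\Re, \widetilde W)$. Since $V$ is a potential, $\bar S = S/(\partial_{z_1}V,\ldots,\partial_{z_m}V)$ is a finite-rank free $k$-module, and Theorem $7.4$ of~\cite{dm1102.2957}, applied this time with the null-homotopies $\mu_1,\ldots,\mu_m$ for the $\partial_{z_j} V$, produces a diagram
\[
\xymatrix@C+2pc{
(X^\vee \otimes_S X)[m] \ar@<-1ex>[r]_-{\vartheta} & \bar X^\vee \otimes_{\bar S} \bar X := (X^\vee \otimes_S X) \otimes_S \bar S \ar@<-1ex>[l]_-{\psi}
}
\]
in $\HMF(\Re,\widetilde W)$ with $\psi \circ \vartheta = 1$ and $\vartheta = 1 \otimes \Lambda^{(z)}$, where $\bar X = X \otimes_S \bar S$.

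Next, I would verify that the lifting problem for $\eval_0$ has a unique solution up to homotopy. This follows by considering the commutative diagram analogous to \eqref{eq:proofconstructevalmap}, in which $\pi_\Delta^\bullet: \Hom_{\Re}(\bar X^\vee \otimes_{\bar S} \bar X, \Delta_W) \lto \Hom_{\Re}(\bar X^\vee \otimes_{\bar S} \bar X, R)$ is a homotopy equivalence by Proposition~\ref{prop:liftingresult} (applied to the finite-rank factorisation $\bar X^\vee \otimes_{\bar S} \bar X$); this property transfers to the $(X^\vee \otimes_S X)[m]$ side via the splitting $\psi \circ \vartheta = 1$. So $\eval_X$ is well-defined and may be constructed as $\vartheta_\bullet(\eval') \circ \psi = \eval' \circ (1 \otimes \Lambda^{(z)}) \circ \psi$, where $\eval'$ is the lift of $\eval'_0 = (-1)^{m}\cdot(\text{canonical map }\bar X^\vee \otimes_{\bar S}\bar X \lto \Hom_{S \otimes_k R}(\bar X, \bar X) \xrightarrow{\str} \bar S \otimes_k R \xrightarrow{\Res} R)$ supplied by Proposition~\ref{prop:liftingresult}.

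The final step is to compute this composite explicitly. Fixing the $\Re$-basis $\{g_\alpha e_i^* \otimes e_j\}_{\alpha,i,j}$ for $\bar X^\vee \otimes_{\bar S} \bar X$, where $\{g_\alpha\}$ is a $k$-basis for $\bar S$, Proposition~\ref{prop:liftingresult} gives
\[
\eval_X = \sum_{l \ge 0} \Psi \circ (1 \otimes \eval''_0) \circ \lAt_{R_1}(\bar X^\vee \otimes_{\bar S} \bar X)^l \circ (1 \otimes \Lambda^{(z)}) \circ \psi
\]
with $\eval''_0(g_\alpha e_i^* \otimes e_j) = 1 \otimes \eval'_0(g_\alpha e_i^* \otimes e_j)$. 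Substituting the formula \eqref{eq:leftatiyahexp} for the left Atiyah class, noting that the composed operator pushes the $\Lambda^{(z)}$ factor to the right of $\nu$ (so the argument becomes $\nu\Lambda^{(z)} \otimes \eta$ as in \eqref{eq:constructevalmap01}), and identifying the supertrace-with-residue expression in $\eval''_0$ with the pairing $\langle-\rangle$ composed with $\Res$, the expression collapses to~\eqref{eq:constructevalmap01}.

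The main obstacle is the bookkeeping in the last step: matching Koszul signs between the derivation via Atiyah classes (where $\Lambda^{(z)}$ starts on the left and must migrate through the $\lAt_{R_1}$ operator) and the presentation in~\eqref{eq:constructevalmap01}, as well as verifying that the global sign $(-1)^m$ fixed in Lemma~\ref{lemma:morphismevalzeroother} propagates correctly through the chain of canonical maps. Once this sign bookkeeping is settled, the rest of the argument is formally identical to the proof of Proposition~\ref{prop:constructevalmap}.
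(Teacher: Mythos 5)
Your overall strategy correctly mirrors what the paper intends: the paper explicitly states that Proposition~\ref{prop:constructevalmapnontilde} follows by the same idempotent pushforward plus lifting argument as Proposition~\ref{prop:constructevalmap}, with the roles of $R$ and $S$ interchanged, and your three steps (realise as a summand, check uniqueness via Proposition~\ref{prop:liftingresult} and the splitting, compute explicitly) are exactly the right ones.

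The one point that does not quite work as written is the choice of split monomorphism $\vartheta = 1 \otimes \Lambda^{(z)}$. In the proof of Proposition~\ref{prop:constructevalmap} the split mono is $\vartheta = \Lambda^{(x)} \otimes 1$, i.e.\ the null-homotopies are taken on the \emph{first} tensor factor, and this is why \eqref{eq:constructevalmap00} ends up with $\Lambda^{(x)}\eta \otimes \nu$. The target formula \eqref{eq:constructevalmap01} has $\nu\Lambda^{(z)} \otimes \eta$, where $\nu\Lambda^{(z)} = \nu \circ \Lambda^{(z)} \in X^\vee$; that is, the null-homotopies should be applied to the $X^\vee$ factor, not to $X$. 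If one instead uses $1 \otimes \Lambda^{(z)}$, tracing through $\lAt_{R_1}$ and the pairing $\langle - \rangle$ one lands on the matrix expression $\{\partial^{x,x'}_{[i_1]}d_X \cdots \partial^{x,x'}_{[i_l]}d_X \, \Lambda^{(z)}\}_{ij}$ rather than the $\{\Lambda^{(z)}\,\partial^{x,x'}_{[i_1]}d_X\cdots\partial^{x,x'}_{[i_l]}d_X\}_{ij}$ of Remark~\ref{remark:explicitev} --- these are homotopic (both are lifts of $\eval_0$, and lifts are unique up to homotopy) but they are not the same chain map, which is what the proposition asserts. The remark that ``the composed operator pushes the $\Lambda^{(z)}$ factor to the right of $\nu$'' does not resolve this: the Atiyah class does not commute with multiplication by the $\Lambda^{(z)}$-matrix entries, so there is no automatic migration. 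The fix is elementary: use the cyclicity of the supertrace (as in Appendix~\ref{app:reshom}) to rewrite $\eval_0$ with $\Lambda^{(z)}$ pre-composed on $\nu$, i.e.\ define the split mono via null-homotopies $\lambda_j(\nu) = (-1)^{|\nu|}\nu\mu_j$ on $X^\vee$, which produces $\nu\Lambda^{(z)}\otimes\eta$ directly after accounting for the reversal-of-order sign $(-1)^{\binom{m}{2}}$. With this modification (and the accompanying Koszul signs from the shift $[m]$ sitting on the first factor) the computation does collapse to \eqref{eq:constructevalmap01} exactly as you describe.
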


\begin{remark}\label{remark:indeptlambda} 
The morphisms $\widetilde\eval_X$ and $\eval_X$ are independent, up to homotopy, of the choices of null-homotopies $\lambda_i$. To justify this claim it clearly suffices to argue that $\widetilde\eval_0$ and $\eval_0$ are independent of the choices, and this is a consequence of the remarks in Appendix \ref{app:reshom}. In fact it is also shown there that, up to the sign of the relevant permutation, $\widetilde\eval_X$ and $\eval_X$ are independent up to homotopy of the chosen \textsl{ordering} of the homotopies $\lambda_i$. 
\end{remark}

\begin{remark}\label{remark:explicitev} The formulas \eqref{eq:constructevalmap00} and \eqref{eq:constructevalmap01} for $\widetilde\eval_X$ and $\eval_X$ can be written in terms of divided difference operators by expanding the Atiyah classes $\lAt_R$ and $\lAt_S$ similarly to \eqref{eq:atlarrow_formula}: for $g \in R$ we have
$$
\widetilde\eval_X( g e_j \otimes e_i^* ) = \sum_{l \ge 0} \sum_{i_1 < \cdots < i_l} (-1)^{l + (n+1)|e_j|} \, \theta_{i_1} \ldots \theta_{i_l}
\, \Res_{R/k} \left[ \frac{ \{ \partial^{z,z'}_{[i_l]} d_X  \ldots \partial^{z,z'}_{[i_1]} d_X \, \Lambda^{(x)} \}_{ij} g \, \underline{\operatorname{d}\!x}}{\partial_{x_1}W, \ldots, \partial_{x_n} W} \right]. 
$$
Similarly for $g \in S$, we have
$$
\eval_X( e_i^* \otimes g e_j ) = \sum_{l \ge 0} \sum_{i_1 < \cdots < i_l} (-1)^{\binom{l}{2}+l|e_j|+m} \, \theta_{i_1} \ldots \theta_{i_l}  
\, \Res_{S/k} \left[ \frac{ \{ \Lambda^{(z)} \, \partial^{x,x'}_{[i_1]} d_X  \ldots \partial^{x,x'}_{[i_l]} d_X \}_{ij} g \, \underline{\operatorname{d}\!z}}{\partial_{z_1}V, \ldots, \partial_{z_m} V} \right].
$$
\end{remark}

\section{Zorro moves}\label{sec:Zorro}

In this section we will show that the bicategory $\LG$ of Landau-Ginzburg models has adjoints, by proving that given potentials $W\in k[x] = k[x_1,\ldots,x_n]$ and $V\in k[z] = k[z_1,\ldots,z_m]$ and any matrix factorisation $X\in \hmf(k[x,z], V-W)$ the evaluation and coevaluation maps of Section \ref{sec:derivcoeval} satisfy the Zorro moves~\eqref{Zorros} and~\eqref{otherZorros}. Let us consider the first identity of~\eqref{otherZorros} in more detail: 
\be\label{Zorro1detail}
\begin{tikzpicture}[very thick,scale=1.0,color=blue!50!black, baseline=0cm]

\fill (0.2,1.6) circle (0pt) node {{\small $\Delta_V$}};
\fill (-0.2,-1.6) circle (0pt) node {{\small $\Delta_W$}};

\fill (1,1.8) circle (2.5pt) node[right] {{\small $\lambda$}};
\fill (-1,-1.8) circle (2.5pt) node[left] {{\small $\rho^{-1}$}};


\fill (-1.25,0) circle (0pt) node {{\footnotesize $S\vphantom{S}$}};
\fill (-0.5,0) circle (0pt) node {{\footnotesize $R\vphantom{R}$}};
\fill (0.5,0) circle (0pt) node {{\footnotesize $S\vphantom{R}$}};
\fill (1.25,0) circle (0pt) node {{\footnotesize $R\vphantom{S}$}};


\draw[dashed] (-0.5,0.75) .. controls +(0,0.75) and +(-0.25,-0.75) .. (1,1.8);
\draw[dashed] (0.5,-0.75) .. controls +(0,-0.75) and +(0.25,0.75) .. (-1,-1.8);

\draw[line width=0] 
(1,2.7) node[line width=0pt] (A) {{\small $X$}}
(-1,-2.7) node[line width=0pt] (A2) {{\small $X$}}; 
\draw[redirected] (0,0) .. controls +(0,1) and +(0,1) .. (-1,0);
\draw[redirected] (1,0) .. controls +(0,-1) and +(0,-1) .. (0,0);
\draw (-1,0) -- (A2); 
\draw (1,0) -- (A); 
\end{tikzpicture}
=
\begin{tikzpicture}[very thick,scale=1.0,color=blue!50!black, baseline=0cm]
\draw[line width=0] 
(0,2.7) node[line width=0pt] (A) {{\small $X$}}
(0,-2.7) node[line width=0pt] (A2) {{\small $X$}}; 
\draw (A2) -- (A); 
\end{tikzpicture}
. 
\ee
Here we label the domains by the rings $R = k[x]$ and $S = k[z]$ pertaining to the two objects of $\LG$. We call the left-hand side of (\ref{Zorro1detail}) the \textsl{Zorro map} and denote it $\mathcal Z$. It is the composite
\be\label{eq:zorro1a1}
\xymatrix@C+2pc{
X \ar[r]^-{\rho^{-1}} & X \otimes_R \Delta_W \ar[r]^-{1 \otimes\,\widetilde{\coev}} & X \otimes_R X^{\dagger} \otimes_S X \ar[r]^-{\widetilde{\eval}\, \otimes 1} & \Delta_V \otimes_S X \ar[r]^-{\lambda} & X\,.
}
\ee
We prove that this map is homotopic to the identity on $X$. Since the argument is very similar for the other identity of \eqref{otherZorros} and the two identities of \eqref{Zorros}, we do not give full details in these cases. 

Some elements of notation:
\begin{itemize}
\item $D = [d_X, -]$ is the differential on $\End(X) = \End_{R \otimes_k S}(X)$.
\item $\{ e_i \}_{i}$ denotes a homogeneous $(S \otimes_k R)$-basis of $X$ and $\{ e_i^* \}_{i}$ the dual basis.
\item $f_i = \partial_{x_i} W$ and we choose a null-homotopy $\lambda_i$ on $X$ for the action of $f_i$. For example $\lambda_i= -\partial_{x_i}d_X(x,z)$ will do. Postcomposition with $\lambda_i$ defines an operator $\psi \mapsto \lambda_i \circ \psi$ on $\End(X)$, which we also denote by $\lambda_i$. 
\item $\Re = R_1 \otimes_k R_2$ with $R_i = R$ and $\Se  = S_1 \otimes_k S_2$ with $S_i = S$.
\item $\Bar = \Omega_{R_2}(\Re)$. Recall that left and right multiplication in $\Omega_{R_2}(\Re)$ make $\Bar$ into an $\Re$-bimodule. 
\end{itemize}
To establish the expression for $\mathcal Z$ in Lemma \ref{lemma:Zorrointermediate} below we need:

\begin{definition} With $\underline{\operatorname{d}\!x}=\operatorname{d}\!x_1\ldots \operatorname{d}\!x_n$ we define the degree zero map
\be\label{eq:bigpairing_defn}
\big\langle\!\big\langle - \big\rangle\!\big\rangle: \Bar \lto R[n]
\, ,\qquad
\big\langle\!\big\langle \alpha \big\rangle\!\big\rangle = \Res_{\Re/R_2} \left[ \frac{ \varepsilon\Psi( \alpha ) \, \underline{\operatorname{d}\!x}}{f_1,\ldots,f_n} \right]
\ee
using $\Psi: \Bar \lto \Delta$ of \eqref{intro_psi} and $\varepsilon: \Delta \lto \Re[n]$ of \eqref{eq:vareps}.
\end{definition}

This map is \textsl{left} $R_2$-linear and \textsl{right} $R_1$-linear, that is:

\begin{lemma}\label{lemma:linearity_bigpair} For $\alpha \in \Bar$ and $r,r' \in R$,
\begin{align}
\big\langle\!\big\langle (r \otimes r') \cdot \alpha \big\rangle\!\big\rangle &= r' \big\langle\!\big\langle (r \otimes 1) \cdot \alpha \big\rangle\!\big\rangle\,, \label{eq:bigpairing_right_linear0}\\
\big\langle\!\big\langle \alpha \cdot (r \otimes r') \big\rangle\!\big\rangle &= r \big\langle\!\big\langle \alpha \cdot (1 \otimes r') \big\rangle\!\big\rangle\,. \label{eq:bigpairing_right_linear}
\end{align}
\end{lemma}
\begin{proof}
Recall that the left $\Re$-action on $\Bar$ is given by \eqref{eq:leftreactionb}, and that $\Psi$ is left $\Re$-linear. Since $\varepsilon$ is of course $\Re$-linear, and the residue is $R_2$-linear, we deduce \eqref{eq:bigpairing_right_linear0}. The linearity on the \textsl{right} is less obvious but more important. On noncommutative forms $\omega$ of degree $n$, Lemma \ref{lem:PsiToTheRight} tells us that $\Psi( \omega \cdot (r \otimes 1) ) = \Psi( \omega ) \cdot (1 \otimes r)$. Combined with the $R_2$-linearity of the residue and the fact that \eqref{eq:bigpairing_defn} is only nonvanishing on degree $n$ forms, \eqref{eq:bigpairing_right_linear} follows.
\end{proof}

The formula for $\mathcal Z$ further involves the Atiyah class of the complex $\End(X)$ of $(S \otimes_k R)$-modules with respect to the ring map $k \lto R$. This is an operator
\be\label{eq:atiyah_class_endX}
\At = \At_R( \End(X) ): \End(X) \otimes_R \Omega R \lto \End(X) \otimes_R \Omega R
\ee
which is $S$-linear, and also right $R$-linear using multiplication by $R$ on the right of $\Omega R$. As explained in Example \ref{remark:linearatotherR}, this can be recast using the Bar complex $\Bar = \Omega R_1 \otimes_k R_2$ by identifying the $R$ in \eqref{eq:atiyah_class_endX} with $R_1$ and tensoring on the right with $R_2$. Then $\At$ is an $(S \otimes_k R_1)$-linear operator\footnote{The $R_1$-linearity of $\At$ is with respect to right multiplication by $R_1$ on $\Bar$.}
\[
\At = [d,D]: \End(X) \otimes_{R_1} \Bar \lto \End(X) \otimes_{R_1} \Bar\,.
\]
The extension of the usual operator $d$ on $\Bar$ to a connection on $\End(X) \otimes_{R_1} \Bar$ is as defined in Section \ref{section:atiyahclasses} using the natural basis $e_i \circ e_j^*$ for $\End(X)$. Finally, the supertrace is a map
\be\label{eq:strasfunction}
\str = \str \otimes 1: \End(X) \otimes_{R_1} \Bar \lto (S \otimes_k R) \otimes_{R_1} \Bar \cong S \otimes_k \Bar\,.
\ee

\begin{lemma}\label{lemma:Zorrointermediate} We have
\be\label{Zorrointermediate}
\mathcal Z = \sum_j (-1)^{n+|e_j|} \big\langle\!\big\langle \str \big( \lambda_1 \ldots \lambda_n \At^n (-\circ e_j^*) \big) \big\rangle\!\big\rangle \cdot e_j\,.
\ee
\end{lemma}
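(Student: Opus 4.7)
The plan is to evaluate the composite in \eqref{eq:zorro1a1} by simplifying via the defining properties of $\widetilde\eval_X$ and $\widetilde\coev_X$, then recognising the resulting expression as a power of the Atiyah class of $\End(X)$.

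The first step is to collapse the outer maps. Since $\lambda_X = \pi_\Delta \otimes 1_X$ and, by the defining diagram \eqref{eq:defineeval}, $\pi_\Delta \circ \widetilde\eval_X = \widetilde\eval_0$ up to homotopy, we have $\lambda_X \circ (\widetilde\eval_X \otimes 1_X) = \widetilde\eval_0 \otimes 1_X$, so
$$\mathcal Z = (\widetilde\eval_0 \otimes 1_X) \circ (1_X \otimes \widetilde\coev_X) \circ \rho_X^{-1}\,.$$

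The second step is to combine $\rho_X^{-1}$ and $\widetilde\coev_X$ into a single series in an Atiyah class. Substituting the explicit formulas $\rho_X^{-1} = \sum_{l \ge 0}(-1)^l (1 \otimes \Psi) \At_R(X)^l \sigma_2$ from \eqref{eq:formula_rhoinverse} and $\widetilde\coev_X(\gamma) = \sum_{l' \ge 0} (-1)^{l'+nl'}\varepsilon(\gamma \wedge \Psi\Atlarrow_{R_1}(X^\dual \otimes_S X)^{l'}(\iota_X))$ from \eqref{eq:coev10-2}, and using that $\Psi$ is a dg-algebra map (Lemma \ref{PhiPsiDG}), we may rewrite the wedge of $\Psi$-outputs as $\Psi$ applied to a shuffle product in $\Bar$. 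Lemma \ref{lemma:atshufat} then identifies this shuffle as $\At_{R_1}(X \otimes_R X^\dual \otimes_S X)^{l+l'}(e_a \otimes \iota_X)$ applied after tensoring with $e_a$. Because $\varepsilon$ projects onto top $\theta$-degree $n$ and each $\Psi\At^k$ contributes form-degree $k$, only terms with $l+l' = n$ survive, and the double sum collapses to a single Atiyah power of order $n$ on the tensor product $X \otimes X^\dual \otimes X$.

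The third step is to apply $\widetilde\eval_0 \otimes 1_X$. Under the canonical identification $X \otimes_R X^\dual \cong \End(X)$ induced by the map $\xi$ of Section \ref{section:canonicalmaps} (combined with the shift on $X^\dual = X^\vee[n]$), the element $e_a \otimes e_j^*$ is sent to the matrix unit $-\circ e_j^*$, while $\iota_X = \sum_j (-1)^{|e_j|} e_j^* \otimes e_j$ yields the outer sum over $j$ with the sign $(-1)^{|e_j|}$ and the trailing basis vector $e_j$. Because $\xi$ is a closed isomorphism, the Atiyah class transports to $\At = \At_{R_1}(\End(X))$. By Lemma \ref{lemma:morphismevalzero}, $\widetilde\eval_0$ now inserts $\Lambda^{(x)} = \lambda_1\cdots\lambda_n$, takes the supertrace and applies the residue; composed with $\varepsilon\Psi$ these form precisely the bracket $\dlangle \str(\Lambda^{(x)} \cdot (-))\drangle$. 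Consolidating the contributions $(-1)^l$ from $\rho_X^{-1}$, $(-1)^{l'+nl'}$ from $\widetilde\coev_X$ (restricted to $l+l'=n$), $(-1)^{n+n|\cdot|}$ from $\widetilde\eval_0$, and $(-1)^{|e_j|}$ from $\iota_X$ yields the claimed overall sign $(-1)^{n+|e_j|}$.

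The main obstacle is the sign bookkeeping: one must verify that the partition-dependent signs $(-1)^{l+l'+nl'}$ are actually constant on the locus $l+l' = n$, so that no residual summation over partitions remains, and then combine all remaining sign contributions to recover the uniform factor $(-1)^{n+|e_j|}$ independently of the input index $a$.
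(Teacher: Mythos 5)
Your approach is essentially identical to the paper's: collapse $\lambda_X\circ(\widetilde\eval_X\otimes 1_X)$ to $\widetilde\eval_0\otimes 1_X$ via the lifting property \eqref{eq:defineeval}, substitute the explicit expansions \eqref{eq:formula_rhoinverse} and \eqref{eq:coev10-2}, use Lemma~\ref{PhiPsiDG} (that $\Psi$ is a dg-algebra map) to convert the wedge into a shuffle, apply Lemma~\ref{lemma:atshufat} to merge the two Atiyah series into a single power, and finish with $\widetilde\eval_0$ to produce the bracket $\dlangle\,\str(\Lambda^{(x)}\cdot(-))\,\drangle$, observing that $(S\otimes_k R)$-linearity propagates the verification from basis elements.

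One inaccuracy worth correcting: your proposed resolution of the ``main obstacle'' is not quite right. On the locus $l+l'=n$ the sign $(-1)^{l+l'+nl'}=(-1)^{n+nl'}$ is \emph{not} constant --- it alternates with $l'$ when $n$ is odd. The partition-dependence does not disappear on its own but is absorbed when Lemma~\ref{lemma:atshufat} is combined with the explicit formulas \eqref{eq:powerofatiyah} and \eqref{eq:atlarrow_formula}, which themselves carry $\binom{\cdot}{2}$- and degree-dependent signs; it is this cancellation against the $[n]$-shift in $X^\dual=R[n]\otimes_R X^\vee$ that makes the combined coefficient partition-independent. Likewise, your assertion that the Atiyah class ``transports'' simply because $\xi$ is a closed isomorphism understates the issue: the Atiyah class depends on a choice of basis, and the paper must carefully pass through the shift isomorphisms $X\otimes_R X^\vee[n]\cong(X\otimes_R X^\vee)[n]$ (which contribute the sign $(-1)^{n|e_q|}$ that is ultimately discharged by $(S\otimes_kR)$-linearity) before reaching $\At(\End(X))$. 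These are exactly the points the paper's proof tracks sign by sign; your sketch identifies the right landmarks but does not resolve them.
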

\begin{proof}
Throughout the proof $\At = \At_{R_1}$. We begin by computing the image of a basis element $e_q$ of $X$ under $\mathcal Z$. The image of $e_q$ under the first two maps of \eqref{eq:zorro1a1} is computed by the expressions in \eqref{eq:formula_rhoinverse} and \eqref{eq:coev10-2}, 
\begin{align*}
(1 \otimes \widetilde{\coev}) \circ \rho^{-1}( e_q ) &= (1 \otimes \widetilde{\coev}) \sum_{l \ge 0} (-1)^l \Psi \At(X)^l (e_q)\\
&= \sum_{j} \sum_{l+l'=n} (-1)^{n+nl'+|e_j|} \varepsilon\Big( \Psi \At(X)^l( e_q ) \wedge \Psi \Atlarrow(X^{\dual} \otimes_S X)^{l'}( e_j^*  \otimes e_j ) \Big) \\
&= \sum_{j} \sum_{l+l' = n} (-1)^{n+nl'+|e_j|} \varepsilon\Psi\Big( \At(X)^l( e_q ) \times \Atlarrow(X^{\dual} \otimes_S X)^{l'}( e_j^* \otimes e_j ) \Big) \\ 
&= \sum_j (-1)^{|e_j| + n|e_j|} \varepsilon \Psi\Big( \At(X \otimes_R X^\dual \otimes_S X)^n(e_q \otimes e_j^* \otimes e_j ) \Big) \\
&= \sum_j (-1)^{|e_j| + n|e_j|} \varepsilon \Psi\Big( \At(X \otimes_R X^\dual)^n(e_q \otimes e_j^*) \otimes e_j \Big)
\end{align*} 
where we use that $\Psi$ intertwines the shuffle product with the exterior product, and Lemma \ref{lemma:atshufat} in order to rewrite the shuffle product of Atiyah classes as the Atiyah class of the tensor product. The last step follows from observing that the Atiyah class is defined as a commutator with a connection which is linear in the variables $S$ contained in the final $X$, so overall the Atiyah class acts as the identity on the last tensor component.

To this we apply $\lambda \circ (\widetilde\eval \otimes 1)$, which amounts to the map $\widetilde\eval_0$ of Lemma \ref{lemma:morphismevalzero}. Applying the first isomorphism of \eqref{eq:defneval0_00} has the effect of changing the argument of the Atiyah class from $X \otimes_R X^{\vee}[n]$ to $(X \otimes_R X^\vee)[n]$ and replacing the sign by $(-1)^{(n+1)|e_j| + n|e_q|}$. For any $Y$ the Atiyah class of $Y[n]$ agrees with that of $Y$ so we can drop the shift by $n$. The next step is to apply $(-) \otimes_{\Se} S$ after which $X \otimes_R X^\vee$ becomes identified with $\End(X)$, and we are left with
\[
\sum_j (-1)^{(n+1)|e_j|+n|e_q|} \varepsilon \Psi \Big( \At( \End(X) )^n( e_q \circ e_j^* ) \otimes e_j \Big)\,.
\]
To finish applying $\widetilde\eval_0$ we compose with $\lambda_1 \ldots \lambda_n$ and take the residue, leaving us with the right-hand side of \eqref{Zorrointermediate} evaluated on $e_q$. Thus we have shown that both sides of \eqref{Zorrointermediate} agree on $e_q$. But the right-hand side is $(S \otimes_k R)$-linear since both the Atiyah class and $\dlangle - \drangle$ are right $R_1$-linear by Lemma \ref{lemma:linearity_bigpair}, so this completes the proof.
\end{proof}

\begin{remark}\label{remark:recastmathcalz}
There is a canonical $(S \otimes_k R)$-linear isomorphism of complexes
$$
\varrho: \End(X) \lto \Hom_{S \otimes_k R}( \End(X), S \otimes_k R)\,, \qquad \varrho(e) = \str( e \circ - )
$$
under which $\mathcal{Z}$ corresponds, by Lemma \ref{lemma:Zorrointermediate}, to the map
$$
\varrho(\mathcal{Z}) = (-1)^n \dlangle \str \big( \lambda_1 \ldots \lambda_n \At^n (-) \big) \drangle
$$
To prove that $\mathcal{Z} \simeq 1_X$ it is therefore equivalent to prove that $\varrho(\mathcal{Z}) \simeq \varrho(1_X) = \str(-)$.
\end{remark}

We are led to study the map 
\begin{equation}\label{eq:strlambdaat}
\str\, \lambda_1 \ldots \lambda_n \At^n( - \otimes - ): \End(X) \otimes_{R_1} \Bar \lto S \otimes_k \Bar\,.
\end{equation}

\begin{lemma}\label{lemma:stratzero} 
$\str \circ \At = 0$ as maps $\End(X) \otimes_{R_1} \Bar \lto S \otimes_k \Bar$.
\end{lemma}
\begin{proof}
We have $[d, \str] = 0$, $\str \, \circ \, D = 0$ and $\At = [d,D]$. So
\[
\str \circ \At = \str \circ  ( d D + D d ) = d \str D + \str D d = 0\,.
\]
\end{proof}

In what follows we identify $f_i = \partial_{x_i} W$ with an operator of left multiplication on $\End(X) \otimes_{R_1} \Bar$ defined by $\psi \otimes \alpha \mapsto f_i \psi \otimes \alpha = \psi \otimes (f_i \otimes 1)\alpha$. Similarly we write $d f_i$ for left multiplication by this element, with Koszul signs $\psi \otimes \alpha \mapsto (-1)^{|\psi|} \psi \otimes d f_i \alpha$. We can use the graded Jacobi identity to see that $[\lambda_i, \At] = [d, f_i]$ up to homotopy, and more generally: 

\begin{lemma}\label{lemma:commutator1} $[ \lambda_1 \ldots \lambda_n, \At ] = \sum_{i=1}^n (-1)^i \big[d, f_i \lambda_1 \ldots \widehat{\lambda_i} \ldots \lambda_n \big] - [D, [d, \lambda_1 \ldots \lambda_n]]$.
\end{lemma}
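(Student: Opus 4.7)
The plan is to derive the identity by applying two standard tools of graded commutator algebra to the expression $[\lambda_1\ldots\lambda_n,\At]=[\lambda_1\ldots\lambda_n,[d,D]]$: the graded Jacobi identity to split the $\At = [d,D]$ commutator into a $D$-piece and a $d$-piece, and the graded Leibniz rule for commutators to distribute $\lambda_1\ldots\lambda_n$ across each piece. The only subtle point is sign bookkeeping, using that $|\lambda_i|=|d|=|D|=1$ and $|f_i|=0$.

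First I would write, via the graded Jacobi identity,
\[
[\lambda_1\ldots\lambda_n,[d,D]] = [[\lambda_1\ldots\lambda_n,d],D] + (-1)^{n}[d,[\lambda_1\ldots\lambda_n,D]].
\]
For the second summand I would apply the graded Leibniz rule $[AB,C]=A[B,C]+(-1)^{|B||C|}[A,C]B$ inductively to obtain
\[
[\lambda_1\ldots\lambda_n,D] = \sum_{i=1}^{n}(-1)^{n-i}\lambda_1\ldots\lambda_{i-1}[\lambda_i,D]\lambda_{i+1}\ldots\lambda_n,
\]
and then use that $[\lambda_i,D]=f_i$ (which holds because $[D,\lambda_i]=f_i$ by assumption, and $[\lambda_i,D]=[D,\lambda_i]$ for two odd operators) together with the fact that each $f_i$ is even and therefore commutes past any $\lambda_j$, so each summand rewrites as $(-1)^{n-i}f_i\lambda_1\ldots\widehat{\lambda_i}\ldots\lambda_n$. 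Multiplying by the prefactor $(-1)^n$ and using $(-1)^{n-i}(-1)^n=(-1)^{i}$, this contribution becomes exactly $\sum_{i=1}^{n}(-1)^{i}[d,f_i\lambda_1\ldots\widehat{\lambda_i}\ldots\lambda_n]$, the first term on the right-hand side.

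For the first summand $[[\lambda_1\ldots\lambda_n,d],D]$ I would use graded antisymmetry twice. Since $\lambda_1\ldots\lambda_n$ has degree $n$ and $d$ has degree $1$, $[\lambda_1\ldots\lambda_n,d]=-(-1)^{n}[d,\lambda_1\ldots\lambda_n]$, and the outer commutator with $D$ (both arguments now of total degrees $n+1$ and $1$) gives $[[\lambda_1\ldots\lambda_n,d],D]=(-1)^n[D,[\lambda_1\ldots\lambda_n,d]]$. Combining the two sign factors yields $-[D,[d,\lambda_1\ldots\lambda_n]]$, which is precisely the second term of the lemma. Adding the two contributions finishes the derivation.

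The calculation is entirely formal; there is no homotopy-theoretic content to resolve. The only place where one must be attentive is the sign tracking in the two successive applications of graded antisymmetry and in moving $f_i$ past the preceding $\lambda_j$'s—in particular verifying that the parity $(-1)^{n-i}$ coming from Leibniz, multiplied by the Jacobi prefactor $(-1)^n$, correctly produces $(-1)^i$. Once this is confirmed the statement follows directly, with no appeal to properties of the matrix factorisation $X$ beyond $[D,\lambda_i]=f_i$.
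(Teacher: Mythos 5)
Your proof is correct and follows exactly the route the paper sketches: the graded Jacobi identity for $[\lambda_1\ldots\lambda_n,[d,D]]$, the inductive Leibniz expansion of $[\lambda_1\ldots\lambda_n,D]$ using $[D,\lambda_i]=f_i$, and careful sign bookkeeping to reassemble the result. The paper itself offers no written proof beyond invoking the Jacobi identity, so your argument is a correct fill-in of the implicit sketch; the only small point to flag is that $[D,\lambda_i]=f_i$ is not literally an assumption but follows by a one-line computation from $[d_X,\lambda_i]=f_i\cdot 1_X$ when $\lambda_i$ is re-read as the postcomposition operator on $\End(X)\otimes\mathds{B}$.
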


\begin{lemma}\label{lemma:strlambdaatfirst} $\str \lambda_1 \ldots \lambda_n \At = \sum_{i=1}^n (-1)^i \big[d, f_i \str \lambda_1 \ldots \widehat{\lambda_i} \ldots \lambda_n \big] + (-1)^{n+1} [d, \str \lambda_1 \ldots \lambda_n] D$.
\end{lemma}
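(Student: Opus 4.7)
The plan is to postcompose the identity of Lemma \ref{lemma:commutator1} with the supertrace map \eqref{eq:strasfunction} and simplify, using two ingredients: Lemma \ref{lemma:stratzero}, which says $\str \circ \At = 0$, and the vanishing of the supertrace on graded commutators with $d_X$, i.e.~$\str \circ D = 0$ on $\End(X) \otimes_{R_1} \Bar$.

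On the left-hand side, expanding the graded commutator yields
\[
\str [\lambda_1 \ldots \lambda_n, \At] = \str \lambda_1 \ldots \lambda_n \At - (-1)^n \str \At \lambda_1 \ldots \lambda_n,
\]
whose second term vanishes by Lemma \ref{lemma:stratzero}, leaving exactly the desired left-hand side.

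For the right-hand side I would distribute $\str$ through the sum. The crucial observation is that $d$ as extended to $\End(X) \otimes_{R_1} \Bar$ acts only on the $\Bar$ factor whereas $\str = \str \otimes 1_{\Bar}$ acts only on the $\End(X)$ factor, so these two operators commute on the nose: $[d, \str] = 0$. Consequently $\str [d, T] = [d, \str T]$ for any homogeneous operator $T$, and combined with the $(S \otimes_k R)$-linearity of $\str$ (which lets $f_i \in R$ pass through) this converts each of the first $n$ summands into $(-1)^i [d, f_i \str \lambda_1 \ldots \widehat{\lambda_i} \ldots \lambda_n]$, as required.

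For the final summand $-\str [D, [d, \lambda_1 \ldots \lambda_n]]$, set $T = [d, \lambda_1 \ldots \lambda_n]$ so that $|T| = n+1$. Expanding the outer commutator gives $\str D T - (-1)^{n+1} \str T D$; the first piece is killed by $\str \circ D = 0$, and the second piece, after one more application of $[d, \str] = 0$, becomes $(-1)^{n+1} [d, \str \lambda_1 \ldots \lambda_n] D$. Tracking the parity $n+1$ of $T$ is the only point where care is needed, and the overall sign indeed matches the statement. Assembling the three contributions yields the claimed identity.
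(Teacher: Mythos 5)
Correct, and the same approach as the paper: postcompose the identity of Lemma~\ref{lemma:commutator1} with $\str$, use $\str \circ \At = 0$ (Lemma~\ref{lemma:stratzero}) to turn the left-hand side into $\str\,\lambda_1\ldots\lambda_n\At$, and use $[d,\str]=0$ together with $\str \circ D = 0$ to simplify the right-hand side. Your write-up simply spells out the ``the rest follows'' step that the paper leaves implicit, and the sign bookkeeping in the final summand comes out correctly.
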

\begin{proof}
By Lemma \ref{lemma:stratzero}, $\str \lambda_1 \ldots \lambda_n \At = \str[\lambda_1 \ldots \lambda_n, \At]$, and the rest follows from Lemma \ref{lemma:commutator1}.
\end{proof}
Of course there are similar formulas when $\lambda_1 \ldots \lambda_n$ is replaced by any subproduct. 

The fact that we are working with noncommutative forms introduces various subtleties that will be dealt with carefully below, but if we ignore these complications for a moment and naively replace $[d, f_i]$ by $d f_i$, the lemma allows us to compute the map \eqref{eq:strlambdaat} up to homotopy by contracting Atiyah classes with $\lambda_i$'s in all possible ways, as in the following ``meta-proof'': 
\begin{align*}
\dlangle \str &\big( \lambda_1 \ldots \lambda_n \At^n( -) \big) \drangle \simeq 
-\sum_{i_1} (-1)^{i_1+1} \dlangle df_{i_1} \str \big( \lambda_1 \cdots \widehat{\lambda}_{i_1} \cdots \lambda_n \At^{n-1}(-) \big) \drangle\\
&\simeq (-1)^2 \sum_{i_1 \neq i_2} (-1)^{i_1+1+i_2+2} \dlangle (df_{i_1} df_{i_2} - df_{i_2} df_{i_1}) \str \big( \lambda_1 \cdots \widehat{\lambda}_{i_1} \cdots \widehat{\lambda}_{i_2} \cdots \lambda_n \At^{n-2}(-) \big) \drangle\\
&\simeq \ldots\\
&\simeq (-1)^n \sum_{\sigma \in S_n} (-1)^{|\sigma|} \dlangle df_{\sigma(1)} \cdots df_{\sigma(n)} \str( - ) \drangle\\
&= (-1)^n \str( - )
\end{align*}
where in the last step we use a fact of the residue calculus: integrating against the determinant of the matrix $( \partial_{[i]} f_j )$ is the identity. This calculation should not be taken literally, but it does serve as a useful sketch of how we will use the commutation relations for the Atiyah class to deduce the Zorro move in the rest of this section.

Given a sequence of distinct elements $\alpha = (\alpha_p,\ldots,\alpha_1)$ in $\{1, \ldots, n\}$ we define the length of $\alpha$ to be $\ell(\alpha) = p$, the inversion number to be $|\alpha| = \sum_{1 \le i < j \le p} \delta_{\alpha_i > \alpha_j}$ (where by definition $\delta_{\alpha_i > \alpha_j}$ equals 1 if $\alpha_i > \alpha_j$, and zero otherwise), and we set $\gamma(\alpha) = |\alpha| + \alpha_1 + \ldots + \alpha_p$. We write $\Lambda_\alpha$ for the product $\lambda_1 \ldots \lambda_n$ with the $\lambda_{\alpha_1},\ldots,\lambda_{\alpha_p}$ omitted, and we use the same symbol $\Lambda_\alpha$ for the operator on $\End(X) \otimes_{R_1} \Bar$ defined by postcomposition with $\Lambda_\alpha$.

\begin{proposition}\label{prop:mainzorrotrick} For $1 \le p \le n$ we have
\begin{align}
\str \,&\lambda_1 \ldots \lambda_n\, \At^p = \sum_{\ell(\alpha) = p} (-1)^{\gamma(\alpha)} \big[d, f_{\alpha_p} \big[d, f_{\alpha_{p-1}} \cdots \big[d, f_{\alpha_1} \str \, \Lambda_\alpha \big] \cdots \big]\big] \label{eq:mainzorrotrick}\\
&+ \sum_{\substack{0 \le q < p \\ \ell(\alpha) = q}} (-1)^{\gamma(\alpha) + n - q + 1} \big[d, f_{\alpha_q} \big[d, f_{\alpha_{q-1}} \cdots \big[d, f_{\alpha_1} \big[d, \str \, \Lambda_\alpha\big] \big] \cdots \big]\big] \At^{p-q-1} D\,. \nonumber
\end{align}
\end{proposition}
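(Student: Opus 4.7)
The proof will be by induction on $p$, with Lemma \ref{lemma:strlambdaatfirst} serving as the base case $p = 1$. The key preliminary observation is that the same Jacobi-identity argument that gives Lemma \ref{lemma:commutator1}, combined with the vanishing $\str \circ \At = 0$ of Lemma \ref{lemma:stratzero}, yields a subproduct version of Lemma \ref{lemma:strlambdaatfirst}: for any sequence $\alpha = (\alpha_\ell, \ldots, \alpha_1)$ of distinct indices in $\{1, \ldots, n\}$, writing $\{j_1 < \cdots < j_{n-\ell}\}$ for the ordered complement of $\alpha$ and $\Lambda_\alpha = \lambda_{j_1} \cdots \lambda_{j_{n-\ell}}$, one obtains
$$\str\, \Lambda_\alpha\, \At \;=\; \sum_{k=1}^{n-\ell} (-1)^{\eta(\alpha, k)}\, \bigl[d,\, f_{j_k}\, \str\, \Lambda_{(j_k, \alpha)}\bigr] \;+\; (-1)^{n-\ell+1}\bigl[d, \str\, \Lambda_\alpha\bigr]\, D,$$
for an appropriate sign $\eta(\alpha, k)$ which reduces to $(-1)^k$ when $\alpha = \varnothing$, recovering Lemma \ref{lemma:strlambdaatfirst} in that case.

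For the inductive step I will write $\str\, \lambda_1 \cdots \lambda_n\, \At^{p+1} = (\str\, \lambda_1 \cdots \lambda_n\, \At^p)\, \At$ and apply the induction hypothesis to the first factor. The trailing $\At$ then has to be moved through each layer of the nested $[d, -]$ commutators appearing in the hypothesis. This is permitted because $\At$ has even degree and graded-commutes with both $d$ and $D$: $d^2 = 0$ gives $d\At = \At d = dDd$ so that $[d, \At] = 0$, and $D^2 = 0$ on $\End(X)$ (a consequence of $d_X^2$ being scalar) yields $[D, \At] = 0$ by the same sort of computation. The former identity implies $[d, F]\At = [d, F\At]$ for any operator $F$, and iterating this allows $\At$ to reach the innermost position $\str\, \Lambda_\alpha$. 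There the subproduct formula above is applicable: its first summand extends $\alpha$ by prepending a new index $j_k$ and contributes to the first sum of \eqref{eq:mainzorrotrick} at length $\ell(\alpha) + 1 \le p+1$; its second summand produces a new $D$-term at stage $q = \ell(\alpha)$. Meanwhile, the $D$-terms already produced at stages $q < p$ in the hypothesis inherit an extra factor of $\At$ on the right, updating $\At^{p-q-1} D$ to $\At^{p-q} D$ (using $[D, \At] = 0$), exactly as required by the second sum of \eqref{eq:mainzorrotrick} at level $p+1$.

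The main obstacle is the sign bookkeeping. One must establish a precise formula for $\eta(\alpha, k)$ in the subproduct version by tracking how each $\lambda_{j_i}$ to the left of $\lambda_{j_k}$ contributes to the sign when $[\lambda_{j_k}, \At]$ is transported out, and then verify that extending $\alpha$ to $\alpha' = (j_k, \alpha_\ell, \ldots, \alpha_1)$ yields the identity $(-1)^{\gamma(\alpha) + \eta(\alpha, k)} = (-1)^{\gamma(\alpha')}$ demanded by the target formula. Using that the position $k$ of $j_k$ in the ordered complement of $\alpha$ satisfies $k = j_k - \#\{i : \alpha_i < j_k\}$ and that prepending $j_k$ creates $\#\{i : \alpha_i > j_k\} = \ell - j_k + k$ new inversions, one deduces the combinatorial identity $\gamma(\alpha') = \gamma(\alpha) + \ell + k$; once the explicit form of $\eta$ is pinned down the signs align. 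A parallel but simpler analysis handles the signs of the $D$-terms, both those newly created at stage $q = \ell(\alpha)$ and those inherited from the induction hypothesis.
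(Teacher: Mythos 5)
Your proposal follows the paper's argument exactly: induction on $p$ with Lemma~\ref{lemma:strlambdaatfirst} as base case, right-multiplication by $\At$, commuting $\At$ through the nested $C_i = [d, f_i(-)]$ commutators via $[d,\At] = [D,\At] = 0$, and then applying the subproduct version of Lemma~\ref{lemma:strlambdaatfirst} (which the paper also notes as ``similar formulas when $\lambda_1 \ldots \lambda_n$ is replaced by any subproduct'') to $\str\,\Lambda_\alpha\,\At$. Your combinatorial identity $\gamma(\alpha') = \gamma(\alpha) + \ell + k$ is correct and matches the sign $(-1)^{j + \delta_{\alpha_1 < j} + \cdots + \delta_{\alpha_p < j}}$ that appears in the paper's inductive step, so the argument is sound.
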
 
\begin{proof}
The proof is by induction on $p$, with the case $p = 1$ given by Lemma \ref{lemma:strlambdaatfirst}. Now suppose that \eqref{eq:mainzorrotrick} holds for $p$. For convenience we write $C_i$ for $[d, f_i (-)]$. Since $[d, \At] = 0$ and $[D, \At] = 0$ we can multiply \eqref{eq:mainzorrotrick} on the right by an Atiyah class: 
\begin{align}
\str\, &\lambda_1 \ldots \lambda_n \At^{p+1} = \sum_{\ell(\alpha) = p} (-1)^{\gamma(\alpha)} C_{\alpha_p} \ldots C_{\alpha_1}( \str \Lambda_\alpha \At )\label{eq:mainzorrotrick2}\\
&+ \sum_{\substack{0 \le q < p \\ \ell(\alpha) = q}} (-1)^{\gamma(\alpha) + n - q + 1} C_{\alpha_q} \ldots C_{\alpha_1}\Big( \big[d, \str \Lambda_\alpha \big] \Big) \At^{p+1-q-1} D\,.\nonumber
\end{align}
Using Lemma \ref{lemma:strlambdaatfirst} the first summand may be rewritten as
\begin{align*}
&\sum_{\ell(\alpha) = p} (-1)^{\gamma(\alpha)} C_{\alpha_p} \ldots C_{\alpha_1}\Big( \sum_{j \notin \alpha} (-1)^{j + \delta_{\alpha_1 < j} + \cdots + \delta_{\alpha_p < j}} \big[d, f_j \str \Lambda_{\alpha,j}\big] + (-1)^{n-\ell(\alpha) + 1}\big[d, \str \Lambda_\alpha\big] D \Big)\\
&= \sum_{\ell(\alpha) = p+1} (-1)^{\gamma(\alpha)} C_{\alpha_{p+1}} \ldots C_{\alpha_1}\big( \str \, \Lambda_\alpha \big) + \sum_{\ell(\alpha) = p} (-1)^{\gamma(\alpha) + n - \ell(\alpha) + 1} C_{\alpha_p} \ldots C_{\alpha_1}\Big( \big[d, \str \Lambda_\alpha\big] \Big) D \, . 
\end{align*}
Substituting this back into \eqref{eq:mainzorrotrick2} yields \eqref{eq:mainzorrotrick} for $p + 1$ and completes the inductive step.
\end{proof}

\begin{corollary}\label{corollary:nearlymaintheorem} For $1 \le p \le n$ we have
\begin{align}
\str \,&\lambda_1 \ldots \lambda_n \At^p \equiv \sum_{\ell(\alpha) = p} (-1)^{\gamma(\alpha)} d f_{\alpha_p} d f_{\alpha_{p-1}} \cdots d f_{\alpha_1} \str \Lambda_\alpha \nonumber \\
&+ \sum_{\substack{0 \le q < p \\ \ell(\alpha) = q}} (-1)^{\gamma(\alpha) + n - q + 1} d f_{\alpha_q} d f_{\alpha_{q-1}} \cdots d f_{\alpha_1} \big[d, \str \, \Lambda_\alpha\big] \At^{p-q-1} D\,. \nonumber
\end{align}
where $\equiv$ denotes equality in $S \otimes_k \Bar$ modulo the left action of the $f_i$ by $s \otimes \alpha \mapsto s \otimes (f_i \otimes 1)\alpha$.
\end{corollary}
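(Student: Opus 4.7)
The plan is to deduce the corollary from Proposition \ref{prop:mainzorrotrick} by collapsing each iterated commutator to a pure left-multiplication by a product of $df_{\alpha_i}$'s. First, for any operator $h$ and any element $f \in R$ acting on $\Bar$ by left multiplication, the Leibniz rule for $d$ on $\Bar$ (together with the fact that $f$ is even) gives the key identity
\[
[d, fh] = (df) \cdot h + f \cdot [d, h]\,,
\]
in which the second summand is manifestly in the left $f$-image and therefore $\equiv 0$ modulo the left action of the $f_i$. Second, since $d^2 = 0$ on both $\End(X) \otimes_{R_1} \Bar$ and $S \otimes_k \Bar$, the graded Jacobi identity gives $[d, [d, X]] = 0$ for every operator $X$.

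Combining these, for an inner expression of the form $[d, Y]$ we have $[d, f_{\alpha}[d, Y]] = df_{\alpha} \cdot [d, Y]$ on the nose (the term $f_{\alpha}[d,[d,Y]]$ vanishes); iterating this collapse yields
\[
\bigl[d, f_{\alpha_p} [d, \cdots [d, f_{\alpha_1} h] \cdots]\bigr] = df_{\alpha_p} \cdots df_{\alpha_2} \bigl[d, f_{\alpha_1} h\bigr] = df_{\alpha_p} \cdots df_{\alpha_1} h + df_{\alpha_p} \cdots df_{\alpha_2} f_{\alpha_1} [d, h]\,.
\]
For the second sum in Proposition~\ref{prop:mainzorrotrick}, with $h = [d, \str \Lambda_\alpha]$, the extra term vanishes because $[d, h] = [d, [d, \str \Lambda_\alpha]] = 0$, and the iterated commutator equals $df_{\alpha_q} \cdots df_{\alpha_1} [d, \str \Lambda_\alpha]$ exactly, producing the second sum of the corollary.

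For the first sum, with $h = \str \Lambda_\alpha$, the extra term $df_{\alpha_p} \cdots df_{\alpha_2} f_{\alpha_1} [d, \str \Lambda_\alpha]$ is not individually $\equiv 0$. However, since $\str \Lambda_\alpha$ depends only on the unordered set $\{\alpha_1,\ldots,\alpha_p\}$, it suffices to show that for each fixed subset $J = \{j_1, \ldots, j_p\}$ the antisymmetrization
\[
S_p = \sum_{\sigma \in S_p} (-1)^{|\sigma|} df_{j_{\sigma(p)}} \cdots df_{j_{\sigma(2)}} f_{j_{\sigma(1)}}
\]
lies in $\sum_i f_i \cdot \Bar$. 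Using the identity $df_a \cdot f_b = -f_a df_b + d(f_a f_b)$ in $\Omega R$ together with $d(f_a f_b) = df_a f_b + f_a df_b$, one verifies directly (as in the cases $p=2,3$) that the corrections involving $d(f_\alpha f_\beta)$ cancel in pairs under the antisymmetrization, leaving $S_p = \sum_{k=1}^p (-1)^{k-1} f_{j_k} \cdot A_k$ where $A_k$ is the antisymmetrization of the $p-1$ remaining 1-forms $\{df_{j_i}\}_{i \ne k}$; each summand is visibly in the left $f_{j_k}$-image.

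The main obstacle is the combinatorial step above: although a single swap $df_a f_b \to df_b f_a$ changes an expression only by an element in $\sum_i f_i \cdot \Bar$, the non-graded-commutativity of $\Omega R$ means one cannot simply swap factors at arbitrary positions while preserving the quotient class, and the ``Leibniz corrections'' $d(f_\alpha f_\beta)$ have leading component $1$ (hence are themselves not visibly in the left $f_i$-image). The cancellation is thus a genuinely combinatorial phenomenon, which however follows cleanly once one writes out the expansion and groups terms by their leftmost factor, as illustrated by the direct computation for $p=2,3$ and extended to general $p$ by induction on the position of the non-differentiated factor.
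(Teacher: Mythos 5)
Your proof is correct, and it reorganizes the argument in a way that is genuinely different from the paper's. The paper proceeds top--down: it expands the nested commutators one level at a time, at each level dropping the $(\cdot)\circ d$ piece of the expanded commutator modulo the left $f_i$-action and using the antisymmetrization over $\alpha$ to cancel the resulting adjacent products $f_{\alpha_q}f_{\alpha_{q-1}}$. You instead establish the \emph{exact} operator identity
\[
\bigl[d, f_{\alpha_p}[d, \cdots [d, f_{\alpha_1} h]\cdots]\bigr] = df_{\alpha_p}\cdots df_{\alpha_1}h + df_{\alpha_p}\cdots df_{\alpha_2} f_{\alpha_1}[d,h]
\]
by repeatedly invoking $[d,[d,-]]=0$ to kill all intermediate corrections, isolating a single error term. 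This is notably cleaner for the second sum of the corollary: with $h = [d, \str\Lambda_\alpha]$ the error vanishes identically (no congruence needed at all), whereas the paper runs its modular-reduction argument there too. For the first sum both proofs must grapple with the same residual combinatorial fact, that the antisymmetrization of $df_{\alpha_p}\cdots df_{\alpha_2}f_{\alpha_1}$ over orderings of a fixed index set lies in the left $f_i$-ideal of $\Bar$; the paper compresses this into the phrase ``a similar argument using the Leibniz rule'', while you verify it for $p=2,3$ and sketch the general pattern via cancellation of the $d(f_af_b)$ corrections. Neither write-up is fully detailed on this last point, but your route makes the structure of the cancellations more transparent by separating the one nontrivial congruence from the exact collapse of the nested commutator, and it clarifies that it is $d^2=0$ that does most of the work.
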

\begin{proof}
In the formula \eqref{eq:mainzorrotrick} we see expressions of the form
\be\label{eq:whofui}
\sum_{\ell(\alpha) = q} (-1)^{\gamma(\alpha)} \big[d, f_{\alpha_q} \big[d, f_{\alpha_{q-1}} \cdots \big[d, f_{\alpha_1} \theta \big] \cdots \big]\big]
\ee
for some function $\theta$. Working modulo $f_i$ the first commutator can be replaced by $d \circ f_{\alpha_q}$, where we write ``$\circ$'' to indicate that this is $d$ preceeded by the operator of left multiplication by $f_{\alpha_q}$, and not multiplication by $d f_{\alpha_q}$. Expanding the second commutator gives
\begin{align*}
\sum_{\ell(\alpha) = q} (-1)^{\gamma(\alpha)} \Big( d \circ f_{\alpha_q} \circ d \circ f_{\alpha_{q-1}} [d,  \cdots \big[d, f_{\alpha_1} \theta \big] \cdots \big]
\pm d \circ f_{\alpha_q} \circ f_{\alpha_{q-1}} [d, \cdots \big[d, f_{\alpha_1} \theta \big] \cdots \big] d \Big)
\end{align*}
in which the $\pm$ is some sign not depending on $\alpha$. The second summand, in which $f_{\alpha_q} f_{\alpha_{q-1}}$ appears, cancels with the corresponding term for the sequence which is obtained from $\alpha$ by switching the $q$-th entry with the $(q+1)$-th. By this logic we see that \eqref{eq:whofui} equals
\be\label{eq:whofui2}
\sum_{\ell(\alpha) = q} (-1)^{\gamma(\alpha)} d \circ f_{\alpha_q} \circ d \circ f_{\alpha_{q-1}} \circ \cdots \circ d \circ f_{\alpha_1} \circ \theta\,.
\ee
A similar argument using the Leibniz rule allows us to replace each $d \circ f_{\alpha_i}$ by left multiplication by $d f_{\alpha_i}$, completing the proof.
\end{proof}

\begin{proposition}\label{prop:mainzorro} The Zorro map $\mathcal{Z}$ is homotopic to $1_X$.
\end{proposition}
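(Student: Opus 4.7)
The plan is to reduce the claim via Remark \ref{remark:recastmathcalz} to proving $\psi(\mathcal{Z}) \simeq \str$ as morphisms $\End(X) \to S \otimes_k R$ in the complex $\Hom_{S \otimes_k R}(\End(X), S \otimes_k R)$ equipped with the differential induced by $D$ on the source and zero on the target, so that null-homotopies amount to expressing the difference as $h \circ D$ for some degree one $h$. The starting point is the formula $\psi(\mathcal{Z}) = (-1)^n \dlangle \str(\lambda_1 \ldots \lambda_n \At^n(-))\drangle$, to which I apply Corollary \ref{corollary:nearlymaintheorem} at $p = n$. Since $\Lambda_\alpha$ is the empty product when $\ell(\alpha) = n$, the corollary splits $\str \lambda_1 \ldots \lambda_n \At^n$ into a ``main sum'' $\sum_{\ell(\alpha)=n} (-1)^{\gamma(\alpha)} df_{\alpha_n} \cdots df_{\alpha_1} \str$ plus a collection of ``D-terms'' which all factor through the differential $D$ on $\End(X)$, with equality modulo left multiplication by the $f_i$.

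First I would pass from $\equiv$ to genuine equality by verifying that $\dlangle - \drangle$ annihilates left multiples of $f_i$. Writing $\dlangle f_i \omega \drangle$ as $\Res_{\Re/R_2}[f_i \varepsilon\Psi(\omega) \underline{\operatorname{d}\!x}/(f_1, \ldots, f_n)]$ and using the classical fact that the residue symbol $\Res[f_i g \underline{\operatorname{d}\!x}/(f_1, \ldots, f_n)]$ vanishes (the residue pairing $R \times R \to R_2$ factors through $R/(f_1,\ldots,f_n)$), the defect terms vanish after applying $\dlangle - \drangle$.

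Next I would analyse the main sum. Expanding $\varepsilon\Psi(df_{\alpha_n} \cdots df_{\alpha_1})$ via the definition \eqref{intro_psi} of $\Psi$, only the top-$\theta$-degree term survives $\varepsilon$, which forces the multi-index $i_1 < \ldots < i_n$ to equal $1 < 2 < \ldots < n$ and leaves $\partial_{[1]} f_{\alpha_n} \partial_{[2]} f_{\alpha_{n-1}} \cdots \partial_{[n]} f_{\alpha_1}$. Reversing the permutation and accounting for the fact that the reverse shifts the inversion number by $\binom{n}{2}$ (and $\sum \alpha_i = n(n+1)/2$ is constant), one checks that $\sum_\alpha (-1)^{\gamma(\alpha)} \varepsilon\Psi(df_{\alpha_n} \cdots df_{\alpha_1}) = (-1)^n \delta$ where $\delta = \det((\partial^{x,x'}_{[i]} f_j))$. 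Proposition \ref{prop:determinantresidue} with $g = 1$ then gives $\Res[\delta \underline{\operatorname{d}\!x}/(f_1,\ldots,f_n)] = 1 \in R_2$, so the main sum contributes $(-1)^n \str$. Multiplying by the overall factor $(-1)^n$ in $\psi(\mathcal{Z})$ produces exactly $\str$.

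Finally, each D-term is of the form $G \circ D$ with $G$ some operator and $D = [d_X, -]$ the differential on $\End(X)$. Post-composing with $\dlangle - \drangle$ preserves this factorisation, giving $\dlangle G \drangle \circ D$, which is the coboundary of $\pm \dlangle G \drangle$ in the Hom complex and hence null-homotopic. Assembling these pieces yields $\psi(\mathcal{Z}) = \str + (\text{exact})$, and therefore $\mathcal{Z} \simeq 1_X$. The main obstacle will be the careful sign bookkeeping to identify the signed sum of permutations with $(-1)^n \delta$ and to verify that every D-term truly factors through $D$ after the $\dlangle - \drangle$ reduction; the remaining Zorro moves are proved by the same mechanism applied to the analogous identities derived via $\Atlarrow$ and $\lAt$ from Section~\ref{section:pertandhtpy}.
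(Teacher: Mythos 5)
Your proposal follows essentially the same route as the paper's proof: reduce via Remark~\ref{remark:recastmathcalz}, apply Corollary~\ref{corollary:nearlymaintheorem} at $p=n$, observe that $\dlangle-\drangle$ kills multiples of the $f_i$, rewrite the signed permutation sum as the determinant and invoke Proposition~\ref{prop:determinantresidue}, and absorb all the $D$-terms into a null-homotopy. The one point the paper flags explicitly that you elide is that the operator $H$ collecting the $D$-terms must be $(S\otimes_k R)$-linear for $HD$ to qualify as a coboundary in $\Hom_{S\otimes_k R}(\End(X), S\otimes_k R)$; the paper checks this by noting that left multiplication by $df_i$, the commutator $[d,\str\Lambda_\alpha]$, and the Atiyah class are all right $R_1$-linear, but this is a detail to verify rather than a divergence in approach.
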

\begin{proof}
The functional $\dlangle - \drangle$ kills $f_i \Bar$ for all $i$ because a residue with denominator $f_1,\ldots,f_n$ vanishes on any polynomial in the ideal generated by these elements. It follows from Corollary \ref{corollary:nearlymaintheorem} that
\[
\dlangle \str \, \lambda_1 \ldots \lambda_n \At^n(-) \drangle = \sum_{\ell(\alpha) = n} (-1)^{\gamma(\alpha)} \dlangle d f_{\alpha_n} d f_{\alpha_{p-1}} \cdots d f_{\alpha_1} \str(-) \drangle + HD
\]
for a map $H: \End(X) \lto S \otimes_k R$. The explicit formula for $H$ is not important here, but we do need to know that $H$ is $(S \otimes_k R)$-linear since left multiplication by $df_i$, the commutator $[d, \str \Lambda_\alpha]$ and the Atiyah class are all right $R_1$-linear. 

A sequence $\alpha$ of length $n$ is a permutation, and if we define the sequence $\alpha'$ by $\alpha'_i = \alpha_{n+1-i}$ then
\[
\gamma(\alpha) = 1 + \cdots + n + |\alpha| = \binom{n+1}{2} + \binom{n}{2} + |\alpha'| = n + |\alpha'|
\]
from which we deduce that
\begin{align*}
\dlangle \str \, \lambda_1 \ldots \lambda_n \At^n(-) \drangle &= (-1)^n \sum_{\sigma \in S_n} (-1)^{|\sigma|} \dlangle d f_{\sigma(1)} d f_{\sigma(2)} \ldots d f_{\sigma(n)} \str(-) \drangle + HD\\
&= (-1)^n \str(-) + HD
\end{align*}
using the residue identity of Proposition~\ref{prop:determinantresidue}. By Remark \ref{remark:recastmathcalz}, this proves that $\mathcal{Z} \simeq 1_X$.
\end{proof}

\begin{theorem}\label{theorem:adjointbicategoryyeah}
The bicategory $\LG$ has left and right adjoints, given for a $1$-morphism $X: W \lto V$ represented by a finite-rank matrix factorisation by
\[
\xymatrix{
X^{\vee} \otimes_S S[m] = {}^\dual X \ar@{-|}[r] & X \ar@{-|}[r] & X^\dual = R[n] \otimes_R X^{\vee}
} .
\]
Explicit expressions for the evaluation and coevaluation maps are those of Section~\ref{sec:derivcoeval}. 
\end{theorem}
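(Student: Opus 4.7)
The plan is to deduce Theorem~\ref{theorem:adjointbicategoryyeah} from the four Zorro identities \eqref{Zorros} and \eqref{otherZorros}, one of which (the left-hand identity of \eqref{otherZorros}, namely $\mathcal{Z}\simeq 1_X$) is already handled by Proposition~\ref{prop:mainzorro}. The three remaining Zorro moves are established by the same argument with the roles of $(X,W,V,R,S,n,m)$ suitably interchanged with $(X^\dual, V,W,S,R,m,n)$ or $({}^\dual X, V,W,S,R,m,n)$, so the bulk of the work is already done. Once all four identities hold, Definition~\ref{def:adjointbicats} gives the claimed adjunctions and the formulas for ${}^\dual X$, $X^\dual$ and the structure maps are those collected in Section~\ref{sec:derivcoeval}.

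The first step is to unpack each Zorro map into the format of Lemma~\ref{lemma:Zorrointermediate}: substitute the explicit forms of the unit actions and their inverses from \eqref{eq:formula_rhoinverse}, \eqref{eq:formulalambdainv}, and the explicit formulas \eqref{eq:coev10-2}, \eqref{eq:coev102}, \eqref{eq:constructevalmap00}, \eqref{eq:constructevalmap01} for the evaluation and coevaluation maps, then use that $\Psi$ intertwines the shuffle product with the wedge product, together with the Atiyah-of-tensor-product identity of Lemma~\ref{lemma:atshufat}, to collapse each composite into a single supertrace of $\Lambda^{(x)}$ (or $\Lambda^{(z)}$) times a power of a single Atiyah class, packaged inside the functional $\dlangle - \drangle$ (or its $S$-analogue). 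This is exactly the calculation carried out at the start of Section~\ref{sec:Zorro}.

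The second step is the commutator trick of Proposition~\ref{prop:mainzorrotrick}: exploit $\str\circ\At=0$ (Lemma~\ref{lemma:stratzero}) together with the graded Jacobi identity to successively push Atiyah classes past the null-homotopies $\lambda_i$ (resp.~$\mu_j$), producing commutators $[d,f_i]$ (resp.~$[s,g_j]$ with $g_j=\partial_{z_j}V$) plus boundary terms of the form $[\,\cdots\,]D$. Working modulo the left ideal generated by the $f_i$ (resp.~$g_j$)—as is legitimate inside $\dlangle - \drangle$ (resp.~its $S$-counterpart) since these functionals kill $f_i\Bar$—yields Corollary~\ref{corollary:nearlymaintheorem}, and the determinant identity of Proposition~\ref{prop:determinantresidue} then collapses the sum over permutations to $(-1)^n\str(-)$ (resp.~$(-1)^m\str(-)$). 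Under the identification of Remark~\ref{remark:recastmathcalz} this is precisely $1_X$ (resp.~$1_{X^\dual}$ or $1_{{}^\dual X}$), and the boundary term $HD$ with $H$ right $R_1$-linear (resp.~$S_1$-linear) is a null-homotopy.

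The main obstacle is sign bookkeeping. The four Zorro moves differ not only in the roles of $W/V$ and $n/m$ but also in whether the Atiyah class appears as $\At_{R_1}$ versus $\lAt_{R_2}$ (or $S_1$ versus $S_2$), in the orientation of tensor factors, and in the Koszul signs incurred by swapping $\Omega$ for $\widetilde\Omega$ and by the shift $[n]$ or $[m]$ that builds ${}^\dual X$ and $X^\dual$ out of $X^\vee$. In each case the alternative presentations of $\coev$ (see \eqref{eq:alt_formula_coev}) and of $\rho^{-1}$, $\lambda^{-1}$ (Propositions~\ref{prop:finalpertdefo} and~\ref{prop:finalpertdefo2}) let us match conventions between left and right actions so that Lemma~\ref{lemma:atshufat} applies verbatim and the Jacobi-identity calculation of Proposition~\ref{prop:mainzorrotrick} proceeds unchanged; the shift signs are absorbed into the global $(-1)^m$ normalisation inserted in Section~\ref{subsec:derivcoeval}. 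Once the signs are verified—which is a tedious but entirely mechanical cross-check against \eqref{eq:differentials_adjoints}, \eqref{eq:atlarrow_formula} and the definitions of $\varepsilon$, $\Psi$—the three remaining Zorro moves follow word for word from the proof of Proposition~\ref{prop:mainzorro}, completing the theorem.
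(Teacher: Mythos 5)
Your argument for the finite-rank case is essentially the paper's: Proposition~\ref{prop:mainzorro} handles the first Zorro move, and the other three are established by re-running the machinery with the alternative presentations of $\coev$ and of $\lambda^{-1}$, $\rho^{-1}$ (in the paper's phrase, ``with $s$ replacing $d$'' for the second move of \eqref{otherZorros}, and likewise for \eqref{Zorros} using the formulas of Section~\ref{sec:derivcoeval}). Your description of this as ``suitably interchanging $(X,W,V,R,S,n,m)$'' is a slight mischaracterisation — the second move of \eqref{otherZorros} concerns the \emph{same} adjunction $X\dashv X^\dual$ and is obtained not by swapping potentials but by replacing $\At_{R_1}$ with $\lAt_{R_2}$, i.e.\ the contracting homotopy $d$ with $s$ — but your invocation of \eqref{eq:alt_formula_coev}, \eqref{eq:formulalambdainv}, Lemma~\ref{lemma:atshufat} and the Jacobi-identity cascade shows you have the right mechanism in mind, so this is a matter of phrasing rather than substance.

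The genuine gap is that you only prove the theorem for finite-rank matrix factorisations, whereas the theorem asserts that \emph{the bicategory} $\LG$ has adjoints, and the hom-categories $\LG(W,V)=\hmf(k[x,z],V-W)^\omega$ are idempotent completions: a general $1$-morphism $Y$ is a direct summand, in the homotopy category, of a finite-rank $X$, and need not itself be finite-rank (indeed tensor products of $1$-morphisms are typically infinite rank). You need a further argument that adjoints descend to summands. Concretely: choose $\kappa:Y\to X$, $\rho:X\to Y$ with $\rho\kappa=1_Y$, set $e=\kappa\rho$, split the idempotent $e^\dual$ in $\LG(V,W)$ to obtain $Y^\dual$ together with $\kappa',\rho'$, and then define $\widetilde\coev_Y=(\kappa'\otimes\rho)\circ\widetilde\coev_X$ and $\widetilde\eval_Y=\widetilde\eval_X\circ(\kappa\otimes\rho')$; the Zorro moves for $Y$ reduce to those for $X$ together with the idempotent relations. (One also needs to know that the functor $(-)^\dual$ determined by the adjunction data on finite-rank factorisations coincides with the obvious $(-)^\vee[n]$ on morphisms — this is Proposition~\ref{prop.morphismmigration} — so that $e^\dual$ really is an idempotent one can split.) Without this step the theorem is proved only for a generating sub-bicategory, not for $\LG$ itself.
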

\begin{proof}
We have shown in Proposition \ref{prop:mainzorro} that the first Zorro move in \eqref{otherZorros} holds. To establish the adjunction $X \dashv X^\dual$ it remains to prove the second Zorro move in \eqref{otherZorros}. This involves the inverse of the left unit action \eqref{eq:formulalambdainv} and the coevaluation, for which we use the alternative formula \eqref{eq:alt_formula_coev} in terms of $\lAt_{R_2}$. With this substitution the proof proceeds exactly as before with $s$ replacing $d$. The verification of ${}^\dual X \dashv X$ follows in the same way, using the formulas of Section \ref{sec:derivcoeval}.

This proves that ${}^\dual X$ and $X^\dual$ are adjoint to $X$ whenever $X$ is a finite-rank matrix factorisation. As explained in Section \ref{subsec:bicat} 
(see~\eqref{eq:rightadjointfunctor} and~\eqref{eq:leftadjointfunctor}), 
formal properties of adjoints ensure that there are canonical functors ${}^\dual (-), (-)^\dual: \hmf(k[x,z], V - W) \lto \hmf(k[x,z], W - V)$ and by Proposition \ref{prop.morphismmigration} below these functors are given on 
morphisms~$\varphi$ by the obvious duals, to wit $\varphi^\vee$ shifted as many times as there are $z$- and $x$-variables, respectively. 

Now let $X \in \LG(W,V)$ be an infinite-rank matrix factorisation which is a summand of a finite-rank matrix factorisation $X'$. Let $\kappa: X \lto X', \rho: X' \lto X$ be such that $\rho \circ \kappa = 1_X$ and set $e = \kappa \circ \rho$. Then $e^\dual$ is an idempotent on $(X')^\dual$ in $\LG(V, W)$ and since this category is idempotent complete, we may define $X^\dual$ to be the object which splits this idempotent. That is, we may find an object $Z$ together with morphisms $\kappa', \rho'$ as in the diagram
\[
\xymatrix@C+2pc{
(X')^\dual \ar@<-1ex>[r]_-{\rho'} & Z \ar@<-1ex>[l]_-{\kappa'} 
}
\]
such that $\kappa' \circ \rho' = e^\dual$ and $\rho' \circ \kappa' = 1_Z$, and it is straightforward to check that the morphisms 
\begin{gather*}
\widetilde{\coev}_X := \xymatrix@C+0.7pc{ \Delta_W \ar[r]^-{\widetilde\coev_{X'}} & (X')^\dual \otimes X' \ar[r]^-{\rho' \otimes \rho} & Z \otimes X }
, \quad
\widetilde{\eval}_X := \xymatrix@C+0.7pc{ X \otimes Z \ar[r]^-{\kappa \otimes \kappa'} & X' \otimes (X')^\dual \ar[r]^-{\widetilde\eval_{X'}} & \Delta_V }
\end{gather*}
exhibit $Z$ as the right adjoint $X^\dual$ of $X$. A similar argument applies for left adjoints, completing the proof that every $1$-morphism in $\LG$ admits left and right adjoints.
\end{proof}



With the notation of Remark \ref{remark:integral_functors} this shows that there are adjunctions of integral functors
\[
\xymatrix@C+1pc{
\Phi_{{}^\dual X} \ar@{-|}[r] & \Phi_X \ar@{-|}[r] & \Phi_{X^\dual}
} .
\]
That is, for any $Y \in \hmf(k[x], W)^\omega$ and $Z \in \hmf(k[z], V)^\omega$ there are natural isomorphisms
\begin{align}
\underline{\Hom}( X \otimes_{k[x]} Y, Z ) & \xlto{\cong} \underline{\Hom}( Y, X^\dual \otimes_{k[z]} Z )\,,\\
\underline{\Hom}( {}^\dual X \otimes_{k[z]} Z, Y ) & \xlto{\cong} \underline{\Hom}( Z, X \otimes_{k[x]} Y ) \label{eq:adjunc_bikesoutside}
\end{align}
where $\underline{\Hom}$ denotes morphisms in one of the categories $\hmf(k[z], V)^\omega$ or $\hmf(k[x], W)^\omega$.

The units and counits of adjunction are derived from the evaluation and coevaluation maps of Section~\ref{sec:derivcoeval} in the obvious way; for example, the counit of the adjunction $\Phi_X \dashv \Phi_{X^\dual}$ is the natural transformation $\Phi_X \circ \Phi_{X^\dual} \lto 1$ which, evaluated on an object $Z \in \hmf(k[z], V)$, is the morphism
\[
\xymatrix@C+2pc{
(\Phi_X \circ \Phi_{X^\dual})(Z) = X \otimes_{k[x]} X^\dual \otimes_{k[z]} Z \ar[r]^-{\widetilde\eval_X \otimes 1_Z} & \Delta_V \otimes_{k[z]} Z \ar[r]^-{\lambda_Z} & Z
}.
\]

\begin{remark} When $k$ is a field we recover Serre duality in the triangulated category $\hmf(k[z], V)^\omega$ as the special case of a $1$-morphism $X: \mathbb{I} \lto V$ where $\mathbb{I} = (k,0)$, with its left adjoint ${}^\dual X \cong X^{\vee}[m]: V \lto \mathbb{I}$. Taking $x$ to be the empty set of variables and $Y = k$, and writing $\underline{\Hom}$ for morphisms in both $\hmf(k[z],V)^\omega$ and the homotopy category of $\mathbb{Z}_2$-graded complexes $\hmf(k, 0)$, there is a natural isomorphism by \eqref{eq:adjunc_bikesoutside}
\begin{align*}
\Hom_k(\underline{\Hom}(X, Z[m]),k) &= H^m \Hom_k( \Hom_{k[z]}(X,Z), k )\\
&\cong \underline{\Hom}( \Hom_{k[z]}(X, Z)[m], k )\\
&\cong \underline{\Hom}( X^{\vee}[m] \otimes_{k[z]} Z, k )\\
&\cong \underline{\Hom}( Z, X )\,.
\end{align*}
\end{remark}


We end this section with a discussion of the naturality of the evaluation and coevaluation maps, which in diagrammatic notation amounts to a freedom to ``migrate'' $2$-morphisms around a cap or a cup. For a morphism $\varphi: X \lra Y$ in $\hmf(k[x_1,\ldots,x_n,z_1,\ldots,z_m], V-W)$ 
we define
$$
{}^\dual \varphi = \varphi^\vee[m] : {}^\dual Y \lra {}^\dual X \, , \qquad
\varphi^\dual = \varphi^\vee[n] : Y^\dual \lra X^\dual \, . 
$$
Formally speaking, the next result shows that these morphisms agree with the ones uniquely determined by the adjunction data as explained in Section~\ref{subsec:bicat}, 
see~\eqref{eq:funcrightdual}. 

\begin{proposition}
\label{prop.morphismmigration}
 For a morphism $\varphi: X \lra Y$ as above we have
\begin{enumerate}
\item $(\varphi \otimes 1_{{}^\dual X}) \circ \coev_X = (1_Y \otimes {}^\dual \varphi) \circ \coev_Y$, 
\item $\eval_Y \circ (1_{{}^\dual Y} \otimes \varphi) = \eval_X \circ ({}^\dual \varphi \otimes 1_X)$, 
\item ${}^\dual \varphi = \lambda_{{}^\dual X} \circ (\eval_Y \otimes 1_{{}^\dual X}) \circ (1_{{}^\dual Y} \otimes \varphi \otimes 1_{{}^\dual X}) \circ (1_{{}^\dual Y} \otimes \coev_X) \circ \rho^{-1}_{{}^\dual Y}$
\end{enumerate}
and similarly for $\widetilde\eval$ and $\widetilde\coev$. Diagrammatically the above identities read
$$
\begin{tikzpicture}[very thick,scale=1.0,color=blue!50!black, baseline=.4cm]
\draw[line width=0pt] 
(3,0.5) node[line width=0pt] (D) {}
(2,0.5) node[line width=0pt] (s) {}; 
\draw[directed] (D) .. controls +(0,-1) and +(0,-1) .. (s);
\fill (2,0.5) circle (2.5pt) node[left] {{\small $\varphi$}};
\draw (3,0.35) -- (3,1)
node[above] {{{\small${}^\dual X$}}};
\draw (2,0.35) -- (2,1)
node[above] {{{\small$Y$}}};
\end{tikzpicture} 
=
\begin{tikzpicture}[very thick,scale=1.0,color=blue!50!black, baseline=.4cm]
\draw[line width=0pt] 
(3,0.5) node[line width=0pt] (D) {}
(2,0.5) node[line width=0pt] (s) {}; 
\draw[directed] (D) .. controls +(0,-1) and +(0,-1) .. (s);
\fill (3,0.5) circle (2.5pt) node[right] {{\small ${}^\dual \varphi$}};
\draw (3,0.35) -- (3,1)
node[above] {{{\small${}^\dual X$}}};
\draw (2,0.35) -- (2,1)
node[above] {{{\small$Y$}}};
\end{tikzpicture} 
\, , \qquad
\begin{tikzpicture}[very thick,scale=1.0,color=blue!50!black, baseline=.4cm]
\draw[line width=0pt] 
(3,0.5) node[line width=0pt] (D) {}
(2,0.5) node[line width=0pt] (s) {}; 
\draw[directed] (D) .. controls +(0,1) and +(0,1) .. (s);
\fill (3,0.5) circle (2.5pt) node[right] {{\small $\varphi$}};
\draw (3,0.65) -- (3,0)
node[below] {{{\small$X\vphantom{{}^\dual Y}$}}};
\draw (2,0.65) -- (2,0)
node[below] {{{\small${}^\dual Y$}}};
\end{tikzpicture} 
=
\begin{tikzpicture}[very thick,scale=1.0,color=blue!50!black, baseline=.4cm]
\draw[line width=0pt] 
(3,0.5) node[line width=0pt] (D) {}
(2,0.5) node[line width=0pt] (s) {}; 
\draw[directed] (D) .. controls +(0,1) and +(0,1) .. (s);
\fill (2,0.5) circle (2.5pt) node[left] {{\small ${}^\dual \varphi$}};
\draw (3,0.65) -- (3,0)
node[below] {{{\small$X\vphantom{{}^\dual Y}$}}};
\draw (2,0.65) -- (2,0)
node[below] {{{\small${}^\dual Y$}}};
\end{tikzpicture} 
\, , \qquad
\begin{tikzpicture}[very thick,scale=1.0,color=blue!50!black, baseline=.4cm]
\fill (3,0.5) circle (2.5pt) node[right] {{\small ${}^\dual \varphi$}};
\draw (3,0) -- (3,1.5)
node[above] {{{\small${}^\dual X$}}};
\draw (3,0) -- (3,-0.5)
node[below] {{{\small${}^\dual Y$}}};
\end{tikzpicture} 
=
\begin{tikzpicture}[very thick,scale=1.0,color=blue!50!black, baseline=.4cm]
\draw[line width=0pt] 
(3,0.5) node[line width=0pt] (D) {}
(2,0.5) node[line width=0pt] (s) {}; 
\draw[directed] (D) .. controls +(0,-1) and +(0,-1) .. (s);
\fill (2,0.5) circle (2.5pt) node[right] {{\small $\varphi$}};
\draw (2,0.32) -- (2,0.68);
\draw[line width=0pt] 
(2,0.5) node[line width=0pt] (D) {}
(1,0.5) node[line width=0pt] (s) {}; 
\draw[directed] (D) .. controls +(0,+1) and +(0,+1) .. (s);
\draw (3,0.35) -- (3,1.5)
node[above] {{{\small${}^\dual X$}}};
\draw (1,0.65) -- (1,-0.5)
node[below] {{{\small${}^\dual Y$}}};
\end{tikzpicture} 
\, . 
$$
\end{proposition}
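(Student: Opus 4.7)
My strategy is to first argue that the three identities are formally equivalent given the Zorro moves of Theorem~\ref{theorem:adjointbicategoryyeah}, and then verify identity (ii) directly using the explicit formula for $\eval_0$ from Lemma~\ref{lemma:morphismevalzeroother}. Identity (iii) expresses $^\dual\varphi$ as the ``mate'' of $\eval_Y \circ (1 \otimes \varphi)$ under the adjunction $^\dual Y \dashv Y$, and since the Zorro moves imply that a $2$-morphism is determined by any of its mates, (iii) is formally equivalent to (ii); analogously (iii) is equivalent to (i) via the adjunction $^\dual X \dashv X$. Concretely, the RHS of (iii) defines some morphism $\psi: {}^\dual Y \to {}^\dual X$ that automatically satisfies the formal analogues of (i) and (ii) with $^\dual\varphi$ replaced by $\psi$, and verifying (ii) for the explicit $^\dual\varphi = \varphi^\vee[m]$ will then force $\psi = {}^\dual\varphi$.

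To prove (ii), my first step is to invoke the characterisation of $\eval_X$ from Section~\ref{subsec:eval}: it is the unique (up to homotopy) lift through the stabilisation $\pi_\Delta: \Delta_W \to R$ of the concrete morphism $\eval_0^X: {}^\dual X \otimes_S X \to R$ from Lemma~\ref{lemma:morphismevalzeroother}, and similarly for $\eval_Y$. The uniqueness in Proposition~\ref{prop:liftingresult} (extended to direct summands of finite-rank factorisations as in the proof of Proposition~\ref{prop:constructevalmap}) reduces (ii) to the zero-level identity
\[
\eval_0^Y \circ (1_{^\dual Y} \otimes \varphi) \simeq \eval_0^X \circ ({}^\dual\varphi \otimes 1_X)
\]
of maps $^\dual Y \otimes_S X \to R$. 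Plugging in the formula from Lemma~\ref{lemma:morphismevalzeroother} with the canonical null-homotopies $\mu_j = \partial_{z_j} d$, this in turn reduces, modulo the ideal $(\partial_{z_1}V,\ldots,\partial_{z_m}V)$, to the trace identity
\[
\str_Y\!\bigl(\Lambda^{(z)}_Y \circ \varphi(\eta) \circ \nu\bigr) \simeq \str_X\!\bigl(\Lambda^{(z)}_X \circ \eta \circ (\nu \circ \varphi)\bigr)
\]
for $\eta \in X$ and $\nu \in Y^\vee$.

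I will close the argument by combining two elementary facts: (a) for the canonical null-homotopies one has the exact identity $\mu_j^Y \varphi - \varphi\, \mu_j^X = -[d,\partial_{z_j}\varphi]$, so that by a short telescoping computation $\Lambda^{(z)}_Y \circ \varphi - \varphi \circ \Lambda^{(z)}_X$ is a sum of $D$-exact operators; and (b) cyclicity of the supertrace, $\str_Y(\varphi \circ g) = (-1)^{|\varphi||g|}\str_X(g \circ \varphi)$ for $g: Y \to X$. Since the residue against the regular sequence $(\partial_{z_j}V)$ annihilates $D$-exact operators, combining (a) and (b) delivers the required homotopy. The parallel statements for $\widetilde\eval$ and $\widetilde\coev$ in the adjunction $X \dashv X^\dual$ will go through verbatim using $\widetilde\eval_0$ and the stabilisation $\pi_\Delta: \Delta_V \to S$. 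The main obstacle will be purely sign bookkeeping --- tracking the Koszul sign $(-1)^{m+|\nu|}$ in the definition of $\eval_0$, the signed isomorphism $X^\vee[m] \cong X^\vee \otimes_S S[m]$, $\nu \mapsto (-1)^{m|\nu|}\nu$ flagged in the excerpt, and the grading signs from supertrace cyclicity --- but no new conceptual difficulty will arise.
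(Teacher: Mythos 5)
Your proposal is correct in outline but takes a genuinely different route from the paper. The paper proves (i) directly by a soft naturality argument: $\coev_X$ is characterised as the unique $2$-morphism whose image under the canonical homotopy equivalence $\Hom(\Delta,X\otimes{}^\dual X)\simeq\Hom(X,X)$ is $1_X$, and two commuting naturality squares for this equivalence (in each slot) show that both sides of (i) are images of $1_X$, resp. $1_Y$, mapping to the same place. Parts (iii) and (ii) then follow by Zorro moves, exactly as you anticipate. Your version instead attacks (ii) head-on by plugging the formula of Lemma~\ref{lemma:morphismevalzeroother} into both sides, reducing to a supertrace identity, and then using the telescoping commutator
$\mu_j^Y\varphi - \varphi\,\mu_j^X = -D(\partial_{z_j}\varphi)$
together with cyclicity of $\str$ and the residue calculus. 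This is a legitimate alternative and in fact quite close in spirit to what the paper does in Appendix~\ref{app:reshom}; what the paper's approach buys is avoiding the sign and homotopy bookkeeping entirely, which in your approach is nontrivial (one must track the Koszul signs of the operator $\eta\circ\nu$, the shift sign in $X^\vee\otimes_S S[m]\cong X^\vee[m]$, and the cyclicity sign). One piece of wording you should tighten: the residue does not ``annihilate $D$-exact operators''. Rather, if the trace argument changes by $D(H)$ for some $H$, then $\str(D(H)\circ A) = -(-1)^{|H|}\str(H\circ D(A))$, so the resulting functional differs by a null-homotopy, not by zero. Combined with the fact that the Leibniz boundary terms of $D$ applied to the products $\mu_1^Y\cdots(\partial_{z_j}\varphi)\cdots\mu_m^X$ produce factors of $\partial_{z_i}V$ which die in the residue, this yields the desired homotopy. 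With that correction the argument closes.
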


\begin{proof}
Consider the diagrams
$$
\xymatrix{%
\Hom(\Delta, X \otimes {}^\dual X) \ar[rr]^-{(\varphi \otimes 1)\circ(-)} && \Hom(\Delta, Y \otimes {}^\dual X) \\
\Hom(X,X) \ar[rr]_-{\varphi\circ(-)} \ar[u]^-{\cong} && \Hom(X,Y)\,, \ar[u]_-{\cong}
}%
\quad
\xymatrix{%
\Hom(\Delta, Y \otimes {}^\dual Y) \ar[rr]^-{(1 \otimes {}^\dual \varphi)\circ(-)} && \Hom(\Delta, Y \otimes {}^\dual X) \\
\Hom(Y,Y) \ar[rr]_-{(-)\circ\varphi} \ar[u]^-{\cong} && \Hom(X,Y)\,, \ar[u]_-{\cong}
}%
$$
where the vertical maps are analogous to~\eqref{eq:isogivescoev2}
(the only change being a switch from left to right adjoints). 
By their naturality both diagrams commute. The counterclockwise maps send $1_X \in \Hom(X,X)$ and $1_Y \in \Hom(Y,Y)$ to the same element in $\Hom(\Delta, {}^\dual X \otimes Y)$. Thus the two sides of part~(i) are equal as they are the images of~$1_X$,~$1_Y$ under the clockwise maps. 

To prove part~(iii) we only have to apply a Zorro move and use part~(i), and with this part~(ii) follows together with another Zorro move: 
$$
\begin{tikzpicture}[very thick,scale=1.0,color=blue!50!black, baseline=.4cm]
\fill (3,0.5) circle (2.5pt) node[right] {{\small ${}^\dual \varphi$}};
\draw (3,0) -- (3,1.5)
node[above] {{{\small${}^\dual X$}}};
\draw (3,0) -- (3,-0.5)
node[below] {{{\small${}^\dual Y$}}};
\end{tikzpicture} 
=
\begin{tikzpicture}[very thick,scale=1.0,color=blue!50!black, baseline=.4cm]
\draw[line width=0pt] 
(3,0.5) node[line width=0pt] (D) {}
(2,0.5) node[line width=0pt] (s) {}; 
\draw[directed] (D) .. controls +(0,-1) and +(0,-1) .. (s);
\fill (3,0.5) circle (2.5pt) node[left] {{\small ${}^\dual \varphi$}};
\draw (2,0.32) -- (2,0.68);
\draw[line width=0pt] 
(2,0.5) node[line width=0pt] (D) {}
(1,0.5) node[line width=0pt] (s) {}; 
\draw[directed] (D) .. controls +(0,+1) and +(0,+1) .. (s);
\draw (3,0.35) -- (3,1.5)
node[above] {{{\small${}^\dual X$}}};
\draw (1,0.65) -- (1,-0.5)
node[below] {{{\small${}^\dual Y$}}};
\end{tikzpicture} 
=
\begin{tikzpicture}[very thick,scale=1.0,color=blue!50!black, baseline=.4cm]
\draw[line width=0pt] 
(3,0.5) node[line width=0pt] (D) {}
(2,0.5) node[line width=0pt] (s) {}; 
\draw[directed] (D) .. controls +(0,-1) and +(0,-1) .. (s);
\fill (2,0.5) circle (2.5pt) node[right] {{\small $\varphi$}};
\draw (2,0.32) -- (2,0.68);
\draw[line width=0pt] 
(2,0.5) node[line width=0pt] (D) {}
(1,0.5) node[line width=0pt] (s) {}; 
\draw[directed] (D) .. controls +(0,+1) and +(0,+1) .. (s);
\draw (3,0.35) -- (3,1.5)
node[above] {{{\small${}^\dual X$}}};
\draw (1,0.65) -- (1,-0.5)
node[below] {{{\small${}^\dual Y$}}};
\end{tikzpicture} 
\Rightarrow
\begin{tikzpicture}[very thick,scale=1.0,color=blue!50!black, baseline=.4cm]
\draw[line width=0pt] 
(3,0.5) node[line width=0pt] (D) {}
(2,0.5) node[line width=0pt] (s) {}; 
\draw[directed] (D) .. controls +(0,1) and +(0,1) .. (s);
\fill (3,0.5) circle (2.5pt) node[left] {{\small $\varphi$}};
\draw (3,0.65) -- (3,-0.5)
node[below] {{{\small$X\vphantom{{}^\dual Y}$}}};
\draw (2,0.65) -- (2,-0.5)
node[below] {{{\small${}^\dual Y$}}};
\end{tikzpicture} 
=
\begin{tikzpicture}[very thick,scale=1.0,color=blue!50!black, baseline=.4cm]
\draw[line width=0pt] 
(0,0.5) node[line width=0pt] (D) {}
(-1,0.5) node[line width=0pt] (s) {}; 
\draw[directed] (D) .. controls +(0,1) and +(0,1) .. (s);
\fill (0,0.5) circle (2.5pt) node[left] {{\small $\varphi$}};
\draw (2,0.65) -- (2,-0.5)
node[below] {{{\small$X\vphantom{{}^\dual Y}$}}};
\draw (-1,0.65) -- (-1,-0.5)
node[below] {{{\small${}^\dual Y$}}};
\draw (2,0.32) -- (2,0.68);
\draw[line width=0pt] 
(2,0.5) node[line width=0pt] (D) {}
(1,0.5) node[line width=0pt] (s) {}; 
\draw[directed] (D) .. controls +(0,+1) and +(0,+1) .. (s);
\draw[line width=0pt] 
(1,0.5) node[line width=0pt] (D) {}
(0,0.5) node[line width=0pt] (s) {}; 
\draw[directed] (D) .. controls +(0,-1) and +(0,-1) .. (s);
\draw (1,0.32) -- (1,0.68);
\draw (0,0.32) -- (0,0.68);
\end{tikzpicture} 
=
\begin{tikzpicture}[very thick,scale=1.0,color=blue!50!black, baseline=.4cm]
\draw[line width=0pt] 
(3,0.5) node[line width=0pt] (D) {}
(2,0.5) node[line width=0pt] (s) {}; 
\draw[directed] (D) .. controls +(0,1) and +(0,1) .. (s);
\fill (2,0.5) circle (2.5pt) node[right] {{\small ${}^\dual \varphi$}};
\draw (3,0.65) -- (3,-0.5)
node[below] {{{\small$X\vphantom{{}^\dual Y}$}}};
\draw (2,0.65) -- (2,-0.5)
node[below] {{{\small${}^\dual Y$}}};
\end{tikzpicture} 
\, . 
$$
The obvious analogous identities for $\widetilde\eval$ and $\widetilde\coev$ follow in the same way. 
\end{proof}

\section{Pivotality}\label{sec:wiggliesandsigns}

With the proof that the bicategory $\LG$ has adjoints behind us, we turn in the next several sections to applications. But first we introduce a refined diagrammatic calculus more suitable for working with adjoints; this is also the appropriate setup for realising the goal of translating arbitrary TFT correlators directly into diagrams in $\LG$, which we will do in Section~\ref{sec:ocTFT}. The main result of this section is that $\LG$ is graded pivotal; this will be used for example in Sections~\ref{sec:defectaction} and~\ref{sec:ocTFT} to prove the Cardy condition.

\medskip

As mentioned in the Introduction, the adjoint of a $1$-morphism is interpreted in the TFT picture as the same defect condition but with reversed orientation. In the categorical approach to rational conformal field theory (conjecturally related to Landau-Ginzburg models via the CFT/LG correspondence) the fact that reversing orientation twice does nothing is encapsulated in a \textsl{pivotal structure}, which is given either by a monoidal transformation between the identity and the functor $(-)^{\dual\dual}$ or by a monoidal isomorphism between the functors ${}^{\dual}(-)$ and $(-)^{\dual}$ \cite[Section $4$]{freydyetter}. 

For pivotal bicategories there is a richer diagrammatic language \cite{JSGoTCII, khovdia, ladia} in which we allow non-progressive diagrams with oriented lines. Thus lines may double back, forming caps and cups, and in computing the value of a diagram one interprets a line labelled $f$ with a downward orientation in terms of its adjoint. The bicategory $\LG$ is not pivotal as left and right adjoints may differ by a shift, but it satisfies an appropriate notion of \textsl{graded pivotality}. To incorporate the shifts diagramatically we borrow a device from \cite{ct1007.2679} where we follow the prescription of \cite{JSGoTCII} but enhance the diagrams with ``wiggly'' lines. 

To explain the new conventions we begin in the setting of an arbitrary bicategory $\cat{B}$. Suppose that for every object $A$ of $\cat{B}$ there is given a $1$-morphism $\Omega_A \in \cat{B}(A,A)$. To indicate the special role of this $1$-morphism it is denoted by a wiggly line. We also suppose that there is given a $2$-isomorphism
$$
\mu = 
\begin{tikzpicture}[very thick,scale=1.0,color=blue!50!black, baseline=0.3cm]
\draw[very thin, color=blue!55!white, decorate, decoration={snake, amplitude=0.2mm, segment length=1.0mm}]  (0,0) .. controls +(0.1,1.0) and +(-0.1,1.0) .. (0.75,0); 
\draw[line width=1pt];
\end{tikzpicture}
: \Omega_A \otimes \Omega_A \lra \Delta_A
\, , \qquad
\mu^{-1} = 
\begin{tikzpicture}[very thick,scale=1.0,color=blue!50!black, baseline=-0.4cm, rotate=180]
\draw[very thin, color=blue!55!white, decorate, decoration={snake, amplitude=0.2mm, segment length=1.0mm}]  (0,0) .. controls +(0.1,1.0) and +(-0.1,1.0) .. (0.75,0); 
\draw[line width=1pt];
\end{tikzpicture}
: \Delta_A \lra \Omega_A \otimes \Omega_A
$$ 
which makes $\Omega_A$ both left and right adjoint to itself. The identity
\be\label{eq:switcherooomegalines}
\begin{tikzpicture}[very thick,scale=1.0,color=blue!50!black, baseline=0.9cm]
\draw[very thin, color=blue!55!white, decorate, decoration={snake, amplitude=0.2mm, segment length=1.0mm}]  (0,0) .. controls +(0.1,1.0) and +(-0.1,1.0) .. (0.75,0); 
\draw[very thin, color=blue!55!white, decorate, decoration={snake, amplitude=0.2mm, segment length=1.0mm}]  (0,1.85) .. controls +(0.1,-1.0) and +(-0.1,-1.0) .. (0.75,1.85); 
%
\end{tikzpicture}
\;=\;
\begin{tikzpicture}[very thick,scale=1.0,color=blue!50!black, baseline=0.9cm]
\draw[very thin, color=blue!55!white, decorate, decoration={snake, amplitude=0.2mm, segment length=1.0mm}]  (0,0) -- (0,1.85); 
\draw[very thin, color=blue!55!white, decorate, decoration={snake, amplitude=0.2mm, segment length=1.0mm}]  (0.25,0) -- (0.25,1.85); 
\end{tikzpicture}
\ee
will be useful in manipulating diagrams. A wiggly line in a region labelled $A$ means $\Omega_A$ and we will usually omit the subscript $A$. Since $\Omega$ is self-adjoint we may also omit orientations on wiggly lines.

For every $1$-morphism $X \in \cat{B}(A, B)$ we also suppose that there is given $X^{\vee} \in \cat{B}(B, A)$ together with $2$-morphisms (writing ${}^\dual X = X^{\vee} \otimes \Omega$ and $X^\dual = \Omega \otimes X^{\vee}$)
\begin{align}
&
\eval_X = 
\begin{tikzpicture}[very thick,scale=1.0,color=blue!50!black, baseline=.3cm]
\draw[directed] (3,0) .. controls +(0,1) and +(0,1) .. (2,0);
\draw[very thin, color=blue!55!white, decorate, decoration={snake, amplitude=0.2mm, segment length=1.0mm}] (2.1,0) .. controls +(0.1,0.5) and +(-0.5,-0.45) .. (2.5,0.75); 
\end{tikzpicture}
: {}^\dual X \otimes X \lra \Delta_A
\, , \qquad
\coev_X = 
\begin{tikzpicture}[very thick,scale=1.0,color=blue!50!black, baseline=-.4cm,rotate=180]
\draw[redirected] (3,0) .. controls +(0,1) and +(0,1) .. (2,0);
\draw[very thin, color=blue!55!white, decorate, decoration={snake, amplitude=0.2mm, segment length=1.0mm}] (1.87,0) .. controls +(0.1,0.5) and +(-0.7,-0.15) .. (2.5,0.75); 
\end{tikzpicture}
: \Delta_B \lra X \otimes {}^\dual X \, , \nonumber \\
&
\widetilde\eval_X = 
\begin{tikzpicture}[very thick,scale=1.0,color=blue!50!black, baseline=.3cm]
\draw[redirected] (3,0) .. controls +(0,1) and +(0,1) .. (2,0);
\draw[very thin, color=blue!55!white, decorate, decoration={snake, amplitude=0.2mm, segment length=1.0mm}] (2.9,0) .. controls +(-0.1,0.5) and +(0.5,-0.45) .. (2.5,0.75); 
\end{tikzpicture}
: X \otimes X^\dual \lra \Delta_B
\, , \qquad
\widetilde\coev_X = 
\begin{tikzpicture}[very thick,scale=1.0,color=blue!50!black, baseline=-.4cm,rotate=180]
\draw[directed] (3,0) .. controls +(0,1) and +(0,1) .. (2,0);
\draw[very thin, color=blue!55!white, decorate, decoration={snake, amplitude=0.2mm, segment length=1.0mm}] (3.13,0) .. controls +(-0.1,0.5) and +(0.7,-0.15) .. (2.5,0.75); 
\end{tikzpicture}
: \Delta_A \lra X^\dual \otimes  X \label{eq:wigglyadjunctionmaps}
\end{align}
which make ${}^\dual X$ left adjoint to $X$ and $X^\dual$ right adjoint to $X$. In terms of pictures:
$$
\begin{tikzpicture}[very thick,scale=1.0,color=blue!50!black, baseline=0cm]
\draw[directed] (0,0) .. controls +(0,-1) and +(0,-1) .. (-1,0);
\draw[directed] (1,0) .. controls +(0,1) and +(0,1) .. (0,0);
\draw (-1,0) -- (-1,1.25); 
\draw (1,0) -- (1,-1.25); 
\draw[very thin, color=blue!55!white, decorate, decoration={snake, amplitude=0.2mm, segment length=1.0mm}] (-0.5,-0.75) .. controls +(0.75,0) and +(-0.5,-0.35) .. (0.5,0.75); 
\end{tikzpicture}
\;=\;
\begin{tikzpicture}[very thick,scale=1.0,color=blue!50!black, baseline=0cm]
\draw[
	decoration={markings, mark=at position 0.5 with {\arrow{>}}}, postaction={decorate}
	]
	(0,-1.25) -- (0,1.25); 
\end{tikzpicture}
\; , \qquad
\begin{tikzpicture}[very thick,scale=1.0,color=blue!50!black, baseline=0cm]
\draw[directed] (0,0) .. controls +(0,1) and +(0,1) .. (-1,0);
\draw[directed] (1,0) .. controls +(0,-1) and +(0,-1) .. (0,0);
\draw (-1,0) -- (-1,-1.25); 
\draw (1,0) -- (1,1.25); 
\draw[very thin, color=blue!55!white, decorate, decoration={snake, amplitude=0.2mm, segment length=1.0mm}] (-0.9,-1.25) .. controls +(0,0.5) and +(-0.5,-0.35) .. (-0.5,0.75); 
\draw[very thin, color=blue!55!white, decorate, decoration={snake, amplitude=0.2mm, segment length=1.0mm}] (0.5,-0.75) .. controls +(0.8,0.05) and +(0,-1) .. (1.1,1.25); 
\end{tikzpicture}
\;=\;
\begin{tikzpicture}[very thick,scale=1.0,color=blue!50!black, baseline=0cm]
\draw[
	decoration={markings, mark=at position 0.5 with {\arrow{<}}}, postaction={decorate}
	]
	(0,-1.25) -- (0,1.25); 
\draw[very thin, color=blue!55!white, decorate, decoration={snake, amplitude=0.2mm, segment length=1.0mm}] (0.15,-1.25) -- (0.15,1.25); 
\end{tikzpicture} 
\; , \qquad 
%
%
%
%
%
%
\begin{tikzpicture}[very thick,scale=1.0,color=blue!50!black, baseline=0cm]
\draw[redirected] (0,0) .. controls +(0,-1) and +(0,-1) .. (-1,0);
\draw[redirected] (1,0) .. controls +(0,1) and +(0,1) .. (0,0);
\draw (-1,0) -- (-1,1.25); 
\draw (1,0) -- (1,-1.25); 
\draw[very thin, color=blue!55!white, decorate, decoration={snake, amplitude=0.2mm, segment length=1.0mm}] (-1.1,1.25) .. controls +(0,-1) and +(-0.85,0) .. (-0.5,-0.75); 
\draw[very thin, color=blue!55!white, decorate, decoration={snake, amplitude=0.2mm, segment length=1.0mm}] (0.5,0.75) .. controls +(0.35,-0.05) and +(0,1) .. (0.85,-1.25); 
\end{tikzpicture}
\;=\;
\begin{tikzpicture}[very thick,scale=1.0,color=blue!50!black, baseline=0cm]
\draw[
	decoration={markings, mark=at position 0.5 with {\arrow{<}}}, postaction={decorate}
	]
	(0,-1.25) -- (0,1.25); 
\draw[very thin, color=blue!55!white, decorate, decoration={snake, amplitude=0.2mm, segment length=1.0mm}] (-0.15,-1.25) -- (-0.15,1.25); 
\end{tikzpicture}
\; , \qquad
\begin{tikzpicture}[very thick,scale=1.0,color=blue!50!black, baseline=0cm]
\draw[redirected] (0,0) .. controls +(0,1) and +(0,1) .. (-1,0);
\draw[redirected] (1,0) .. controls +(0,-1) and +(0,-1) .. (0,0);
\draw (-1,0) -- (-1,-1.25); 
\draw (1,0) -- (1,1.25); 
\draw[very thin, color=blue!55!white, decorate, decoration={snake, amplitude=0.2mm, segment length=1.0mm}] (-0.5,0.75) .. controls +(0.35,-0.1) and +(-0.8,-0.15) .. (0.5,-0.75); 
\end{tikzpicture}
\;=\;
\begin{tikzpicture}[very thick,scale=1.0,color=blue!50!black, baseline=0cm]
\draw[
	decoration={markings, mark=at position 0.5 with {\arrow{>}}}, postaction={decorate}
	]
	(0,-1.25) -- (0,1.25); 
\end{tikzpicture} \; .
$$
From now on a downwards oriented solid line labelled by a $1$-morphism $X$ represents $X^\vee$. We call a bicategory $\cat{B}$ equipped with the above data a \textsl{graded bicategory}.

Given this structure there is a $2$-isomorphism between left and right adjoints given by
\be\label{eq:isoq}
q_X :=
\begin{tikzpicture}[very thick,scale=0.8,color=blue!50!black, baseline=0.7cm]
\draw[line width=1pt] ;
\draw[
	decoration={markings, mark=at position 0.35 with {\arrow{<}}}, postaction={decorate}
	]
	(0,0) -- (0,2);
\draw[very thin, color=blue!55!white, decorate, decoration={snake, amplitude=0.2mm, segment length=1.0mm}] (0.2,0) -- (0.2,2); 
\draw[very thin, color=blue!55!white, decorate, decoration={snake, amplitude=0.2mm, segment length=1.0mm}] (-0.5,0) .. controls +(0,0.9) and +(0,00.9) .. (-0.25,0); 
\end{tikzpicture}
\,\,
: \Omega \otimes X^\dual \otimes \Omega \lra {}^\dual X\,.
\ee
We can ask whether this $2$-isomorphism is \textsl{monoidal}, by which we mean that it is compatible with composition of $1$-morphisms. 
If $X: A \lto B$ and $Y: B \lto C$ are $1$-morphisms then $X^{\dual} \otimes Y^{\dual}$ and ${}^\dual X \otimes {}^\dual Y$ are right and left adjoint to $Y \otimes X$, respectively, by the usual principle that the composite of adjoints is the adjoint of the composite; see \cite[\S 2.1]{kellystreet} and \cite[Proposition I, $6.3$]{gray}. Hence by uniqueness of adjoints there are canonical isomorphisms
\[
\mathscr{R}: (Y \otimes X)^{\dual} \stackrel{\cong}{\lra} X^{\dual} \otimes Y^{\dual}\, , \qquad 
\mathscr{L}: {}^\dual (Y \otimes X) \stackrel{\cong}{\lra} {}^\dual X \otimes {}^\dual Y\,.
\]
These canonical isomorphisms can be written explicitly in terms of the units and counits of adjunction \cite[Proposition I, $6.3$]{gray} and may thus be presented by diagrams, which are shown below. In such diagrams the dotted horizontal line denotes an identity map $Y \otimes X \lto Y \otimes X$, but with the domain presented as two lines labelled $Y,X$ and the target presented as a single line labelled $Y \otimes X$.
\[
\mathscr R = 
\begin{tikzpicture}[very thick,scale=0.8,color=blue!50!black, baseline, rotate=180]
\draw[line width=1pt] 
(2,-2) node[line width=0pt, right] (Y) {{\small $Y$}}
(3,-2) node[line width=0pt, right] (X) {{\small $X$}}
(-1,3) node[line width=0pt, right] (XY) {{\small $Y\otimes X$}}; 
\draw[redirected] (1,0) .. controls +(0,1) and +(0,1) .. (2,0);
\draw[redirected] (0,0) .. controls +(0,2) and +(0,2) .. (3,0);
\draw[redirected] (-1,0) .. controls +(0,-1) and +(0,-1) .. (0.5,0);
\draw (2,0) -- (2,-2);
\draw (3,0) -- (3,-2);
\draw[dotted] (0,0) -- (1,0);
\draw (-1,0) -- (-1,3);
\draw[very thin, color=blue!55!white, decorate, decoration={snake, amplitude=0.2mm, segment length=1.0mm}] (-0.35,-0.76) .. controls +(-0.7,0.5) and +(0.1,-1) .. (-0.85,3); 
\draw[very thin, color=blue!55!white, decorate, decoration={snake, amplitude=0.2mm, segment length=1.0mm}] (2.15,-2) .. controls +(0,1.3) and +(0.8,0.05) .. (1.5,0.75); 
\draw[very thin, color=blue!55!white, decorate, decoration={snake, amplitude=0.2mm, segment length=1.0mm}] (3.15,-2) .. controls +(0,1.5) and +(2.0,0.05) .. (1.5,1.5); 
\end{tikzpicture}
\, , \qquad 
\mathscr L = 
\begin{tikzpicture}[very thick,scale=0.8,color=blue!50!black, baseline, rotate=180]
\draw[line width=1pt] 
(-2,-2) node[line width=0pt, left] (X) {{\small $Y$}}
(-1,-2) node[line width=0pt, left] (Y) {{\small $X$}}
(2,3) node[line width=0pt, left] (XY) {{\small $Y\otimes X$}}; 
\draw[redirected] (0,0) .. controls +(0,1) and +(0,1) .. (-1,0);
\draw[redirected] (1,0) .. controls +(0,2) and +(0,2) .. (-2,0);
\draw[redirected] (2,0) .. controls +(0,-1) and +(0,-1) .. (0.5,0);
\draw (-1,0) -- (-1,-2);
\draw (-2,0) -- (-2,-2);
\draw[dotted] (0,0) -- (1,0);
\draw (2,0) -- (2,3);
\draw[very thin, color=blue!55!white, decorate, decoration={snake, amplitude=0.2mm, segment length=1.0mm}] (-1.15,-2) .. controls +(0,1.0) and +(-1.0,0.1) .. (-0.4,0.75); 
\draw[very thin, color=blue!55!white, decorate, decoration={snake, amplitude=0.2mm, segment length=1.0mm}] (-2.15,-2) .. controls +(0,1.5) and +(-2.3,-0.1) .. (-0.4,1.5); 
\draw[very thin, color=blue!55!white, decorate, decoration={snake, amplitude=0.2mm, segment length=1.0mm}] (1.25,-0.76) .. controls +(0.7,0.5) and +(0,-1) .. (1.85,3);
\end{tikzpicture}
\, . 
\]
Note that this graphical presentation makes it manifest that $\mathscr{R},\mathscr{L}$ are indeed isomorphisms in $\LG$: applying the Zorro moves one checks that their inverses are given by copies of themselves rotated by 180 degrees and with the wiggly lines suitably rearranged. A graded bicategory is graded pivotal if these canonical isomorphisms $\mathscr{R},\mathscr{L}$ interact naturally with the isomorphisms $q$ between left and right adjoints:

\begin{definition} 
A graded bicategory $\cat{B}$ is called \textsl{graded pivotal} if for every pair of composable $1$-morphisms $X,Y$ as above, the diagram
\be\label{eq:nicepivotalitysq}
\xymatrix@C+2pc{
\Omega \otimes (Y \otimes X)^\dual \otimes \Omega \ar[d]_-{\mathscr{R}}\ar[r]^-{q_{Y \otimes X}} & {}^\dual(Y \otimes X) \ar[dd]^{\mathscr{L}}\\
\Omega \otimes X^\dual \otimes Y^\dual \otimes \Omega \ar[d]^-{\cong}_-{\mu^{-1}}\\
\Omega \otimes X^\dual \otimes \Omega \otimes \Omega \otimes Y^\dual \otimes \Omega \ar[r]_-{q_X \otimes q_Y} & {}^\dual X \otimes {}^\dual Y
}\,.
\ee
commutes, or equivalently
after inverting $q_{Y \otimes X}$, 
\be\label{eq:nicepivotality}
\begin{tikzpicture}[very thick,scale=0.8,color=blue!50!black, baseline, rotate=180]
\draw[line width=1pt] 
(2,-2) node[line width=0pt, left] (Y) {{\small $Y$}}
(3,-2) node[line width=0pt, left] (X) {{\small $X$}}
(-1,3) node[line width=0pt, left] (XY) {{\small $Y\otimes X$}}; 
\draw[redirected] (1,0) .. controls +(0,1) and +(0,1) .. (2,0);
\draw[redirected] (0,0) .. controls +(0,2) and +(0,2) .. (3,0);
\draw[redirected] (-1,0) .. controls +(0,-1) and +(0,-1) .. (0.5,0);
\draw (2,0) -- (2,-2);
\draw (3,0) -- (3,-2);
\draw[dotted] (0,0) -- (1,0);
\draw (-1,0) -- (-1,3);
\draw[very thin, color=blue!55!white, decorate, decoration={snake, amplitude=0.2mm, segment length=1.0mm}] (-1.25,3) -- (-1.25,-2); 
\draw[very thin, color=blue!55!white, decorate, decoration={snake, amplitude=0.2mm, segment length=1.0mm}] (-0.35,-0.76) .. controls +(-0.7,0.5) and +(-2.5,-0.2) .. (1.5,2);
\draw[very thin, color=blue!55!white, decorate, decoration={snake, amplitude=0.2mm, segment length=1.0mm}] (1.5,1.5) .. controls +(0.7,0) and +(0,0.2) .. (3.0,1.05);
\draw[very thin, color=blue!55!white, decorate, decoration={snake, amplitude=0.2mm, segment length=1.0mm}] (3.0,1.05) .. controls +(0.5,-0.7) and +(2.0,0.2) .. (1.5,2);
\draw[very thin, color=blue!55!white, decorate, decoration={snake, amplitude=0.2mm, segment length=1.0mm}] (2.85,-2) .. controls +(0,1.3) and +(0.8,0.05) .. (1.5,0.75); 
\end{tikzpicture}
\; = \;
\begin{tikzpicture}[very thick,scale=0.8,color=blue!50!black, baseline, rotate=180]
\draw[line width=1pt] 
(-2,-2) node[line width=0pt, left] (X) {{\small $Y$}}
(-1,-2) node[line width=0pt, left] (Y) {{\small $X$}}
(2,3) node[line width=0pt, left] (XY) {{\small $Y\otimes X$}}; 
%
%
\draw[redirected] (0,0) .. controls +(0,1) and +(0,1) .. (-1,0);
\draw[redirected] (1,0) .. controls +(0,2) and +(0,2) .. (-2,0);
\draw[redirected] (2,0) .. controls +(0,-1) and +(0,-1) .. (0.5,0);
\draw (-1,0) -- (-1,-2);
\draw (-2,0) -- (-2,-2);
\draw[dotted] (0,0) -- (1,0);
\draw (2,0) -- (2,3);
\draw[very thin, color=blue!55!white, decorate, decoration={snake, amplitude=0.2mm, segment length=1.0mm}] (-1.15,-2) .. controls +(0,1.0) and +(-1.0,0.1) .. (-0.4,0.75); 
\draw[very thin, color=blue!55!white, decorate, decoration={snake, amplitude=0.2mm, segment length=1.0mm}] (-2.15,-2) .. controls +(0,1.5) and +(-2.3,-0.1) .. (-0.4,1.5); 
\draw[very thin, color=blue!55!white, decorate, decoration={snake, amplitude=0.2mm, segment length=1.0mm}] (1.25,-0.76) .. controls +(0.7,0.5) and +(0,-1) .. (1.85,3);
\end{tikzpicture}
.
\ee
\end{definition}




Now we specialise to the bicategory $\LG$. For an object $(k[x_1,\ldots,x_n], W)$ we define $\Omega_W = \Delta_W[n]$ and the $2$-isomorphism $\mu: \Omega_W \otimes \Omega_W \lto \Delta_W$ to be the composite
$$
\xymatrix{
\Delta_W[n] \otimes \Delta_W[n] \ar[r]^-{\cong} & \big( \Delta_W \otimes \Delta_W[n] \big)[n] \ar[r]^-{\cong} & \Delta_W \otimes \Delta_W[n][n] = \Delta_W \otimes \Delta_W \ar[r]^-{\cong}_-{\lambda_{\Delta} = \rho_{\Delta}} & \Delta_W\,.
}
$$
Given a $1$-morphism $X: (R,W) \lra (S,V)$ represented by a finite-rank matrix factorisation with $R=k[x_1,\ldots,x_n]$ and $S=k[z_1,\ldots,z_m]$ there are canonical isomorphisms
\[
X^\dual = R[n] \otimes_R X^{\vee} \cong \Delta_W[n] \otimes_R X^{\vee}, \qquad {}^\dual X = X^{\vee} \otimes_S S[m] \cong X^{\vee} \otimes_S \Delta_V[m]\,.
\]
The dual $Y^{\vee}$ of an infinite rank matrix factorisation in $\LG(W, V)$ is defined as follows: choose a finite rank factorisation $X$ together with an idempotent $e: X \lto X$ splitting to $Y$, and define $Y^{\vee}$ to be the splitting of $e^{\vee}$. With this structure, $\LG$ is a graded bicategory.



\begin{proposition}\label{prop:monoidalproperty} The bicategory $\LG$ is graded pivotal.
\end{proposition}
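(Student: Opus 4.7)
The plan is to reformulate the commutativity of \eqref{eq:nicepivotalitysq} as a compatibility of evaluation maps, exploiting the uniqueness of morphisms between two instances of a left adjoint to $Y \otimes X$. Both sides of the square are $2$-isomorphisms $\Omega \otimes (Y \otimes X)^\dual \otimes \Omega \to {}^\dual X \otimes {}^\dual Y$; the source and target are each canonically left adjoint to $Y \otimes X$, so by uniqueness of adjoints an isomorphism between them is determined by how it intertwines the evaluation data. It therefore suffices to show that both composites induce the same identification of evaluations.

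Concretely, I would first show that $q_Z: \Omega \otimes Z^\dual \otimes \Omega \to {}^\dual Z$ for a $1$-morphism $Z$ is characterized (uniquely up to homotopy) by a Zorro-type identity of the form
\[
\eval_Z \;=\; \mu \circ (1 \otimes \widetilde\eval_Z \otimes 1) \circ (q_Z \otimes 1_Z)\,,
\]
and an analogous one involving $\coev_Z, \widetilde\coev_Z$. Applying this characterization to $Z = Y \otimes X$, $Z = X$, and $Z = Y$, the square \eqref{eq:nicepivotalitysq} reduces to a compatibility between $\widetilde\eval_{Y\otimes X}$ and the composite $\widetilde\eval_Y \circ (1_Y \otimes \widetilde\eval_X \otimes 1_{Y^\dual})$, which defines $\mathscr R$, together with a similar compatibility between $\eval_{Y\otimes X}$ and $\eval_Y \circ (1_{{}^\dual Y} \otimes \eval_X \otimes 1_Y)$, which defines $\mathscr L$. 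Both compatibilities are to be checked after transport through the appropriate shift isomorphisms and $\mu$'s.

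The main computational ingredient is the Atiyah class shuffle identity, Lemma~\ref{lemma:atshufat}. Applied to the formulas \eqref{eq:coev10-2}--\eqref{eq:constructevalmap01}, it shows that both $\widetilde\eval_{Y\otimes X}$ and $\eval_{Y\otimes X}$ decompose as shuffle sums of the corresponding component evaluations for $X$ and $Y$. Under this decomposition, the wrapping $\Omega \otimes (Y\otimes X)^\dual \otimes \Omega$ on the upper-left vertex of \eqref{eq:nicepivotalitysq} matches the wrapping $\Omega \otimes X^\dual \otimes \Omega \otimes \Omega \otimes Y^\dual \otimes \Omega$ on the lower-left after applying $\mu^{-1}$, precisely because the shifts $[n]$ and $[m]$ in the definitions of $\Omega_W$ and $\Omega_V$ reassemble as $[n+m]$ via the standard shift isomorphisms of Section~\ref{section:canonicalmaps}.

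The main obstacle will be the coherent bookkeeping of global signs. The prefactor $(-1)^m$ in the definition of $\coev_X$, the Koszul signs arising from the shift isomorphism $X^\vee[m] \cong X^\vee \otimes_S S[m]$, the sign $(-1)^{\binom{n+1}{2}}$ in the residue-based definition of $\widetilde\eval$, and the shuffle signs from the Atiyah class decomposition must all combine consistently with the sign of $\mu: \Omega \otimes \Omega \to \Delta$. The pivotality axiom is in essence the assertion that the sign conventions introduced in Section~\ref{sec:derivcoeval} have been chosen compatibly with the graded pivotal structure; the verification is a careful but mechanical sign-chase once the shuffle identity has reduced the problem to its combinatorial core.
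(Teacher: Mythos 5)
Your broad strategy—reduce via Lemma~\ref{lemma:atshufat} and then track signs—aligns with the paper's proof, and your framing via uniqueness of left adjoints is a reasonable reformulation of what the commutativity of \eqref{eq:nicepivotalitysq} means. However, the reformulation does not actually eliminate any of the computational work, and there are two genuine gaps.

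First, you never address the fact that $Y \otimes X$ is an \emph{infinite-rank} matrix factorisation, so the explicit formulas of Section~\ref{sec:derivcoeval} do not apply to it directly. The paper handles this by exhibiting (Appendix~\ref{section:symidem}) a split monomorphism $\kappa: Y \otimes X \to \bar{Y} \otimes \bar{X}$ into a finite-rank object and then evaluating both sides of \eqref{eq:nicepivotality} against $\kappa$, using naturality of the Atiyah class (Lemma~\ref{lemma:atiyahnat}) to replace the Atiyah class of $(Y \otimes X) \otimes (\bar{Y} \otimes \bar{X})^\dual$ with that of $(\bar{Y} \otimes \bar{X}) \otimes (\bar{Y} \otimes \bar{X})^\dual$. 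This step is not cosmetic; without it neither side of the square is computable by the chain-level formulas you intend to invoke.

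Second, and more seriously, the final comparison is not a ``mechanical sign-chase.'' After applying the shuffle identity and contracting, the two composites reduce (as in \eqref{eq:valueofLpivot} and \eqref{eq:valueofRpivot}) to expressions involving \emph{structurally different} Atiyah operators: one path produces $\At_S({}^\dual X \otimes {}^\dual Y)^m$, the other produces $\overset{\rightarrow}{\lAt}_S({}^\dual X \otimes {}^\dual Y)^m$. These are not equal as chain maps, nor related by a simple sign. The paper's key observation is that the explicit form of $\overset{\rightarrow}{\lAt}_S$ computed with the variable ordering $z_1,\ldots,z_m$ coincides with $\At_S$ computed using the \emph{reversed} ordering $z_m,\ldots,z_1$, and the equality of the two sides modulo the $\partial_{z_i} V$ then follows from the independence of diagram values under variable reordering (Appendix~\ref{app:variableordering}). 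That invariance is itself a separate result, proved via uniqueness of units in a bicategory and Lemmas~\ref{lemma:evsignspermute}, \ref{lemma:coevsignspermute}; your proposal would need to identify and import this input, rather than absorbing it into ``bookkeeping of global signs.''
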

\begin{proof}
It suffices to consider $1$-morphisms which are finite-rank matrix factorisations: if \eqref{eq:nicepivotalitysq} commutes for a pair $Y,X$ then by functoriality it commutes for any pair $Y',X'$ where $Y'$ is a summand of $Y$ and $X'$ is a summand of $X$. So let $W \in R = k[x_1,\ldots,x_n]$, $V \in S = k[z_1,\ldots,z_m]$, $U \in T = k[y_1,\ldots,y_p]$ be potentials and suppose that we are given a pair of $1$-morphisms $X \in \hmf(k[x,z],V-W)$ and $Y\in \hmf(k[y,z],U-V)$.

To evaluate the diagrams in \eqref{eq:nicepivotality} we need to use the evaluation map for the infinite-rank matrix factorisation $Y \otimes X$ of $U - W$ over $k[x,y]$. The prescription given in the proof of Theorem \ref{theorem:adjointbicategoryyeah} is to define these evaluation maps by presenting $Y \otimes X$ as a summand of something of finite rank. In Appendix~\ref{section:symidem} we show that the quotient map
\be\label{eq:splitmonokappa}
\kappa: Y \otimes X \lto \bar Y \otimes \bar X := (Y \otimes_{k[z]} X) \otimes_{k[z]} k[z]/(\partial_{z_1}V, \ldots, \partial_{z_m}V)
\ee
is a split monomorphism in $\HMF(k[x,y], U - W)$. Denote the left inverse to $\kappa$ by $\rho$. The idempotent $e = \kappa \circ \rho$ on $\bar Y \otimes \bar X$ dualises to an idempotent $e^\vee$ on $(\bar Y \otimes \bar X)^{\vee} = \Hom_{k[x,y]}( \bar Y \otimes \bar X, k[x,y])$ and we define $Z$ to be the splitting of this idempotent in $\LG(U,W)$. Thus there are morphisms $\kappa', \rho'$ as in the diagram
\[
\xymatrix@C+2pc{
(\bar Y \otimes \bar X)^\vee \ar@<-1ex>[r]_-{\rho'} & Z \ar@<-1ex>[l]_-{\kappa'} 
}
\]
with $\rho' \kappa' = 1_{Z}$ and $\kappa' \rho' = e^{\vee}$. With this notation we explain in the proof of Theorem \ref{theorem:adjointbicategoryyeah} why $(Y \otimes X)^\dual := R[n] \otimes_R Z$ and ${}^\dual( Y \otimes X) := Z \otimes_T T[p]$ are the adjoints of $Y \otimes X$.

Omitting wiggly lines, the left-hand side of \eqref{eq:nicepivotality} is then (by definition) $\kappa'$ followed by
\be\label{eq:leftmonoidaldetail}
\begin{tikzpicture}[very thick,scale=1.0,color=blue!50!black, baseline, rotate=180]
\draw[line width=1pt] 
(2,-2) node[line width=0pt, right] (Y) {{\small $Y$}}
(3,-2) node[line width=0pt, right] (X) {{\small $X$}}
(-1,3) node[line width=0pt, right] (XY) {{\small $\bar Y\otimes \bar X$}}; 
\draw[redirected] (1,0) .. controls +(0,1) and +(0,1) .. (2,0);
\draw[redirected] (0,0) .. controls +(0,2) and +(0,2) .. (3,0);
\draw[redirected] (-1,0) .. controls +(0,-1) and +(0,-1) .. (0.5,0);
\draw (2,0) -- (2,-2);
\draw (3,0) -- (3,-2);
\draw[dotted] (0,0) -- (1,0);
\draw (-1,0) -- (-1,3);
%
%
%
%
%
\fill (0.5,0) circle (2.2pt) node[below] {{\small$\kappa$}};
\coordinate (lambdainv) at ($ (-1,2.2) $);
\fill (lambdainv) circle (2.2pt) node[right] {{\small$\lambda^{-1}$}};
\coordinate (rhoinv) at ($ (2.3,1.28) $);
\fill (rhoinv) circle (2.2pt) node[above] {{\small$\rho^{-1}$}};
\coordinate (rho) at ($ (2,-1.4) $);
\fill (rho) circle (2.2pt) node[below right] {{\small$\rho$}};
\draw[dashed] (rho)  .. controls +(-1,0.3) and +(0,-1) ..  (-0.31,-0.72);
\draw[dashed] (rhoinv)  .. controls +(-0.1,-0.5) and +(0,0.5) ..  (1.45,0.74);
\draw[dashed] (1.45,1.49)  .. controls +(-0.1,0.5) and +(0,-0.5) ..  (lambdainv);
\end{tikzpicture}
\, .
\ee
We denote this morphism by $\bar{\mathscr{R}}: (\bar{Y} \otimes \bar{X})^\dual \lto X^\dual \otimes Y^\dual$. Similarly the right-hand side of~\eqref{eq:nicepivotality} is $\kappa'$ followed by a morphism $\bar{\mathscr{L}}: {}^\dual(\bar{Y} \otimes \bar{X}) \lto {}^\dual X \otimes {}^\dual Y$. The first step is to compute $\bar{\mathscr{R}}$ and $\bar{\mathscr{L}}$.


Let $\{e_i\}_i$ be a $k[x,z]$-basis of~$X$, $\{f_j\}_j$ a $k[y,z]$-basis of~$Y$, and $\{g_\alpha\}_\alpha$ a $k$-basis of $k[z]/(\partial V)$. Then $\{ f_j \otimes g_\alpha e_i \}_{i,j,\alpha}$ is a $k[x,y]$-basis of $\bar{Y} \otimes \bar{X}$. Using the expressions~\eqref{eq:formulalambdainv} and~\eqref{eq:alt_formula_coev} together with Lemma~\ref{lemma:atshufat} we find that $\lambda^{-1}$ and $\widetilde\coev_X$ in \eqref{eq:leftmonoidaldetail} send $(f_q \otimes g_\alpha e_p)^* \in (\bar Y \otimes \bar X)^\vee$ to
$$
\sum_j (-1)^{|e_j| + n|f_q| + n|e_p|} \, e_j^* \otimes \varepsilon \Psi \overset{\rightarrow}{\lAt}_{R} \big(X\otimes (\bar Y \otimes \bar X)^\dual \big)^n \big(e_j\otimes (f_q \otimes g_\alpha e_p)^*\big) \, . 
$$
Similarly with~\eqref{eq:formula_rhoinverse} we compute that $\rho^{-1}$ and $\widetilde\coev_Y$ map this to
\begin{align*}
&\sum_{j,k} 
(-1)^{(m+1)|f_k| + |e_j| + n|f_q| + n|e_p|} \, \\
& \qquad\; \cdot \varepsilon \Psi \At_S(X^\dual \otimes Y^\dual)^m (e_j^* \otimes f_k^*)
\otimes \varepsilon \Psi \overset{\rightarrow}{\lAt}_R\big((Y\otimes X)\otimes( \bar Y \otimes \bar X)^\dual\big)^n \big( f_k \otimes e_j \otimes (f_q \otimes g_\alpha e_p)^*\big) \, . 
\end{align*}
Next we apply the morphism~$\kappa$. By naturality of the Atiyah class (Lemma \ref{lemma:atiyahnat}) the sole effect of~$\kappa$ on the above is to replace the Atiyah class of $(Y\otimes X)\otimes( \bar Y \otimes \bar X)^\dual$ by that of $(\bar Y\otimes \bar X)\otimes( \bar Y \otimes \bar X)^\dual$. This means that we are in precisely the same situation as in the proof of the Zorro move for $\bar Y \otimes \bar X$, i.\,e.~after applying $\widetilde\eval_{\bar Y \otimes \bar X}$ and~$\rho$ we are left with
\be\label{eq:valueofLpivot}
\bar{\mathscr{R}}( (f_q \otimes g_\alpha e_p )^* ) = (-1)^{m|f_q|} \, 
(1 \otimes g_\alpha^*) \varepsilon \Psi \At_S(X^\dual \otimes Y^\dual)^m (e_p^* \otimes f_q^*) \, . 
\ee
To explain the notation: the output of $\varepsilon$ is an element of $(X^\dual \otimes Y^\dual) \otimes_{S_1} \Se$ and to this we apply the functional $1 \otimes g_\alpha^*$ which sends $s \otimes t \in \Se$ to $s \cdot g_\alpha^*(t)$. Composing with the other maps in the square \eqref{eq:nicepivotalitysq} and using again naturality of the Atiyah class,
\be
(q_X \otimes q_Y)\mu^{-1}\bar{\mathscr{R}}( (f_q \otimes g_\alpha e_p )^* ) = (-1)^{m|f_q|+mp} \, 
(1 \otimes g_\alpha^*) \varepsilon \Psi \At_S({}^\dual X \otimes {}^\dual Y)^m (e_p^* \otimes f_q^*) \, .
\ee
Similarly one computes that
\be\label{eq:valueofRpivot}
\bar{\mathscr{L}}q_{Y \otimes X}( (f_q \otimes g_\alpha e_p)^* ) = (-1)^{m|f_q|+mp} \, 
(g_\alpha^* \otimes 1) \varepsilon \Psi \overset{\rightarrow}{\lAt}_S ({}^\dual X \otimes {}^\dual Y)^m (e_p^* \otimes f_q^*) \, . 
\ee
We prove that these maps are homotopic by showing that they are \textsl{equal} modulo the $\partial_{z_i} V$ (to see that this is sufficient, use a split monomorphism in $\HMF(k[x,y], U - W)$ like the one in \eqref{eq:splitmonokappa}). Since $\bar{X}^\dual \otimes \bar{Y}^\dual$ has a $k[x,y]$-basis consisting of tensors $e_i^* \otimes g_\alpha f_j^*$, to prove this equality modulo the $\partial_{z_i} V$ it suffices to prove both sides are equal after applying $g_\beta^*$ for every $\beta$. Thus we have to show
\be
(g_\beta^* \otimes g_\alpha^*) \varepsilon \Psi \At_S({}^\dual X \otimes {}^\dual Y)^m (e_p^* \otimes f_q^*) = (g_\alpha^* \otimes g_\beta^*) \varepsilon \Psi \overset{\rightarrow}{\lAt}_S ({}^\dual X \otimes {}^\dual Y)^m (e_p^* \otimes f_q^*)\,.
\ee
Let $\tau: \Se \lto \Se$ be the isomorphism $\tau(s \otimes s') = s' \otimes s$. We compute using the formulas of Section~\ref{section:atiyahclasses}, with $D$ the differential on ${}^\dual X \otimes {}^\dual Y$,
\begin{align}
&(g_\alpha^* \otimes g_\beta^*) \varepsilon \Psi \overset{\rightarrow}{\lAt}_S ({}^\dual X \otimes {}^\dual Y)^m (e_p^* \otimes f_q^*) \nonumber \\
&= (g_\beta^* \otimes g_\alpha^*) \tau \Big( (-1)^{mp+m|e_p| + m|f_q|} \sum_{p',q'} \left\{ \partial_{[m]} D \cdots \partial_{[1]} D \right\}_{p'q', pq} e_{p'}^* \otimes f_{q'}^* \Big) \nonumber \\
&= (g_\beta^* \otimes g_\alpha^*) (-1)^{mp+m|e_p| + m|f_q|} \sum_{p',q'} \left\{ \partial^{\,\sigma}_{[1]} D \cdots \partial^{\,\sigma}_{[m]} D \right\}_{p'q', pq} e_{p'}^* \otimes f_{q'}^* \label{eq:laststeppivotality}
\end{align}
using the notation of Appendix \ref{app:variableordering} for the variable ordering $z_{\sigma(1)}, \ldots, z_{\sigma(m)}$ with $\sigma(i) = m - i + 1$. But we recognise this as the explicit form of $\At_S$ evaluated using the $\Psi$ for this reversed variable ordering; by Appendix \ref{app:variableordering} the value of a diagram is independent of variable ordering, so~\eqref{eq:laststeppivotality} equals
\be
(g_\beta^* \otimes g_\alpha^*) \varepsilon \Psi \At_S({}^\dual X \otimes {}^\dual Y)^m( e_p^* \otimes f_q^* )
\ee
which completes the proof.
\end{proof}

\begin{remark}\label{remark:notpivotal} The sub-bicategory $\LG^{\text{even}}$ of potentials depending on an even number of variables is pivotal in the usual sense. More precisely, there is always an isomorphism
\be\label{delta}
\delta_X = 
\begin{tikzpicture}[very thick,scale=1.0,color=blue!50!black, baseline=.4cm]
\draw[line width=0pt] 
(3,0.5) node[line width=0pt] (D) {}
(2,0.5) node[line width=0pt] (s) {}; 
\draw[directed] (D) .. controls +(0,-1) and +(0,-1) .. (s);
\fill (2.6,-0.3) circle (0pt) node[below] {{\small $\coev_{X^\dual}$}};
\draw (2,0.32) -- (2,0.68);
\draw[line width=0pt] 
(1,0.5) node[line width=0pt] (D) {}
(2,0.5) node[line width=0pt] (s) {}; 
\draw[directed] (D) .. controls +(0,+1) and +(0,+1) .. (s);
\fill (1.6,1.23) circle (0pt) node[above] {{\small $\widetilde\eval_{X}$}};
\draw (3,0.35) -- (3,1.5)
node[above] {{{\small${}^\dual( X^{\dual}$)}}};
\draw (1,0.65) -- (1,-0.5)
node[below] {{{\small$X$}}};
\end{tikzpicture} 
: X\stackrel{\cong}{\lra} {}^\dual (X^\dual) \cong X^{\vee\vee}[n+n] = X^{\vee\vee}
\, 
\ee
and if the number of variables in the regions on either side of $X$ agree mod $2$, for example when $X$ is an endomorphism of an object of $\LG$, then the left and right adjoints actually are isomorphic and $\delta_X$ gives a pivotal structure, i.\,e., a monoidal isomorphism between $X$ and $X^{\dual \dual}$.
\end{remark}

\section{Defect action on bulk fields}\label{sec:defectaction}

In a graded bicategory there are natural operators defined on $2$-morphisms $\Delta \lto \Omega$ constructed by ``capturing'' such $2$-morphisms inside loops labelled by 1-morphisms. Below we present the details for the bicategory $\LG$ and give a natural interpretation in terms of defect actions on bulk fields in Landau-Ginzburg models. In the context of derived categories of coherent sheaves the space of $2$-morphisms $\Delta \lto \Omega$ was identified with Hochschild homology in \cite{ct1007.2679} and from this perspective the operators discussed below are a diagrammatic presentation of maps induced on Hochschild homology of matrix factorisation categories by integral functors, as studied in \cite{pv1002.2116}. 

Recall that for an object $(k[x_1,\ldots,x_n], W)$ of $\LG$ we have defined $\Omega_W = \Delta_W[n]$. In this section when we write $\uHom$ and $\uEnd$ to mean 2-morphisms in $\LG$. Given a $1$-morphism $X: W \lto V$ there are associated $k$-linear \textsl{defect operators} 
$$
\mathcal D_l(X): \uHom(\Delta_V, \Omega_V) \longrightarrow \uHom(\Delta_W, \Omega_W) \, , \qquad
\mathcal D_r(X): \uHom(\Delta_W, \Omega_W) \longrightarrow \uHom(\Delta_V, \Omega_V)
$$
defined as follows. For $\phi\in \uHom(\Delta_V, \Omega_V)$ and $\psi\in \uHom(\Delta_W, \Omega_W)$ we set 
\be\label{leftandrightdefectaction}
\mathcal D_l(X)(\phi) =
\begin{tikzpicture}[very thick,scale=0.6,color=blue!50!black, baseline,>=stealth]
\nicecolourscheme (0,0) circle (3.5);
\fill (-2.2,-2.2) circle (0pt) node[white] {{\small$W$}};
\nicepalecolourscheme (0,0) circle (2);
\fill (-1.1,-1.1) circle (0pt) node[white] {{\small$V$}};
\draw[very thin, color=blue!85!black, color=blue!55!white, decorate, decoration={snake, amplitude=0.2mm, segment length=1.0mm}] (135:0)  .. controls +(-0.25,1.0) and +(-2.25,-1.25) .. (90:2);
\draw[very thin, color=blue!85!black, color=blue!55!white, decorate, decoration={snake, amplitude=0.2mm, segment length=1.0mm}] (270:2) .. controls +(-3,0) and +(0,-1) .. (135:3.5); 
\draw (0,0) circle (2);
\fill (180:2) circle (0pt) node[right] {{\small$X$}};
\draw[<-, very thick] (0.100,-2) -- (-0.101,-2) node[above] {}; 
\draw[<-, very thick] (-0.100,2) -- (0.101,2) node[below] {}; 
\fill (135:0) circle (3.3pt) node[right] {{\small$\phi$}};
\fill (-50:2) circle (3.3pt) node[right] {{\small$\lambda^{-1}$}};
\draw[dashed] (135:0) .. controls +(0,-1) and +(-0.5,1) .. (-50:2);
\end{tikzpicture} 
\, , \qquad 
\mathcal D_r(X)(\psi) = 
\begin{tikzpicture}[very thick,scale=0.6,color=blue!50!black, baseline,>=stealth]
\nicepalecolourscheme (0,0) circle (3.5);
\fill (2.2,-2.2) circle (0pt) node[white] {{\small$V$}};
\nicecolourscheme (0,0) circle (2);
\fill (1.1,-1.1) circle (0pt) node[white] {{\small$W$}};
\draw[very thin, color=blue!85!black, color=blue!55!white, decorate, decoration={snake, amplitude=0.2mm, segment length=1.0mm}] (135:0)  .. controls +(0.25,1.0) and +(2.25,-1.25) .. (90:2);
\draw[very thin, color=blue!85!black, color=blue!55!white, decorate, decoration={snake, amplitude=0.2mm, segment length=1.0mm}] (270:2) .. controls +(3,0) and +(0,-1) .. (45:3.5); 
\draw (0,0) circle (2);
\fill (0:2) circle (0pt) node[left] {{\small$X$}};
\draw[<-, very thick] (0.100,2) -- (-0.101,2) node[above] {}; 
\draw[<-, very thick] (-0.100,-2) -- (0.101,-2) node[below] {}; 
\fill (135:0) circle (3.3pt) node[left] {{\small$\psi$}};
\fill (230:2) circle (3.3pt) node[left] {{\small$\rho^{-1}$}};
\draw[dashed] (135:0) .. controls +(0,-1) and +(0.5,1) .. (230:2);
\end{tikzpicture} 
\, , 
\ee
\begin{align}
\mathcal D_l(X)(\phi) & = \eval_X \circ (1_{X^\dual} \otimes ((\phi \otimes 1_X) \circ \lambda_X^{-1})) \circ \widetilde\coev_X \, , \nonumber \\ 
\mathcal D_r(X)(\psi) & = \widetilde\eval_X \circ (( (1_X \otimes \psi) \circ \rho_{X}^{-1}) \otimes 1_{{}^\dual X}) \circ \coev_X \, . \label{defectactionwrittenout}
\end{align}
For $W \in k[x_1,\ldots,x_n]$ there is a canonical isomorphism \cite{d0904.4713}
\[
\uHom(\Delta_W, \Omega_W) \cong \begin{cases} 0 & n \text{ odd}\\ k[x]/(\partial W) & n \text{ even} \end{cases}
\]
so the operators $\mathcal D_l(X)$ and $\mathcal D_r(X)$ are either zero, or they map between $\uEnd(\Delta_W)$ and $\uEnd(\Delta_V)$. In the latter case, so where $W$ and $V$ are both functions of an even number of variables, we define the left and right \textsl{quantum dimensions} of $X$ to be $\operatorname{dim}_l(X) = \mathcal D_l(X)(1)$ and $\operatorname{dim}_r(X) = \mathcal D_r(X)(1)$. 

\begin{remark} When $k$ is a field, Dyckerhoff proves in~\cite{d0904.4713} that $\uEnd(\Delta_W) = k[x]/(\partial W)$ is the Hochschild cohomology of $\hmf(k[\![x]\!], W)$. This also holds in the current context, where $k$ is an arbitrary ring; to see this, use a variant of the calculation \eqref{eq:isogivescoev2} with $R = k[x]$ to see that $\End_{\Re}(\Delta_W)$ is quasi-isomorphic to a shift of the complex
\[
\Big( \bigwedge \bigoplus_{i=1}^n R \theta_i, \sum_{i=1}^n \partial^{x,x'}_{[i]} W|_{x=x'} \theta_i \wedge (-) \Big) .
\]
Since $\partial^{x,x'}_{[i]} W|_{x=x'} = \partial_{x_i} W$ this is the Koszul complex of the partial derivatives, which by the definition of a potential is quasi-isomorphic to $k[x]/(\partial_{x_i} W)$.

When $k = \nC $ this space also describes bulk fields of Landau-Ginzburg models with potential~$W$, it is a commutative Frobenius algebra whose nondegenerate pairing \cite{kl0305, m0912.1629}
\be\label{bulktopmet}
\langle \phi, \psi \rangle_W = 
\Res_{k[x]/k} \left[ \frac{\phi\,  \psi \, \underline{\operatorname{d}\!x}}{\partial_{x_1}W, \ldots, \partial_{x_n} W}\right]
\ee
describes 2-point correlators on the sphere, see also Section~\ref{sec:ocTFT}. Furthermore, matrix factorisations of $V-W$ describe defect conditions between different Landau-Ginzburg models. Hence the maps~\eqref{leftandrightdefectaction} have the natural interpretation in terms of defect operators on bulk fields: for example, a bulk field~$\phi$ in the theory with potential~$V$ is mapped to the bulk field $\mathcal D_l(X)(\phi)$ in the theory with potential~$W$ by wrapping around its insertion on the worldsheet a defect line labelled by~$X$, and then collapsing this loop onto the insertion point. This limiting process is nonsingular as the bicategory $\LG$ describes the purely topological sector of Landau-Ginzburg models. 
\end{remark}

Using the ``folding trick'', which relates defects to boundary conditions in a product theory, one can argue for explicit expressions for $\mathcal D_l(X)$ and $\mathcal D_r(X)$. This was done in~\cite{cr1006.5609} for the case $V=W$ in one variable. Here we use our adjunction formulas to directly prove it for the general case. In addition, we want to consider operators $\mathcal D_l^{\Phi}(X)$ and $\mathcal{D}_r^{\Phi}(X)$ defined by inserting a $2$-morphism $\Phi: X \lra X$ in the obvious way on the circles in \eqref{leftandrightdefectaction}, that is replacing $1_X$ by $\Phi$ in \eqref{defectactionwrittenout}.

\begin{proposition}\label{prop:defectaction}
Suppose that $m$ and $n$ are even. Then for any $1$-morphism $X: W \lto V$ given by a finite-rank matrix factorisation
\[
X\in \hmf(k[x_1,\ldots,x_n,z_1,\ldots,z_m], V-W)
\]
together with $\Phi \in \uEnd(X)$, $\phi\in \uEnd(\Delta_V)$ and $\psi\in \uEnd(\Delta_W)$ we have
\begin{align*}
\mathcal D_l^\Phi(X)(\phi) & 
= (-1)^{{{n+1}\choose 2}}\Res_{k[x,z]/k[x]} \left[ \frac{\phi(z) \str\big( \Phi \, \partial_{x_1} d_{X}\ldots \partial_{x_n} d_{X} \,  \partial_{z_1} d_{X}\ldots \partial_{z_m} d_{X} \big) \underline{\operatorname{d}\! z}}{\partial_{z_1} V, \ldots, \partial_{z_m} V} \right] , \\
\mathcal D_r^\Phi(X)(\psi) & = (-1)^{{{m+1}\choose 2}}  \Res_{k[x,z]/k[z]} \left[ \frac{\psi(x) \str\big( \Phi \, \partial_{x_1} d_{X}\ldots \partial_{x_n} d_{X} \, \partial_{z_1} d_{X}\ldots \partial_{z_m} d_{X} \big) \underline{\operatorname{d}\! x}}{\partial_{x_1} W, \ldots, \partial_{x_n} W} \right] . 
\end{align*}
\end{proposition}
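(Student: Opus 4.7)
The strategy is to substitute the explicit chain-level formulas derived in Section~\ref{sec:derivcoeval} into the composition \eqref{defectactionwrittenout} defining $\mathcal{D}_r^\Phi(X)(\psi)$ and to reduce the result to the claimed closed form via the residue/supertrace machinery developed in Section~\ref{sec:Zorro}. I will describe the argument for $\mathcal{D}_r^\Phi(X)(\psi)$ in detail; the computation of $\mathcal{D}_l^\Phi(X)(\phi)$ proceeds identically after exchanging the roles of $(x_i, W, n)$ and $(z_j, V, m)$. Since $m$ is assumed even one has $\uEnd(\Delta_V) \cong k[z]/(\partial V)$, and it is enough to represent $\mathcal{D}_r^\Phi(X)(\psi)$ by the image of $1 \in \Delta_V$ composed with the projection $\pi_\Delta$ to top $\theta$-degree.

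The first step is to unfold $\coev_X(1)$ via the divided-difference formula \eqref{eq:coev_divideddiff}: with $\gamma' = 1$ the complementary sequence is $(1, \ldots, m)$ and $s = 0$, so
\[
\coev_X(1) = (-1)^{\binom{m+1}{2} + m + m^2} \sum_{i,j} \bigl\{\partial^{z,z'}_{[1]} d_X \cdots \partial^{z,z'}_{[m]} d_X\bigr\}_{ij} \, e_i \otimes e_j^* \, ,
\]
which supplies precisely the block $\partial_{z_1} d_X \cdots \partial_{z_m} d_X$ appearing in the target formula, once the identification $z = z'$ implicit in the final residue is applied. The next step is to compose with $\bigl((\Phi \otimes \psi) \circ \rho_X^{-1}\bigr) \otimes 1_{{}^\dual X}$ using \eqref{eq:formula_rhoinverse} to expand $\rho_X^{-1}$ as a geometric series in the Atiyah class $\At_R(X)$. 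Finally $\widetilde{\eval}_X$ of Remark~\ref{remark:explicitev}, evaluated with the choice $\lambda_i = -\partial_{x_i} d_X$, contributes the factor $\Lambda^{(x)} = \lambda_1 \cdots \lambda_n$, the residue $\Res_{k[x,z]/k[z]}\bigl[\,\cdot\,/(\partial_{x_1}W, \ldots, \partial_{x_n} W)\bigr]$, and a second geometric Atiyah series indexed by ordered sequences producing $\theta$-factors.

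The crux is that both geometric series truncate. The $\Omega_W = \Delta_W[n]$ insertion forces the $\rho_X^{-1}$ series to saturate at Atiyah degree $n$, producing the block $\partial^{x,x'}_{[1]} d_X \cdots \partial^{x,x'}_{[n]} d_X$, while the outer $\Omega_V$ line has already been saturated by $\coev_X$, collapsing the Atiyah sum in $\widetilde{\eval}_X$ to its degree-zero term; terms of excess degree vanish for $\theta$-counting reasons, and terms of deficient degree are annihilated by commutator rewrites of the form $[d, f_i(-)]$ modulo the regular sequence $(\partial_{x_i} W)$, exactly as in the proof of Proposition~\ref{prop:mainzorrotrick}. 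Combining the surviving contributions and invoking the identity $\partial^{x,x'}_{[i]} f|_{x=x'} = \partial_{x_i} f$, legitimate after integrating against $(\partial_{x_i} W)$ by Proposition~\ref{prop:determinantresidue}, eliminates the primed variables and yields the stated supertrace. The main obstacle is the accounting of the overall sign $(-1)^{\binom{m+1}{2}}$, which requires combining signs from \eqref{eq:coev_divideddiff}, \eqref{eq:formula_rhoinverse}, Remark~\ref{remark:explicitev} and the suspension isomorphisms identifying $\Omega_W, \Omega_V$ with the appropriate shifts of $\Delta_W, \Delta_V$, together with the careful justification of the truncation argument using Lemma~\ref{lemma:atshufat} and the supertrace vanishing Lemma~\ref{lemma:stratzero}.
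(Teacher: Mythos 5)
Your first step—evaluating $\coev_X(1)$ via \eqref{eq:coev_divideddiff}—is exactly how the paper begins, and the sign works out (since $m+m^2$ is always even your $(-1)^{\binom{m+1}{2}+m+m^2}$ is just $(-1)^{\binom{m+1}{2}}$). The argument goes wrong at the $\rho_X^{-1}$ stage, however, and it produces a double-counting: you claim the $\rho_X^{-1}$ geometric series from \eqref{eq:formula_rhoinverse} ``saturates at Atiyah degree $n$, producing the block $\partial^{x,x'}_{[1]} d_X \cdots \partial^{x,x'}_{[n]} d_X$,'' and independently that $\widetilde\eval_X$ contributes $\Lambda^{(x)} = \lambda_1 \cdots \lambda_n$. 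That is two $n$-fold products of $x$-derivatives of $d_X$, while the target formula has only one. The $\rho_X^{-1}$ contribution is in fact not there: once $\psi$ is identified with a polynomial in $\uEnd(\Delta_W) \cong k[x]/(\partial W)$, the inner part of the loop is literally multiplication by $\psi(x)$, and the Atiyah series in $\rho_X^{-1}$ collapses to its $l = 0$ term under the subsequent projection back to $X$—there is no ``saturation at degree $n$.'' The $x$-derivative block arises solely from $\Lambda^{(x)}$ inside $\widetilde\eval_X$.

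The paper's own proof exploits exactly this simplification and is therefore much shorter: identify $\psi$ with a polynomial, evaluate $\coev_X(1)$ by \eqref{eq:coev_divideddiff}, insert $\Phi$ and $\psi$ as multiplication operators, and apply $\widetilde\eval_X$. This yields the claimed residue expression plus contributions $\mathcal{O}(\theta)$ of positive $\theta$-degree; these are then dismissed at one stroke by the a-priori observation that the whole composite lies in $\uEnd(\Delta_V) \cong k[z]/(\partial V)$, which sits in $\theta$-degree zero, so $\mathcal{O}(\theta)$ must be null-homotopic. Your proposed truncation mechanism—``excess degree vanish for $\theta$-counting reasons, deficient degree annihilated by commutator rewrites'' via Proposition~\ref{prop:mainzorrotrick} and Lemma~\ref{lemma:stratzero}—is neither needed nor correct here: those lemmas organise the proof of the Zorro moves and do not apply to the $\rho_X^{-1}$ series (which truncates for the opposite reason, at $l=0$), nor do they substitute for the degree-zero argument that disposes of $\mathcal{O}(\theta)$.
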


\begin{proof}
We treat the case of $\mathcal D_r^\Phi(X)$ in detail, the argument for $\mathcal D_l^\Phi(X)$ is similar. Since $\uEnd(\Delta_W) = k[x]/(\partial W)$ and $\uEnd(\Delta_V) = k[z]/(\partial V)$ we are free to identify the variables on both sides of the unit 1-endomorphisms at appropriate places. In particular we identify $\psi$ with a polynomial.

In the lower part of the expression for $\mathcal D_r^\Phi(X)(\psi)$ in~\eqref{leftandrightdefectaction} we have (using \eqref{eq:coev_divideddiff})
$$
\coev_X(1) = \sum_{i,j} (-1)^{{m+1\choose 2}} \big\{ \partial^{z,z'}_{[1]} d_X \ldots \partial^{z,z'}_{[m]} d_X \big\}_{ij} e_i \otimes e_j^* \, . 
$$
Next we apply~$\Phi$, $\psi$ and $\widetilde\eval_X$. Since the latter maps back to~$\Delta_V$ we may set $\partial_{[i]} d_X = \partial_{z_i} d_X$. Thus
$$
\mathcal D_r^\Phi(X)(\psi) = 
(-1)^{{m+1\choose 2}} \Res_{k[x,z]/k[z]} \left[ \frac{\psi(x) \str\big( \Phi \, \partial_{x_1} d_{X}\ldots \partial_{x_n} d_{X} \, \partial_{z_1} d_{X}\ldots \partial_{z_m} d_{X} \big) \underline{\operatorname{d}\! x}}{\partial_{x_1} W, \ldots, \partial_{x_n} W} \right] + \mathcal O(\theta) \, . 
$$
Here we collectively denote the contributions from $\widetilde\eval_X$ of non-zero $\mathbb{Z}$-degree in the Koszul complex $\Delta_V$ by $\mathcal O(\theta)$. Since we know that $\mathcal D_r^\Phi(X)(\psi)$ is a morphism in $\uEnd(\Delta_V) = k[z]/(\partial V)$ it follows that $\mathcal O(\theta)$ must be null-homotopic, thus concluding the proof. 
\end{proof}


\begin{corollary}\label{corollary:uses_transitivity}
$\mathcal D_l(X)$ and $\mathcal D_r(X)$ are adjoint with respect to the pairings~\eqref{bulktopmet}, i.\,e.~we have
\be\label{Dadjoint}
\big\langle \mathcal D_l(X)(\phi), \psi \big\rangle_W = \big\langle \phi , \mathcal D_r(X)(\psi) \big\rangle_V
\ee
for all $\phi\in \uEnd(\Delta_V)$ and $\psi\in \uEnd(\Delta_W)$. 
\end{corollary}

\begin{proof}
This directly follows from the explicit expressions for $\mathcal D_l(X)$, $\mathcal D_r(X)$ and $\big\langle -, -\big\rangle_W$, $\big\langle -, -\big\rangle_V$, together with the transitivity rule for residues (Proposition \ref{prop:transitivity}). 
\end{proof}

\begin{remark}
We recall the physical interpretation of the relation~\eqref{Dadjoint}. Both sides of this equation are 2-point correlators on the Riemann sphere, with a defect line labelled by~$X$ wrapped around counterclockwise the bulk field~$\phi$, or wrapped around~$\psi$ in clockwise fashion. That both correlators should be equal follows from the fact that the topological defect can be moved around the sphere at no cost: 
$$
\left\langle
\begin{tikzpicture}[baseline=-0.1cm]
\def\R{1.85}
\def\angEl{45}
\filldraw[ball color= white!77!blue,draw=white] (0,0) circle (\R);
\DrawLatitudeCircleU[\R,rotate=130,very thick, blue]{65}
\fill (-0.95,-0.83) circle (1.3pt) node[above] {{\small$\phi$}}; 
\fill (0.95,-0.83) circle (1.3pt) node[above] {{\small$\psi$}}; 
\end{tikzpicture}
\right\rangle
=
\left\langle
\begin{tikzpicture}[baseline=-0.1cm]
\def\R{1.85}
\def\angEl{45}
\filldraw[ball color= white!77!blue,draw=white] (0,0) circle (\R);
\DrawLongitudeCircle[\R]{80}
\fill (-0.95,-0.83) circle (1.3pt) node[above] {{\small$\phi$}}; 
\fill (0.95,-0.83) circle (1.3pt) node[above] {{\small$\psi$}}; 
\end{tikzpicture}
\right\rangle
=
\left\langle
\begin{tikzpicture}[baseline=-0.1cm]
\def\R{1.85}
\def\angEl{45}
\filldraw[ball color= white!77!blue,draw=white] (0,0) circle (\R);
\DrawLatitudeCircle[\R,rotate=-130, very thick, blue]{65}
\fill (-0.95,-0.83) circle (1.3pt) node[above] {{\small$\phi$}}; 
\fill (0.95,-0.83) circle (1.3pt) node[above] {{\small$\psi$}}; 
\end{tikzpicture}
\right\rangle .
$$
\end{remark}

Defect operators satisfy the following compatibility conditions: 

\begin{proposition}
\label{prop:defectactionproperties}
Let $W\in k[x_1,\ldots,x_n]$, $V\in k[z_1,\ldots,z_m]$, $U\in k[y_1,\ldots,y_p]$ be potentials and $Y\in \hmf(k[y,z],U-V)$, $X\in \hmf(k[x,z],V-W)$. 
\begin{enumerate}
\item $\mathcal D_l(\Delta) = 1 = \mathcal D_r(\Delta)$. 
\item $\mathcal D_l(X) = \mathcal D_r(X^\vee)$ and $\mathcal D_r(X) = \mathcal D_l(X^\vee)$.
\item $\mathcal D_l(X) \circ \mathcal D_l(Y) = \mathcal D_l(Y \otimes X)$ and $\mathcal D_r(Y) \circ \mathcal D_r(X) = \mathcal D_r(Y \otimes X)$. 
\item $\mathcal D_l(X[a]) = (-1)^a \mathcal D_l(X)$ and $\mathcal D_r(X[a]) = (-1)^a \mathcal D_r(X)$ for $a \in \nZ$.
\end{enumerate}
Analogous results hold for the operators $\mathcal D^\Phi_l, \mathcal D^\Phi_r$ decorated with endomorphisms of $X$ and $Y$.
\end{proposition}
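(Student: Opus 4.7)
All four statements are diagrammatic identities in the graded pivotal bicategory $\LG$ (Proposition~\ref{prop:monoidalproperty}), and the plan is to verify each by a combination of string-diagram manipulation and the explicit residue formulas of Proposition~\ref{prop:defectaction}. In every case the $\Phi$-decorated version follows verbatim from the undecorated argument, with $\Phi$ threaded through the supertrace, so I only discuss the case $\Phi = 1_X$ below.

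Parts (i) and (iv) are essentially bookkeeping. For (i), taking $X = \Delta_W$ causes the defect loops in~\eqref{leftandrightdefectaction} to collapse: the evaluation and coevaluation maps become (shifted versions of) the unit isomorphisms $\lambda, \rho$, and the Zorro moves of Section~\ref{sec:Zorro} reduce the composite to $\phi$. Alternatively, plugging $X = \Delta_W$ into the residue formula and using $d_{\Delta_W} = \delta_+ + \delta_-$ one checks directly that the answer reduces to $\phi$ via Proposition~\ref{prop:determinantresidue}. For (iv), the suspension $[1]$ leaves the matrix entries of $d_X$ unchanged but swaps the $\nZ_2$-grading of $X$; the supertrace accordingly acquires a sign $-1$, and this sign is the only change in the residue formula of Proposition~\ref{prop:defectaction}, yielding the factor $(-1)^a$.

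For (ii), since $(X^\vee)^\vee \cong X$ it suffices to verify a single identity, say $\mathcal D_l(X) = \mathcal D_r(X^\vee)$. Applying Proposition~\ref{prop:defectaction} to $X^\vee$ viewed as a $1$-morphism $V \lra W$ (with source variables $z$ and target variables $x$), the prefactors and the residue shape match those of $\mathcal D_l(X)(\phi)$, reducing the claim to the comparison of
\[
\str\!\big( \partial_{x_1} d_X \cdots \partial_{x_n} d_X\, \partial_{z_1} d_X \cdots \partial_{z_m} d_X \big)
\;\text{and}\;
\str\!\big( \partial_{z_1} d_{X^\vee} \cdots \partial_{z_m} d_{X^\vee}\, \partial_{x_1} d_{X^\vee} \cdots \partial_{x_n} d_{X^\vee} \big)
\]
inside $k[x,z]$. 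Using~\eqref{eq:differentials_adjoints} to express each $\partial_i d_{X^\vee}$ in terms of the transpose of $\partial_i d_X$, together with the cyclicity and transposition invariance of the supertrace, reduces the comparison to a sign count; under the hypothesis that $n$ and $m$ are even the signs align.

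The main technical obstacle is (iii), the claim that nested defect loops fuse. The geometric picture is transparent: $\mathcal D_l(X) \circ \mathcal D_l(Y)(\phi)$ is $\phi$ wrapped in a $Y$-loop wrapped in an $X$-loop, and one wants to identify this with $\phi$ wrapped in a single $(Y \otimes X)$-loop. The plan is to carry this out via the canonical comparison morphisms $\mathscr R, \mathscr L$ of Section~\ref{sec:wiggliesandsigns}, which by graded pivotality (the commutative square~\eqref{eq:nicepivotalitysq}) fit together so that $\eval_{Y \otimes X}$ and $\coev_{Y \otimes X}$ factor, up to homotopy, through the evaluation and coevaluation maps of $X$ and $Y$ separately; the shifts introduced by the grading are absorbed consistently by~\eqref{eq:switcherooomegalines} together with the pivotality square. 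A concrete backup strategy is to expand both sides via the residue formula of Proposition~\ref{prop:defectaction}, apply the transitivity of residues to introduce an intermediate residue over $k[z]$, and use the Atiyah class composition law of Lemma~\ref{lemma:atshufat} together with a localisation argument modulo $\partial V$ to identify the two expressions term by term.
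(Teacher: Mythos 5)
Your plan matches the paper's proof on all four points: (i) and (iv) are read off the residue formula of Proposition~\ref{prop:defectaction}, (ii) uses $(d_{X^\vee})_{ij}=(-1)^{|e_i|}(d_X)_{ji}$ together with the reordering-up-to-sign argument of Remark~\ref{remark:indeptlambda}, and (iii) is the string-diagram computation resting on graded pivotality (Proposition~\ref{prop:monoidalproperty}), the Zorro moves and the wiggly-line identity~\eqref{eq:switcherooomegalines}. Your residue/Atiyah-class backup strategy for (iii) is a plausible alternative that the paper does not pursue, but your primary plan is exactly the route the paper takes.
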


\begin{proof}
(i) is a direct consequence of Proposition~\ref{prop:defectaction} and the results of~\cite[Section\,5.1, Eq.\,(21)]{kr0405232} while (iv) is trivial.
(Note that we do not see that part~(i) is a special case of part (iii) because the latter only implies that the defect action of~$\Delta$ is an idempotent -- which acts as the identity on~$1_\Delta$, but possibly not on all of $\End(\Delta)$.)

(ii): By the argument of Remark~\ref{remark:indeptlambda} reordering the partial derivatives of $d_{X^\vee}$ leaves $\mathcal D_r(X^\vee)$ invariant up to a sign. Together with $(d_{X^\vee})_{ij} = (-1)^{|e_i|}(d_X)_{ji}$ one checks that the claim then follows from Proposition~\ref{prop:defectaction}. 

(iii): We give the proof for $\mathcal D_l$, the other case is analogous. 
The basic argument is a standard calculation with string diagrams. There are however slight modifications needed due to the fact that $\LG$ is graded pivotal as discussed in Section~\ref{sec:wiggliesandsigns}. 
In fact we observe that the following proof, and hence part (iii) of the proposition, holds in any graded pivotal bicategory. 

We start by writing $\mathcal D_l(X) \circ \mathcal D_l(Y)$ applied to some element of $\uHom(\Delta_V, \Omega_V)$ as
$$
\begin{tikzpicture}[very thick,scale=0.7,color=blue!50!black, baseline=-0.75cm,>=stealth]
%
\draw[very thin, color=blue!85!black, color=blue!55!white, decorate, decoration={snake, amplitude=0.2mm, segment length=1.0mm}] (0,-1)  .. controls +(-0.25,1.0) and +(-1.75,-1.25) .. (0,1);
\draw[very thin, color=blue!55!white, decorate, decoration={snake, amplitude=0.2mm, segment length=1.0mm}] 
(0,2) .. controls +(-2.75,-1.0) and +(-1.75,0) .. (0,-3); 
\draw[very thin, color=blue!55!white, decorate, decoration={snake, amplitude=0.2mm, segment length=1.0mm}] 
(-2.5,2.25) .. controls +(0,-3.0) and +(-2.75,0) .. (0,-4); 
%
\draw[
	decoration={markings, mark=at position 0.5 with {\arrow{>}}}, postaction={decorate}
	] 
	(0:1) arc (0:180:1);
\draw[
	decoration={markings, mark=at position 0.5 with {\arrow{>}}}, postaction={decorate}
	] 
	(0:2) arc (0:180:2);
\draw[
	decoration={markings, mark=at position 0.5 with {\arrow{>}}}, postaction={decorate}
	] 
	(-1,-2) arc (180:360:1);
\draw[
	decoration={markings, mark=at position 0.5 with {\arrow{>}}}, postaction={decorate}
	] 
	(-2,-2) arc (180:360:2);
\draw (-1,0) -- (-1,-2);
\draw (-2,0) -- (-2,-2);
\draw (1,0) -- (1,-2);
\draw (2,0) -- (2,-2);
\draw (0.7,0.23) node {{\scriptsize $Y$}};
\draw (1.7,0.23) node {{\scriptsize $X$}};
\fill (0,-1) circle (2.5pt) node[right] {};
\end{tikzpicture}
\,=\!\!
\begin{tikzpicture}[very thick,scale=0.7,color=blue!50!black, baseline=-0.75cm,>=stealth]
%
\draw[very thin, color=blue!85!black, color=blue!55!white, decorate, decoration={snake, amplitude=0.2mm, segment length=1.0mm}] (0,-1)  .. controls +(-0.25,1.0) and +(-1.75,-1.25) .. (0,1);
\draw[very thin, color=blue!55!white, decorate, decoration={snake, amplitude=0.2mm, segment length=1.0mm}] 
(0,2) .. controls +(-2.75,-1.0) and +(-1.75,0) .. (0,-3); 
\draw[very thin, color=blue!55!white, decorate, decoration={snake, amplitude=0.2mm, segment length=1.0mm}] 
(-2.5,2.25) .. controls +(0,-3.0) and +(-2.75,0) .. (0,-4); 
\draw[very thin, color=blue!55!white, decorate, decoration={snake, amplitude=0.2mm, segment length=1.0mm}] 
(2.25,-0.625) .. controls +(1.5,0.4) and +(-1,-1) .. (3.375,2.1); 
\draw[very thin, color=blue!55!white, decorate, decoration={snake, amplitude=0.2mm, segment length=1.0mm}] 
(2.25,-1.375) .. controls +(0.5,-1) and +(-1.5,0) .. (3.375,-4.1); 
%
\draw[
	decoration={markings, mark=at position 0.5 with {\arrow{>}}}, postaction={decorate}
	] 
	(0:1) arc (0:180:1);
\draw[
	decoration={markings, mark=at position 0.5 with {\arrow{>}}}, postaction={decorate}
	] 
	(0:2) arc (0:180:2);
\draw[
	decoration={markings, mark=at position 0.5 with {\arrow{>}}}, postaction={decorate}
	] 
	(-1,-2) arc (180:360:1);
\draw[
	decoration={markings, mark=at position 0.5 with {\arrow{>}}}, postaction={decorate}
	] 
	(-2,-2) arc (180:360:2);
\draw[
	decoration={markings, mark=at position 0.5 with {\arrow{>}}}, postaction={decorate}
	] 
	(2.75,0) arc (360:180:0.625);
\draw[
	decoration={markings, mark=at position 0.5 with {\arrow{>}}}, postaction={decorate}
	] 
	(1.5,-2) arc (180:0:0.625);
\draw[
	decoration={markings, mark=at position 0.5 with {\arrow{<}}}, postaction={decorate}
	] 
	(2.75,1.5) arc (180:0:0.625);
\draw[
	decoration={markings, mark=at position 0.5 with {\arrow{>}}}, postaction={decorate}
	] 
	(2.75,-3.5) arc (180:360:0.625);	
\draw (-1,0) -- (-1,-2);
\draw (-2,0) -- (-2,-2);
\draw (2.75,-2) -- (2.75,-3.5);
\draw (2.75,0) -- (2.75,1.5);
\draw (4,-3.5) -- (4,1.5);
\draw[dotted] (1,0)[] -- (2,0);
\draw[dotted] (1,-2) -- (2,-2);
\draw (0.7,0.23) node {{\scriptsize $Y$}};
\draw (1.7,0.23) node {{\scriptsize $X$}};
\draw (4.7,0.225) node {{\scriptsize $Y \otimes X$}};
\fill (0,-1) circle (2.5pt) node[right] {};
\end{tikzpicture}
=
\begin{tikzpicture}[very thick,scale=0.7,color=blue!50!black, baseline=-0.75cm,>=stealth]
%
\draw[very thin, color=blue!85!black, color=blue!55!white, decorate, decoration={snake, amplitude=0.2mm, segment length=1.0mm}] (0,-1)  .. controls +(-0.25,1.0) and +(-1.75,-1.25) .. (0,1);
\draw[very thin, color=blue!55!white, decorate, decoration={snake, amplitude=0.2mm, segment length=1.0mm}] 
(0,2) .. controls +(-2.75,-1.0) and +(-1.75,0) .. (0,-3); 
\draw[very thin, color=blue!55!white, decorate, decoration={snake, amplitude=0.2mm, segment length=1.0mm}] 
(2.25,-0.625) .. controls +(1.5,0.4) and +(-1,-1) .. (3.375,2.1); 
\draw[very thin, color=blue!55!white, decorate, decoration={snake, amplitude=0.2mm, segment length=1.0mm}] 
(2.25,-1.375) .. controls +(0.5,-1) and +(0,2) .. (2.5,-3.5); 
\draw[very thin, color=blue!55!white, decorate, decoration={snake, amplitude=0.2mm, segment length=1.0mm}] 
(2.5,-3.5) .. controls +(0,-1) and +(0,-1.1) .. (-2.25,-3.7); 
\draw[very thin, color=blue!55!white, decorate, decoration={snake, amplitude=0.2mm, segment length=1.0mm}] 
(-2.25,-3.7) .. controls +(0.2,0.75) and +(-2.25,0) .. (0,-4); 
\draw[very thin, color=blue!55!white, decorate, decoration={snake, amplitude=0.2mm, segment length=1.0mm}] 
(3.375,-4.1) .. controls +(-6,-0.75) and +(0,-9) .. (-2.75,2.25); 
%
\draw[
	decoration={markings, mark=at position 0.5 with {\arrow{>}}}, postaction={decorate}
	] 
	(0:1) arc (0:180:1);
\draw[
	decoration={markings, mark=at position 0.5 with {\arrow{>}}}, postaction={decorate}
	] 
	(0:2) arc (0:180:2);
\draw[
	decoration={markings, mark=at position 0.5 with {\arrow{>}}}, postaction={decorate}
	] 
	(-1,-2) arc (180:360:1);
\draw[
	decoration={markings, mark=at position 0.5 with {\arrow{>}}}, postaction={decorate}
	] 
	(-2,-2) arc (180:360:2);
\draw[
	decoration={markings, mark=at position 0.5 with {\arrow{>}}}, postaction={decorate}
	] 
	(2.75,0) arc (360:180:0.625);
\draw[
	decoration={markings, mark=at position 0.5 with {\arrow{>}}}, postaction={decorate}
	] 
	(1.5,-2) arc (180:0:0.625);
\draw[
	decoration={markings, mark=at position 0.5 with {\arrow{<}}}, postaction={decorate}
	] 
	(2.75,1.5) arc (180:0:0.625);
\draw[
	decoration={markings, mark=at position 0.5 with {\arrow{>}}}, postaction={decorate}
	] 
	(2.75,-3.5) arc (180:360:0.625);	
\draw (-1,0) -- (-1,-2);
\draw (-2,0) -- (-2,-2);
\draw (2.75,-2) -- (2.75,-3.5);
\draw (2.75,0) -- (2.75,1.5);
\draw (4,-3.5) -- (4,1.5);
\draw[dotted] (1,0)[] -- (2,0);
\draw[dotted] (1,-2) -- (2,-2);
\draw (0.7,0.23) node {{\scriptsize $Y$}};
\draw (1.7,0.23) node {{\scriptsize $X$}};
\draw (4.7,0.225) node {{\scriptsize $Y \otimes X$}};
\fill (0,-1) circle (2.5pt) node[right] {};
\end{tikzpicture}
\vspace{-1.25cm}
$$
where we used the Zorro moves to re-express the identity on $Y\otimes X$, as well as~\eqref{eq:switcherooomegalines}. Inserting the quantum dimension of a wiggly line (which equals one) we are now in a position to apply~\eqref{eq:nicepivotality} to the lower part of the diagram on the right-hand side to find that it equals
$$
\!\!\!\!\!\!
\begin{tikzpicture}[very thick,scale=0.7,color=blue!50!black, baseline=-0.75cm,>=stealth]
\draw[
	decoration={markings, mark=at position 0.5 with {\arrow{>}}}, postaction={decorate}
	] 
	(0:1) arc (0:180:1);
\draw[
	decoration={markings, mark=at position 0.5 with {\arrow{>}}}, postaction={decorate}
	] 
	(0:2) arc (0:180:2);
\draw[
	decoration={markings, mark=at position 0.5 with {\arrow{>}}}, postaction={decorate}
	] 
	(-1,-2) arc (360:180:2);
\draw[
	decoration={markings, mark=at position 0.5 with {\arrow{>}}}, postaction={decorate}
	] 
	(-2,-2) arc (360:180:1);
\draw[
	decoration={markings, mark=at position 0.5 with {\arrow{>}}}, postaction={decorate}
	] 
	(2.75,0) arc (360:180:0.625);
\draw[
	decoration={markings, mark=at position 0.5 with {\arrow{>}}}, postaction={decorate}
	] 
	(-4.5,-2) arc (0:180:0.625);
\draw[
	decoration={markings, mark=at position 0.5 with {\arrow{<}}}, postaction={decorate}
	] 
	(2.75,1.5) arc (180:0:0.625);
\draw[
	decoration={markings, mark=at position 0.5 with {\arrow{>}}}, postaction={decorate}
	] 
	(-5.75,-3.5) .. controls +(0,-2) and +(0,-2) .. (4,-3.5);
\draw[very thin, color=blue!55!white, decorate, decoration={snake, amplitude=0.2mm, segment length=1.0mm}] 
(0,-1) .. controls +(-0.3,0.2) and +(-1.1,-0.4) .. (0,1); 
\draw[very thin, color=blue!55!white, decorate, decoration={snake, amplitude=0.2mm, segment length=1.0mm}] 
(-3,-4) .. controls +(5.5,0.5) and +(-1.75,-5.5) .. (-5.25,-1.375); 
\draw[very thin, color=blue!55!white, decorate, decoration={snake, amplitude=0.2mm, segment length=1.0mm}] 
(2.25,-0.625) .. controls +(1.4,0.2) and +(-1.0,-0.32) .. (3.375,2.1); 
\draw[very thin, color=blue!55!white, decorate, decoration={snake, amplitude=0.2mm, segment length=1.0mm}] 
(-1,-5) .. controls +(-6.5,-0.1) and +(0,-6) .. (-6,2); 
\draw[very thin, color=blue!55!white, decorate, decoration={snake, amplitude=0.2mm, segment length=1.0mm}] 
(0,2) .. controls +(-2.0,-0.6) and +(2.0,-0.4) .. (-3,-3); 
\draw (-1,0) -- (-1,-2);
\draw (-2,0) -- (-2,-2);
\draw (-5.75,-2) -- (-5.75,-3.5);
\draw (2.75,0) -- (2.75,1.5);
\draw (4,-3.5) -- (4,1.5);
\draw[dotted] (1,0) -- (2,0);
\draw[dotted] (-4,-2) -- (-5,-2);
\draw (0.7,0.23) node {{\scriptsize $Y$}};
\draw (1.7,0.23) node {{\scriptsize $X$}};
\draw (4.7,0.23) node {{\scriptsize $Y \otimes X$}};
\fill (0,-1) circle (2.5pt) node[right] {};
\end{tikzpicture}
\, . 
\vspace{-1.2cm}
$$
After another application of the identity~\eqref{eq:switcherooomegalines} in conjunction with the Zorro moves this becomes 
$$
\!\!\!\!\!\!\!\!\!\!\!\!\!\!\!\!\!\!\!\!\!\!\!\!
\begin{tikzpicture}[very thick,scale=0.7,color=blue!50!black, baseline=-0.75cm,>=stealth]
\draw[
	decoration={markings, mark=at position 0.5 with {\arrow{>}}}, postaction={decorate}
	] 
	(0:1) arc (0:180:1);
\draw[
	decoration={markings, mark=at position 0.5 with {\arrow{>}}}, postaction={decorate}
	] 
	(0:2) arc (0:180:2);
\draw[
	decoration={markings, mark=at position 0.5 with {\arrow{>}}}, postaction={decorate}
	] 
	(-1,-2) arc (360:180:2);
\draw[
	decoration={markings, mark=at position 0.5 with {\arrow{>}}}, postaction={decorate}
	] 
	(-2,-2) arc (360:180:1);
\draw[
	decoration={markings, mark=at position 0.5 with {\arrow{>}}}, postaction={decorate}
	] 
	(2.75,0) arc (360:180:0.625);
\draw[
	decoration={markings, mark=at position 0.5 with {\arrow{>}}}, postaction={decorate}
	] 
	(-4.5,-2) arc (0:180:0.625);
\draw[
	decoration={markings, mark=at position 0.5 with {\arrow{<}}}, postaction={decorate}
	] 
	(2.75,1.5) arc (180:0:0.625);
\draw[
	decoration={markings, mark=at position 0.5 with {\arrow{>}}}, postaction={decorate}
	] 
	(-5.75,-3.5) .. controls +(0,-2) and +(0,-2) .. (4,-3.5);
\draw[very thin, color=blue!55!white, decorate, decoration={snake, amplitude=0.2mm, segment length=1.0mm}] 
(0,1) .. controls +(-1.5,-0.6) and +(3.5,-0.4) .. (-3,-4); 
\draw[very thin, color=blue!55!white, decorate, decoration={snake, amplitude=0.2mm, segment length=1.0mm}] 
(3.5,-0.925) .. controls +(0,-2.5) and +(-1.75,-5.5) .. (-5.25,-1.375);

\draw[very thin, color=blue!55!white, decorate, decoration={snake, amplitude=0.2mm, segment length=1.0mm}] 
(2.25,-0.625) .. controls +(1.25,0.5) and +(0,0.4) .. (3.5,-0.925); 

\draw[very thin, color=blue!55!white, decorate, decoration={snake, amplitude=0.2mm, segment length=1.0mm}] 
(3.375,0) .. controls +(-0.3,0.2) and +(-0.8,-0.4) .. (3.375,2.1); 
\draw[very thin, color=blue!55!white, decorate, decoration={snake, amplitude=0.2mm, segment length=1.0mm}] 
(-1,-5) .. controls +(-6.5,-0.1) and +(0,-6) .. (-6,2); 
\draw[very thin, color=blue!55!white, decorate, decoration={snake, amplitude=0.2mm, segment length=1.0mm}] 
(0,2) .. controls +(-2.0,-0.6) and +(2.0,-0.4) .. (-3,-3); 
\draw (-1,0) -- (-1,-2);
\draw (-2,0) -- (-2,-2);
\draw (-5.75,-2) -- (-5.75,-3.5);
\draw (2.75,0) -- (2.75,1.5);
\draw (4,-3.5) -- (4,1.5);
\draw[dotted] (1,0) -- (2,0);
\draw[dotted] (-4,-2) -- (-5,-2);
\draw (0.7,0.23) node {{\scriptsize $Y$}};
\draw (1.7,0.23) node {{\scriptsize $X$}};
\draw (4.7,0.23) node {{\scriptsize $Y \otimes X$}};
\fill (3.375,0) circle (2.5pt) node[right] {};
\end{tikzpicture}
=
\begin{tikzpicture}[very thick,scale=0.7,color=blue!50!black, baseline=-0.75cm,>=stealth]
\draw[
	decoration={markings, mark=at position 0.5 with {\arrow{>}}}, postaction={decorate}
	] 
	(0:1) arc (0:180:1);
\draw[
	decoration={markings, mark=at position 0.5 with {\arrow{>}}}, postaction={decorate}
	] 
	(-1,-2) arc (180:360:1);
%
\draw[very thin, color=blue!55!white, decorate, decoration={snake, amplitude=0.2mm, segment length=1.0mm}] 
(0,1) .. controls +(-1.5,-1.0) and +(-0.5,0.2) .. (0,-1); 
\draw[very thin, color=blue!55!white, decorate, decoration={snake, amplitude=0.2mm, segment length=1.0mm}] 
(-1.5,1) .. controls +(0,-3.0) and +(-2.0,0.2) .. (0,-3); 
\draw (-1,0) -- (-1,-2);
\draw (1,0) -- (1,-2);
%
\draw (1.7,0.23) node {{\scriptsize $Y \otimes X$}};
\fill (0,-1) circle (2.5pt) node[right] {};
\end{tikzpicture}
\, , 
\vspace{-1.0cm}
$$
thus ending the proof. 
\end{proof}

\section{Open/closed topological field theory}\label{sec:ocTFT}

In this section we explain how the structure of two-dimensional open/closed topological field theory (TFT) is completely captured in the bicategory $\LG$, and how string diagrams enable us to prove nontrivial statements about matrix factorisations in a way that follows the physical intuition. This perspective also allows for the computation of any correlator (of worldsheets of arbitrary genus, possibly with boundaries and defect lines). As an example we will compute an annulus correlator, thereby providing a new, simple proof of the Cardy condition.


Let $k$ be a field (of arbitrary characteristic). All algebras will be $k$-algebras, and categories will be $k$-linear. Recall from~\cite{l0010269, ms0609042} that one way to present a two-dimensional open/closed TFT is by the data of 
\begin{itemize}
\item a commutative Frobenius algebra~$C$, 
\item a Calabi-Yau category~$\mathcal O$, 
\item \textsl{bulk-boundary maps} $\beta_A: C \lra \End_{\mathcal O}(A)$ and \textsl{boundary-bulk maps} $\beta^A: \End_{\mathcal O}(A) \lra C$ for all $A\in \mathcal O$. 
\end{itemize}
These data are subject to the following conditions. 
\begin{itemize}
\item The bulk-boundary maps $\beta_A$ are morphisms of unital algebras that map into the centre of $\End_{\mathcal O}(A)$. 
\item $\beta_A$ and $\beta^A$ are mutually adjoint with respect to the nondegenerate pairings $\langle -,- \rangle$ on~$C$ and $\langle -,- \rangle_A$ on $\End_{\mathcal O}(A)$ (which are part of the Frobenius and Calabi-Yau structure): 
$$
\langle \beta_A(\phi) , \psi \rangle_A = \langle \phi, \beta^A(\psi) \rangle
$$
for all $\phi\in C$ and $\psi\in \End_{\mathcal O}(A)$. 
\item The \textsl{Cardy condition} is satisfied, i.\,e.~we have
$$
\str ( {}_\psi m_\varphi ) = \langle \beta^A(\varphi), \beta^B(\psi) \rangle
$$
for all $\varphi: A \lra A$, $\psi: B \lra B$ where ${}_\psi m_\varphi (\alpha) = \psi\alpha\varphi$ for all $\alpha \in \Hom_{\mathcal O}(A,B)$. 
\end{itemize}

Every Landau-Ginzburg model with potential $W\in R = k[x_1,\ldots,x_n]$ gives rise to an open/closed TFT with $C = R/(\partial W)$, $\mathcal O = \hmf(R,W)$, 
\be\label{bubobobu}
\beta_X : \phi \lmt \phi \cdot 1_X \, , \qquad  
\beta^X : \psi \lmt (-1)^{n+1\choose 2} \str(\psi \, \partial_{x_1} d_X \ldots \partial_{x_n} d_X) 
\ee
and the bulk and boundary pairings
\begin{align}
\langle \phi_1 , \phi_2 \rangle 
& = 
\Res_{k[x]/k} \left[ \frac{\phi_1 \phi_2 \, \underline{\operatorname{d}\!x}}{\partial_{x_1}W, \ldots, \partial_{x_n} W}\right] , \label{bulkpairing} \\
\langle \psi_1 , \psi_2 \rangle_X 
& = 
\Res_{k[x]/k} \left[ \frac{ \str (\psi_1 \psi_2 \, \partial_{x_1} d_X \ldots \partial_{x_n} d_X) \,  \underline{\operatorname{d}\!x}}{\partial_{x_1}W, \ldots, \partial_{x_n} W}\right] . \label{KapuLi}
\end{align}
The hardest part in establishing this result is to prove the nondegeneracy of the Kapustin-Li pairing~\eqref{KapuLi} and that the Cardy condition holds (the fact that~\eqref{bulkpairing} is nondegenerate is a classical result in residue theory, and checking the remaining axioms is obvious or straightforward); this was first done in~\cite[Theorem\,6.2]{m0912.1629} and~\cite[Theorem\,4.1.4]{pv1002.2116}, respectively.

Before turning to the Cardy condition we wish to explain how the above data can be extracted from the bicategory $\LG$ using the string diagram language.\footnote{Note however that $\LG$ contains much more information than just the structure of open/closed TFT.} For $X\in \hmf(R,W)$ the bulk-boundary and boundary-bulk maps~\eqref{bubobobu} are given by
$$
\beta_X(\phi) = 
\begin{tikzpicture}[very thick,scale=0.5,color=blue!50!black, baseline,>=stealth]
\nicecolourscheme (-2.6,-3.3) rectangle (0,3.3);
\fill (-2.1,-2.2) circle (0pt) node[white] {{\small$W$}};
\draw[->, very thick] (0,-3.3) -- (0,0); 
\draw[very thick] (0,0) -- (0,3.3);
\draw[dashed] (0,-2) .. controls +(-0.5,1) and +(0,-1) .. (-1,0);
\draw[dashed] (-1,0) .. controls +(0,1) and +(-0.5,-1) .. (0,2);
\fill (0,0) circle (0pt) node[right] {{\small$X$}};
\fill (-1,0) circle (3.3pt) node[left] {{\small$\phi$}};
\end{tikzpicture} 
\, , \qquad 
\beta^X(\psi) = 
\begin{tikzpicture}[very thick,scale=0.5,color=blue!50!black, baseline,>=stealth]
\nicecolourscheme (0,0) circle (3.5);
\fill (-2.1,-2.2) circle (0pt) node[white] {{\small$W$}};
\shadedraw[top color=white, bottom color=white, draw=white] (0,0) circle (2);
\draw (0,0) circle (2);
\draw[->, very thick] (0.100,-2) -- (-0.101,-2) node[above] {}; 
\draw[->, very thick] (-0.100,2) -- (0.101,2) node[below] {}; 
\fill (90:2) circle (0pt) node[below] {{\small$X$}};
\fill (180:2) circle (3.3pt) node[left] {{\small$\psi$}};
\draw[dashed] (270:2) -- (270:3.3)
node[near end,right] {{{\small$\Delta_W$}}};
\draw[dashed] (90:2) -- (90:3.3)
node[near end,right] {{{\small$\Delta_W$}}};
\end{tikzpicture} 
$$
where we used the identification $R/(\partial W) \cong \End_{\hmf(R,W)}(\Delta_W)$ and a special case of Proposition~\ref{prop:defectaction}.\footnote{As a side remark we observe that the Chern character $\operatorname{ch}(X) = \beta^X(1_X)$ can be thought of as the supertrace of the exponential of the Atiyah class of~$X$ as follows from the expressions~\eqref{eq:coev102} and~\eqref{eq:constructevalmap00} that compose to $\operatorname{ch}(X)$.} Another special case of the same proposition allows us to recover the Kapustin-Li pairing as the obvious 2-morphism representing the disc correlator: 
$$
\langle \psi_1 , \psi_2 \rangle_X = 
\begin{tikzpicture}[very thick,scale=0.6,color=blue!50!black, baseline,>=stealth]
\nicecolourscheme (0,0) circle (2);
\fill (-1.1,-1.1) circle (0pt) node[white] {{\small$W$}};
\draw (0,0) circle (2);
\draw[<-, very thick] (0.100,-2) -- (-0.101,-2) node[above] {}; 
\draw[<-, very thick] (-0.100,2) -- (0.101,2) node[below] {}; 
\fill (-22.5:2) circle (3.3pt) node[right] {{\small$\psi_2$}};
\fill (22.5:2) circle (3.3pt) node[right] {{\small$\psi_1$}};
\fill (270:2) circle (0pt) node[below] {{\small$X$}};
\end{tikzpicture} 
\, .
$$
Note that here and from now on we do no longer display dashed lines for the unit~$\Delta_W$. 

The bulk pairing $\langle -,- \rangle = \langle -,- \rangle_W$ describes the 2-point \textsl{sphere} correlator; flattening the sphere suggests the identity
\be\label{DeltaDisk}
\langle \phi_1, \phi_2 \rangle_W = 
\begin{tikzpicture}[very thick,scale=0.6,color=blue!50!black, baseline,>=stealth]
\nicecolourscheme (0,0) circle (2);
\fill (0,0) circle (0pt) node[white] {{\small$W(x)-W(y)$}};
\draw (0,0) circle (2);
\draw[<-, very thick] (0.100,-2) -- (-0.101,-2) node[above] {}; 
\draw[<-, very thick] (-0.100,2) -- (0.101,2) node[below] {}; 
\fill (-22.5:2) circle (3.3pt) node[right] {{\small$\phi_2(x)$}};
\fill (22.5:2) circle (3.3pt) node[right] {{\small$\phi_1(x)$}};
\fill (270:2) circle (0pt) node[below] {{\small$\Delta_W$}};
\end{tikzpicture} 
\ee
where we view $\Delta_W$ as a 1-morphism $(k,0) \lra (\Re, \widetilde W)$, i.\,e.~as a boundary condition of the doubled theory with potential $W(x)-W(y)$. The 
identity~\eqref{DeltaDisk}
indeed holds as follows from our explicit expressions for the adjunction maps together with~\cite[Proposition\,4.1.2]{pv1002.2116}. A more conceptual derivation of~\eqref{DeltaDisk} would involve endowing the bicategory $\LG$ with a monoidal structure (so as to give rigorous meaning to the process of folding worldsheets together, i\,.e.~tensoring objects in the bicategory $\LG$).\footnote{In fact it can be checked that $\LG$ can be endowed with data that satisfies the necessary conditions for a symmetric monoidal structure given in \cite{s1004.0993}, but that is beyond the scope of the present paper.}

We have just seen how the structure of open/closed TFT embeds into $\LG$. In principle this is enough to compute arbitrary correlators using the factorisation property of TFT. However, one can also compute more general correlators directly in $\LG$: all we have to do is to interpret the physical picture of a worldsheet~$\Sigma$ with insertions and defect lines as the associated string diagram representing a 2-morphism $k \lra k$ which is the value of the correlator of~$\Sigma$.

\subsection{Cardy condition}

For Landau-Ginzburg models the Cardy condition was recently proven using different methods for~$k$ a field of characteristic zero in~\cite{pv1002.2116}, and there were subsequent proofs in \cite{dm1102.2957,buchweitzstratten}.

Let $k$ be an arbitrary ring. We fix a potential $W\in k[x] = k[x_1,\ldots,x_n]$ in the sense of Definition~\ref{defn:potential}, matrix factorisations $X,Y \in \hmf(k[x], W)$, and morphisms $\varphi: X\lra X$, $\psi: Y\lra Y$. Then the $2$-morphism
\be\label{annulus}
\begin{tikzpicture}[very thick,scale=0.8,color=blue!50!black, baseline,>=stealth]
\nicecolourscheme (0,0) circle (2);
\nicereallynocolourscheme (0,0) circle (1);
\fill (1.5,0) circle (0pt) node[white] {{\small$W$}};
\draw (0,0) circle (2);
\draw[->, very thick] (-0.100,2) -- (-0.101,2) node[above] {}; 
\fill (45:2) circle (2.5pt) node[right] {{\small$\psi$}};
\draw (0,0) circle (1);
\draw[->, very thick] (0.100,1) -- (0.101,1) node[above] {}; 
\fill (135:1) circle (2.5pt) node[left] {{\small$\varphi$}};
\fill (180:0.9) circle (0pt) node[left] {{\small$X$}};
\fill (0:2.7) circle (0pt) node[left] {{\small$Y$}};
\end{tikzpicture} 
\equiv \; 
\begin{tikzpicture}[very thick,scale=0.8,color=blue!50!black, baseline,>=stealth]
\nicecolourscheme (0,0) circle (2);
\nicereallynocolourscheme (0,0) circle (1);
\fill (1.5,0) circle (0pt) node[white] {{\small$W$}};
\draw (0,0) circle (2);
\draw[->, very thick] (-0.100,2) -- (-0.101,2) node[above] {{\small$\eval_Y$}}; 
\draw[->, very thick] (0.100,-2) -- (0.101,-2) node[below] {{\small$\widetilde\coev_Y$}}; 
\fill (45:2) circle (2.5pt) node[right] {{\small$\psi$}};
\draw (0,0) circle (1);
\draw[->, very thick] (0.100,1) -- (0.101,1) node[above] {{\small$\widetilde\eval_X$}}; 
\draw[->, very thick] (-0.100,-1) -- (-0.101,-1) node[below] {{\small$\coev_X$}}; 
\fill (135:1) circle (2.5pt) node[left] {{\small$\varphi$}};
\draw[very thin, color=blue!55!white, decorate, decoration={snake, amplitude=0.2mm, segment length=1.0mm}] (270:1) .. controls +(-2,0) and +(-2,-0.2) .. (90:2); 
\end{tikzpicture} 
\ee
is a map $k\lra k$, i.\,e.~an element of $k$, that we call~$C$. We can think of~$C$ as the value of the annulus correlator with boundary conditions $X,Y$ and boundary fields $\varphi,\psi$. 

There are two natural ways to compute the value of~\eqref{annulus}. One possibility is to first collapse the $X$-loop to obtain a ``$W$-bubble'', and then to collapse the boundary loop of this bubble to arrive at a scalar. The main result on defect actions (Proposition \ref{prop:defectaction}) shows that the first step produces the 2-morphism $(-1)^{n+1\choose 2} \str(\varphi \Lambda_X)\in\End(\Delta_W)$ where $\Lambda_Z = \partial_{x_1}d_Z \ldots \partial_{x_n}d_Z$ for $Z \in \{X,Y\}$. Similarly, the second step of collapsing the~$Y$-loop produces a residue: 
$$
C 
= 
\begin{tikzpicture}[very thick,scale=0.6,color=blue!50!black, baseline,>=stealth]
\nicecolourscheme (0,0) circle (2);
\nicereallynocolourscheme (0,0) circle (1);
\fill (1.5,0) circle (0pt) node[white] {{\small$W$}};
\draw (0,0) circle (2);
\draw[->, very thick] (-0.100,2) -- (-0.101,2) node[above] {}; 
\draw[->, very thick] (0.100,-2) -- (0.101,-2) node[below] {}; 
\fill (45:2) circle (2.5pt) node[right] {{\small$\psi$}};
\draw (0,0) circle (1);
\draw[->, very thick] (0.100,1) -- (0.101,1) node[above] {}; 
\draw[->, very thick] (-0.100,-1) -- (-0.101,-1) node[below] {}; 
\fill (135:1) circle (2.5pt) node[left] {{\small$\varphi$}};
\end{tikzpicture} 
= 
(-1)^{n+1\choose 2} \,  
\begin{tikzpicture}[very thick,scale=0.6,color=blue!50!black, baseline,>=stealth]
\nicecolourscheme (0,0) circle (2);
\fill (1.5,0) circle (0pt) node[white] {{\small$W$}};
\draw (0,0) circle (2);
\draw[->, very thick] (-0.100,2) -- (-0.101,2) node[above] {}; 
\draw[->, very thick] (0.100,-2) -- (0.101,-2) node[below] {}; 
\fill (45:2) circle (2.5pt) node[right] {{\small$\psi$}};
\fill (45:0) circle (2.5pt) node[below] {{\small$\str(\varphi\Lambda_X)$}};
\end{tikzpicture} 
= 
(-1)^{n+1\choose 2} \,  
\Res_{k[x]/k} \left[ \frac{\str\! \big( \str( \varphi \Lambda_X) \psi \Lambda_Y \big) \underline{\operatorname{d}\! x}}{\partial_1 W, \ldots, \partial_n W} \right] .
$$
We observe that this expression vanishes unless~$n$ is even, 
since the supertrace of an odd operator is zero, and~$\Lambda_X$, say, is odd for odd~$n$. 
Writing $\beta^X(\varphi) = (-1)^{n+1\choose 2} \str(\varphi \Lambda_X)$ and $\beta^Y(\psi) = (-1)^{n+1\choose 2} \str(\psi \Lambda_Y)$ for the boundary-bulk maps we thus find from the above that
\be\label{C1}
C = (-1)^{n+1\choose 2} \,  
\Res_{k[x]/k} \left[ \frac{\beta^X(\varphi) \, \beta^Y (\psi) \, \underline{\operatorname{d}\! x}}{\partial_1 W, \ldots, \partial_n W} \right] .
\ee

A second way of computing~$C$ is to inflate the $X$-loop in \eqref{annulus} and fuse it with the outer $Y$-loop, creating a single loop labelled by $X^\vee \otimes Y \in \End_{\LG}( (k,0) )$. Rigorously, this process is described by the series of equalities
\[
\begin{tikzpicture}[very thick,scale=0.8,color=blue!50!black, baseline,>=stealth]
\nicecolourscheme (0,0) circle (2);
\nicereallynocolourscheme (0,0) circle (1);
\fill (1.5,0) circle (0pt) node[white] {{\small$W$}};
\draw (0,0) circle (2);
\draw[->, very thick] (-0.100,2) -- (-0.101,2) node[above] {}; 
\fill (45:2) circle (2.5pt) node[right] {{\small$\psi$}};
\draw (0,0) circle (1);
\draw[->, very thick] (0.100,1) -- (0.101,1) node[above] {}; 
\fill (135:1) circle (2.5pt) node[left] {{\small$\varphi$}};
\fill (180:1.0) circle (0pt) node[right] {{\small$X$}};
\fill (0:2.7) circle (0pt) node[left] {{\small$Y$}};
\end{tikzpicture} 
=
\begin{tikzpicture}[very thick,scale=0.8,color=blue!50!black, baseline,>=stealth]
\nicecolourscheme (0,0) circle (2);
\nicereallynocolourscheme (0,0) circle (1);
\fill (1.5,0) circle (0pt) node[white] {{\small$W$}};
\draw (0,0) circle (2);
\draw[->, very thick] (-0.100,2) -- (-0.101,2) node[above] {}; 
\fill (45:2) circle (2.5pt) node[right] {{\small$\psi$}};
\draw (0,0) circle (1);
\draw[->, very thick] (-0.100,1) -- (-0.101,1) node[above] {}; 
\fill (45:1) circle (2.5pt) node[right] {{\small$\varphi^{\vee}$}};
\fill (180:1.0) circle (0pt) node[right] {{\small$X^{\vee}$}};
\fill (0:2.7) circle (0pt) node[left] {{\small$Y$}};
\end{tikzpicture} 
= \; 
\begin{tikzpicture}[very thick,scale=0.8,color=blue!50!black, baseline,>=stealth]
\draw[ultra thick] (0,0) circle (2);
\draw[->, ultra thick] (-0.100,2) -- (-0.101,2) node[above] {}; 
\draw[->, ultra thick] (0.100,-2) -- (0.101,-2) node[below] {}; 
\fill (45:2) circle (3.3pt) node[right] {{\small$\varphi^\vee\otimes\psi$}};
\fill (0:2.0) circle (0pt) node[right] {{\small$X^{\vee} \otimes Y$}};
\end{tikzpicture} 
\]
where in the first step we use Proposition \ref{prop:defectactionproperties}(ii) to reorient the inner loop, and in the second step we use (iii) to fuse the two loops (note that the proof of (iii) relies on Proposition~\ref{prop:monoidalproperty}). The complexes $X^{\vee} \otimes Y$ and $\Hom(X,Y)$ are isomorphic, so
\be\label{eq:finaldiagramcardycond}
\begin{tikzpicture}[very thick,scale=0.6,color=blue!50!black, baseline,>=stealth]
\draw[ultra thick] (0,0) circle (2);
\draw[->, ultra thick] (-0.100,2) -- (-0.101,2) node[above] {}; 
\draw[->, ultra thick] (0.100,-2) -- (0.101,-2) node[below] {}; 
\fill (45:2) circle (3.3pt) node[right] {{\small$\varphi^\vee\otimes\psi$}};
\fill (-30:2) circle (0pt) node[right] {{\small$X^{\vee} \otimes Y$}};
\end{tikzpicture} 
= \; 
\begin{tikzpicture}[very thick,scale=0.6,color=blue!50!black, baseline,>=stealth]
\draw[ultra thick] (0,0) circle (2);
\draw[->, ultra thick] (-0.100,2) -- (-0.101,2) node[above] {}; 
\draw[->, ultra thick] (0.100,-2) -- (0.101,-2) node[below] {}; 
\fill (45:2) circle (3.3pt) node[right] {{\small${}_\psi m_{\varphi}$}};
\fill (-30:2) circle (0pt) node[right] {{\small$\Hom(X,Y)$}};
\end{tikzpicture} 
= \str({}_\psi m_{\varphi})
\ee
where $ {}_\psi m_\varphi \in \End_k(\Hom(X,Y))$ is the operator that sends~$\alpha$ to $ \psi\circ\alpha\circ\varphi$. 

The value of an oriented loop labelled by a finite-rank $\nZ_2$-graded $k$-complex $F$ with the insertion of a morphism of complexes $\phi: F \lto F$ is easily checked to be the supertrace of $\phi$. If $k$ were a field the infinite-rank $k$-complex $\Hom(X,Y)$ would be isomorphic, in $\HMF(k,0)$, to the finite-dimensional $\nZ_2$-graded vector space $H^*\Hom(X,Y)$ with zero differential, and the value of \eqref{eq:finaldiagramcardycond} would be the supertrace of the action of ${}_\psi m_{\varphi}$ on this cohomology.

In general $\Hom(X,Y)$ is not isomorphic to its cohomology, but it is at least a summand in the homotopy category of a finite-rank $\nZ_2$-graded free $k$-module and therefore any operator on $\Hom(X,Y)$ has a well-defined supertrace. Indeed, once we choose such a split embedding of $\Hom(X,Y)$ in something finite-rank, the supertrace can be computed by writing out the diagram \eqref{eq:finaldiagramcardycond} as a series of morphisms, using the evaluation and coevaluation maps.

Comparing~\eqref{C1} and~\eqref{eq:finaldiagramcardycond} we arrive at:

\begin{theorem}\label{thm:CardyCondition}
The Cardy condition holds in $\LG$: given 1-morphisms $X,Y \in \hmf( k[x_1,\ldots,x_n], W)$ and 2-morphisms $\varphi: X\lra X$, $\psi: Y\lra Y$ we have 
$$
\str ( {}_\psi m_\varphi ) 
= 
(-1)^{n+1\choose 2} \,  
\Res_{k[x]/k} \left[ \frac{\beta^X(\varphi) \, \beta^Y (\psi) \, \underline{\operatorname{d}\! x}}{\partial_1 W,
 \ldots, \partial_n W} \right] 
$$
with ${}_\psi m_\varphi(\alpha) = \psi\circ\alpha\circ\varphi$ for $\alpha\in \Hom(X,Y)$. 
\end{theorem}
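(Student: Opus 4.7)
The plan is to compute the value of the annulus diagram~\eqref{annulus}, which I will call $C$, in two different ways and equate the results. Both evaluations will proceed entirely through the topological manipulations permitted by the bicategory $\LG$, using the adjunction, pivotality, and defect-action machinery developed earlier in the paper. The whole argument is essentially dictated by the calculation sketched in equations \eqref{annulus}--\eqref{eq:finaldiagramcardycond} in the text preceding the theorem; the task is to justify each move rigorously.

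For the first evaluation I would shrink the inner $X$-loop first. By Proposition~\ref{prop:defectaction} (applied with $V = 0$, i.e.\ to the case of an $X$-loop with a $2$-morphism $\varphi$ inserted and the outer region labelled by $W$), the inner loop evaluates to the bulk field $\beta^X(\varphi) = (-1)^{\binom{n+1}{2}}\str(\varphi\,\Lambda_X) \in \uEnd(\Delta_W)$. What remains is the $W$-disc bounded by the $Y$-loop with $\beta^X(\varphi)$ and the boundary field $\psi$ inserted, i.e.\ a Kapustin--Li disc correlator. By the same proposition (now shrinking the $Y$-loop to a point, or equivalently by the disc-correlator computation recalled in~\eqref{KapuLi}) this produces
\[
C = (-1)^{\binom{n+1}{2}} \Res_{k[x]/k}\!\left[\frac{\beta^X(\varphi)\,\beta^Y(\psi)\,\underline{\operatorname{d}\!x}}{\partial_{x_1}W,\ldots,\partial_{x_n}W}\right].
\]

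For the second evaluation I would inflate the inner loop until it can be fused with the outer $Y$-loop. Reversing the orientation of the inner loop turns it into an $X^\vee$-loop with $\varphi^\vee$ inserted; by Proposition~\ref{prop:defectactionproperties}(ii) this does not change the value of $C$. Then Proposition~\ref{prop:defectactionproperties}(iii) (whose proof already used the graded pivotality of $\LG$ established in Proposition~\ref{prop:monoidalproperty}) allows the two concentric loops to be fused into a single loop labelled by $X^\vee\otimes Y$ with $2$-morphism $\varphi^\vee\otimes\psi$ inserted. Using the canonical isomorphism $X^\vee\otimes Y \cong \uHom(X,Y)$ in $\HMF(k,0)$ this becomes a single loop labelled by $\uHom(X,Y)$ with the endomorphism ${}_\psi m_\varphi(\alpha) = \psi\circ\alpha\circ\varphi$ inserted, and a diagrammatic loop of this form evaluates to the supertrace of the inserted endomorphism.

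The main technical obstacle is the last step: $\uHom(X,Y)$ is infinite-rank, so the loop diagram is not a priori a supertrace in the naive sense. To make sense of it one has to present $\uHom(X,Y)$ as a direct summand (in the homotopy category) of a finite-rank $\mathbb{Z}_2$-graded $k$-complex, via the idempotent-splitting of Section~\ref{sec:Zorro} (as in the proof of Theorem~\ref{theorem:adjointbicategoryyeah}), and check that the diagrammatic loop value is independent of the chosen splitting; this identifies it with $\str({}_\psi m_\varphi)$ where the supertrace is the well-defined one available on summands of finite-rank complexes. Once this is in place, equating the two expressions for $C$ yields the Cardy identity for any ring $k$.
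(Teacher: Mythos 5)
Your proposal reproduces the paper's argument essentially verbatim: collapse the inner $X$-loop via the defect-action formula to $\beta^X(\varphi)$ and then the $Y$-loop to the Kapustin--Li residue, and separately reverse, fuse, and trace using Proposition~\ref{prop:defectactionproperties} and the idempotent-splitting to make sense of $\str({}_\psi m_\varphi)$ over an arbitrary base ring. One small slip: when contracting the inner loop you invoke Proposition~\ref{prop:defectaction} ``with $V=0$'', but in that proposition's conventions the inner region carries $W$ and the outer carries $V$, so for a boundary condition $X\colon (k,0)\to W$ one applies $\mathcal D_r^\varphi(X)$ with $W_{\mathrm{prop}}=0$ and $V_{\mathrm{prop}}=W$; the resulting value $\beta^X(\varphi)$ that you state is nevertheless correct.
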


\begin{remark}
The Cardy condition also holds if one or both of the maps $\varphi, \psi$ are odd (which we present as morphisms $X\lra X[1]$ in $\LG$). The proof of this fact proceeds similarly to the above. 
\end{remark}

\section{Shadows}\label{sec:shadows}

The adjunctions in $\LG$ afford us the construction of a bicategorical trace in terms of shadow functors~\cite{p0807.1471}. We will also see that shadows allow to recover and generalise the boundary-bulk and bulk-boundary maps of the two-dimensional TFTs based on Landau-Ginzburg models. 

\begin{definition}
A bicategory~$\mathcal B$ \textsl{has shadows} if there is a category~$\mathcal C$ together with functors
$$
\langle\!\langle - \rangle\!\rangle : \mathcal B(A,A) \lra \mathcal C 
$$
for every object $A\in \mathcal B$ and natural isomorphisms $\theta : \langle\!\langle X \otimes Y \rangle\!\rangle \lra \langle\!\langle Y \otimes X \rangle\!\rangle$ for every pair of composable 1-morphisms $X: A \lto B,Y: B \lto A$, such that the diagrams
$$
\xymatrix@C+0.5pc{%
\langle\!\langle (X \otimes Y) \otimes Z \rangle\!\rangle \ar[r]^-{\theta} \ar[d]_-{\langle\!\langle \alpha \rangle\!\rangle} & 
\langle\!\langle Z \otimes (X \otimes Y) \rangle\!\rangle \ar[r]^-{\langle\!\langle \alpha^{-1} \rangle\!\rangle} & 
\langle\!\langle (Z \otimes X) \otimes Y \rangle\!\rangle \\
\langle\!\langle X \otimes (Y \otimes Z) \rangle\!\rangle \ar[r]^-{\theta} & 
\langle\!\langle (Y \otimes Z) \otimes X \rangle\!\rangle \ar[r]^-{\langle\!\langle \alpha \rangle\!\rangle} & 
\langle\!\langle Y \otimes (Z \otimes X) \rangle\!\rangle \ar[u]_-{\theta}
}%
$$
and
$$
\xymatrix{%
\langle\!\langle X \otimes \Delta_A \rangle\!\rangle \ar[r]^-{\theta} \ar[dr]_-{\langle\!\langle \rho \rangle\!\rangle} & 
\langle\!\langle \Delta_A \otimes X \rangle\!\rangle \ar[r]^-{\theta} \ar[d]^-{\langle\!\langle \lambda \rangle\!\rangle} & 
\langle\!\langle X \otimes \Delta_A \rangle\!\rangle \ar[dl]^-{\langle\!\langle \rho \rangle\!\rangle}  \\
 & 
\langle\!\langle X \rangle\!\rangle & 
}%
$$
commute whenever they make sense. 
\end{definition}

Let $\mathcal{M}_2(k)$ denote the category of $\nZ _2$-graded $k$-modules. If $Y$ is a $\nZ _2$-graded complex of $k$-modules then the cohomology $H^*(Y)$ is naturally an object of $\mathcal{M}_2(k)$.

\begin{proposition}
The bicategory $\LG$ has shadows given by 
\begin{gather*}
\langle\!\langle - \rangle\!\rangle : \LG \big( (R,W), (R,W) \big) \lra \mathcal{M}_2(k) 
\, , \qquad
Z \lmt H^*(Z\otimes_{\Re} R)
\end{gather*}
and the isomorphism $\theta: \langle\!\langle X \otimes Y \rangle\!\rangle \lra \langle\!\langle Y\otimes X  \rangle\!\rangle$ induced by the graded swap map $X \otimes_{\Re} Y \lra Y \otimes_{\Se} X$ 
for $X \in \LG( (R,W), (S,V) )$ and $Y \in \LG( (S,V), (R,W) )$. 
\end{proposition}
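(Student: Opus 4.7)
The plan is to check each piece of the definition at the chain level and then descend to the homotopy category and its idempotent completion. Given a $1$-morphism $Z\in\LG(W,W)$, i.\,e.\ a matrix factorisation of $\widetilde W = W\otimes 1 - 1\otimes W$ over $\Re$, the tensor product $Z\otimes_{\Re}R$ is a genuine $\nZ_2$-graded complex of $k$-modules, because $\widetilde W$ acts as zero on $R$ viewed as $\Re$-module via multiplication. A chain map $Z_1\to Z_2$ induces a chain map of the resulting complexes, homotopic maps become homotopic, so $H^*(Z\otimes_{\Re}R)$ is a functor $\HMF(\Re,\widetilde W)\lto\mathcal M_2(k)$. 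Since $H^*$ of a direct summand in the homotopy category is a direct summand of the cohomology, this extends to the idempotent completion $\hmf(\Re,\widetilde W)^\omega=\LG(W,W)$ in the obvious way, giving a well-defined $\langle\!\langle -\rangle\!\rangle$.

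For $\theta$ I would proceed as follows. Given composable $1$-morphisms $X\colon W\lto V$ and $Y\colon V\lto W$ with $R=k[x]$ and $S=k[z]$, the composites $Y\otimes_S X$ and $X\otimes_R Y$ are matrix factorisations of $\widetilde W$ over $\Re$ and of $\widetilde V$ over $\Se$ respectively. Define
\[
\theta\colon (Y\otimes_S X)\otimes_{\Re}R\lto (X\otimes_R Y)\otimes_{\Se}S,\qquad y\otimes x\otimes 1\lmt (-1)^{|x||y|}x\otimes y\otimes 1.
\]
A short computation with Koszul signs (the kind performed in the discussion preceding the proposition) shows that this is a chain map; it is clearly $k$-bilinear and its inverse is given by the symmetric formula. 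Passing to cohomology yields the required isomorphism $\theta:\langle\!\langle X\otimes Y\rangle\!\rangle\to\langle\!\langle Y\otimes X\rangle\!\rangle$, and naturality in $X$ and $Y$ is immediate from the definition.

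It remains to verify the two coherence diagrams. The hexagon for three composable $1$-morphisms $X,Y,Z$ reduces, after unwinding the associators, to the statement that in a threefold tensor product the graded swap of the first two factors composed with the graded swap of the last two equals the cyclic shift $(a,b,c)\mapsto(-1)^{|a|(|b|+|c|)}(b,c,a)$, which is a standard identity for the symmetric monoidal structure on $\nZ_2$-graded modules and passes through the tensor-cotrace $(-)\otimes_{(-)^{\operatorname{e}}}(-)$ without modification. The triangle for the unit follows from compatibility of the graded swap with $\lambda,\rho$: both composites reduce to $\langle\!\langle \rho_X\rangle\!\rangle$ using that the right and left actions of $\Delta_W$ are interchanged by the swap, and that $\lambda$ and $\rho$ agree modulo the identification of their source and target after applying $\otimes_{\Re}R$.

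I expect the main subtlety to be not in the sign bookkeeping, which is standard, but in making sure that the two coherence diagrams literally commute on the nose at the chain level (rather than up to homotopy), so that no further homotopy-theoretic bookkeeping is required when passing to cohomology. A careful but routine check should show that on the chain level the graded swap satisfies these equalities strictly, so that after applying $H^*$ the required commutative diagrams in $\mathcal M_2(k)$ follow without ambiguity.
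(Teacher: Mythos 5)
The paper itself gives no detailed proof — it says only that ``the proof is a straightforward check of the axioms.'' Your proposal is exactly that check, correctly identifying the graded swap as the map inducing $\theta$, verifying that it is a chain map, and outlining the hexagon and triangle coherences; it matches what the paper intends, just spelled out. (Minor note: with the paper's composition convention $X\otimes Y = c(Y,X) = X\otimes_{R_B} Y$, your displayed formula for $\theta$ as written goes from $\langle\!\langle Y\otimes X\rangle\!\rangle$ to $\langle\!\langle X\otimes Y\rangle\!\rangle$ rather than the direction stated in the proposition, but this is only a relabelling of $X$ and $Y$ and has no mathematical content.)
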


The proof is a straightforward check of the axioms. Since $\LG$ is a bicategory with adjoints and shadows it is automatically equipped with a 2-categorical trace operation as introduced and discussed at length in~\cite{p0807.1471, ps0910.1306}. We only quote the definition 
from \cite[Definition\,4.5.1]{p0807.1471}: 

\begin{definition}
Let~$\mathcal B$ be a bicategory with shadows and~$Y$ a 1-morphism with left adjoint~${}^\dual Y$. The \textsl{trace} of a 2-morphism $\psi: X\otimes Y \lra Y \otimes Z$ is the map 
$$
\xymatrix{%
\langle\!\langle X \rangle\!\rangle \ar[rr]^-{\langle\!\langle1\otimes \coev_Y\rangle\!\rangle} && 
\langle\!\langle X \otimes Y \otimes {}^\dual Y \rangle\!\rangle \ar[r]^-{\langle\!\langle\psi \otimes 1\rangle\!\rangle} & 
\langle\!\langle Y \otimes Z \otimes {}^\dual Y \rangle\!\rangle \ar[r]^-{\theta} &
\langle\!\langle {}^\dual Y \otimes Y \otimes Z \rangle\!\rangle \ar[rr]^-{\langle\!\langle\eval_Y \otimes 1\rangle\!\rangle} && 
\langle\!\langle Z \rangle\!\rangle 
}%
. 
$$
\end{definition}

Next we wish to point out a connection between shadows and the structure of two-dimensional open/closed topological field theory (TFT) for Landau-Ginzburg models. For this we merely observe that the bulk-boundary and boundary-bulk maps~\eqref{bubobobu} can also be recovered from the adjunction and shadow structure of $\LG$ as follows. On the one hand we have 
$$
\langle\!\langle \Delta_W \rangle\!\rangle 
= 
H^*\Big(\bigwedge (\bigoplus_{i=1}^n R \theta_i) , \sum_{i=1}^n \partial_{x_i} W \cdot \theta_i\Big) 
\cong 
R/(\partial W)[n]\,.
$$
On the other hand for $X\in \hmf(R,W)$ viewed as a $1$-morphism $(k,0) \lto (R,W)$ we have 
\[
\langle\!\langle X^\dual \otimes X \rangle\!\rangle = H^*\big( (X^\vee \otimes_k X) \otimes_{\Re} R \big)[n] \cong H^*\End_R(X)[n] \, . 
\]
Thus from the explicit expressions in Section~\ref{sec:derivcoeval} we find that $\beta_X = \langle\!\langle \widetilde\coev_X \rangle\!\rangle$ and $\beta^X = \langle\!\langle \eval_X \rangle\!\rangle$. 

This construction can be extended to any 1-morphism in $\LG$: for $X \in \hmf(k[x,z], V-W)$ we define the \textsl{generalised bulk-boundary} and \textsl{boundary-bulk maps} to be
$$
\beta_X= \langle\!\langle \widetilde\coev_X \rangle\!\rangle : \langle\!\langle \Delta_{W} \rangle\!\rangle \lra \langle\!\langle X^\dual \otimes X \rangle\!\rangle
\, , \qquad 
\beta^X= \langle\!\langle \eval_X \rangle\!\rangle : \langle\!\langle {}^\dual X \otimes X \rangle\!\rangle \lra \langle\!\langle \Delta_{W} \rangle\!\rangle \, , 
$$
respectively. Substituting the expressions for the adjunction maps in Section~\ref{sec:derivcoeval} we find that the form of $\beta_X$ stays the same while for $m\neq 0$ the generalised boundary-bulk map $\beta^X$  involves a residue and additional derivatives: 
\begin{align*}
\beta_X : k[x]/(\partial W) & \lra H^*\End_{k[x,z]}(X) \, , \\
\phi & \lmt \phi \cdot 1_{X} \, , \\
\beta^X : H^*\End_{k[x,z]}(X)[m] & \lra k[x]/(\partial W)[n] \, , \\
\psi & \lmt (-1)^{{m+1\choose 2}} \Res_{k[x,z]/k[z]} \left[ \frac{\str\left( \psi\,  \partial_{x_1} d_X \ldots \partial_{x_n} d_X \, \partial_{z_1} d_X \ldots \partial_{z_m} d_X \right) \underline{\operatorname{d}\!z}}{\partial_{z_1} V, \ldots, \partial_{z_m} V} \right] . 
\end{align*}

\appendix

\section{A split monomorphism}\label{section:symidem}

In this appendix we present, following \cite{dm1102.2957}, a split monomorphism used in the proof of pivotality in Section \ref{sec:wiggliesandsigns}. Let $R$ be a $k$-algebra which is projective as a $k$-module with a quasi-regular sequence $f = (f_1, \ldots, f_n)$ such that $\bar{R} = R/(f_1,\ldots,f_n)R$ is a finitely generated projective $k$-module. Let $W \in k$ be given and let $Z$ be a finite-rank matrix factorisation of $W$ over $R$ with each $f_i$ acting null-homotopically on $Z$. We assume that the Koszul complex $K(f_1,\ldots,f_n)$ is exact except in degree zero\footnote{This follows from automatically from quasi-regularity of the sequence $f$ in the case where $k$ is noetherian.}, so that by \cite[Section 7]{dm1102.2957} the projection
\begin{equation}\label{eq:symidem1}
\pi: Z \otimes K(f) \lto Z \otimes_R \bar{R} = \bar{Z}
\end{equation}
is a homotopy equivalence over $k$. Next we construct a split monomorphism $\kappa: Z \lto Z \otimes K(f)$. It will be helpful to introduce formal symbols $\theta_i$ of homological degree $-1$, so that $K(f_i) = \bigwedge( R \theta_i )$. There is an exact sequence
\[
\xymatrix{
0 \ar[r] & Z \ar[r]^-{\kappa_i} & Z \otimes K(f_i) \ar[r]^-{\varepsilon_i} & Z\theta_i \ar[r] & 0
}
\]
where $\kappa_i(x) = x \otimes 1$ is the inclusion and $\varepsilon_i( x \theta_i ) = (-1)^{|x|} x$. This sequence is split exact: the morphism $\rho_i(x + y \theta_i) = x + (-1)^{|y|} \lambda_i(y)$ satisfies $\rho_i \circ \kappa_i = 1$. Iterating this we see that the map $\kappa: Z \lto Z \otimes K(f)$ defined by $\kappa(x) = x \otimes 1$ is a split monomorphism. Combining this with \eqref{eq:symidem1} we obtain:

\begin{lemma}\label{lemma:quotientsplitmono} In the homotopy category of $k$-linear factorisations of $W$ the quotient map $Z \lto \bar{Z}$ is a split monomorphism.
\end{lemma}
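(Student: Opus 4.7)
The plan is to recognise the quotient map $Z \lto \bar{Z}$ as the composite $\pi \circ \kappa$ and then to combine the two left inverses that have essentially already been produced. The factorisation is immediate from the formulas $\kappa(x) = x \otimes 1$ and $\pi(x \otimes 1) = x \otimes 1 \in \bar{Z}$. The map $\pi$ is a genuine homotopy equivalence of $k$-linear factorisations of $W$ by \eqref{eq:symidem1}; the map $\kappa$ is split on the nose as a morphism of $k$-linear factorisations. So the homotopy-categorical left inverse of the quotient will just be $\rho \circ \sigma$, where $\rho$ splits $\kappa$ and $\sigma$ is a homotopy inverse to $\pi$.

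First I would spell out the iterated construction. Writing $K(f) = K(f_1) \otimes_R \cdots \otimes_R K(f_n)$, set $\kappa = \kappa_n \circ (\kappa_{n-1} \otimes 1) \circ \cdots \circ (\kappa_1 \otimes 1)$ and $\rho = (\rho_1 \otimes 1) \circ \cdots \circ (\rho_{n-1} \otimes 1) \circ \rho_n$, using the one-variable maps defined in the excerpt. Each $\kappa_i$ is a morphism of $k$-linear factorisations, and each $\rho_i$ is one too (this is where the null-homotopy $\lambda_i$ for the action of $f_i$ and the Leibniz rule enter). Since $\rho_i \circ \kappa_i = 1$ strictly, iteration gives $\rho \circ \kappa = 1_Z$ on the nose.

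Next I would fix a $k$-linear homotopy inverse $\sigma: \bar{Z} \lto Z \otimes K(f)$ to $\pi$, whose existence is precisely the content of \cite[Section 7]{dm1102.2957} under our hypotheses on the Koszul complex and on $\bar{R}$. Setting $\sigma' = \rho \circ \sigma$, one computes
\[
\sigma' \circ (\pi \circ \kappa) = \rho \circ (\sigma \circ \pi) \circ \kappa \simeq \rho \circ \kappa = 1_Z,
\]
so $\sigma'$ is a homotopy left inverse to the quotient map $Z \lto \bar{Z}$, which is exactly the claim.

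I do not anticipate any genuine obstacle: every ingredient is already on the page, and what remains is a formal composition in the homotopy category. The one place where care is needed is in checking closure of the iterated $\rho$ and tracking the Koszul signs through the successive tensor factors, but this is routine bookkeeping rather than a substantive difficulty.
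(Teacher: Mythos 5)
Your argument is correct and is precisely the paper's: factor the quotient as $\pi \circ \kappa$, observe that $\kappa$ is split on the nose via the $\rho_i$ built from the null-homotopies $\lambda_i$, and use the cited homotopy equivalence $\pi$ to transport the splitting. You have simply made explicit the iteration over the $\kappa_i$, $\rho_i$ and the final homotopy-category composition that the paper leaves implicit.
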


\section{Homotopies and residues}\label{app:reshom}

In the the evaluation maps homotopies $\lambda_i$ appear and there are natural questions of independence of the choice of homotopy, and independence up to sign of the choice of ordering. Our aim here is to answer these questions.

The setup is as follows: $k$ is a ring, $R$ is a $k$-algebra, $W \in R$ and $X$ is a finite-rank matrix factorisation of $W$ over $R$. We assume that $(f_1,\ldots,f_n)$ is a regular sequence in $R$ such that $R/(f_1,\ldots,f_n)$ is a finitely generated projective $k$-module and that each $f_i$ acts null-homotopically on $X$, with $\lambda_i$ denoting a null-homotopy. We also assume given some $\omega \in \Omega^n_{R/k}$ and write $D = [d_X, -]$ for the differential on $\End(X) = \Hom_R(X,X)$.

For example, if $R = k[x_1,\ldots,x_n]$ and $W \in R$ is a potential in the sense of the body of the paper, all hypotheses are satisfied for $f_i = \partial_{x_i} W$, $\lambda_i = \partial_{x_i} d_X$ and $\omega = \ud x_1 \ldots \ud x_n$. We define
\[
\langle - \rangle = \Res_{R/k} \!\!\left[ \frac{ (-) \cdot \omega }{f_1, \ldots, f_n} \right].
\]
We may define the closed degree $n$ map
\be\label{eq:applanglerangle1}
\langle \str( - \circ \lambda_1 \cdots \lambda_n ) \rangle: \End(X) \lto k\,.
\ee
In this section all maps and homotopies are $k$-linear. Everything rests on the fact that the supertrace is a closed map, and thus for homogeneous $\alpha, \beta \in \End(X)$ we have
\[
0 = \str( D( \alpha \beta ) ) = \str( D(\alpha) \beta ) + (-1)^{|\alpha|} \str( \alpha D(\beta) )\,.
\]

\begin{lemma}\label{lemma:inreschoosehom} The map \eqref{eq:applanglerangle1} is independent, up to homotopy, of the choice of null-homotopies $\lambda_i$.
\end{lemma}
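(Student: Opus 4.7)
The plan is to reduce the statement by telescoping to altering a single $\lambda_i$, check that the resulting difference is a chain map via a residue-vanishing argument, and then produce the null-homotopy by exploiting that each remaining $\lambda_j$ is itself a $D$-null-homotopy for multiplication by $f_j$.

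Set $\mu_i := \lambda_i' - \lambda_i$ for a second choice of null-homotopies, so $D\mu_i = 0$. The telescoping identity
\[
\lambda_1' \cdots \lambda_n' - \lambda_1 \cdots \lambda_n = \sum_{i=1}^n \lambda_1' \cdots \lambda_{i-1}'\, \mu_i\, \lambda_{i+1} \cdots \lambda_n
\]
reduces us to showing that, for each index $i$ and each closed degree-one $\mu \in \End(X)$, the map
\[
\phi(\alpha) := \langle \str(\alpha\, \Lambda)\rangle \, , \qquad \Lambda := \lambda_1 \cdots \lambda_{i-1}\, \mu\, \lambda_{i+1} \cdots \lambda_n \, ,
\]
is null-homotopic as a chain map $\End(X) \to k$. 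Because $D$ is a graded derivation, $D\mu = 0$, $D\lambda_j = f_j \cdot 1_X$, and the $f_j$ are central, the Leibniz rule gives
\[
D\Lambda = \sum_{j \neq i} \varepsilon_j\, f_j\, \Lambda^{(j)} \, , \qquad \Lambda^{(j)} := \lambda_1 \cdots \widehat{\lambda_j} \cdots \mu \cdots \lambda_n \, ,
\]
with signs $\varepsilon_j \in \{\pm 1\}$. Combining this with $\str \circ D = 0$ and the basic residue identity $\langle g f_j \rangle = 0$ for every $g \in R$ (which expresses that $\langle - \rangle$ factors through $\bar R = R/(f_1,\ldots,f_n)$) immediately yields $\phi \circ D = 0$, so $\phi$ is a chain map.

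For the null-homotopy I would exploit that multiplication by $\lambda_j$ provides an $R$-linear null-homotopy of multiplication by $f_j$ on $\End(X)$: the graded Leibniz rule gives $D(\lambda_j \beta) = f_j \beta + (-1)^{|\lambda_j|} \lambda_j D\beta$. Applying such contractions iteratively to the $\lambda_j$'s with $j \neq i$ appearing in $\str(\alpha\, \Lambda)$, and using $\str$-cyclicity to move $\mu$ into a convenient position, one rewrites
\[
\str(\alpha\, \Lambda) = -(-1)^{|\alpha|} \str\bigl(D(\alpha) \cdot \Xi\bigr) + \sum_{l \neq i} f_l \cdot \str(\alpha\, M_l)
\]
for operators $\Xi$ and $M_l$ depending only on $\mu$ and the $\lambda_j$'s (and, crucially, not on $\alpha$). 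Applying $\langle - \rangle$ kills the $f_l$-terms and delivers $\phi = H \circ D$ with $H(\beta) := -(-1)^{|\alpha|} \langle \str(\beta\, \Xi) \rangle$; the sign prefactor is constant because the supertrace forces $\phi$ to be concentrated in a single parity of $\End(X)$.

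The hard part will be the third step: the inductive contraction has to be organised so that the Koszul signs line up and so that $\Xi$ comes out $\alpha$-independent. A conceptually cleaner but less explicit route is to interpret $\phi$ as the evaluation of a Hochschild-style trace, two choices of null-homotopies giving cohomologous representatives; however the direct combinatorial route sketched here has the virtue of producing $H$ explicitly, which meshes with the explicit calculations for $\widetilde\eval_X, \eval_X$ in Section~\ref{sec:derivcoeval}.
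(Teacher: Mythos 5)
Your telescoping reduction (steps 1–3) is fine, and your check that $\phi$ is a chain map is correct. The gap is in step 4, and it is a genuine one. The proposed rewriting
\[
\str(\alpha\,\Lambda) = -(-1)^{|\alpha|}\str\bigl(D(\alpha)\cdot\Xi\bigr) + \sum_{l\neq i} f_l\,\str(\alpha M_l)
\]
is equivalent, by $\str\circ D = 0$, to the operator identity $\Lambda = D(\Xi) + \sum_{l\neq i} f_l M_l$. Applying $D$ to both sides and using $D(\Lambda) = \sum_{j\neq i}\pm f_j(\cdots) $ (since $D\mu=0$) forces $f_i\mu$ to be expressible via $f_i D(M_i)$-type terms when you track the coefficient of $f_i$; already for $n=1$, $i=1$, the identity degenerates to $\mu = D(\Xi)$. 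In general it forces $\mu$ to be $D$-\emph{exact}, but your $\mu=\lambda_i'-\lambda_i$ is only $D$-\emph{closed}; a closed odd element of $\End(X)$ need not be exact, so the rewriting cannot hold at the level of the supertrace. No amount of reorganising Koszul signs will fix this because the obstruction is a cohomological one, not a combinatorial one.

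The idea that unlocks the argument — and which the paper uses — is to manipulate the \emph{residue} rather than the supertrace: multiply numerator and denominator by $f_i$, replace $f_i$ by $D(\lambda_i)$, and only then apply Leibniz plus $\str\circ D=0$. The extra copy of $\lambda_i$ introduced this way is precisely what makes the null-homotopy appear without ever needing $\mu$ to be exact; the $f_j$-terms for $j\neq i$ die in the residue because $f_j$ lies in the denominator ideal. Concretely, after the substitution one finds $\phi(\alpha)\simeq \pm\,\Res[\str(D(\alpha)\,\lambda_i\Lambda)\,\omega/(f_1,\ldots,f_i^2,\ldots,f_n)]$, which is visibly $H\circ D$. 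Your reduction to a closed $\mu$ is actually compatible with this and could be carried to completion; the paper's version is slightly more direct in that it doesn't subtract — it replaces $\lambda_1$ by the new null-homotopy $\mu_1$ and uses $D(\mu_1)=f_1$ inside the enlarged residue. Either way, the residue multiplication trick is essential and is what your sketch is missing.
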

\begin{proof}
We show that replacing $\lambda_1$ by another null-homotopy $\mu_1$ has the effect of changing \eqref{eq:applanglerangle1} by a null-homotopic linear functional; the argument for the other $\lambda_i$ is identical. Given $\alpha \in \End(X)$, we have using the residue identity \eqref{eq:defin_residue_2}
\begin{align}
\langle \str( \alpha \lambda_1 \ldots \lambda_n ) \rangle &= \Res_{R/k} \!\!\left[ \frac{ \str( \alpha f_1 \lambda_1 \ldots \lambda_n ) \omega }{f_1^2, f_2, \ldots, f_n} \right]\label{eq:inreschoosehom1}\\
&= \Res_{R/k} \!\!\left[ \frac{ \str( \alpha D(\mu_1) \lambda_1 \ldots \lambda_n ) \omega }{f_1^2, f_2, \ldots, f_n} \right]\\
&= \Res_{R/k} \!\!\left[ \frac{ \str\big( (-1)^{|\alpha|}D(\alpha) \mu_1 \lambda_1 \ldots \lambda_n + \alpha \mu_1 D(\lambda_1 \ldots \lambda_n) \big) \omega }{f_1^2, f_2, \ldots, f_n} \right] . 
\end{align}
The first summand inside the supertrace gives a null-homotopic functional, and in $D(\lambda_1 \ldots \lambda_n) = D(\lambda_1) \lambda_2 \ldots \lambda_n - \lambda_1 D(\lambda_2 \ldots \lambda_n)$ the second summand contains factors of $f_i$ for $i > 2$ which make the residue vanish since the numerator belongs to the ideal generated by the denominators. Finally we have that \eqref{eq:applanglerangle1} is homotopic to the map
\begin{align*}
\Res_{R/k} \!\!\left[ \frac{ \str( - \circ \mu_1 D(\lambda_1) \lambda_2 \ldots \lambda_n ) \omega }{f_1^2, f_2, \ldots, f_n} \right] = \langle \str( - \circ \mu_1 \lambda_2 \ldots \lambda_n ) \rangle
\end{align*}
as claimed.
\end{proof}

\begin{lemma}\label{lemma:symmetrystrres} For any $\sigma \in S_n$ the map
\[
(-1)^{|\sigma|} \langle \str( - \circ \lambda_{\sigma(1)} \ldots \lambda_{\sigma(n)} ) \rangle: \End(X) \lto k
\]
is homotopic to \eqref{eq:applanglerangle1}.
\end{lemma}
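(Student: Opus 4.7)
The plan is to reduce to the case of an adjacent transposition and then produce an explicit null-homotopy by exploiting the super-Leibniz rule together with the special structure of the null-homotopies $\lambda_i$. Since the symmetric group $S_n$ is generated by the simple transpositions $\sigma_i = (i, i+1)$ and the sign homomorphism is multiplicative, it suffices to prove that for each $i$,
\[
\langle \str(-\circ \lambda_1\cdots\lambda_{i-1}(\lambda_i\lambda_{i+1} + \lambda_{i+1}\lambda_i)\lambda_{i+2}\cdots\lambda_n)\rangle \simeq 0 \, .
\]

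The key identity is: if $\eta = \lambda_{i+1}\lambda_i\lambda_{i+1} + \lambda_i\lambda_{i+1}^2 \in \End(X)$, then
\[
D(\eta) = f_{i+1}\cdot(\lambda_i\lambda_{i+1} + \lambda_{i+1}\lambda_i) \, .
\]
This is a direct computation using super-Leibniz, the centrality of the $f_j$ in $\End(X)$, the relation $D(\lambda_i\lambda_{i+1}) = f_i\lambda_{i+1} - f_{i+1}\lambda_i$, and the vanishing $D(\lambda_{i+1}^2) = f_{i+1}\lambda_{i+1} - \lambda_{i+1}f_{i+1} = 0$ (valid in any characteristic since $f_{i+1}$ is central).

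Next, I would insert the factor $f_{i+1}$ into the denominator using the transformation rule from Section~\ref{section:residuebackground}, exactly as in the proof of Lemma~\ref{lemma:inreschoosehom}, rewriting the target expression as
\[
\Res_{R/k}\!\!\left[\frac{\str(\alpha\,\lambda_1\cdots\lambda_{i-1}\,D(\eta)\,\lambda_{i+2}\cdots\lambda_n)\,\omega}{f_1,\ldots,f_{i+1}^2,\ldots,f_n}\right] \, .
\]
Then I would apply the super-Leibniz expansion of $0 = \str(D(\alpha\,\lambda_1\cdots\lambda_{i-1}\,\eta\,\lambda_{i+2}\cdots\lambda_n))$. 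The terms where $D$ lands on some $\lambda_j$ (with $j \notin \{i,i+1\}$) produce a factor $D(\lambda_j) = f_j$ in the numerator; since $f_j$ lies in the denominator ideal $(f_1,\ldots,f_{i+1}^2,\ldots,f_n)$ to the first power, these contributions vanish inside the residue. What remains is an identity expressing $\str(\alpha\cdots D(\eta)\cdots)$ up to an explicit sign as $-\str(D(\alpha)\cdots\eta\cdots)$.

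Consequently the functional in question has the form $\alpha \mapsto H(D(\alpha))$ for some explicit $H$, and is therefore null-homotopic in $\Hom_k(\End(X), k)$. The main obstacle is twofold: first, correctly guessing the even $D$-preimage $\eta$ of $f_{i+1}[\lambda_i,\lambda_{i+1}]_+$ (the subtle point being that no obvious symmetric candidate works, but the asymmetric combination above does); and second, carefully bookkeeping the signs coming from the super-Leibniz expansion and the dependence of these signs on $|\alpha|$ to ensure the resulting functional factors through $D$ as claimed.
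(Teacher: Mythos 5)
Your argument is correct. The identity $D(\eta) = f_{i+1}(\lambda_i\lambda_{i+1}+\lambda_{i+1}\lambda_i)$ checks out (using centrality of the $f_j$ so that $D(\lambda_{i+1}^2)=0$ and the middle terms cancel), and the rest of the mechanism — enlarge the denominator to $f_1,\ldots,f_{i+1}^2,\ldots,f_n$, write the extra $f_{i+1}$ in the numerator as a $D$-image, expand $\str(D(-))=0$ by super-Leibniz, discard the terms containing $f_j$ for $j\neq i+1$ since they lie in the denominator ideal, and observe the surviving functional factors through $D(\alpha)$ — is exactly the machinery of Lemma~\ref{lemma:inreschoosehom}. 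Reducing to adjacent transpositions via the sign homomorphism is also sound.

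The detour through $\eta$, however, is unnecessary and is the one place your route departs from the paper's. The paper's proof simply reruns Lemma~\ref{lemma:inreschoosehom} with a different denominator entry: multiply top and bottom by $f_{i+1}$, insert the resulting $D(\lambda_{i+1})$ directly into the product (no auxiliary element needed), and expand
\[
0 = \str\bigl(D(\alpha\lambda_1\cdots\lambda_{i-1}\,\lambda_{i+1}\,\lambda_i\lambda_{i+1}\cdots\lambda_n)\bigr)\,.
\]
The $D(\alpha)$ term is a coboundary, the terms with $D(\lambda_j)$ for $j\neq i+1$ produce annihilating $f_j$ factors, and the two terms in which $D$ lands on a copy of $\lambda_{i+1}$ (note there are now two copies) give $f_{i+1}\str(\alpha\lambda_1\cdots\lambda_n)\simeq -f_{i+1}\str(\alpha\lambda_1\cdots\lambda_{i+1}\lambda_i\cdots\lambda_n)$, which cancels against the $f_{i+1}^2$ in the denominator. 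Your $\eta$ repackages precisely this computation: $\eta$ is exactly what one gets by distributing $\lambda_{i+1}$ into $\lambda_i\lambda_{i+1}+\lambda_{i+1}\lambda_i$ from the left, so the Leibniz expansion involving $\eta$ is term-for-term the same as the one involving the repeated $\lambda_{i+1}$. The direct version avoids having to produce and verify a $D$-preimage, and makes it transparent that the argument is a rerun of the previous lemma; in a paper one would prefer it for economy. Both arrive at the same homotopy.
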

\begin{proof}
If in \eqref{eq:inreschoosehom1} we instead multiplied top and bottom of the residue by $f_2$, so that the numerator reads $\str( \alpha f_2 \lambda_1 \ldots \lambda_n )$, then the same argument will show that there is a homotopy
\[
\langle \str( - \circ \lambda_1 \lambda_2 \ldots \lambda_n ) \rangle \simeq - \langle \str( - \circ \lambda_2 \lambda_1 \ldots \lambda_n )\,.
\]
The argument for a general permutation is the same.
\end{proof}

\section{Independence of variable ordering}\label{app:variableordering}

Objects of the bicategory $\LG$ are pairs consisting of a polynomial ring $R$ and potential $W$, together with a chosen ordering of the ring variables. Thus the value of a diagram in $\LG$ depends \textsl{a priori} on the ordering of the ring variables in each of its $2$-dimensional regions. In this appendix we show that the value is actually independent of these orderings, up to the natural permutation signs.

To take a concrete example, consider for $X \in \hmf(k[z_1,\ldots,z_m], V)$ the diagram
\be\label{eq:signdependencepermdia}
\begin{tikzpicture}[very thick,scale=0.4,color=blue!50!black, baseline,>=stealth]
\nicecolourscheme (0,0) circle (3.5);
\fill (-2.1,-2.1) circle (0pt) node[white] {{\small$V$}};
\shadedraw[top color=white, bottom color=white, draw=white] (0,0) circle (2);
\draw[very thin, color=blue!55!white, decorate, decoration={snake, amplitude=0.2mm, segment length=1.0mm}] (270:2) .. controls +(3,0) and +(0,-1) .. (45:3.5); 
\fill (180:1.8) circle (0pt) node[left] {{\small$X$}};
\draw (0,0) circle (2);
\draw[<-, very thick] (0.100,2) -- (-0.101,2) node[above] {}; 
\draw[<-, very thick] (-0.100,-2) -- (0.101,-2) node[below] {}; 
\end{tikzpicture} 
\, . 
\ee
We show in Section \ref{sec:defectaction} that the value of this diagram is the endomorphism of $\Delta_V$ given by multiplication with the polynomial $(-1)^{{m+1\choose 2}} \str( \partial_{z_1} d_X \ldots \partial_{z_m} d_X )$. While there is an obvious dependence on the variable ordering, we prove that the homotopy equivalence class of this endomorphism changes only by a permutation sign if we change the ordering of the variables~$z_i$ in the outer region.

Let $(R = k[x_1,\ldots,x_n], W)$ be an object of $\LG$. Given $\sigma \in S_n$ consider the matrix factorisation $(\Delta^\sigma_W, d_{\Delta^\sigma_W})$ of $\widetilde W$ over $\Re$ with the same underlying graded module as $\Delta_W$, but the differential that we would have written down if we had begun with the variable ordering $x_{\sigma(1)},\ldots,x_{\sigma(n)}$, namely
\be
d_{\Delta^\sigma_W} = \delta^\sigma_+ + \delta_- \, , \qquad \delta_+ = \sum_{i=1}^n \partial^{\,\sigma}_{[i]}W\cdot \theta_i\, , \qquad \delta_- =  \sum_{i=1}^n (x_i-x'_i) \cdot \theta_i^*
\ee
where $\partial^{\,\sigma}_{[i]}$ are the modified difference quotient operators, defined for $1 \le i \le n$ by
\be\label{diffquotopmod}
\partial^{\,\sigma}_{[i]}: k[x,x'] \lra k[x,x'] \, , \qquad f \lmt \frac{{}^{t_{\sigma(1)}\ldots t_{\sigma(i-1)}}f - {}^{t_{\sigma(1)}\ldots t_{\sigma(i)}}f}{x_{\sigma(i)}-x'_{\sigma(i)}} \, . 
\ee
Together with the morphisms $\rho, \lambda$ of \eqref{lambdarho} this matrix factorisation serves as an alternative unit $1$-endomorphism of $(R,W)$. That is, $\rho$ and $\lambda$ are both isomorphisms and the coherence axioms for the unit in a bicategory holds for $\Delta^\sigma_W$. By uniqueness of units there is an isomorphism in $\hmf(\Re, \widetilde W)$
\[
\xi: \Delta_W \lto \Delta^\sigma_W
\]
unique with the property that, for any $1$-morphism $X: (k[x],W) \lto (k[z],V)$, the diagram
\be\label{eq:propertydeltasigma}
\xymatrix{
X \otimes_R \Delta_W \ar[dr]_{\rho}\ar[rr]^-{1 \otimes \xi} & & X \otimes_R \Delta^\sigma_W \ar[dl]^{\rho}\\
& X
}
\ee
commutes in $\HMF(k[x,z], V - W)$. Explicitly, $\xi$ can be constructed as the composite
$$
\xymatrix@C+1pc{
\Delta_W \ar[r]^-{\lambda_{\Delta^\sigma_W}^{-1}} & \Delta^\sigma_W \otimes_R \Delta_W \ar[r]^-{\rho_{\Delta_W}} & \Delta^\sigma_W
} ,
$$
but we will only use commutativity of \eqref{eq:propertydeltasigma}. Taking $X = \Delta_W^{\vee}$ and tensoring with $R$ over $\Re$ turns occurrences of $\Delta_W^{\vee} \otimes_R -$ into $\Hom_{\Re}( \Delta_W, - )$ and in particular $X$ becomes $\Hom_{\Re}( \Delta_W, R )$. From this we deduce that after applying $\Hom_{\Re}( \Delta_W, -)$ the diagram
\be
\xymatrix{
\Delta_W \ar[dr]_{\pi}\ar[rr]^-{\xi} & & \Delta^\sigma_W\ar[dl]^{\pi}\\
& R
}
\ee
commutes. But this means that the diagram itself commutes in $\HF(\Re, \widetilde W)$, i.\,e.~$\xi$ is the isomorphism connecting these two stabilisations of $R$.

To understand how permuting variables affects the values of diagrams like \eqref{eq:signdependencepermdia}, we need to understand the effect on the evaluation and coevaluation maps of Section \ref{sec:derivcoeval}. For the remainder of this section we fix potentials $W \in R = k[x_1,\ldots,x_n]$, $V \in S = k[z_1,\ldots,z_m]$ and $X \in \hmf(k[x,z], V - W)$.

By inspection $\widetilde\coev_X$ depends only on the order of the $x$-variables and $\coev_X$ depends only on the order of the $z$-variables, through the maps $\Psi, \varepsilon$. Since $\Psi$ is independent of the ordering up to homotopy, the only ``real'' dependence in both cases is via $\varepsilon$. Similarly $\widetilde \eval_X$ depends on the order of the $x$-variables via $\Lambda^{(x)}$, $\underline{\ud x}$ and the order of the partial derivatives in the denominator of the residue, and $\eval_X$ depends on the order of the $z$-variables via $\Lambda^{(z)}$, $\underline{\ud z}$ and the residue denominator.

Let us fix permutations $\sigma \in S_n$ and $\tau \in S_m$. If we take the variable orderings $x_{\sigma(1)},\ldots,x_{\sigma(n)}$ and $z_{\tau(1)}, \ldots, z_{\tau(m)}$ in Section \ref{sec:derivcoeval} the result will be morphisms
\begin{align}
\coev^{\sigma,\tau}_X &: \Delta^\tau_V \lto X \otimes_R {}^\dual X
\, , \qquad
\eval^{\sigma,\tau}_X: {}^\dual X \otimes_{S} X \lto \Delta^\sigma_W \, ,\label{eq:permutedevcoev1}\\
\widetilde\coev^{\sigma,\tau}_X&: \Delta^\sigma_W \lto X^\dual \otimes_{S} X
\, , \qquad
\widetilde\eval^{\sigma,\tau}_X: X \otimes_{R} X^\dual \lto \Delta^\tau_V\,. \label{eq:permutedevcoev2}
\end{align}
These are related to the original evaluation and coevaluation maps as follows.

\begin{lemma}\label{lemma:evsignspermute} The diagrams
\be\label{eq:evsignspermute}
\xymatrix@C+3pc@R-1pc{
& \Delta_V \ar[dd]^-{\xi}\\
X \otimes_R X^{\dual}\ar[ur]^-{\widetilde\eval_X}\ar[dr]_-{(-1)^{|\sigma|} \widetilde \eval^{\sigma, \tau}_X} \\
& \Delta^\tau_V, 
}
\qquad
\xymatrix@C+3pc@R-1pc{
& \Delta_W \ar[dd]^-{\xi}\\
{}^\dual X \otimes_S X \ar[ur]^-{\eval_X}\ar[dr]_-{(-1)^{|\tau|} \eval^{\sigma,\tau}_X}\\
& \Delta^\sigma_W
}
\ee
commute in $\HMF(S^{\operatorname{e}}, \widetilde V)$ and $\HMF(\Re, \widetilde W)$, respectively.
\end{lemma}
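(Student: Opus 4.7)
The approach is to use the unique-lift characterisation of $\widetilde\eval_X$ and $\eval_X$ from Section~\ref{sec:derivcoeval}: these morphisms are determined up to homotopy as the unique lifts of $\widetilde\eval_0$ and $\eval_0$ along the stabilisation maps $\pi_\Delta: \Delta \lto S$ or $R$. The plan is to compare the permuted analogues $\widetilde\eval^{\sigma,\tau}_0$ and $\eval^{\sigma,\tau}_0$ with the originals up to a sign, and then invoke this uniqueness together with the identity $\pi_{\Delta^\tau} \circ \xi = \pi_\Delta$.

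First I would carry out the sign calculation for $\widetilde\eval_0$. Since $\widetilde\eval_0$ takes values in $S$ rather than $\Delta_V$, it is manifestly independent of $\tau$, so $\widetilde\eval^{\sigma,\tau}_0 = \widetilde\eval^{\sigma}_0$. Under the permutation $\sigma$ of the $x$-variables three independent sign contributions combine: reordering the $\lambda_i$'s inside the supertrace produces a factor $(-1)^{|\sigma|}$ up to homotopy by Lemma~\ref{lemma:symmetrystrres}; the volume form satisfies $\ud x_{\sigma(1)}\cdots\ud x_{\sigma(n)} = (-1)^{|\sigma|}\,\underline{\ud x}$; and the transformation rule for residues in Section~\ref{section:residuebackground} applied to a permutation matrix $C$ gives a further $(-1)^{|\sigma|}$ when reordering $\partial_{x_1}W, \ldots, \partial_{x_n}W$ in the denominator. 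The product is $(-1)^{3|\sigma|} = (-1)^{|\sigma|}$, yielding $\widetilde\eval^{\sigma,\tau}_0 \simeq (-1)^{|\sigma|}\,\widetilde\eval_0$. A symmetric argument with the $z$-variables gives $\eval^{\sigma,\tau}_0 \simeq (-1)^{|\tau|}\,\eval_0$.

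Second, I would invoke the identity $\pi_{\Delta^\tau_V} \circ \xi = \pi_{\Delta_V}$, which follows from the defining diagram \eqref{eq:propertydeltasigma} by taking $X = \Delta_V^{\vee}$. Then the two candidate maps
\[
\xi \circ \widetilde\eval_X \quad \textrm{and} \quad (-1)^{|\sigma|}\,\widetilde\eval^{\sigma,\tau}_X : X \otimes_R X^\dual \lto \Delta^\tau_V
\]
both project under $\pi_{\Delta^\tau_V}$ to the map $\widetilde\eval_0$ (using $(-1)^{2|\sigma|} = 1$ in the second case). The lifting property established in the proof of Proposition~\ref{prop:constructevalmap}, specifically the fact that the vertical map $\pi_\Delta^\bullet$ in \eqref{eq:proofconstructevalmap} is a homotopy equivalence after replacing $\Delta$ by $\Delta^\tau$, shows that lifts along $\pi_{\Delta^\tau_V}$ are unique up to homotopy, so the two composites agree. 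The second diagram is handled by an identical argument with the roles of $x$ and $z$ interchanged.

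The main technical point is the triple sign bookkeeping in the first step; in particular, the fact that reordering the null-homotopies inside the supertrace produces exactly $(-1)^{|\sigma|}$ relies on the closedness of $\str$ and is isolated in Appendix~\ref{app:reshom}. Once that input together with the two elementary signs from the volume form and the residue denominator is assembled, the conclusion follows formally from uniqueness of lifts and the compatibility of $\xi$ with the stabilisation morphism.
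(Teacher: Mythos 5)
Your proof is correct and follows essentially the same route as the paper's: reduce via the universal property of the stabilisation (together with $\pi_{\Delta^\tau_V}\circ\xi=\pi_{\Delta_V}$) to showing $\widetilde\eval_0\simeq(-1)^{|\sigma|}\widetilde\eval^{\sigma,\tau}_0$, and then deduce this from Appendix~\ref{app:reshom}. The only difference is one of exposition: the paper simply writes out the two residue expressions and says ``this follows from Appendix~\ref{app:reshom}'', whereas you spell out the three independent $(-1)^{|\sigma|}$ contributions (the $\lambda_i$-reordering via Lemma~\ref{lemma:symmetrystrres}, the volume form, and the residue denominator) whose product gives the net sign; this is a helpful elaboration of the step the paper leaves implicit.
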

\begin{proof}
We give the argument for the first diagram, the second is similar. By the universal property of the stabilisation it suffices to prove that the $\Se$-linear morphisms
\[
\widetilde \eval_0\, , \; (-1)^{|\sigma|} \widetilde \eval^{\sigma, \tau}_0: X \otimes_R X^\dual \lto S
\]
are homotopic. For this it is enough to show that the $S$-linear maps
\[
\Res_{k[x]/k} \!\!\left[ \frac{ \str( \lambda_1 \ldots \lambda_n \circ (-) ) \, \underline{\ud x}}{\partial_{x_1} W \ldots \partial_{x_n} W} \right] , \qquad
(-1)^{|\sigma|}\Res_{k[x]/k} \!\!\left[  \frac{\str( \lambda_{\sigma(1)} \ldots \lambda_{\sigma(n)} \circ (-) ) \, \ud x_{\sigma(1)} \ldots \ud x_{\sigma(n)}}{\partial_{x_{\sigma(1)}} W \ldots \partial_{x_{\sigma(n)}} W} \right]
\]
are homotopic, but this follows from Appendix \ref{app:reshom}.
\end{proof}

\begin{lemma}\label{lemma:coevsignspermute} The diagrams
\be\label{eq:coevsignspermute}
\xymatrix@C+3pc@R-1pc{
\Delta_W \ar[dr]^-{\widetilde \coev_X} \ar[dd]_-{\xi}\\
& X^\dual \otimes_S X , \\
\Delta^\sigma_W \ar[ur]_-{(-1)^{|\sigma|} \widetilde \coev^{\sigma, \tau}}
}
\qquad
\xymatrix@C+3pc@R-1pc{
\Delta_V \ar[dd]_-{\xi} \ar[dr]^-{\coev^{\sigma,\tau}_X}\\
& X \otimes_R {}^\dual X\\
\Delta^\tau_V \ar[ur]_-{(-1)^{|\tau|} \coev^{\sigma,\tau}_X}
}
\ee
commute in $\HMF(\Re, \widetilde W)$ and $\HMF(\Se, \widetilde V)$, respectively.
\end{lemma}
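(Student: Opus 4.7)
The argument is dual to that of Lemma \ref{lemma:evsignspermute}. We treat the first diagram in (\ref{eq:coevsignspermute}) in detail; the second is obtained by exchanging the roles of the two factors (so that the residue-free construction of Section~\ref{subsec:derivcoeval} is replaced by the canonical quasi-isomorphism (\ref{eq:othercoev11})) and produces the sign $(-1)^{|\tau|}$ instead.

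The key is to use the characterisation of $\widetilde\coev_X$ from Definition \ref{def:coeval} and Proposition \ref{prop:isogivescoev}: $\widetilde\coev_X$ is the unique (up to homotopy) element of $\Hom_{\Re}(\Delta_W, X^\dual \otimes_S X)$ whose image under the canonical quasi-isomorphism
\[
\Theta_W: \Hom_{\Re}(\Delta_W, X^\dual \otimes_S X) \lra \Hom_{R \otimes_k S}(X,X)
\]
is the class of $1_X$. Running the construction (\ref{eq:isogivescoev2}) with the alternative stabilisation $\Delta^\sigma_W$ yields an analogous quasi-isomorphism $\Theta^\sigma_W$, and by definition $\Theta^\sigma_W(\widetilde\coev^{\sigma,\tau}_X) = [1_X]$. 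Hence to prove the first diagram commutes it suffices to verify
\[
\Theta_W\big((-1)^{|\sigma|}\,\widetilde\coev^{\sigma,\tau}_X \circ \xi\big) = [1_X]\,,
\]
which in turn reduces to the claim $\Theta_W \circ \xi^{*} \simeq (-1)^{|\sigma|}\Theta^\sigma_W$, where $\xi^*$ denotes precomposition with $\xi: \Delta_W \lra \Delta^\sigma_W$.

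To establish this claim I would trace through (\ref{eq:isogivescoev2}) for both orderings. All maps in that chain except one are manifestly ordering-independent: the canonical isomorphism $\xi$ of Section~\ref{section:canonicalmaps}, the suspension isomorphism, the morphism $\kappa$, and the projection $1 \otimes \pi$ (which is compatible with the comparison $\xi: \Delta_W \lra \Delta^\sigma_W$ by the defining property (\ref{eq:propertydeltasigma}) of the latter). The sole dependence on the variable ordering enters through the isomorphism $\zeta: \Delta_W' \lra \Delta_W^\vee[n]$, $\omega \mapsto \varepsilon(\omega \wedge -)$, built from $\varepsilon: \Delta \lra \Re[n]$ of (\ref{eq:vareps}). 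The corresponding $\varepsilon^\sigma$ used to form $\zeta^\sigma$ sends $\theta_{\sigma(1)}\ldots\theta_{\sigma(n)} \mapsto 1$, and since $\theta_{\sigma(1)}\ldots\theta_{\sigma(n)} = (-1)^{|\sigma|}\theta_1\ldots\theta_n$ in the exterior algebra, $\varepsilon^\sigma = (-1)^{|\sigma|}\varepsilon$. Combining this with the compatibility of $\xi$ and the ordinary projection $\pi$ to $R$ (which factors $\xi$ through a map of top-$\theta$-degree components differing only by the sign in $\varepsilon$) gives the desired identity $\Theta_W \circ \xi^* \simeq (-1)^{|\sigma|}\Theta^\sigma_W$.

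The main obstacle will be Step 3: carefully checking that every map in (\ref{eq:isogivescoev2}) other than $\zeta$ really is ordering-independent in the appropriate sense, and that the compatibility (\ref{eq:propertydeltasigma}) between $\xi$ and the stabilisation $\pi$ is enough to conclude that no further hidden dependence on the ordering slips in when we replace $\Delta_W$ by $\Delta^\sigma_W$ throughout. Once this bookkeeping is in place, the sign $(-1)^{|\sigma|}$ arises unambiguously from the exterior-algebra computation above, mirroring the way $(-1)^{|\sigma|}$ arises in Appendix \ref{app:reshom} for the evaluation side.
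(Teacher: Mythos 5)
Your proposal takes a genuinely different route from the paper. The paper's own proof is a one-line formal deduction: since $\eval^{\sigma,\tau}_X$, $\widetilde\eval^{\sigma,\tau}_X$, $\coev^{\sigma,\tau}_X$, $\widetilde\coev^{\sigma,\tau}_X$ satisfy the Zorro moves (they are the adjunction data built from the $\sigma,\tau$-ordered variables), and coevaluations are uniquely determined by evaluations once the Zorro moves hold, the diagrams of Lemma~\ref{lemma:coevsignspermute} follow automatically from those of Lemma~\ref{lemma:evsignspermute}. No computation with $\varepsilon$ or the chain \eqref{eq:isogivescoev2} is needed at all.

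By contrast you re-derive the result from scratch by tracing through the quasi-isomorphism \eqref{eq:isogivescoev2} and isolating $\varepsilon$ as the source of the sign. The central observation --- that $\varepsilon^\sigma = (-1)^{|\sigma|}\varepsilon$ because $\theta_{\sigma(1)}\cdots\theta_{\sigma(n)} = (-1)^{|\sigma|}\theta_1\cdots\theta_n$ --- is correct, and the overall strategy of showing $\Theta_W \circ \xi^* \simeq (-1)^{|\sigma|}\Theta^\sigma_W$ and then invoking uniqueness of the preimage of $[1_X]$ is sound in principle. But you have (by your own admission) not done the ``bookkeeping,'' and it is genuinely nontrivial: the chain \eqref{eq:isogivescoev2} passes through $\Delta_W^\vee$ and $\Delta_W'$, both of which depend on the variable ordering, so one must construct comparison maps $(\Delta_W^\sigma)^\vee \to \Delta_W^\vee$ and $(\Delta^\sigma_W)' \to \Delta'_W$ and verify that they mesh with $\zeta$, the shift, and $1\otimes\pi$. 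The compatibility \eqref{eq:propertydeltasigma} you cite only directly governs $\xi : \Delta_W \to \Delta^\sigma_W$, not the duals and primed variants, so it does not immediately supply the needed squares. The danger is that hidden signs slip in precisely where you claim the maps are ``manifestly ordering-independent.'' This is exactly the kind of error-prone sign-chase that the paper's formal argument is designed to circumvent: having paid the cost of establishing the evaluation diagrams in Lemma~\ref{lemma:evsignspermute} (which does genuinely require the residue calculus of Appendix~\ref{app:reshom}), one gets the coevaluation diagrams for free. I would recommend finding the short formal argument before investing in the detailed verification you outline.
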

\begin{proof}
The morphisms in \eqref{eq:permutedevcoev1} and \eqref{eq:permutedevcoev2} satisfy the Zorro moves, so commutativity of the diagrams in \eqref{eq:coevsignspermute} follows from commutativity of those in \eqref{eq:evsignspermute}.
\end{proof}

The general rule is that permuting the variables in a region by $\tau$ changes the value of a diagram by a factor of $(-1)^{M|\tau|}$ where $M$ is the number of wiggly lines (see Section~\ref{sec:wiggliesandsigns}) entering or departing an evaluation or coevaluation within that region. The simplest example is \eqref{eq:signdependencepermdia} where the value changes by the sign $(-1)^{|\tau|}$.

\newcommand{\etalchar}[1]{$^{#1}$}
\providecommand{\href}[2]{#2}

\end{document}